\renewcommand{\triangle}{\vartriangle}
\newcommand{\Z}{\mathbb{Z}}
\newcommand{\Q}{\mathbb{Q}}
\newcommand{\C}{\mathbb{C}}
\newcommand{\K}{\mathbb{K}}
\newcommand{\Y}{\mathbb{Y}}
\newcommand{\F}{\mathbb{F}}
\newcommand{\qqq}{\mathfrak{q}}
\newcommand{\ddd}{\mathfrak{d}}
\newcommand{\cC}{\mathcal{C}}
\newcommand{\cP}{\mathcal{P}}
\newcommand{\bd}{b^\bullet}
\newcommand{\cd}{c^\bullet}
\newcommand{\kb}{\kappa}
\newcommand{\La}{\Lambda}
\newcommand{\la}{\lambda}
\newcommand{\lad}{{\lambda^\bullet}}
\newcommand{\mud}{{\mu^\bullet}}
\newcommand{\Nd}{{N_\bullet}}
\newcommand{\Xd}{X^\bullet}
\newcommand{\charge}{\mathrm{charge}}
\newcommand{\core}{\mathrm{core}}
\newcommand{\ES}{\mathrm{ES}}
\newcommand{\id}{\mathrm{id}}
\newcommand{\quot}{\mathrm{quot}}
\newcommand{\shape}{\mathrm{shape}}
\newcommand{\Sym}{\mathfrak{S}}
\newcommand{\bigp}{\mathrm{big}}
\newcommand{\Ju}{{\underline{J}}}
\newcommand{\fS}{\mathfrak{S}}
\newcommand{\dt}{\cdot}
\providecommand{\blacktriangle}{\mdblktriangle}
\newcommand{\comment}[1]{}
\newtheorem*{ithm}{Theorem}
\newtheorem{thm}{Theorem}
\newtheorem{lem}[thm]{Lemma}
\newtheorem{prop}[thm]{Proposition}
\newtheorem{cor}[thm]{Corollary} 
\theoremstyle{remark}
\newtheorem{rem}[thm]{Remark}
\newtheorem{ex}[thm]{Example}
\numberwithin{thm}{section}
\numberwithin{equation}{section}
\begin{document}

\title{Wreath Macdonald operators}
\author{Daniel Orr}\address{Orr:
Department of Mathematics (MC 0123),
	460 McBryde Hall, Virginia Tech,
	225 Stanger St.,
	Blacksburg, VA 24061 USA /
Max Planck Institut f\"{u}r Mathematik, Vivatsgasse 7,
53111 Bonn,
Germany}\email{dorr@vt.edu}
\author{Mark Shimozono}\address{Shimozono: Department of Mathematics (MC 0123),
	460 McBryde Hall, Virginia Tech,
	225 Stanger St.,
	Blacksburg, VA 24061 USA}\email{mshimo@math.vt.edu}
\author{Joshua Jeishing Wen}\address{Wen: Department of Mathematics, Northeastern University, 463 Lake Hall, 43 Leon St, Boston, MA 02115}\email{j.wen@northeastern.edu}
\date{\today}
\subjclass[2020]{Primary: 05E05, 17B37; Secondary: 81R10}
\maketitle

\begin{abstract}
We construct a novel family of difference-permutation operators and prove that they are diagonalized by the wreath Macdonald $P$-polynomials; the eigenvalues are written in terms of elementary symmetric polynomials of arbitrary degree. Our operators arise from integral formulas for the action of the horizontal Heisenberg subalgebra in the vertex representation of the corresponding quantum toroidal algebra.
\end{abstract}

\section{Introduction}
Let $X_{N}=\left\{ x_{1},\ldots, x_N \right\}$ be a set of variables.
The \textit{Macdonald polynomials} $\left\{ P_\lambda[X_N;q,t] \right\}$ are a basis of the ring of $(q,t)$-deformed symmetric polynomials $\Q(q,t)[X_N]^{\Sym_N}$ that have appeared across a remarkably broad collection of mathematical fields.
They can be characterized as eigenfunctions of a commuting family of difference operators, the \textit{Macdonald operators}: for $1\le n\le N$,
\begin{align}
\label{OldMacOp}
D_n(X_N ;q,t)&:=t^{\frac{n(n-1)}{2}}\sum_{\substack{I\subset\{1,\ldots, N\}\\|I|=n}}\left(\prod_{\substack{i\in I\\j\not\in I}}\frac{tx_i-x_j}{x_i-x_j}\right)\prod_{i\in I} T_{q,x_i}\\
\label{OldEigen}
D_n(X_N ;q,t) P_\lambda[X_N;q,t] &= e_n(q^{\lambda_1}t^{N-1}, q^{\lambda_2}t^{N-2},\ldots, q^{\lambda_N})P_\lambda[X_N ;q,t]
\end{align}
Here, $T_{q,x_i}$ is the $q$-shift operator 
\[
T_{q,x_i} x_j=q^{\delta_{i,j}}x_j
\]
and $e_n$ is the $n$th elementary symmetric polynomial.
The Macdonald operators are themselves distinguished as Hamiltonians of the quantum trigonometric \textit{Ruijsenaars-Schneider} integrable system.

This paper is concerned with the \textit{wreath} Macdonald polynomials, a generalization of the Macdonald polynomials proposed by Haiman \cite{H2}.
Fix an integer $r>0$ and partition the variables $x_1,\ldots, x_N$ into $r$ subsets:
\[
X_{N_\bullet}:=\bigsqcup_{i=0}^{r-1}\left\{x^{(i)}_{l}\right\}_{l=1,\ldots, N_i}=\left\{ x_1,\ldots,x_N \right\}
\]
where $\sum_{i=0}^{r-1}N_i=N$.
We call the index $i$ the \textit{color} of $x^{(i)}_{l}$, and it will be helpful to view it as an element of $I:=\Z/r\Z$.
The number of variables is recorded by the vector $N_\bullet :=(N_0,\ldots, N_{r-1})$ and we set $|N_\bullet|:=N$.
Consider the action of the product of symmetric groups
\[
\Sym_{N_\bullet}:=\prod_{i\in I}\Sym_{N_i}
\]
on the polynomial ring $\Q(q,t)\left[ X_{N_\bullet} \right]$ whereby $\Sym_{N_i}$ only permutes the variables of color $i$.
The wreath Macdonald polynomials can be viewed as a set of \textit{color-symmetric} polynomials that are again indexed by a single partition:
\[
P_\lambda[X_{N_\bullet};q,t]\in\Q(q,t)\left[ X_{N_\bullet} \right]^{\Sym_{N_\bullet}}.
\]
The combinatorics of $r$-cores and $r$-quotients play a key role in this subject, which we review in Section \ref{Wreath} below.
When we restrict $\lambda$ to range over partitions with a fixed $r$-core and $\ell(\lambda)\le |N_\bullet|$, we obtain a basis of color-symmetric polynomials.
For reasons that seem technical at first, the $r$-core and $N_\bullet$ must satisfy a compatibility condition (see \ref{Finite}). 
The original Macdonald polynomials are the case $r=1$.

Haiman's proposed definition characterizes $P_\lambda[X_{N_\bullet};q,t]$ using a pair of triangularity conditions.
In contrast with the usual Macdonald theory, we \textit{a priori} do not have an analogous characterization as the joint eigenfunction of an explicit family of difference operators.
The present work remedies this situation: we produce a novel family of difference-permutation operators that are diagonalized by the wreath Macdonald polynomials and whose eigenvalues are written in terms of the elementary symmetric polynomials.
In addition to the degree $n$, they also carry a color parameter $p\in I$:
\begin{equation}
\begin{aligned}
D_{p,n}(X_{N_\bullet};q,t)&:=
\frac{(-1)^{\frac{n(n-1)}{2}}}{\prod_{k=1}^n(1-q^{k}t^{-k})}\\
&\times\sum_{\mathbf{\Ju}\in Sh_p^{[n]}(X_{N_\bullet})}
\overset{\curvearrowright}{\prod_{a=1}^n}
\left\{(1-qt^{-1})^{|\Ju_a|}\left(\frac{x^{(r-1)}_{\Ju_a^\triangledown}}{x^{(p)}_{\Ju_a}}\right)
\frac{\displaystyle
\prod_{\substack{l=1\\x^{(p)}_{l}\not\in|\mathbf{\Ju}|_{\ge a}}}^{N_p}\left( tx^{(p-1)}_{\Ju_a^\triangledown}-x^{(p)}_{l} \right)}
{\displaystyle
\prod_{\substack{l=1\\x^{(p)}_{l}\not\in|\mathbf{\Ju}|_{\le a}}}^{N_p}\left( x^{(p)}_{\Ju_a}-x^{(p)}_{l} \right)}\right.\\
&\times
\left.
\left( \prod_{\substack{i\in I\backslash\{p\}}}\, 
\prod_{\substack{l=1\\x^{(i)}_{l}\not=x^{(i)}_{\Ju_a^\triangledown}}}^{N_i}\frac{\left(tx^{(i-1)}_{\Ju_a^\triangledown}-x^{(i)}_{l}\right)}
{\left( x^{(i)}_{\Ju_a^\triangledown}-x^{(i)}_{l} \right)}
\right)
\left( \prod_{i\in J_a\backslash \left\{ p \right\}} 
\frac{q^{-1}t T_{\Ju_a}x^{(i)}_{\Ju_a}}
{\left( x^{(i)}_{\Ju_a}-T_{\Ju_a}x^{(i)}_{\Ju_a} \right)}
\right) T_{\Ju_a}\right\}.
\end{aligned}
\label{WreathMacOp}
\end{equation}
The notation used in this formula is outlined in \ref{Cyclic}. Our main result is the following:
\begin{ithm}[see Theorem~\ref{ThmEigenHigher}]
For $\lambda$ having $r$-core compatible with $N_\bullet$ and $\ell(\lambda)\le |N_\bullet|$, the polynomial $P_\lambda[X_{N_\bullet};q,t]$ satisfies the eigenfunction equation
\begin{equation}
D_{p,n}(X_{N_\bullet};q,t)P_\lambda[X_{N_\bullet};q,t]
=e_n\left[ \sum_{\substack{b=1\\ b-\lambda_b\equiv p+1\: \mathrm{mod} \: r}}^{|N_\bullet|}q^{\lambda_b}t^{|N_\bullet|-b} \right]
P_\lambda[X_{N_\bullet};q,t].
\label{WreathEigen}
\end{equation}
\end{ithm}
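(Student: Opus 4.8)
The plan is to prove the eigenfunction equation representation-theoretically, exploiting the source of the operators $D_{p,n}$ indicated in the abstract. First I would assemble the operators of a fixed color into a single generating series
$\mathcal{D}_p(u) := \sum_{n\ge 0} u^n\, c_n\, D_{p,n}(X_{N_\bullet};q,t)$,
where the scalars $c_n$ absorb the sign $(-1)^{\binom{n}{2}}$ and the normalization $\prod_{k=1}^n(1-q^k t^{-k})^{-1}$, and then recognize this series as the normal-ordered contour-integral expression for the color-$p$ component of the horizontal Heisenberg vertex operator acting in the vertex (Fock-space) representation of the quantum toroidal algebra of type $A^{(1)}_{r-1}$. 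Concretely, the sum over shapes $Sh_p^{[n]}(X_{N_\bullet})$, the cyclic product $\overset{\curvearrowright}{\prod}$, and the color-shifted numerators $t x^{(i-1)}_{\Ju_a^\triangledown}-x^{(i)}_l$ are exactly what emerge upon expanding the iterated residue of this vertex operator and collecting terms according to which tuple of variables is shifted by the $T_{\Ju_a}$.

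The second step is to invoke the diagonalization of the commutative horizontal Heisenberg subalgebra by the wreath Macdonald basis: each $P_\lambda[X_{N_\bullet};q,t]$ is a joint eigenvector, and the eigenvalue of the degree-one color-$p$ generator is the character sum $\sum_{b-\lambda_b\equiv p+1} q^{\lambda_b}t^{|N_\bullet|-b}$ (a finite multiset since $\ell(\lambda)\le|N_\bullet|$ and the $r$-core is compatible with $N_\bullet$). Granting this, the eigenvalue of the full series $\mathcal{D}_p(u)$ on $P_\lambda$ is forced into the factored form $\prod_{b:\,b-\lambda_b\equiv p+1 \,(\mathrm{mod}\,r)}\!\bigl(1+u\,q^{\lambda_b}t^{|N_\bullet|-b}\bigr)$, in direct analogy with the classical Macdonald generating series $\prod_i(1+u\,q^{\lambda_i}t^{N-i})$. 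Extracting the coefficient of $u^n$ then produces precisely $e_n$ of the indicated multiset, which is the claimed eigenvalue; in particular the eigenvalue vanishes once $n$ exceeds the size of the residue class, consistent with $e_n$ of fewer variables.

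The key computation, and the main obstacle, is matching the abstract Heisenberg action with the explicit combinatorial operator of \eqref{WreathMacOp}: one must carry out the normal ordering of the contour integral and verify that the residues reorganize into the sum over shapes with the stated color factors and shift operators. The delicate bookkeeping lies in the two restricted products over $\{l : x^{(p)}_l\notin|\mathbf{\Ju}|_{\ge a}\}$ and $\{l : x^{(p)}_l\notin|\mathbf{\Ju}|_{\le a}\}$, which record exactly which poles have already been consumed by earlier factors in the cyclic product; getting this inclusion/exclusion right is what dictates the prefactors $(1-qt^{-1})^{|\Ju_a|}$ and $q^{-1}t\,T_{\Ju_a}x^{(i)}_{\Ju_a}/(x^{(i)}_{\Ju_a}-T_{\Ju_a}x^{(i)}_{\Ju_a})$, and it is the step most prone to error.

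As a cross-check, and to pin down the eigenvalue independently of the algebra, I would first establish the case $n=1$, where the formula collapses to a first-order difference-permutation operator and the eigenvalue is the bare character sum; this can be confirmed directly against Haiman's triangularity characterization of $P_\lambda$, by showing that $D_{p,1}$ preserves the relevant dominance filtration and reading off its diagonal entry. One then bootstraps to general $n$ either through the generating-function factorization above or by identifying the $D_{p,n}$ as the normalized higher symmetric functions of a single commuting family, whereby the common eigenbasis and the $e_n$-form of the eigenvalues follow at once.
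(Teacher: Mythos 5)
Your overall strategy is the paper's: realize the $D_{p,n}$ as the action of horizontal Heisenberg elements of the quantum toroidal algebra on the vertex representation (via the shuffle-algebra integral formulas), use the fact that the wreath Macdonald polynomials diagonalize that subalgebra, read off eigenvalues from the Fock side through Tsymbaliuk's isomorphism, and organize everything into a generating series whose $u^n$-coefficient yields $e_n$. The residue bookkeeping you flag as the main obstacle is indeed carried out in Section \ref{Difference} essentially as you anticipate.

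There is, however, one genuine gap in your second step, and it is exactly where the paper has to work hardest. The generating series $\mathcal{D}_p(u)=\sum_n u^n c_n D_{p,n}$ is \emph{not} equal to the horizontal Heisenberg vertex operator. When the contour integral for the $n$-th Heisenberg coefficient is evaluated, the poles in the first group of variables split into $x$-poles and $t$-poles; only the all-$x$-pole contribution produces $D_{p,n}$, while the mixed contributions produce lower-order operators, so what one actually obtains is $t^{n|N_\bullet|}D_{p,n}(X_{N_\bullet};q,t^{-1})+\sum_{k=0}^{n-1}c^{-}_{p,k,n}D_{p,k}(X_{N_\bullet};q,t^{-1})$ with coefficients $c^{-}_{p,k,n}$ that are never computed (Lemma \ref{ResidueCalc}). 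Correspondingly, the eigenvalue supplied by the Fock representation is $e_n$ of the \emph{infinite} character sum over all $b>0$ with $b-\lambda_b\equiv p+1$, whose tail is a geometric series, not $e_n$ of the finite sum over $1\le b\le|N_\bullet|$ appearing in \eqref{WreathEigen}. Your claim that the eigenvalue of $\mathcal{D}_p(u)$ is ``forced into the factored form'' $\prod_{b\le|N_\bullet|}\bigl(1+u\,q^{\lambda_b}t^{|N_\bullet|-b}\bigr)$ therefore conflates two different operator families and two different character sums. The paper closes this by showing that the infinite-sum $e_n$ is a triangular combination of the finite-sum $e_k$ for $k\le n$ (Lemma \ref{EigenStab}) and then isolating the top piece by a homogeneity argument: multiplication by $\prod_{i,l}x^{(i)}_l$ sends $P_\lambda$ to $P_{\lambda+r^{|N_\bullet|}}$ (Proposition \ref{ShiftProp}) and conjugating $D_{p,n}$ by it produces $q^{nr}$, so the eigenvalue of $D_{p,n}$ is homogeneous of degree $n$ in the spectral variables and must be the degree-$n$ piece, i.e.\ $e_n$ of the finite sum. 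Without a substitute for this step your argument determines the joint eigenvalue of the Heisenberg element but not of $D_{p,n}$ itself; your proposed cross-checks (verifying $n=1$ via Haiman's triangularity, or exhibiting the $D_{p,n}$ as symmetric functions of a single commuting family) are not what the paper does and would each require a separate proof.
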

\noindent
For the eigenvalues, we have used plethystic notation---we merely mean the elementary symmetric function $e_n$ evaluated at the characters appearing in the summation.
In earlier work \cite{OS}, the first two authors constructed the first order dual operators $D_{p,1}^*$ and their eigenfunction equation in Theorem~\ref{ThmEigenHigher}.

Our operators (\ref{WreathMacOp}) are much more complicated than the original Macdonald operators (\ref{OldMacOp}).
In the case $r=1$, we do indeed obtain (\ref{OldMacOp}) after some simplification (see Remark \ref{MacCompare}).
When $r>1$, the vanilla $q$-shift operator $T_{q,x_i}$ is replaced with what we call a \textit{cyclic-shift operator} $T_{\Ju_a}$, which cyclically permutes variables of different colors in addition to multiplying by a power of $q$.
Because of this extra permutation, the cyclic-shift operators might not commute.
Note now the \textit{ordered} product in (\ref{WreathMacOp})---we expect the formula to simplify meaningfully after taking into account the (non)commutativity of the constituent cyclic-shift operators.
Moving beyond the intricacies of our formula, let us now highlight some nice conceptual aspects of our operators.

\subsection{Integral formulas}
Our strategy for deriving (\ref{WreathMacOp}) and establishing the eigenfunction equation uses work of the third author \cite{Wen}.
Namely, we study the wreath Macdonald polynomials using the \textit{quantum toroidal algebra} $U_{\qqq,\ddd}(\ddot{\mathfrak{sl}}_r)$ and its \textit{vertex representation} $W$.
The aforementioned work proves that infinite-variable wreath Macdonald polynomials can be naturally embedded inside $W$ such that they diagonalize a large commutative subalgebra of $U_{\qqq,\ddd}(\ddot{\mathfrak{sl}}_r)$, the \textit{horizontal Heisenberg subalgebra}.
This alone is insufficient for obtaining explicit formulas---we also need work of Negu\cb{t} \cite{Neg} realizing $U_{\qqq,\ddd}(\ddot{\mathfrak{sl}}_r)$ in terms of a \textit{shuffle algebra}.
The shuffle algebra is a space of rational functions endowed with an exotic product structure, and it is isomorphic to a part of $U_{\qqq,\ddd}(\ddot{\mathfrak{sl}}_r)$ via a map that is morally (but not precisely) an integration map.
Writing its action on $W$ and then specializing from infinite to finite variables, we obtain actual integral formulas.
Finally, to pin down the eigenvalues, we use the (twisted) isomorphism established by Tsymbaliuk \cite{T} between the vertex representation and the \textit{Fock representation}.

We apply this process to the shuffle realizations of well-chosen elements of the horizontal Heisenberg subalgebra which were found in \cite{Wen}.
Our operators are the highest degree parts (see Proposition \ref{ResidueCalc}), and we can write their action as follows: for a factored element
\[
f=\prod_{i\in I} f_i\left(x^{(i)}_{\bullet}\right)\in\C(q,t)\left[ X_{N_\bullet} \right]^{\Sym_{N_\bullet}}
\]
\begin{align*}
D_{p,n}(X_{N_\bullet};q,t)f&= \oint_{C}
\frac{(-1)^{\frac{n(n-1)}{2}}t^{-\frac{n(n+1)}{2}}(1-qt^{-1})^{nr}}{\prod_{a=1}^n(1-q^at^{-a})} 
\prod_{i\in I}\prod_{a=1}^n\prod_{l=1}^{N_i}\left(\frac{  tw_{i,a}-x^{(i)}_{l} }{ w_{i+1,a}-x^{(i)}_{l} }\right)\\
&\times\prod_{1\le a<b\le n}\left\{
\frac{\left(w_{p,a}-w_{p,b}\right)\left(w_{p,a}-qt^{-1}w_{p,b}\right)}{\left(w_{p,b}-t^{-1}w_{p+1,a}\right)\left(w_{p-1,a}-t^{-1}w_{p,b}\right)}\right.\\
&\times\left.
\prod_{i\in I\backslash\{p\}}
\frac{\left(w_{i,a}-w_{i,b}\right)\left(w_{i,a}-qt^{-1}w_{i,b}\right)}{\left(w_{i+1,a}-qw_{i,b}\right)\left(w_{i-1,a}-t^{-1}w_{i,b}\right)}\right\}\\
&\times
\prod_{a=1}^n\left\{
\left(\frac{w_{0,a}}{w_{p+1,a}}\right)\left(\frac{w_{p+1,a}}{w_{p,a}-t^{-1}w_{p+1,a}}\right)\right.\\
&\times\left.
\prod_{i\in I\setminus\{p\}}
\left(\frac{w_{i,a}}{w_{i,a}-t^{-1}w_{i+1,a}}\right)
\left( \frac{w_{i+1,a}}{w_{i+1,a}-qw_{i,a}} \right)
\right\}\\ 
&\times 
\prod_{i\in I}f_i\left[  \sum_{l=1}^{N_i}x^{(i)}_{l}+\sum_{a=1}^nqw_{i,a}-\sum_{a=1}^nw_{i+1,a} \right]
\prod_{a=1}^n \frac{dw_{i,a}}{2\pi\sqrt{-1}w_{i,a}}
\end{align*}
where for each variable $w_{i,a}$, the cycle $C$ only encloses poles of the form $(w_{i,a}-qw_{i-1,b})$ and $(w_{i,a}-x_{i-1,l})$.
Explicit evaluation of this integral leads to \eqref{WreathEigen}.
We also carry this out for its dual counterpart in Theorem~\ref{ThmEigenHigher}.

Using other shuffle elements from \cite{Wen}, we obtain similar integral formulas for wreath analogues of the \textit{Noumi-Sano operators} \cite{NSa}, although we are only able to evaluate the integral and obtain formulas for the operators in degree $n=1$.
We note that our approach is similar to \cite{FHHSY} in the $r=1$ case, although our \textit{a priori} knowledge and endgoals are different.
In \cite{FHHSY}, the authors use the well-known Macdonald operators to study the action of certain shuffle elements, whereas we use $r>1$ analogues of their shuffle elements to discover new operators.
In \cite{T2}, Tsymbaliuk has also produced difference operators out of $U_{\qqq,\ddd}(\ddot{\mathfrak{sl}}_r)$ through very different means.
The relation between Tsymbaliuk's operators to wreath Macdonald theory does not seem straightforward but could be interesting.

\subsection{Towards bispectral duality}\label{Duality}
In the case $r=1$, the eigenfunction equation (\ref{OldEigen}) is particularly interesting when juxtaposed with the \textit{Pieri rules} \cite{Mac}.
To make this apparent, introduce a continuous extension of the discrete parameters $\lambda=(\lambda_1,\ldots,\lambda_{N})$:
\[
s_i:=q^{\lambda_i}t^{N-i},\, S_N:=\{s_1,\ldots, s_N\}.
\]
We call the variables $X_N$ the \textit{position variables} and $S_N$ the \textit{spectral variables}.
It is natural to interpret the spectral $q$-shift $T_{q,s_i}P_\lambda[X_N;q,t]$ as adding a box to row $i$ of the partition $\lambda$.
For a certain renormalization $\tilde{P}_\lambda[X_N;q,t]$ of $P_{\lambda}[X_{N};,q,t]$, we can write the Pieri rules as
\begin{equation}
e_n(x_1,\ldots, x_N)\tilde{P}_\lambda[X_N ;q,t]
=t^{\frac{n(n-1)}{2}}\sum_{\substack{I\subset\{1,\ldots, N\}\\|I|=n}}\left(\prod_{\substack{i\in I\\j\not\in I}}\frac{ts_i-s_j}{s_i-s_j}\right)\prod_{i\in I} T_{q,s_i}\tilde{P}_\lambda[X_{N} ;q,t].
\label{Pieri}
\end{equation}
The fact that no shift operator $T_{q,s_i}$ appears more than once enforces the well known support condition of the Pieri rules: the $\tilde{P}_\mu[X_N;q,t]$ that appear on the right hand side of (\ref{Pieri}) are such that $\mu\backslash\lambda$ contains no horizontally adjacent boxes.
On the other hand, we can view the eigenfunction equation (\ref{OldEigen}) as describing multiplication by $e_n(s_1,\ldots, s_N)$.
The similarity between (\ref{OldEigen}) and (\ref{Pieri}) is reflective of a symmetry $X_N\leftrightarrow S_N$.

This symmetry is the subject of many beautiful works in Macdonald theory.
A totalizing perspective on this was given by Noumi and Shiraishi \cite{NS}, who produced an explicit function $f_N(s_1,\ldots, s_N|x_1,\ldots, x_N)$
satisfying
\begin{align*}
f_N(q^{\lambda_1}t^{N-1},q^{\lambda_2}t^{N-2},\ldots, q^{\lambda_N}|x_1,\ldots, x_N)&= \tilde{P}_\lambda[X_N;q,t]\\
f_N(s_1,\ldots, s_N|x_1,\ldots, x_N)&= f_N(x_1,\ldots, x_N|s_1,\ldots, s_N).
\end{align*}
Discretizing the $x$-variables as well, we obtain the well-known \textit{evaluation duality} \cite{Mac}:
\[
\tilde{P}_\lambda(q^{\mu_1}t^{N-1},q^{\mu_2}t^{N-2},\ldots, q^{\mu_N})=\tilde{P}_\mu(q^{\lambda_1}t^{N-1},q^{\lambda_2}t^{N-2},\ldots, q^{\lambda_N}).
\]
The evaluation duality is also a consequence of the \textit{Cherednik-Macdonald-Mehta formula} \cite{C}, which can be regarded as a remarkable statement about the quantum toroidal algebra $U_{q,t}(\ddot{\mathfrak{gl}}_1)$ and its \textit{Miki automorphism}.
The $X_N\leftrightarrow S_N$ symmetry has also been extended by Etingof and Varchenko \cite{EV} to the much broader context of traces of intertwiners for quantum groups, although we note that in their setting, finding explicit formulas is difficult.
Finally, the symmetry is also a case of \textit{3d mirror symmetry} as proposed by Okounkov \cite{AO}.

For the wreath case $r>1$, the spectral variables should also have color. 
We assign $s^{(i)}_{l}$ to some $b$ such that $b-\lambda_b\equiv i+1$ mod $r$:
\[
s^{(i)}_{l}:=q^{\lambda_b}t^{|N_\bullet|-b}.
\]
Here, we point out a natural motivation for imposing our compatibility condition between $\core_r(\lambda)$ and $N_\bullet$---it forces there to also be $N_i$ spectral variables of color $i$. 
The eigenfunction equation (\ref{WreathEigen}) then describes multiplication by $e_n(s^{(p)}_{1},\ldots, s^{(p)}_{N_p})$.
Note that adding a box to a row will not only contribute a $q$-shift but also change the color, and that is precisely what the cyclic-shift operators $T_{\Ju_a}$ do.
Work of the third author \cite{Wen} provides one constraint on the support of the wreath Pieri rules.
Namely, for a box $(a,b)$, if we call the class of $b-a$ mod $r$ its color, then $P_\mu[X_{N_\bullet};q,t]$ appears as a summand of
\[
e_n(x_{p,1},\ldots, x_{p,N_p})P_\lambda[X_{N_\bullet};q,t]
\]
only if $\mu\backslash\lambda$ consists of $n$ boxes of each color such that no boxes of color $p$ and $p+1$ are horizontally adjacent. 
One can check that the combinations of $T_{\Ju_a}$ appearing in (\ref{WreathMacOp}) enforce this condition after swapping $x^{(i)}_{l}\leftrightarrow s^{(i)}_{l}$.
Computer calculations done by the second author also confirm a wreath analogue of evaluation duality.
While we are still a long way from establishing a wreath analogue of the $X_N\leftrightarrow S_N$ symmetry, our strange operators seem to go out of their way to say it must be true.
Generalizing any of the aforementioned perspectives for understanding this symmetry must surely lead to interesting mathematics.

\subsection{Outline}
Section \ref{Wreath} introduces the wreath Macdonald polynomials.
It includes a review of the combinatorics of $r$-cores and $r$-quotients.
Section \ref{Toroidal} focuses on the quantum toroidal algebra and its representations.
We derive eigenvalues for the infinite-variable analogues of our operators.
Section \ref{Shuffle} moves onto the shuffle algebra.
We write the action of a shuffle element on the vertex representation as the constant term of a series.
Section \ref{Difference} is the technical heart of the paper.
We derive integral formulas for our operators and compute the integral.
Some additional efforts are needed to go from the infinite-variable eigenvalues to their finite-variable versions.
Finally, in the Appendix, we derive integral formulas for wreath analogues of Noumi-Sano operators.
Unfortunately, for these operators, we were only able to evaluate the integrals for degree $n=1$.
Throughout, we present examples following the derivation of each of our operators.

\subsection{Acknowledgments}
We thank Mark Haiman, Andrei Negu\cb{t}, and Alexander Tsymbaliuk for helpful conversations. 
D.O. gratefully acknowledges support from the Simons Foundation (638577, MPS-TSM-00008136) and the Max Planck Institute for Mathematics (MPIM Bonn). 
J.J.W. was supported by NSF-RTG grant ``Algebraic Geometry and Representation Theory at Northeastern University'' (DMS-1645877) and ERC consolidator grant No. 101001159 ``Refined invariants in combinatorics, low-dimensional topology and geometry of moduli spaces''.

\section{Wreath Macdonald functions}\label{Wreath}

Fix a positive integer $r$ and let $I=\Z/r\Z$.
	
\subsection{Partitions}
Let $\Y$ be the set of all integer partitions. We define the diagram of a partition $\mu=(\mu_1,\mu_2,\dotsc)\in \Y$ to be $D(\mu)=\{(a,b)\in\left(\Z_{\ge0}\right)^2 : 0 \le a < \mu_{b+1} \}.$ The \emph{residue} of $(a,b)\in\Z^2$ is the element $b-a\in \Z/r\Z$. 






\subsection{Edge sequences and partitions}
A function $b:\Z\to\{0,1\}$ can be viewed as an infinite indexed binary word $\dotsm b(1) b(0) b(-1) \dotsm$; notice that in writing such a word we index the positions in reverse order. An \emph{inversion} of $b$ is a pair of integers $i>j$ such that $b(i)>b(j)$, a $1$ to the left of a $0$. An \emph{edge sequence} is a function $b:\Z\to\{0,1\}$ such that $b(i)=0$ for $i\gg0$ and $b(i)=1$ for $i\ll0$, that is, $b$ has finitely many inversions. Let $\ES$ denote the set of edge sequences. 
The \emph{shape} of $b\in\ES$ is the partition whose French partition diagram has boundary traced out by the 
values of $b$ from northwest to southeast where $0$ (resp. $1$) indicates a vertical (resp. horizontal) unit segment; see Figure \ref{F:edge sequence}.
Its parts are given by the number of $1$'s to the left of each $0$ in the edge sequence.
The \emph{charge} of $b$ is the index of the segment that touches the main diagonal from the northwest, or equivalently the index of the last $0$ in the edge sequence of the form $\dotsm0011\dotsm$ obtained from $b$ by repeatedly swapping adjacent pairs $10$ to $01$ until none remain.
There is a bijection 
\begin{equation}
\label{E:edge sequence of partition}
\begin{split}
\ES &\to \Z \times \Y \\
b &\mapsto (\charge(b),\shape(b)).
\end{split}
\end{equation}

\begin{ex} An edge sequence $b$ and its charge and shape are pictured in Figure \ref{F:edge sequence}.
\ytableausetup{boxsize=7pt,aligntableaux=center}
\begin{figure}
\label{F:edge sequence}
\[
\begin{tikzpicture}
\node at (0,6) {$\bullet$};
\node at (0,5) {$\bullet$};
\node at (0,4) {$\bullet$};
\node at (1,4) {$\bullet$};
\node at (2,4) {$\bullet$};
\node at (2,3) {$\bullet$};
\node at (2,2) {$\bullet$};
\node at (3,2) {$\bullet$};
\node at (3,1) {$\bullet$};
\node at (4,1) {$\bullet$};
\node at (4,0) {$\bullet$};
\node at (5,0) {$\bullet$};
\node at (6,0) {$\bullet$};

\draw (0,6) -- (0,0) -- (6,0);
\coordinate (zf) at (0,6);
\coordinate (ze) at (0,5);
\coordinate (zd) at (0,4); 
\coordinate (ad) at (1,4);
\coordinate (bd) at (2,4);
\coordinate (bc) at (2,3);
\coordinate (bb) at (2,2);
\coordinate (cb) at (3,2);
\coordinate (ca) at (3,1);
\coordinate (da) at (4,1);
\coordinate (dz) at (4,0);
\coordinate (ez) at (5,0);
\coordinate (fz) at (6,0);

\draw[dotted] (0,0) -- (3,3);

\path[every node/.style={font=\sffamily\small}]
     (zf)  edge node[midway,right] {5} node[midway,left] {0} (ze);
\path[every node/.style={font=\sffamily\small}]
     (ze)  edge node[midway,right] {4} node[midway,left] {0} (zd);
\path[every node/.style={font=\sffamily\small}]
     (zd)  edge node[midway,above] {3} node[midway,below] {1} (ad);
\path[every node/.style={font=\sffamily\small}]
     (ad)  edge node[midway,above] {2} node[midway,below] {1} (bd);
\path[every node/.style={font=\sffamily\small}]
     (bd)  edge node[midway,right] {1} node[midway,left] {0} (bc);
 \path[every node/.style={font=\sffamily\small}]
     (bc)  edge node[midway,right] {{\color{red}{0}}} node[midway,left] {0} (bb);
 \path[every node/.style={font=\sffamily\small}]
     (bb)  edge node[midway,above] {-1} node[midway,below] {1} (cb);
 \path[every node/.style={font=\sffamily\small}]
     (cb)  edge node[midway,right] {-2} node[midway,left] {0} (ca);
 \path[every node/.style={font=\sffamily\small}]
     (ca)  edge node[midway,above] {-3} node[midway,below] {1} (da);
 \path[every node/.style={font=\sffamily\small}]
     (da)  edge node[midway,right] {-4} node[midway,left] {0} (dz);
 \path[every node/.style={font=\sffamily\small}]
     (dz)  edge node[midway,above] {-5} node[midway,below] {1} (ez);   
 \path[every node/.style={font=\sffamily\small}]
     (ez)  edge node[midway,above] {-6} node[midway,below] {1} (fz);
\end{tikzpicture}
\]
\begin{align*}
&\begin{array} {|c||c|c|c|c|c|c|c||c|c|c|c|c|c|c|c|c|c|} \hline
i   &\dotsm  &5&4&3&2&1&{\color{red}{0}}&-1&-2&-3&-4&-5&-6&\dotsm \\ \hline
b_i &0       &0 &0 & 1& 1& 0&0&1&0&1&0&1&1&1\\ \hline
\end{array} \\[2mm]
&\charge(b)={\color{red}{0}} \qquad \shape(b) = \ydiagram{2,2,3,4}
\end{align*}
\caption{The shape of an edge sequence}
\end{figure}
\ytableausetup{boxsize=normal,aligntableaux=bottom}
\end{ex}

\subsection{Cores and quotients}
Our goal is to define the bijection 
\begin{equation}
\begin{split}
 \Y&\cong \cC_r \times \Y^r   \\
 \la&\mapsto (\core_r(\la),\quot_r(\la))
\end{split}
\end{equation}
where $\core_r$ is the \emph{$r$-core} and $\quot_r$ is the \emph{$r$-quotient} map.

In the following diagram all horizontal maps are bijections and vertical maps are inclusions.
\[
\begin{tikzcd}[column sep=huge]
\Z\times \Y & \ES \arrow[l,swap,"\charge\times \shape"] \arrow[r] & \ES^r \arrow[r,"\cd\times \quot_r"] & \Z^r \times \Y^r \\
\{0\} \times \Y \arrow[u] & \ES_0 \arrow[u] \arrow[l]\arrow[u] \arrow[r] & (\ES^r)_0 \arrow[r] \arrow[u] & Q\times \Y^r \arrow[u] \arrow[r,"\kb^{-1}\times\id"] & \cC_r \times \Y^r \\ 
\{0\} \times \cC_r \arrow[u] \arrow[rrr,"\kb"] & & & Q \times \varnothing^r\arrow[u]
\end{tikzcd}
\]
Elements $b^\bullet=(b^0,b^1,\dotsc,b^{r-1})\in \ES^r$ are called \emph{abaci}. We may write them as $\{0,1,\dotsc,r-1\}\times \Z$ matrices with entries in $\{0,1\}$ where a $0$ is a bead and a $1$ is a hole (position with no bead) and the $i$-th row represents the edge sequence $b^i$ and is the $i$-th runner in the abacus.

There is a bijection $\ES\to \ES^r$ sending $b$ to $(b^0,b^1,\dotsc,b^{r-1})$ by letting $b^i$ select the bits in $b$ indexed by integers congruent to $i$ mod $r$: $b^i(j) = b(rj+i)$ for $0\le i<r$ and $j\in\Z$. The inverse map is given by interleaving the sequences $b^0,b^1,\dotsc,b^{r-1}$. 
This bijection is charge-additive: $\charge(b) = \sum_{j=0}^{r-1} \charge(b^j)$.
The $r$-fold product of the bijection \eqref{E:edge sequence of partition} yields the 
bijection $\ES^r \cong \Z^r \times \Y^r $. Denote this by $\bd=(b^0,\dotsc,b^{r-1})\mapsto ((c_0,\dotsc,c_{r-1}), \lad)$. We write $\lad = \quot_r(\bd)$; this is the $r$-quotient. Call $(c_0,\dotsc,c_{r-1})=\cd(\bd)$ the charge vector. This indicates the position on each runner where the beads end after pushing all beads to the left.
This defines the bijections going across the top row of the diagram.

We now restrict all these bijections. Let $\ES_0 = \{b\in \ES\mid \charge(b)=0\}$ and 
$(\ES^r)_0 = \{\bd\in\ES^r\mid \sum_{i=0}^{r-1} c_i(\bd) =0 \}$. 
Then $\cd(\bd)$ can be viewed as an element of the $\mathfrak{sl}_r$ root lattice $Q$ (and belongs to the zero lattice $Q=0$ when $r=1$).
The second row of the diagram (save the last map) 
is given by suitable restrictions of the top row of bijections.

An $r$-core is a partition $\gamma$ which does not have $r$ as a hook length.
That is, $h_\gamma(i,j)\ne r$ for all $(i,j)\in \gamma$. We denote by $\cC_r\subset \Y$ the set of $r$-cores.
Let $\gamma$ be a partition and let $b\in\ES$ be such that $\shape(b)=\gamma$.
Then $\gamma$ has a box $(i,j)\in\gamma$ with hook-length $r$, that is, $h_\gamma(i,j)=r$, 
if and only if there is an index $k$ such that $b(k)=1$ and $b(k+r)=0$. This is equivalent to $\mu^{(k)}\ne\varnothing$ where $\mud=\quot_r(\gamma)$ and we take superscripts mod $r$. This proves that $\gamma$ is an $r$-core if and only if the $r$-quotient of $\gamma$ is empty: $\quot_r(\gamma)=(\varnothing^r)$.

Therefore the bijection $\{0\} \times\Y \cong Q\times\Y^r$ restricts to the bijection
$\{0\} \times \cC_r \cong Q \times (\varnothing^r)$, that is, $\cC_r\cong Q$. We call this bijection $\kb$.

\begin{ex}\label{X:core-quot}
Let $b\in \ES_0$ be as in the previous example. We have $\la=\shape(b)=(4,3,2,2)$. Set $r=3$. 
We map $b\mapsto (b^0,b^1,b^2)$ which are pictured in the matrix below.
Reading up the columns of the $\{0,1,2\}\times \Z$ matrix we recover $b$. 
Each runner of the abacus is an edge sequence; the corresponding shapes give the $3$-quotient of $(4,3,2,2)$, which is
$(1,\varnothing,2)$.

To get the $3$-core of $\la$ we move all beads to the left in each runner. This produces the second abacus. 
Reading up columns we obtain the edge sequence $a=\dotsm 0001|1011\dotsm$. Therefore $\core_3(4,3,2,2)=\shape(a)=(2)$.
The charge sequence is $(1,-1,0)\in Q$. 

\ytableausetup{boxsize=4pt}

\[
\begin{array} {|c||c|c|c|c|c|c|c||c|c|c|c|c|c|c|c|c|c|} \hline
i   &\dotsm  &5&4&3&2&1&{\color{red}{0}}&-1&-2&-3&-4&-5&-6&\dotsm \\ \hline
b_i &\dotsm       &0 &0 & 1& 1& 0&0&1&0&1&0&1&1&\dotsm\\ \hline
\end{array}
\]

\[
\begin{array}{c||p{10pt}p{10pt}p{10pt}|p{10pt}p{10pt}p{10pt}}
   &2&1&0&-1&-2&-3 \\ \hline \hline
b^0& 0&1& 0&1&1&1 \\
b^1& 0&0& 0&0&1&1 \\
b^2& 0&0& 1&1&0&1 \\ \hline
\end{array}
\]

\begin{align*}
\qquad\qquad&
\begin{tikzpicture}[scale=.70]
\node at (-3,2) (aa) {$\dotsm$};
\node at (-3,1) (ab) {$\dotsm$};
\node at (-3,0) (ac) {$\dotsm$};
\node at (-2,2) (ba) {$\bullet$};
\node at (-2,1) (bb) {$\bullet$};
\node at (-2,0) (bc) {$\bullet$};
\node at (-1,2) (ca) {$\circ$};
\node at (-1,1) (cb) {$\bullet$};
\node at (-1,0) (cc) {$\bullet$};
\node at (0,2) (da) {$\bullet$};
\node at (0,1) (db) {$\bullet$};
\node at (0,0) (dc) {$\circ$};
\node at (1,2) (ea) {$\circ$};
\node at (1,1) (eb) {$\bullet$};
\node at (1,0) (ec) {$\circ$};
\node at (2,2) (fa) {$\circ$};
\node at (2,1) (fb) {$\circ$};
\node at (2,0) (fc) {$\bullet$};
\node at (3,2) (ga) {$\circ$};
\node at (3,1) (gb) {$\circ$};
\node at (3,0) (gc) {$\circ$};
\node at (4,2) (ha) {$\dotsm$};
\node at (4,1) (hb) {$\dotsm$};
\node at (4,0) (hc) {$\dotsm$};
\draw[dotted] (.5,-.5) -- (.5,2.5);
\draw (aa) -- (ha);
\draw (ab) -- (hb);
\draw (ac) -- (hc);
\node at (5,2) (ia) {$\ydiagram{1}$};
\node at (5,1) (ib) {$\varnothing$};
\node at (5,0) (ic) {$\ydiagram{2}$};
\end{tikzpicture}
\\
&
\begin{tikzpicture}[scale=.70]
\node at (-3,2) (aa) {$\dotsm$};
\node at (-3,1) (ab) {$\dotsm$};
\node at (-3,0) (ac) {$\dotsm$};
\node at (-2,2) (ba) {$\bullet$};
\node at (-2,1) (bb) {$\bullet$};
\node at (-2,0) (bc) {$\bullet$};
\node at (-1,2) (ca) {$\bullet$};
\node at (-1,1) (cb) {$\bullet$};
\node at (-1,0) (cc) {$\bullet$};
\node at (0,2) (da) {$\circ$};
\node at (0,1) (db) {$\bullet$};
\node at (0,0) (dc) {$\bullet$};
\node at (1,2) (ea) {$\circ$};
\node at (1,1) (eb) {$\bullet$};
\node at (1,0) (ec) {$\circ$};
\node at (2,2) (fa) {$\circ$};
\node at (2,1) (fb) {$\circ$};
\node at (2,0) (fc) {$\circ$};
\node at (3,2) (ga) {$\circ$};
\node at (3,1) (gb) {$\circ$};
\node at (3,0) (gc) {$\circ$};
\node at (4,2) (ha) {$\dotsm$};
\node at (4,1) (hb) {$\dotsm$};
\node at (4,0) (hc) {$\dotsm$};
\draw[dotted] (.5,-.5) -- (.5,2.5);
\draw (aa) -- (ha);
\draw (ab) -- (hb);
\draw (ac) -- (hc);
\node at (5,2) (ia) {$1$};
\node at (5,1) (ib) {$-1$};
\node at (5,0) (ic) {$0$};
\end{tikzpicture}
\end{align*}
\begin{align*}
\text{core:} \qquad\begin{array} {|c||c|c|c|c|c|c|c||c|c|c|c|c|c|c|} \hline
i &\dotsm  &5&4&3&2&1&0&-1&-2&-3&-4&-5&-6&\dotsm \\ \hline
a_i &0&0&0 & 0&0&0&1&1&0&1&1&1&1&1\\ \hline
\end{array}\qquad \ydiagram{2}
\end{align*}
\end{ex}

\begin{rem}
Our map $\quot_r$ and our definition of charge are the same as in \cite{Wen}, except that we interchange the roles of black and white dots in our Maya diagrams.
\end{rem}

When considering a fixed $r$, we simply write $\core=\core_r$ and $\quot=\quot_r$.

\subsection{Cores and ribbons}

Consider $\mu,\la\in\Y$ such that $\mu\subset \la$. The skew shape $\la/\mu:=D(\la)-D(\mu)$ is a $\mu$-addable and $\la$-removable $r$-ribbon if $|\la|-|\mu|=r$ and the set of boxes $\la/\mu$ is rookwise connected (i.e. any two boxes in $\la/\mu$ can be connected by a chain of horizontally and vertically adjacent boxes in $\la/\mu$) with at most one element on each southwest-northeast diagonal. 
Then an $r$-core is precisely a partition that has no removable $r$-ribbon.
One way to obtain $\core(\mu)$ is to repeatedly remove (removable) $r$-ribbons starting with $\mu$ until an $r$-core is reached; by definition this is $\core(\mu)$. 
This is well-defined: one obtains the same $r$-core independently of the order of removal of $r$-ribbons. It is the same as moving the beads in the abacus to the left.

\subsection{Cores to root lattice}
Recall that $Q$ denotes the $\mathfrak{sl}_r$ root lattice (or $Q=0$ in the case $r=1$), realized as the zero sum elements in the lattice $\Z^I$:
\[
Q:=\left\{(c_0,\ldots, c_{r-1})\in\Z^I\, \middle|\, \sum_{i\in I}c_i=0\right\}.
\]
Let $\epsilon_i\in\Z^I$ be the $i$-th coordinate vector.
Then $Q$ is the spanned by the elements
\[
\alpha_i:=\epsilon_{i-1}-\epsilon_i,\qquad i\in I.
\]
We realize the simple roots of $\mathfrak{sl}_r$ as the $\alpha_i$ for $i\neq 0$.

Another way to compute the bijection $\kb: \cC \to Q$ is as follows.
Define the map $\kb:\Y\to Q$ by
\begin{align*}
  \kb(\mu) = -\sum_{(p,q)\in\mu} \alpha_{q-p}.
\end{align*}
It is not difficult to show that the restriction of $\kb$ to $\cC$ is the same as the bijection $\cC\cong Q$ constructed above.

%

\ytableausetup{boxsize=normal}
\begin{ex} \label{X:roots and cores} Let $r=3$ and consider the $3$-core $(2)$. We put $\alpha_{q-p}$ into the box $(p,q)$:
\[
\begin{ytableau} \alpha_0 & \alpha_2 \end{ytableau}
\]
Thus $\kb((2))=-(\alpha_0+\alpha_2)=\alpha_1$, which agrees with the charge sequence $(1,-1,0)\in Q$ computed above.
\end{ex}



Define the bijection $\bigp:Q\times \Y^I\to \Y$ via the following commutative diagram:
\begin{equation}\label{E:big}
\begin{tikzcd} 
\Y \arrow[rr,"{(\core,\quot)}"]& & \cC\times \Y^I \arrow[d,"\kb\times \id"]\\ && Q\times \Y^I\arrow[ull,"\bigp"] 
\end{tikzcd}
\end{equation}




\begin{ex} \label{X:multipartition order} We list the elements $\mud\in\Y^I$ of total size $2$ and their images under $\mud\mapsto \bigp(-\alpha_1,\mud)$.
\ytableausetup{boxsize=3pt,aligntableaux=bottom}
\begin{equation}\notag
\begin{array}{|c|l|} \hline
\mud & \text{image} \\ \hline \hline
\begin{matrix}
\ydiagram{2} & \dt & \dt
\end{matrix} 
& \ydiagram{0,1,9} \\ \hline
\begin{matrix}
\ydiagram{1,1} & \dt & \dt
\end{matrix} & \ydiagram{0,4,6} \\ \hline
\begin{matrix}
\ydiagram{1} & \dt & \ydiagram{1}
\end{matrix} & \ydiagram{0,2,2,6} \\ \hline
\begin{matrix}
\ydiagram{1} & \ydiagram{1}&\dt
\end{matrix} & \ydiagram{0,1,1,1,1,6} \\ \hline
\begin{matrix}
\dt & \dt & \ydiagram{2} 
\end{matrix} & \ydiagram{0,2,4,4} \\ \hline
\begin{matrix}
\dt & \ydiagram{2} & \dt 
\end{matrix} & \ydiagram{0,1,1,2,3,3} \\ \hline
\begin{matrix}
\dt & \ydiagram{1} & \ydiagram{1} 
\end{matrix} & \ydiagram{0,1,2,2,2,3} \\ \hline
\begin{matrix}
\dt & \dt & \ydiagram{1,1} 
\end{matrix} & \ydiagram{0,1,1,1,2,2,3} \\ \hline
\begin{matrix}
\dt  & \ydiagram{1,1} &\dt
\end{matrix} & \ydiagram{0,1,1,1,1,1,1,1,3} \\ \hline
\end{array} 
\end{equation}
\ytableausetup{boxsize=normal}
\end{ex}


\subsection{Symmetric functions}
Let $\La$ be the algebra of symmetric functions over $\K=\Q(q,t)$ in infinitely many variables \cite[\S I.2]{Mac}.
Denote by $\La^{I}=\La^{\otimes I}$ the $I$-fold tensor power of $\La$ over $\K$, which is a graded $\K$-algebra with grading given by the sum of degrees in each tensor factor. 
For $f\in \La$, we write $f[X^{(i)}]$ to indicate the element of $\La^I$ with $1$ in tensor factors $j\ne i$ and $f$ in factor $i$. 
The power sums $p_k[X^{(i)}]$ for $i\in I$ and $k>0$ generate $\Lambda^I$ as a $\K$-algebra. We write $\Xd$ for the $I$-tuple of alphabets $(X^{(0)},\dotsc,X^{(r-1)})$ and often denote by $f[\Xd]$ a generic element of $\Lambda^I$.
Note that each alphabet $X^{(i)}$ itself contains infinitely many variables.

For an $I$-tuple of partitions $\lad=(\la^{(0)},\la^{(1)},\dotsc,\la^{(r-1)})\in \Y^I$, define the tensor Schur function $s_\lad=\bigotimes_{i\in I} s_{\la^{(i)}}=\prod_{i\in I} s_{\la^{(i)}}[X^{(i)}]$. Let $\langle-,-\rangle$ be the Hall pairing on $\Lambda^I$, which is given by $\langle s_\lad, s_\mud \rangle = \delta_{\lad,\mud}$. For $f\in\Lambda^I$, we denote by $f^\perp$ be the adjoint under the Hall pairing to the operator of multiplication by $f$.
Explicitly,
\[
\left[p_n^\perp[X^{(i)}], p_m[X^{(j)}]\right]=n\delta_{n,m}\delta_{i,j},
\]
where we view $p_m[X^{(j)}]$ as a multiplication operator.

For any $a\in\K$, define the $\K$-algebra automorphism $\cP_{\id-a\chi^{-1}}$ of $\Lambda^I$ by
\begin{align}
\cP_{\id-a\chi^{-1}}(p_k[X^{(i)}]) &= p_k[X^{(i)}]-a^{k}p_k[X^{(i-1)}]
\end{align}
for all $i\in I$ and $k>0$. (The notation $\cP_{\id-a\chi^{-1}}$ arises from more general matrix plethysms $\cP_A$ for $A\in\mathrm{Mat}_{I\times I}(\K)$ defined in \cite{OS2}.)

\subsection{Wreath Macdonald functions}

For a partition $\lambda$, let $H_\lambda[\Xd;q,t]$ be the wreath Macdonald functions \cite[Conjecture 7.2.19]{H2}, as defined in \cite[\S 2.3]{Wen}.\footnote{In the more general framework of \cite{OS2} (due to Haiman), these are the wreath Macdonald functions attached to translation elements in the affine Weyl group of type $A_{r-1}$.} These are characterized by the conditions
\begin{align}
\cP_{\id-q\chi^{-1}}H_\lambda[\Xd;q,t] &\in \K^\times s_{\quot(\lambda)} + \bigoplus_{\substack{\nu>\la\\\kb(\nu)=\kb(\la)}} \K s_{\quot(\nu)}\\
\cP_{\id-t^{-1}\chi^{-1}}H_\lambda[\Xd;q,t] &\in \K^\times s_{\quot(\lambda)} + \bigoplus_{\substack{\nu<\la\\\kb(\nu)=\kb(\la)}} \K s_{\quot(\nu)}\\
\langle s_{(n)}[X^{(0)}], H_\lambda[\Xd;q,t]\rangle &= 1.
\end{align}
where $n=|\quot(\lambda)|$ and $<$ is the (strict) dominance order on partitions \cite[\S I.1]{Mac}.

For any $\lambda\in\Y$, the wreath Macdonald $P$-function $P_\lambda[\Xd;q,t^{-1}]$ is defined to be the scalar multiple of $\cP_{\id-t^{-1}\chi^{-1}}(H_\lambda[\Xd;q,t])$ in which the coefficient of $s_{\quot(\lambda)}$ is $1$. In particular, $P_\lambda[\Xd;q,t^{-1}]$ satisfies the unitriangularity
\begin{align*}
P_\lambda[\Xd;q,t^{-1}]\in s_{\quot(\lambda)} + \bigoplus_{\substack{\nu<\la\\\kb(\nu)=\kb(\la)}} \K s_{\quot(\nu)}
\end{align*}
For any fixed $\alpha\in Q$, the $P_\lambda[\Xd;q,t^{-1}]$ such that $\kb(\lambda)=\alpha$ form a homogeneous basis of $\Lambda^I$, with $P_\lambda[\Xd;q,t^{-1}]$ having degree $|\quot(\lambda)|$. 

Our notation $P_\lambda[\Xd;q,t^{-1}]$ agrees with the usual conventions in the classical $r=1$ case. For technical reasons, it is often convenient to work with $P_\lambda[\Xd;q,t^{-1}]$ rather than $P_\lambda[\Xd;q,t]$, though we will eventually switch to the latter.

\subsection{Symmetric polynomials}
For any $N_\bullet=(N_0,\ldots, N_{r-1})\in\left( \Z_{\ge 0} \right)^I$, we can consider a finite set of variables 
\[
X_{N_\bullet}:=
\{x_{l}^{(i)}\}^{i\in I}_{1\le l\le N_i}
\]
and the corresponding restriction map
\begin{align}\label{E:to pol}
\pi_{N_\bullet} : \Lambda^I &\to \Lambda^I_{N_\bullet}:=\bigotimes_{i\in I} \K\left[x_{1}^{(i)},\dotsc,x_{N_i}^{(i)}\right]^{\fS_{N_i}} \\
\nonumber
p_n[X^{(i)}]&\mapsto \sum_{l=1}^{N_i}\left(x_{l}^{(i)}\right)^n=p_n[x_\bullet^{(i)}]
\end{align}
given by the tensor product $\pi_{N_\bullet} = \otimes_{i\in I} \pi_{N_i}$, where $\pi_N : \Lambda\to\K[x_1,\dotsc,x_N]^{\fS_N}$ is the standard projection to symmetric polynomials. 
We also write $\pi_{N_\bullet}(f)=f[X_{N_\bullet}]$.


\subsection{Finitization}\label{Finite}
Our main result will characterize the images $P_\lambda[X_{N_\bullet};q,t]:=\pi_{N_\bullet}(P_\lambda[\Xd;q,t])$ as eigenfunctions of explicit $q$-difference operators. For reasons which are clarified in Remark \ref{CompRem} below, we will only consider variable number vectors $N_\bullet$ for $P_\lambda$ which are compatible with $\core(\lambda)$ in the following way.
If $\kb(\lambda)=\alpha=(c_0,c_1,\dotsc,c_{r-1})$, then we stipulate that $P_\lambda$ will only be assigned variables $X_{N_\bullet}$ where $N_\bullet$ is equivalent to $-\kb(\lambda)$ modulo $\Z(1,\dotsc,1)$, i.e.,
\begin{align}\label{Compatibility}
N_{i}-N_{i-1}=(\alpha_i^\vee,\kb(\lambda))=(\alpha_i^\vee,\alpha)=c_{i-1}-c_{i},\quad \text{for all $i\in I$,}
\end{align}
where:
\begin{itemize}
    \item $\alpha_i^\vee$ is the coroot for $i\not=0$;
    \item $\alpha_0=-\alpha_1-\cdots-\alpha_{r-1}$;
    \item $(-,-):Q^\vee\times Q\to \Z$ is the standard pairing between $\mathfrak{sl}_r$ root and coroot lattices.
\end{itemize}
 Identifying the lattices $Q^\vee\cong Q$ and realizing $Q$ inside $\Z^I$ as above, $(-,-)$ becomes the dot product on $\Z^I$ and $\alpha_i^\vee=\epsilon_{i-1}-\epsilon_i$ for all $i\in I$.

\begin{ex}
In the setting of Example~\ref{X:core-quot}, the root lattice element is $\kb(\lambda)=(1,-1,0)$. The smallest variable number vector which we allow for $\lambda=(4,3,2,2)$ is therefore $N_\bullet=(0,2,1)$. To this we can add the vector $(1,1,1)$ any number of times.
\end{ex}

\begin{lem}\label{SumR}
Under the compatibility condition \eqref{Compatibility} between $N_\bullet\in(\Z_{\ge 0})^I$ and $\alpha\in Q$, 
we have the following:
\begin{enumerate}
\item The quantity
\[|N_\bullet|:=\sum_{i\in I}N_i\]
is divisible by $r$.
\item For $\lambda\in\Y$ with $\kb(\lambda)=\alpha$ and $\ell(\lambda)\le |N_\bullet|$,
\[
N_i=\#\left\{ 1\le b\le |N_\bullet| :  b-\lambda_b\equiv i+1\hbox{ mod }r \right\}
\]
where we count $\lambda_b=0$ if $\ell(\lambda)<b\le|N_\bullet|$; in particular, $\mathrm{quot}(\lambda)=\lambda^\bullet$ satisfies $\ell(\lambda^{(i)})\leq N_i$ for all $i\in I$.
\item For any $\lambda^\bullet\in\Y^I$ satisfying $\ell(\lambda^{(i)})\leq N_i$ for all $i$, the partition $\lambda=\mathrm{big}(\lambda^\bullet,\alpha)$ satisfies $\ell(\lambda)\le |N_\bullet|$.
\end{enumerate} 
\end{lem}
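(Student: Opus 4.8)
The plan is to reduce all three parts to a single counting principle for edge sequences, after first recasting the compatibility condition \eqref{Compatibility}. I would rewrite \eqref{Compatibility} as $N_i+c_i=N_{i-1}+c_{i-1}$ for all $i\in I$, where $\al=(c_0,\dots,c_{r-1})$. Hence $N_i+c_i$ is independent of $i$; call its common value $M$. Summing $N_i=M-c_i$ over $i\in I$ and using $\sum_i c_i=0$ gives $|N_\bullet|=rM$, which is Part (1); this also records $M=|N_\bullet|/r$ and the formula $N_i=M-c_i$ that drives everything below.

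Next I would set up the edge-sequence bookkeeping. Let $b\in\ES$ be the charge-$0$ edge sequence with $\shape(b)=\la$, and let $b^i(j)=b(rj+i)$ be its runners, so that $c_i=\charge(b^i)$ and $\lambda^{(i)}=\quot(\la)^{(i)}$. Listing the positions $p_1<p_2<\dotsb$ of the $0$-entries of $b$, a direct computation in the charge-$0$ French convention gives $p_b=b-1-\la_b$; the same computation on $b^i$ places its $k$-th $0$ at $j=c_i+(k-1)-\lambda^{(i)}_k$. The key technical input is the identity, valid for any edge sequence $e$ of charge $c$ and shape $\mu$ and any integer $L\ge 0$,
\[
\#\{\,j<c+L:\ e(j)=0\,\}=L+\#\{\,j\ge c+L:\ e(j)=1\,\},
\]
obtained by splitting the range at $c$ and invoking the balance characterization of the charge. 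Its immediate corollary is the length criterion $\ell(\mu)\le L\iff e(j)=0$ for all $j\ge c+L$.

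For Part (2), apply the criterion to $b$ (charge $0$, $L=|N_\bullet|=rM=:n$): the hypothesis $\ell(\la)\le|N_\bullet|$ is equivalent to having no $1$ at any position $\ge rM$, i.e.\ $b^i(j)=0$ for all $j\ge M$ and all $i$; by the counting identity for $b$ this forces $p_1<\dotsb<p_n$ to be exactly the $0$'s at positions $<rM$. Since $p_b\equiv i\pmod r$ is equivalent to $b-\la_b\equiv i+1\pmod r$, the right-hand side of the asserted equality counts the $0$'s of $b$ on runner $i$ at positions $<rM$, namely $\{\,j<M:\ b^i(j)=0\,\}$; the counting identity for $b^i$ with $L=M-c_i=N_i$ evaluates this as $N_i+\#\{\,j\ge M:\ b^i(j)=1\,\}=N_i$, the correction term vanishing by the previous sentence. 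The criterion applied to each $b^i$ (no $1$ at $j\ge M=c_i+N_i$) then gives $\ell(\lambda^{(i)})\le N_i$, the ``in particular'' claim. For Part (3) I would run this backwards, runner by runner: given $\ell(\lambda^{(i)})\le N_i=M-c_i$ and $\la=\bigp(\lad,\al)$, the abacus of $\la$ has runner $i$ of charge $c_i$ and shape $\lambda^{(i)}$, so the criterion yields $b^i(j)=0$ for all $j\ge c_i+N_i=M$; as this holds for every $i$, there are no $1$'s at positions $\ge rM=|N_\bullet|$ in $b$, and the criterion for $b$ returns $\ell(\la)\le|N_\bullet|$.

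I expect the only real difficulty to be bookkeeping: verifying $p_b=b-1-\la_b$ and the runner charge normalization $j=c_i+(k-1)-\lambda^{(i)}_k$ in the paper's precise conventions (French diagrams, charge $0$, and the interleaving that defines the runners), and carefully tracking the off-by-one that turns runner index $i$ into the residue $i+1$. Once the counting identity and the length criterion are in place, all three parts follow quickly.
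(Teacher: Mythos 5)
Your argument is correct and follows the same abacus/edge-sequence approach the paper uses: part (1) is the identical computation, and your length criterion and counting identity are exactly the combinatorial facts underlying the paper's terse proof of part (3). The only difference is that for part (2) the paper simply cites \cite[I.1, Ex.~8]{Mac}, whereas you derive it directly from the position formula $p_b=b-1-\lambda_b$ and the runner decomposition; this is a correct, self-contained substitute for that citation.
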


\begin{proof}
\begin{enumerate}
    \item This follows from the fact that $N_\bullet$ and $-\kb(\lambda)$ are congruent modulo $\Z(1,\dotsc,1)$, and the coordinates of the latter sum to zero.
    \item This follows from \cite[I.1, Ex. 8]{Mac} after taking our labeling conventions into account.
    \item For any edge sequence $b$, the length of $\shape(b)$ is precisely the number of $0$'s positioned to the right of at least one $1$. Given $\alpha\in Q$, our choice of $N_\bullet$ ensures that the number of $0$'s positioned to the right of $1$'s in the interleaved edge sequence defining $\lambda$ will not exceed $|N_\bullet|$.\qedhere
\end{enumerate}
\end{proof}

An immediate consequence of parts (2) and (3) of Lemma~\ref{SumR} is the following:

\begin{prop}\label{Basis}
Under the compatibility condition \eqref{Compatibility} between $N_\bullet\in (\Z_{\ge 0})^I$ and $\alpha\in Q$, the wreath Macdonald polynomials $P_\lambda[X_{N_\bullet};q,t]$ indexed by $\lambda\in \Y$ satisfying $\ell(\lambda)\le |N_\bullet|$ and $\kb(\lambda)=\alpha$ form a basis of $\Lambda^I_{N_\bullet}$.
\end{prop}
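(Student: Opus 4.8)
The plan is to deduce Proposition~\ref{Basis} directly from the two structural facts assembled so far: the unitriangularity of $P_\lambda[\Xd;q,t^{-1}]$ against the tensor Schur basis (established right after the definition of the wreath Macdonald $P$-functions), together with the counting statements of Lemma~\ref{SumR}. The key observation is that the $r$-quotient map $\lambda\mapsto\quot(\lambda)$ restricts, on the set of $\lambda$ with fixed $\kb(\lambda)=\alpha$, to a bijection onto all of $\Y^I$ via $\bigp(-,\alpha)$, so that indexing the $P_\lambda$ by such $\lambda$ is the same as indexing them by $\quot(\lambda)\in\Y^I$. First I would record that $\{s_{\lad}\mid \lad\in\Y^I,\ \ell(\la^{(i)})\le N_i\}$ is a basis of $\Lambda^I_{N_\bullet}$: this is just the $I$-fold tensor product of the standard fact that the Schur polynomials $s_{\mu}[x_1,\dots,x_{N_i}]$ with $\ell(\mu)\le N_i$ form a basis of $\K[x_1,\dots,x_{N_i}]^{\fS_{N_i}}$, since $\pi_{N_\bullet}=\otimes_{i\in I}\pi_{N_i}$ and $\pi_{N_i}(s_\mu)=s_\mu[x_1,\dots,x_{N_i}]$ (which vanishes exactly when $\ell(\mu)>N_i$).

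Next I would match up the two index sets. By Lemma~\ref{SumR}(2), every $\lambda$ with $\kb(\lambda)=\alpha$ and $\ell(\lambda)\le|N_\bullet|$ has $\quot(\lambda)=\lad$ satisfying $\ell(\la^{(i)})\le N_i$ for all $i$; conversely, by Lemma~\ref{SumR}(3), every $\lad\in\Y^I$ with $\ell(\la^{(i)})\le N_i$ arises as $\quot(\lambda)$ for $\lambda=\bigp(\lad,\alpha)$, which satisfies $\ell(\lambda)\le|N_\bullet|$ and $\kb(\lambda)=\alpha$. Since $\quot$ and $\bigp(-,\alpha)$ are mutually inverse bijections on these sets (via the commutative diagram \eqref{E:big}), the indexing partitions $\lambda$ and the tensor Schur functions $s_{\quot(\lambda)}$ appearing in the proposed basis are in exact correspondence with the Schur basis of $\Lambda^I_{N_\bullet}$ identified in the previous step.

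With both sets identified, I would finish by a triangularity-plus-bijection argument. Applying $\pi_{N_\bullet}$ to the unitriangular expansion
\[
P_\lambda[\Xd;q,t^{-1}]\in s_{\quot(\lambda)}+\bigoplus_{\substack{\nu<\la\\\kb(\nu)=\kb(\la)}}\K\,s_{\quot(\nu)}
\]
gives, after switching $t^{-1}\leftrightarrow t$ as in the statement, an expansion of each $P_\lambda[X_{N_\bullet};q,t]$ in terms of the $s_{\quot(\nu)}[X_{N_\bullet}]$ with $\nu\le\lambda$ and $\kb(\nu)=\alpha$. Every such $\nu$ again satisfies $\ell(\nu)\le\ell(\lambda)\le|N_\bullet|$ (dominance below $\lambda$ cannot increase the length beyond $|N_\bullet|$ once we include the trailing zeros as in Lemma~\ref{SumR}(2)), so the whole expansion stays within the finite Schur basis and the transition matrix from $\{P_\lambda[X_{N_\bullet};q,t]\}$ to $\{s_{\quot(\lambda)}[X_{N_\bullet}]\}$ is unitriangular with respect to the dominance order, hence invertible over $\K$. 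Therefore the $P_\lambda[X_{N_\bullet};q,t]$ form a basis of $\Lambda^I_{N_\bullet}$.

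I expect the only genuinely delicate point to be verifying that no collapse occurs under $\pi_{N_\bullet}$: a priori, some $s_{\quot(\nu)}$ with $\ell(\nu^{(i)})>N_i$ could appear in the infinite-variable expansion of $P_\lambda$ and map to zero, which would threaten unitriangularity. The content of Lemma~\ref{SumR}(2)--(3) is precisely what rules this out---it guarantees that the length constraint $\ell(\lambda)\le|N_\bullet|$ on the index translates to the componentwise bounds $\ell(\la^{(i)})\le N_i$, and one must check that these bounds are preserved as $\nu$ ranges over partitions dominated by $\lambda$ with the same $\kb$. Making that monotonicity precise (i.e.\ that $\nu\le\lambda$ with $\kb(\nu)=\kb(\lambda)$ forces $\ell(\nu)\le|N_\bullet|$, so that the leading term $s_{\quot(\lambda)}$ survives and no off-diagonal term escapes the basis) is the crux, and it is exactly where the compatibility condition \eqref{Compatibility} does its work.
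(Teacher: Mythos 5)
Your first three paragraphs follow exactly the route the paper intends: Proposition~\ref{Basis} is stated there as an immediate consequence of Lemma~\ref{SumR}(2)--(3), the implicit argument being the index-set matching via $\quot$ and $\bigp(-,\alpha)$ together with unitriangularity against the tensor Schur basis, and that part of your write-up is correct.

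The problem is your final paragraph, where you declare as ``the crux'' the claim that $\nu\le\lambda$ with $\kb(\nu)=\kb(\lambda)$ forces $\ell(\nu)\le|N_\bullet|$. That claim is false: moving \emph{down} in dominance order typically \emph{increases} the number of parts (the minimum of the order on partitions of $n$ is $(1^n)$), so lower terms $\nu$ in the triangular expansion can certainly have $\ell(\nu)>|N_\bullet|$. Already for $r=1$ and $N_\bullet=(1)$ one has $\nu=(1,1)<\lambda=(2)$ with $\ell(\nu)=2>1$, and the same phenomenon persists for all $r$; if you tried to prove the monotonicity you would get stuck. Fortunately it is not needed, because the ``collapse'' you worry about is harmless rather than threatening. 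The correct bookkeeping is: if $\kb(\nu)=\alpha$ and $\ell(\nu)>|N_\bullet|$, then by the contrapositive of Lemma~\ref{SumR}(3) some component of $\quot(\nu)$ has length exceeding the corresponding $N_i$, so $\pi_{N_\bullet}(s_{\quot(\nu)})=0$ and that term simply drops out of the expansion. Deleting off-diagonal entries of a unitriangular matrix keeps it unitriangular, so triangularity is not disturbed; the surviving off-diagonal terms are indexed precisely by $\nu$ in the proposed index set (again by Lemma~\ref{SumR}(3)), and the diagonal term $s_{\quot(\lambda)}[X_{N_\bullet}]$ is nonzero by Lemma~\ref{SumR}(2). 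Hence the transition matrix from $\{P_\lambda[X_{N_\bullet};q,t]\}$ to the finite Schur basis of $\Lambda^I_{N_\bullet}$ is unitriangular and invertible --- the conclusion follows by discarding the out-of-range terms, not by showing they never occur.
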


\section{Quantum toroidal algebra}\label{Toroidal}

To ensure compatibility with \cite{Wen} and \cite{T}, we assume that $r\ge 3$ from this point on.\footnote{See Remark~\ref{MacCompare} and Remark~\ref{Rmk2Colors} for discussion of the cases $r=1,2$.}

\subsection{The algebra $U_{\qqq,\ddd}(\ddot{\mathfrak{sl}}_r)$}
Let $\qqq$ and $\ddd$ be two indeterminates, and set $\F:=\C(\qqq^{\frac{1}{2}},\ddd^{\frac{1}{2}})$.

\subsubsection{Generators and relations}\label{TorDef}
For $i,j\in I=\Z/r\Z$, we set
\begin{align*}
    a_{i,j}&=\left\{
\begin{array}{ll}
    2 & j=i \\
    -1 & j=i\pm 1\\
    0 & \hbox{otherwise}
\end{array}
\right.\\
m_{i,j}&=\left\{
\begin{array}{ll}
    \mp 1 & j=i\pm 1\\
    0 & \hbox{otherwise}
\end{array}
\right.
\end{align*}
and we define
\[
g_{i,j}(z):=\frac{\qqq^{a_{i,j}}z-1}{z-\qqq^{a_{i,j}}}.
\]
The \textit{quantum toroidal algebra} $U_{s,u}(\ddot{\mathfrak{sl}}_r)$ is a unital associative $\F$-algebra with generators
\[
\{e_{i,k},f_{i,k},\psi_{i,k},\psi_{i,0}^{-1},\gamma^{\pm\frac{1}{2}}, \qqq^{\pm d_1},\qqq^{\pm d_2}\}_{i\in I}^{k\in\Z}.
\]
Its relations are described in terms of \textit{currents}:
\begin{align*}
    e_i(z)&:=\sum_{k\in\Z}e_{i,k}z^{-k}\\
    f_i(z)&:=\sum_{k\in\Z}f_{i,k}z^{-k}\\
    \psi_i^\pm(z)&:=\psi_{i,0}^{\pm 1}+\sum_{k>0}\psi_{i,\pm k}z^{\mp k}.
\end{align*}
The relations are then
\begin{gather*}
[\psi_i^\pm(z),\psi_j^\pm(w)]=0,\,\gamma^{\pm\frac{1}{2}}\hbox{ are central},\\
\psi_{i,0}^{\pm1}\psi_{i,0}^{\mp1}=\gamma^{\pm\frac{1}{2}}\gamma^{\mp\frac{1}{2}}=\qqq^{\pm d_1}\qqq^{\mp d_1}=\qqq^{\pm d_2}\qqq^{\mp d_2}=1,\\
\qqq^{d_1}e_i(z)\qqq^{-d_1}=e_i(\qqq^{-1} z),\, \qqq^{d_1}f_i(z)\qqq^{-d_1}=f_i(\qqq^{-1} z),\, \qqq^{d_1}\psi_i^\pm(z)\qqq^{-d_1}=\psi_i^\pm(\qqq^{-1} z),\\
\qqq^{d_2}e_i(z)\qqq^{-d_2}=\qqq e_i(z),\, \qqq^{d_2}f_i(z)\qqq^{-d_2}=\qqq^{-1} f_i(z),\, \qqq^{d_2}\psi_i^\pm(z)\qqq^{-d_2}=\psi_i^\pm( z),\\
g_{i,j}(\gamma^{-1}\ddd^{m_{i,j}}z/w)\psi_i^{+}(z)\psi_j^{-}(w)=g_{i,j}(\gamma\ddd^{m_{i,j}} z/w)\psi_j^{-}(w)\psi_i^{+}(z),\\
e_i(z)e_j(w)=g_{i,j}(\ddd^{m_{i,j}}z/w)e_j(w)e_i(z),\\
f_i(z)f_j(w)=g_{i,j}(\ddd^{m_{i,j}}z/w)^{-1}f_j(w)f_i(z),\\
(\qqq-\qqq^{-1})[e_i(z),f_j(w)]=\delta_{i,j}\left(\delta(\gamma w/z)\psi_i^+(\gamma^{\frac{1}{2}}w)-\delta(\gamma z/w)\psi_i^-(\gamma^\frac{1}{2}z)\right),\\
\psi_i^\pm(z)e_j(w)=g_{i,j}(\gamma^{\pm\frac{1}{2}}\ddd^{m_{i,j}}z/w)e_j(w)\psi_i^\pm(z),\\
\psi_i^\pm(z)f_j(w)=g_{i,j}(\gamma^{\mp\frac{1}{2}}\ddd^{m_{i,j}}z/w)^{-1}f_j(w)\psi_i^\pm(z),\\
\mathrm{Sym}_{z_1,z_2}[e_i(z_1),[e_i(z_2),e_{i\pm1}(w)]_\qqq]_{\qqq^{-1}}=0,\,[e_i(z),e_j(w)]=0\hbox{ for }j\not=i,i\pm1,\\
\mathrm{Sym}_{z_1,z_2}[f_i(z_1),[f_i(z_2),f_{i\pm1}(w)]_\qqq]_{\qqq^{-1}}=0,\,[f_i(z),f_j(w)]=0\hbox{ for }j\not=i,i\pm1,
\end{gather*}
Here, $\delta(z)$ denotes the delta function
\[\delta(z)=\sum_{k\in\Z}z^k\]
and for $v\in\F$, $[a,b]_v=ab-v ba$ is the $v$-commutator.
We will also work with elements $\{h_{i, k}\}_{i\in I}^{k\not= 0}$ defined by
\begin{equation}
\label{HeisDef}
 \psi_i^\pm(z)=\psi_{i,0}^{\pm 1}\exp\left(\pm(\qqq-\qqq^{-1})\sum_{k>0}h_{i,\pm k}z^{\mp k}\right).   
\end{equation}
Finally, we denote by:
\begin{itemize}
    \item $'\ddot{U}$ the subalgebra obtained by dropping the generator $\qqq^{d_1}$;
    \item $\ddot{U}'$ the subalgebra obtained by dropping the generator $\qqq^{d_2}$;
    \item $'\ddot{U}'$ the subalgebra obtained by dropping both generators $\qqq^{d_1}$ and $\qqq^{d_2}$.
\end{itemize}

\subsubsection{Miki automorphism}\label{MikiAut}
%
We recall that $U_{\qqq,\ddd}(\ddot{\mathfrak{sl}}_r)$ contains two copies of the quantum affine algebra $U_{\qqq}(\dot{\mathfrak{sl}}_r)$.
The first, called the \textit{vertical} copy, is generated by currents where $i\not=0$.
This copy is given in the new Drinfeld presentation.
On the other hand, the second copy, called the \textit{horizontal} copy, is generated by the constant terms of all the currents.
This copy is given in the Drinfeld-Jimbo presentation.
We do not go into detail on these two subalgebras as we will not need them in the sequel.
However, we mention them because they give the `two loops' of the quantum toroidal algebra.
Let $\eta$ denote the $\C(\qqq)$-linear antiautomorphism of $'\ddot{U}'$ defined by
\begin{equation}
\begin{gathered}
    \eta(\ddd)=\ddd^{-1}\\
\eta(e_{i,k})=e_{i,-k},\, \eta(f_{i,k})=f_{i,-k},\, \eta(h_{i,k})=-\gamma^kh_{i,-k},\\
\eta(\psi_{i,0})=\psi_{i,0}^{-1},\,\eta(\gamma^{\frac{1}{2}})=\gamma^{\frac{1}{2}}.
\end{gathered}
\label{EtaDef}
\end{equation}
The following beautiful result of Miki gives the `$S$-transformation' of the torus:
\begin{thm}[\cite{Miki}]\label{MikiAutThm}
There is an algebra automorphism $\varsigma$ of $'\ddot{U}'$ that sends the horizontal copy of $U_{\qqq}(\dot{\mathfrak{sl}}_r)$ to the vertical copy. 
Moreover, $\varsigma$ satisfies $\varsigma^{-1}=\eta\varsigma\eta$.
\end{thm}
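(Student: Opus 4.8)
The plan is to construct $\varsigma$ explicitly on a generating set of ${}'\ddot{U}'$ and verify that it preserves all defining relations, then deduce bijectivity from the stated conjugation identity. Recall that both subalgebras are copies of the quantum affine algebra $U_{\qqq}(\dot{\mathfrak{sl}}_r)$: the horizontal copy is generated by the constant terms $e_{i,0},f_{i,0},\psi_{i,0}^{\pm1}$ for all $i\in I$ and is presented in Drinfeld--Jimbo form with affine Cartan matrix $(a_{i,j})_{i,j\in I}$, while the vertical copy is generated by the finite-node currents $e_i(z),f_i(z),\psi_i^\pm(z)$ ($i\neq0$) in the new Drinfeld presentation. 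Since ${}'\ddot{U}'$ is generated by the horizontal Drinfeld--Jimbo generators together with $\gamma^{\pm1/2}$ and one additional family of loop generators---the modes $h_{i,\pm1}$ (equivalently $\psi_{i,\pm1}$), which suffice to reach all higher modes through the current relations---it is enough to prescribe the images of these and check the relations among them.

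First I would fix the images. Under $\varsigma$ the horizontal Drinfeld--Jimbo generators are sent to distinguished elements of the vertical copy obtained by transporting the finite-type Drinfeld generators $e_i(z),f_i(z)$ around the affine cycle $I=\Z/r\Z$, with spectral shifts by $\gamma^{\pm1/2}$ encoding the passage to the second loop direction; concretely this combines a cyclic rotation of the index set $I$ with degree shifts. The images of the modes $h_{i,\pm1}$ then recover the affine-node Drinfeld--Jimbo data of the vertical copy through the standard equivalence between the Drinfeld--Jimbo and current presentations of $U_{\qqq}(\dot{\mathfrak{sl}}_r)$. With these assignments $\varsigma$ manifestly carries the horizontal copy onto the vertical one.

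Second, I would verify that the images satisfy exactly the defining relations of $U_{\qqq}(\dot{\mathfrak{sl}}_r)$ for the horizontal generators---the Cartan-type commutations $\psi_{i,0}e_{j,0}\psi_{i,0}^{-1}=\qqq^{a_{i,j}}e_{j,0}$, the coupling $(\qqq-\qqq^{-1})[e_{i,0},f_{j,0}]=\delta_{i,j}(\psi_{i,0}-\psi_{i,0}^{-1})$, and the quantum Serre relations for the affine Cartan matrix---together with the remaining toroidal relations involving the modes $h_{i,\pm1}$. This is carried out by computing in the vertical copy using the rational factors $g_{i,j}$ and controlling the normal ordering of the currents. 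For bijectivity I would then set $\tau:=\eta\varsigma\eta$; since $\eta$ is an antiautomorphism of ${}'\ddot{U}'$ with $\eta^2=\id$, the composite $\tau$ is again an algebra automorphism, and it remains only to check $\tau\varsigma=\varsigma\tau=\id$ on the chosen generators using the explicit action of $\eta$ recorded in \eqref{EtaDef}. Conceptually, $\varsigma$ realizes the $S$-generator of the $\mathrm{SL}_2(\Z)$ symmetry of the quantum torus, exchanging its two loop directions, and the identity $\varsigma^{-1}=\eta\varsigma\eta$ expresses $S^{-1}=\eta S\eta$.

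The hard part will be the verification of the Serre relations for the image of the affine-node generator, namely $[\varsigma(e_{0,0}),[\varsigma(e_{0,0}),\varsigma(e_{\pm1,0})]_{\qqq}]_{\qqq^{-1}}=0$: because $\varsigma(e_{0,0})$ is a nontrivial degree-shifted element of the vertical loop algebra, this becomes a substantial identity among vertical currents rather than a restatement of a single axiom, and the same difficulty attaches to every relation after the cyclic rotation is applied. The cleanest way to organize this computation---and to guarantee \emph{a priori} that the proposed images satisfy all required relations---is to package $\varsigma$ through the toroidal braid group action of \cite{Miki}, in which the modular element $S$ arises from an outer symmetry of the double affine braid group and the relation-checking is absorbed into the braid relations.
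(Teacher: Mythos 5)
This statement is imported from Miki's work: the paper offers no proof of Theorem~\ref{MikiAutThm} beyond the citation, so the only question is whether your blind attempt actually constitutes a proof. It does not: it is an outline whose load-bearing steps are left unexecuted. The central gap is that $\varsigma$ is never defined. You say the horizontal Drinfeld--Jimbo generators are sent to ``distinguished elements of the vertical copy obtained by transporting the finite-type Drinfeld generators around the affine cycle, with spectral shifts by $\gamma^{\pm1/2}$,'' described as ``a cyclic rotation of the index set with degree shifts.'' That is not a formula, and it is not an accurate description of Miki's map: $\varsigma$ does not permute the nodes of $I$; it exchanges the two loop directions, fixes (essentially) the finite-node generators, and sends the horizontal affine-node generators $e_{0,0},f_{0,0}$ to genuinely complicated elements built from vertical imaginary-root vectors. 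Without explicit images for $e_{0,0}$, $f_{0,0}$, and $h_{i,\pm1}$, none of the relation checks you list in your second step --- the Cartan commutations, the $e$--$f$ coupling, the affine Serre relations --- can even be begun, and the same applies to verifying $\varsigma^{-1}=\eta\varsigma\eta$ ``on the chosen generators'' using \eqref{EtaDef}. That identity is itself a nontrivial computation in Miki's paper, not a formal consequence of $\eta^2=\id$.

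Two further points. First, your generation claim --- that ${}'\ddot{U}'$ is generated by the horizontal copy together with $\gamma^{\pm1/2}$ and the modes $h_{i,\pm1}$ --- is correct but is asserted rather than proved; it needs the bootstrapping argument that $\mathrm{ad}(h_{i,\pm1})$ produces all higher current modes from the zero modes. Second, and decisively, your closing paragraph concedes that the direct verification of the Serre relations for $\varsigma(e_{0,0})$ is infeasible and proposes to ``package $\varsigma$ through the toroidal braid group action of [Miki],'' in which ``the relation-checking is absorbed into the braid relations.'' At that point you have reduced the theorem to the theorem: the existence of that braid group action and the resulting automorphism is precisely the content of the result being cited. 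As a map of how Miki's proof is organized your sketch is reasonable, but as a proof it contains no verifiable content beyond what the citation already supplies.
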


\subsubsection{Heisenberg subalgebras}
Recall the generators $\{h_{i,n}\}_{i\in I}^{n\not=0}$ defined by (\ref{HeisDef}).
Together with $\gamma^{\pm\frac{1}{2}}$, these elements generate a rank $r$ Heisenberg algebra.
The relations are
\begin{gather}
\label{HeisRel}     [h_{i, n}, h_{j,n'}]=\delta_{n,-n'}\frac{(\gamma^n-\gamma^{-n})\ddd^{-nm_{i,j}}[na_{i,j}]_{\qqq}}{(\qqq-\qqq^{-1})n}\\
\nonumber    \gamma^{\frac{1}{2}}\hbox{ is central}
\end{gather}
where $[n]_v$ is the usual quantum number:
\[
[n]_v=\frac{v^n-v^{-n}}{v-v^{-1}}
\]

We define \textit{dual elements} $\{h_{i,n}^\perp\}_{i\in I}^{n\not=0}$ by
\begin{equation}
\label{DualForm}
\begin{aligned}
h_{i,n}^\perp&=
\frac{\qqq^{n}(\qqq-\qqq^{-1})n}{(1-\qqq^{nr}\ddd^{nr})(1-\qqq^{nr}\ddd^{-nr})[n]_\qqq}
\sum_{j,k=0}^{r-1} \qqq^{n(j+k)}\ddd^{n(j-k)}h_{i+j-k, n}\\
h_{i,-n}^\perp&=
\frac{\qqq^{n}(\qqq-\qqq^{-1})n}{(1-\qqq^{nr}\ddd^{nr})(1-\qqq^{nr}\ddd^{-nr})[n]_\qqq}
\sum_{j,k=0}^{r-1} \qqq^{n(j+k)}\ddd^{n(j-k)}h_{i-j+k,-n}
\end{aligned}   
\end{equation}

\begin{lem}
   The elements $\{h_{i,n}^\perp\}$ are characterized by
 \begin{equation}
[h_{i,n}^\perp, h_{j,-n'}]=[h_{j,n'}, h_{i,-n}^\perp]=\delta_{i,j}\delta_{n,n'}(\gamma^n-\gamma^{-n})
\label{DualHeis}
\end{equation}  
for $k>0$.
\end{lem}

\begin{proof}
    Equations (\ref{DualHeis}) obviously characterizes these elements.
    For $n>0$, let $M_n$ be the matrix $r\times r$ matrix
    \[
    \left(M_n\right)_{ij}=\ddd^{-nm_{i,j}}[na_{i,j}]_{\qqq}
    \]
    We view the rows and coloumns as indexed by $I$.
    Equation (\ref{HeisRel}) can be rephrased as
    \[
        [h_{i,n}, h_{j,-n}]= \left(M_{n}\right)_{ij}\frac{(\gamma^n-\gamma^{-n})}{(\qqq-\qqq^{-1})n}
    \]
    For any $r\times r$ matrix $A$ (with rows and columns indexed by $I$), set
    \begin{align*}
        Ah_{i,n}&=\sum_{k\in I}A_{ki}h_{k,n},&
        Ah_{i,-n}&=\sum_{k\in I}A_{ki}h_{k,-n}.
    \end{align*}
    We then have for $n>0$,
    \begin{align*}
        \left[Ah_{i,n}, h_{j,-n}\right]&=(A^TM_n)_{ij}\frac{(\gamma^n-\gamma^{-n})}{(\qqq-\qqq^{-1})n},&
        \left[h_{j,n}, Ah_{i,-n}\right]&=\left(M_nA\right)_{ji}\frac{(\gamma^n-\gamma^{-n})}{(\qqq-\qqq^{-1})n}.
    \end{align*}

    Thus, to obtain (\ref{DualHeis}), we need to invert $M_n$.
    To that end, we factorize $M_n$:
    \begin{align*}
        M_n&=[n]_\qqq\qqq^{-n}
        \begin{pmatrix}
            (\qqq^{2n}+1) & -\qqq^n\ddd^n & 0&\cdots & 0 & -\qqq^n\ddd^{-n}\\
            -\qqq^n\ddd^{-n} & (\qqq^{2n}+1) & -\ddd^n & 0 &\cdots & 0\\
            0 & -\qqq^n\ddd^{-n} & (\qqq^{2n}+1) & -\qqq^n\ddd&\ddots &0\\
            \vdots & \ddots& \ddots& \ddots& \ddots & \vdots&\\
            0 & \cdots & 0 & -\qqq^n\ddd^{-n} & (\qqq^{2n}+1) &-\qqq^n\ddd^n\\
            -\qqq^n\ddd^n & 0 &\cdots & 0& -\qqq^n\ddd^{-n} & (\qqq^{2n}+1)
        \end{pmatrix}\\
        &=[n]_\qqq\qqq^{-n}
        \begin{pmatrix}
            1 &  0&\cdots & 0 & -\qqq^n\ddd^{-n}\\
            -\qqq^n\ddd^{-n} & 1 &  0 & & 0\\
            0 & -\qqq^{n}\ddd^{-n} & 1 & \ddots &0\\
            \vdots & \ddots& \ddots& \ddots& 0&\\
            0 & \cdots & 0 & -\qqq^{n}\ddd^{-n} & 1 \\
        \end{pmatrix}
        \begin{pmatrix}
            1 & -\qqq^n\ddd^n & 0&\cdots & 0 \\
            0 & 1 & -\qqq^n\ddd^n & \ddots & \vdots\\
            \vdots & \ddots & 1 & \ddots &0\\
            0 & \cdots & 0 & \ddots &-\qqq^{n}\ddd^n\\
            -\qqq^{n}\ddd^n & 0 &\cdots & 0& 1
        \end{pmatrix}.
    \end{align*}
    Inverting the last two matrices, we obtain (\ref{DualForm}).
\end{proof}

We denote by $\ddot{U}^0$ the subalgebra generated by $\{\gamma^{\pm\frac{1}{2}}\}\cup\{h_{i, k}\}_{i\in I}^{k\not= 0}$ an call it the \textit{vertical Heisenberg subalgebra}.
In analogy with \ref{MikiAut}, we call $\varsigma(\ddot{U}^0)$ the \textit{horizontal Heisenberg subalgebra}.

\begin{rem}\label{PerpRem}
In \cite{Wen}, the author defines elements $\{b_{i,k}^\perp\}$ in terms of a pairing that is not used in this paper.
By comparing the commutator (\ref{HeisRel}) to the pairing in \textit{loc. cit.}, we have that
\[
h_{i,k}^\perp=-b_{i,k}^\perp
\]
\end{rem}

\subsection{Vertex representation}
$U_{\qqq,\ddd}(\ddot{\mathfrak{sl}}_r)$ directly interacts with the wreath Macdonald polynomials via its \textit{vertex representation}, originally constructed by Yoshihisa Saito \cite{Saito}.

\subsubsection{Twisted group algebra}
Recall that $Q$ and $Q^\vee$ denote the $\mathfrak{sl}_r$ root and coroot lattices, respectively, with simple roots $\{\alpha_j\}_{j=1}^{r-1}$, simple coroots $\{\alpha^\vee_j\}_{j=1}^{r-1}$, and canonical pairing $(-,-) : Q^\vee\times Q \to \Z$:
\[
(\alpha_i^\vee,\alpha_j)=a_{i,j}.
\] 
Let $P$ denote the $\mathfrak{sl}_r$ weight lattice and $\{\Lambda_p\}_{j=1}^{r-1}$ the fundamental weights. 
We will also need
\[
\alpha_0=-\sum_{j=1}^{r-1}\alpha_j,\quad
\alpha^\vee_0=-\sum_{j=1}^{r-1}\alpha^\vee_j,\quad
\Lambda_0:=0.
\]
We have that $\{\alpha_2,\ldots,\alpha_{r-1},\Lambda_{r-1}\}$ is a basis of $P$. 

The \textit{twisted group algebra} $\F\{P\}$ is the $\F$-algebra generated by $\{e^{\alpha_j}\}_{j=2}^{r-1}\cup\{e^{\Lambda_{r-1}}\}$ satisfying the relations
\begin{align*}
e^{\alpha_i}e^{\alpha_j}&=(-1)^{\left( \alpha^\vee_i,\alpha_j\right)}e^{\alpha_j}e^{\alpha_i}\\
e^{\alpha_i}e^{\Lambda_{r-1}}&=(-1)^{\delta_{i,r-1}}e^{\Lambda_{r-1}}e^{\alpha_i}.
\end{align*}
Given a general $\alpha\in P$, we write $\alpha=\sum_{j=2}^{r-1}m_j\alpha_j+m_{r}\Lambda_{r-1}$ and then set
\[e^{\alpha}=e^{m_2\alpha_2}\cdots e^{m_{r-1}\alpha_{r-1}}e^{m_r\Lambda_{r-1}}.\]
For example,
\begin{align}
\begin{split}
e^{\alpha_1}&=e^{-2\alpha_2}e^{-3\alpha_3}\cdots e^{-(r-1)\alpha_{r-1}}e^{r\Lambda_{r-1}}\\
e^{\alpha_0}&=e^{\alpha_2}e^{2\alpha_3}\cdots e^{(r-2)\alpha_{r-1}}e^{-r\Lambda_{r-1}}.
\end{split}
\label{TwistedAlpha}
\end{align}
Define $\F\{Q\}$ to be the subalgebra of $\F\{P\}$ generated by $\{e^{\alpha_i}\}_{i=1}^{r-1}$. 

\subsubsection{Vertex operators}
The vertical Heisenberg subalgebra $\ddot{U}^0$ has a Fock representation $F_r$ defined as follows.
Let $\ddot{U}^0_+$ denote the subalgebra generated by $\gamma^{\frac{1}{2}}$ and $\{h_{i,k}\}_{i\in I}^{k>0}$.
$\ddot{U}^0_+$ has a one-dimensional representation $\F_{\qqq}$ where $\gamma^{\frac{1}{2}}$ acts by $\qqq^{\frac{1}{2}}$ while $h_{i,k}$ acts by $0$.
$F_r$ is then the induced representation
\[
F_r:=\mathrm{Ind}_{\ddot{U}^0_+}^{\ddot{U}^0}\F_{\qqq}\cong\K[h_{i,-k}]_{i\in I}^{k>0}.
\]
The vertex representation is defined on the space $W:=F_r\otimes\F\{Q\}$.
For $v\otimes e^\alpha \in W$ where
\begin{align*}
v&=h_{i_1,-k_1}\cdots h_{i_N,-k_N} v_0\\
\alpha&=\sum_{j=1}^{r-1}m_j\alpha_j
\end{align*}
we define the operators $h_{i,k}$, $e^\beta$, $\partial_{\alpha_i}$, $z^{H_{i,0}}$, and $d$ by
\begin{gather}
\nonumber
h_{i,k}(v\otimes e^\alpha ):=(h_{i,k}v)\otimes e^\alpha ,\, e^\beta(v\otimes e^\alpha ):=v\otimes(e^\beta e^\alpha ),\\
\nonumber
\partial_{\alpha_i}(v\otimes e^\alpha ):=\left( \alpha^\vee_i,\alpha\right) v\otimes e^\alpha ,\\
\label{ZHi}
z^{H_{i,0}}(v\otimes e^\alpha ):=z^{\left( \alpha^\vee_i,\alpha\right)} \ddd^{\frac{1}{2}\sum_{j=1}^{r-1}\left( \alpha^\vee_i,m_j\alpha_j\right) m_{i,j}}v\otimes e^\alpha ,\\
\nonumber
d(v\otimes e^\alpha ):=-\left(\dfrac{(\alpha,\alpha)}{2}+\sum_{i=1}^N k_i\right)v\otimes e^\alpha. 
\end{gather}

\begin{thm}[\cite{Saito}]\label{VertexRep}
Let $\vec{c}=(c_0,\ldots, c_{r-1})\in(\F^\times)^r$.
The following formulas endow $W$ with an action of $\ddot{U}'$:
\begin{align*}
\rho_{\vec{c}}(e_i(z))&=c_i\exp\left(\sum_{k>0}\frac{\qqq^{-\frac{k}{2}}}{[k]_{\qqq}}h_{i,-k}z^k\right)\\
&\times\exp\left(-\sum_{k>0}\frac{\qqq^{-\frac{k}{2}}}{[k]_{\qqq}}h_{i,k}z^{-k}\right)e^{\alpha_i} z^{1+H_{i,0}},\\
\rho_{\vec{c}}(f_i(z))&=\frac{(-1)^{r\delta_{i,0}}}{c_i}\exp\left(-\sum_{k>0}\frac{\qqq^{\frac{k}{2}}}{[k]_{\qqq}}h_{i,-k}z^k\right)\\
&\times\exp\left(\sum_{k>0}\frac{\qqq^{\frac{k}{2}}}{[k]_{\qqq}}h_{i,k}z^{-k}\right)e^{-\alpha_i} z^{1-H_{i,0}},\\
\rho_{\vec{c}}(\psi_i^\pm(z))&=\exp\left(\pm(\qqq-\qqq^{-1})\sum_{k>0}h_{i,\pm k}z^{\mp k}\right)\qqq^{\pm\partial_{\alpha_i}},\\
&\rho_{\vec{c}}(\gamma^{\frac{1}{2}})=\qqq^{\frac{1}{2}},\,\rho_{\vec{c}}(\qqq^{d_1})=\qqq^{d}.
\end{align*}
\end{thm}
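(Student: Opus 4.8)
The plan is to verify directly that the operators in the displayed formulas satisfy every defining relation of $\ddot{U}'$, using the standard vertex-operator (normal-ordering) calculus. Since $\rho_{\vec{c}}(\gamma^{\frac{1}{2}})=\qqq^{\frac{1}{2}}$, the central element acts by $\gamma=\qqq$ throughout, so the Heisenberg commutators \eqref{HeisRel} become explicit scalars. The key technical device is a \emph{contraction lemma}: for any two of the Heisenberg exponentials appearing above, their product equals the normal-ordered product (all $h_{i,-k}$ moved left, all $h_{i,k}$ moved right) times a scalar factor obtained by exponentiating the single surviving commutator. Concretely, moving an annihilation factor $\exp(-\sum_{k>0}\frac{\qqq^{-k/2}}{[k]_\qqq}h_{i,k}z^{-k})$ past a creation factor $\exp(\sum_{l>0}\frac{\qqq^{-l/2}}{[l]_\qqq}h_{j,-l}w^l)$ produces
\[
\exp\Bigl(-\sum_{k>0}\frac{\qqq^{-k}\,\ddd^{-km_{i,j}}\,[ka_{i,j}]_{\qqq}}{k\,[k]_{\qqq}}(w/z)^k\Bigr),
\]
and summing this series (using $[ka_{i,j}]_\qqq/[k]_\qqq\in\{\qqq^k+\qqq^{-k},-1,0\}$ according to $a_{i,j}=2,-1,0$) reproduces, up to a monomial, the expansion of the structure function $g_{i,j}(\ddd^{m_{i,j}}z/w)$. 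I would also record the analogous contractions for the $f$-exponentials (which carry $\qqq^{+k/2}$) and the commutation of the lattice/zero-mode parts $z^{\pm H_{i,0}}$, $e^{\pm\alpha_i}$ through one another, which supplies the remaining monomial and sign factors.

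With the contraction lemma in hand, the $\psi_i^\pm$--$\psi_j^\pm$, $e_ie_j$, $f_if_j$, $\psi e$, and $\psi f$ relations all reduce to matching products of contraction factors against the prescribed $g_{i,j}$. For $e_i(z)e_j(w)$ versus $e_j(w)e_i(z)$ the normal-ordered parts agree, so their ratio is the ratio of contractions, which equals $g_{i,j}(\ddd^{m_{i,j}}z/w)$ once one includes the monomial from commuting $z^{H_{i,0}}$ past $e^{\alpha_j}$ and the sign $(-1)^{(h_i,\alpha_j)}$ from the twisted group algebra $\F\{Q\}$; the $f_if_j$ relation is identical with the inverse structure function. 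The $\psi e$ and $\psi f$ relations follow the same way, with $\gamma^{\pm\frac{1}{2}}=\qqq^{\pm\frac{1}{2}}$ accounting for the shifted argument $\gamma^{\pm\frac{1}{2}}\ddd^{m_{i,j}}z/w$.

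The main obstacle is the $e$--$f$ relation, whose right-hand side is a genuine formal distribution rather than a normal-ordered product. Here the contraction of $e_i(z)$ with $f_j(w)$ is a scalar rational function with simple poles at $z=\qqq^{\pm1}w$ (arising from $a_{i,i}=2$ when $j=i$, and collapsing when $j\ne i$ up to the $e^{\alpha_i}e^{-\alpha_j}$ sign bookkeeping). The commutator $[e_i(z),f_i(w)]$ then equals the \emph{difference} of the two expansions $\iota_{z,w}$ and $\iota_{w,z}$ of this rational function, multiplied by a common normal-ordered operator; this difference is supported on the poles and collapses, via $\iota_{z,w}\tfrac{1}{z-w}-\iota_{w,z}\tfrac{1}{z-w}=\delta(w/z)/z$, to two delta functions at $z=\qqq^{\pm1}w$. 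The delicate point is to check that the residue at each pole—including the argument shift produced by $z^{\pm H_{i,0}}$ acting on $e^{\pm\alpha_i}$ and the resulting $\ddd$-power—reproduces exactly $\psi_i^\pm(\gamma^{\frac{1}{2}}w)$ with the normalization $(\qqq-\qqq^{-1})^{-1}$. This is the computation on which the whole theorem turns.

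Finally I would verify the quantum Serre relations for the $e_i$ (and dually the $f_i$): after applying the contraction lemma three times these reduce to a rational-function identity among products of $g_{i,i}$ and $g_{i,i\pm1}$, with the symmetrization in $z_1,z_2$ killing the apparent poles. The grading relation for $\qqq^{d_1}$ is immediate from the definition of $d$ and the homogeneity of the vertex operators in $z$, and $\gamma^{\frac{1}{2}}=\qqq^{\frac{1}{2}}$ is central by construction. Throughout, the signs built into $\F\{Q\}$, namely $e^{\alpha_i}e^{\alpha_j}=(-1)^{(h_i,\alpha_j)}e^{\alpha_j}e^{\alpha_i}$, are precisely what promote each relation from holding up to sign to holding exactly; confirming this cocycle consistency across all relations is the one piece of global bookkeeping requiring care. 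As this is Saito's theorem \cite{Saito}, I would present these verifications compactly and refer to \emph{loc. cit.} for the routine remaining cases.
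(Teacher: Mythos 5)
The paper offers no proof of this statement: it is quoted verbatim from Saito \cite{Saito} as an external input, so there is nothing internal to compare your argument against. That said, your outline is the standard (and almost certainly Saito's own) route: with $\gamma=\qqq$ the Heisenberg relations \eqref{HeisRel} give $[h_{i,k},h_{j,-k}]=\tfrac{1}{k}[k]_{\qqq}\,\ddd^{-km_{i,j}}[ka_{i,j}]_{\qqq}$, your contraction factor is computed correctly, and the identification $[ka_{i,j}]_{\qqq}/[k]_{\qqq}\in\{\qqq^k+\qqq^{-k},-1,0\}$ does turn the exponentiated series into the binomial factors whose ratios, together with the zero-mode monomials from $z^{H_{i,0}}$ and the cocycle signs in $\F\{Q\}$, reproduce $g_{i,j}(\ddd^{m_{i,j}}z/w)$. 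Your treatment of the $e$--$f$ relation (contraction with simple poles at $z=\qqq^{\pm1}w$, difference of the two expansions collapsing to delta functions, residues matching $\psi_i^{\pm}(\gamma^{1/2}w)$) is the correct mechanism. The only caveat is that the proposal remains a sketch precisely at the two points you yourself flag as delicate — the residue bookkeeping (including the $\ddd$-powers from $z^{\pm H_{i,0}}$ acting on $e^{\pm\alpha_i}$) in the $e$--$f$ relation, and the Serre relations — so as written it is a correct plan rather than a complete verification; for the purposes of this paper that is exactly the status the result has anyway, being delegated to \emph{loc.\ cit.}
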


\subsubsection{Embedding symmetric functions}
We can let $\Lambda^{I}$ act on $F_r$ via multiplication operators given by 
\begin{equation}
p_k[X^{(i)}]\mapsto \frac{k}{[k]_{\qqq}}h_{i,-k}
\label{PowerToBosons}
\end{equation}
for $k>0$.
To obtain an identification $W\cong \Lambda^{I}\otimes\F\left\{ Q \right\}$, we need to embed $\K$ into $\F$:
\begin{equation}
q=\qqq\ddd,\, t=\qqq\ddd^{-1}.
\label{qdtosu}
\end{equation}
Applying $\rho_{\vec{c}}$ to (\ref{DualHeis}) sends $\gamma\mapsto\qqq$.
Thus, as operators on $\Lambda^{I}$, we have the identification
\[
p_k[X^{(i)}]^\perp\mapsto kh_{i,k}^\perp.
\]

Now consider transforming the formulas for $\rho_{\vec{c}}$ using matrix plethysms on $\{p_k[X^{(i)}]\}$.
We can obtain an isomorphic representation as long as we perform a corresponding transformation on $\{h_{i,k}\}$ to maintain the commutation relations, using (\ref{HeisRel}) as a guide.
First, we define $\rho_{\vec{c}}^+$ by performing the plethysm
\[
p_k[X^{(i)}]\mapsto \qqq^{\frac{k}{2}}\left(p_k[X^{(i)}]-t^{-k}p_k[X^{(i-1)}]\right).
\]
For $\rho_{\vec{c}}^+$, we will only be interested in the currents $\{e_i(z)\}$, although we have a representation for the entire algebra:
\begin{align}
\label{ECurrentDef}
\begin{split}
E_i(z):=\rho_{\vec{c}}^+(e_i(z))&=c_i\exp\left[\sum_{k>0}\left(p_{k}[X^{(i)}]-t^{-k}p_{k}[X^{(i-1)}]\right)\frac{z^k}{k}\right]\\
&\times\exp\left[\sum_{k>0}\left(-p_{k}[X^{(i)}]^\perp+q^{-k}p_k[X^{(i-1)}]^\perp\right)\frac{z^{-k}}{k}\right]e^{\alpha_i} z^{1+H_{i,0}}.
\end{split}
\end{align}
Similarly, we define $\rho_{c}^-$ by performing the plethysm
\[
p_k[X^{(i)}]\mapsto \qqq^{-\frac{k}{2}}\left(t^{k}p_k[X^{(i)}]-p_k[X^{(i-1)}]\right).
\]
Here, we will only be interested in the action of the currents $\{ f_i(z)\}$:
\begin{align}
\label{FCurrentDef}
\begin{split}
F_i(z):=\rho_{\vec{c}}^-(f_i(z))&=
\frac{(-1)^{r\delta_{i,0}}}{c_i}\exp\left[\sum_{k>0}\left(-t^{k}p_{k}[X^{(i)}]+p_{k}[X^{(i-1)}]\right)\frac{z^k}{k}\right]\\
&\times\exp\left[\sum_{k>0}\left(q^{k}p_{k}[X^{(i)}]^\perp-p_{k}[X^{(i-1)}]^\perp\right)\frac{z^{-k}}{k}\right]e^{-\alpha_i} z^{1-H_{i,0}}.
\end{split}
\end{align}

The following is a consequence of the main result of \cite{Wen}:
\begin{thm}\label{Eigenstates}
Under both representations $\rho_{\vec{c}}^\pm$, $\varsigma(\ddot{U}^0)$ acts diagonally on $\{P_\lambda[ X^\bullet ;q,t^{-1} ]\otimes e^{\kb(\lambda)}\}$.
\end{thm}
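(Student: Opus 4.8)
The plan is to deduce the statement from the diagonalization theorem of \cite{Wen} by transporting it across the plethystic automorphisms that relate $\rho_{\vec c}$ to $\rho_{\vec c}^\pm$. As recalled in the introduction, \cite{Wen} shows that, under the standard vertex representation $\rho_{\vec c}$ and the identification $W\cong\Lambda^I\otimes\F\{Q\}$ coming from \eqref{PowerToBosons}, the horizontal Heisenberg subalgebra $\varsigma(\ddot{U}^0)$ is simultaneously diagonalized by a basis of $W$ indexed by partitions. The first step is to restate this in our conventions: taking into account the interchange of black and white dots in the Maya diagrams and the identification $h_{i,k}^\perp=-b_{i,k}^\perp$ of Remark~\ref{PerpRem}, one extracts that the joint eigenvectors are scalar multiples of $H_\lambda[\Xd;q,t]\otimes e^{\kb(\lambda)}$, where $H_\lambda$ is the wreath Macdonald function normalized as in this section.

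Next I would identify the automorphisms of $\Lambda^I$ implementing the passage to $\rho_{\vec c}^\pm$. Writing $\cP_{\id-t^{-1}\chi^{-1}}$ for the plethysm of this section and $D_c$ for the degree rescaling $p_k[X^{(i)}]\mapsto c^k p_k[X^{(i)}]$, set
\[
\phi_+ = D_{\qqq^{1/2}}\circ\cP_{\id-t^{-1}\chi^{-1}},\qquad
\phi_- = D_{\qqq^{-1/2}t}\circ\cP_{\id-t^{-1}\chi^{-1}}.
\]
Comparing the creation parts of \eqref{ECurrentDef} and \eqref{FCurrentDef} with the formulas of Theorem~\ref{VertexRep} through the dictionary \eqref{PowerToBosons}, one checks that the bosonic substitutions defining $\rho_{\vec c}^\pm$ are exactly conjugation by $\phi_\pm$, the annihilation parts being forced by Hall-pairing adjointness; that is, $\rho_{\vec c}^\pm=(\phi_\pm\otimes\id)\circ\rho_{\vec c}\circ(\phi_\pm\otimes\id)^{-1}$ as representations on $W$. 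The key structural point is that $\phi_\pm$ acts only on the Fock factor $\Lambda^I$ and fixes $\F\{Q\}$, so conjugation by $\phi_\pm\otimes\id$ leaves the generators $e^\beta$, $z^{H_{i,0}}$, and $\partial_{\alpha_i}$ untouched while transforming the bosonic parts as prescribed.

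Because $\varsigma(\ddot{U}^0)$ is a fixed subalgebra and $\rho_{\vec c}^\pm$ is a conjugate of $\rho_{\vec c}$, the joint eigenbasis is transported by $\phi_\pm\otimes\id$ with eigenvalues unchanged: under $\rho_{\vec c}^\pm$ the subalgebra $\varsigma(\ddot{U}^0)$ is diagonalized by $\{\phi_\pm(H_\lambda[\Xd;q,t])\otimes e^{\kb(\lambda)}\}$. It remains to identify these vectors. By definition $P_\lambda[\Xd;q,t^{-1}]$ is the multiple of $\cP_{\id-t^{-1}\chi^{-1}}(H_\lambda[\Xd;q,t])$ with coefficient $1$ on $s_{\quot(\lambda)}$; moreover $H_\lambda$, and hence $\cP_{\id-t^{-1}\chi^{-1}}(H_\lambda)$, is homogeneous of degree $|\quot(\lambda)|$, so the grading operator $D_c$ acts on it by the scalar $c^{|\quot(\lambda)|}$. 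Therefore $\phi_\pm(H_\lambda[\Xd;q,t])$ is a nonzero scalar multiple of $P_\lambda[\Xd;q,t^{-1}]$. Since diagonalizability is insensitive to rescaling the eigenvectors, both $\rho_{\vec c}^\pm$ diagonalize $\{P_\lambda[\Xd;q,t^{-1}]\otimes e^{\kb(\lambda)}\}$, as claimed.

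I expect the main obstacle to be the first step: pinning down the precise eigenbasis produced by \cite{Wen} and reconciling it with the normalizations in force here---the boson scaling \eqref{PowerToBosons}, the substitution \eqref{qdtosu}, the sign conventions of Remark~\ref{PerpRem}, and the Maya-diagram dot swap---so that the eigenvectors are correctly identified with $H_\lambda[\Xd;q,t]\otimes e^{\kb(\lambda)}$. Once that identification is secured, the remaining plethystic bookkeeping, including the verification that $\rho_{\vec c}^\pm$ is literally conjugation by $\phi_\pm$, is routine.
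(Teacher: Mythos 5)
Your proposal is correct and follows essentially the same route as the paper, which derives Theorem~\ref{Eigenstates} from the main result of \cite{Wen} exactly by observing (Remark~\ref{PlethExp}) that the plethysms defining $\rho_{\vec{c}}^\pm$ are, on each homogeneous component, scalar multiples of $\cP_{\id-t^{-1}\chi^{-1}}$, which carries $H_\lambda[\Xd;q,t]$ to a scalar multiple of $P_\lambda[\Xd;q,t^{-1}]$. Your write-up is in fact more explicit than the paper's about the conjugation $\rho_{\vec c}^\pm=(\phi_\pm\otimes\id)\circ\rho_{\vec c}\circ(\phi_\pm\otimes\id)^{-1}$ and the homogeneity argument, but the underlying idea is identical.
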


\begin{rem}\label{PlethExp}
The paper \cite{Wen} is concerned with the \textit{transformed} wreath Macdonald functions $\{H_\lambda[X^\bullet ;q,t]\}$.
The plethysms used to define $\rho_{\vec{c}}^\pm$ are both scalar multiples of the plethysm $\cP_{\id-t^{-1}\chi^{-1}}$ which sends $H_\lambda[X^\bullet; q,t]$ to a scalar multiple of $P_{\lambda}[ X^\bullet;q,t^{-1} ]$.
\end{rem}

\subsubsection{Normal ordering}
Later, we will make use of a particular expression for products of the currents $\{E_i(z)\}$ and $\{F_i(z)\}$.
We will need notation for an ordered product or composition of noncommuting operators $a_1,\ldots, a_m$:
\begin{equation}
    \begin{aligned}
    \overset{\curvearrowright}{\prod_{j=1}^m}a_j&:=a_1 a_2\cdots a_m,&
    \overset{\curvearrowleft}{\prod_{j=1}^m}a_j&:=a_m a_{m-1}\cdots a_1
    \end{aligned}
\label{OrdProd}
\end{equation}
\begin{prop}\label{NormalOrder}
For $p\in I$, we have
\begin{equation}
\begin{aligned}
&\overset{\curvearrowright}{\prod_{a=1}^n}\,\overset{\curvearrowright}{\prod_{i=1}^{r}}E_{p+i}(z_{p+i,a})\\
&=
\left( (-1)^{\frac{(r-2)(r-3)}{2}}\ddd^{\frac{r}{2}-1}\prod_{i\in I} c_i\right)^n\\
&\times\prod_{1\le a<b\le n}\prod_{i\in I}
\frac{\displaystyle\left(1-z_{i,b}/z_{i,a}\right)\left(1-q^{-1}t^{-1}z_{i,b}/z_{i,a}\right)}{\displaystyle\left(1-t^{-1}z_{i+1,b}/z_{i,a}\right)\left(1-q^{-1}z_{i-1,b}/z_{i,a}\right)}\\
&\times\prod_{a=1}^n \frac{z_{p,a}/z_{p+1,a}}
{\left(1-q^{-1}z_{p,a}/z_{p+1,a}\right)
\prod_{i\in I\setminus\{p+1\}}
\left(1-t^{-1}z_{i,a}/z_{i-1,a}\right)}\\
&\times\prod_{i\in I}\exp\left(\sum_{a=1}^n\sum_{k>0}\left(p_{k}[X^{(i)}]-t^{-k}p_k[X^{(i-1)}]\right)\frac{z_{i,a}^{k}}{k}\right)\\
&\times\prod_{i\in I}\exp\left(\sum_{a=1}^n\sum_{k>0}\left(-p_{k}[X^{(i)}]^\perp+q^{-k}p_k[X^{(i-1)}]^\perp\right)\frac{z_{i,a}^{-k}}{k}\right)
\prod_{i\in I}\prod_{a=1}^n z_{i,a}^{H_{i,0}}
\end{aligned}
\label{ENormalOrder}
\end{equation}
where all rational functions are Laurent series expanded assuming 
\begin{equation}
|z_{i,a}|=1,\,|q|>1,\,|t|>1.
\label{NormalEExpand}
\end{equation}
For the $F$-currents, we have
\begin{equation}
\begin{aligned}
&\overset{\curvearrowleft}{\prod_{a=1}^n}\,\overset{\curvearrowleft}{\prod_{i=1}^{r}}F_{p+i}(z_{p+i,a})\\
&=\left( \frac{(-1)^{\frac{(r-2)(r-3)}{2}}}{\displaystyle\ddd^{\frac{r}{2}-1} \prod_{i\in I} c_i}\right)^n\\
&\times\prod_{1\le a<b\le n}\prod_{i\in I}
\frac{\left(1-z_{i,a}/z_{i,b}\right)\left(1-qtz_{i,a}/z_{i,b}\right)}{\left(1-tz_{i-1,a}/z_{i,b}\right)\left(1-qz_{i+1,a}/z_{i,b}\right)}\\
&\times \prod_{a=1}^n\frac{z_{p+1,a}/z_{p,a}}{\left(1-qz_{p+1,a}/z_{p,a}\right)
\prod_{i\in I\setminus\{p\}}
\left(1-tz_{i,a}/z_{i+1,a}\right)}\\
&\times\prod_{i\in I}\exp\left(\sum_{a=1}^n\sum_{k>0}\left(-t^{k}p_{k}[X^{(i)}]+p_k[X^{(i-1)}]\right)\frac{z_{i,a}^{k}}{k}\right)\\
&\times\prod_{i\in I}\exp\left(\sum_{a=1}^n\sum_{k>0}\left(q^{k}p_{k}[X^{(i)}]^\perp-p_k[X^{(i-1)}]^\perp\right)\frac{z_{i,a}^{-k}}{k}\right)
\prod_{i\in I}\prod_{a=1}^n z_{i,a}^{-H_{i,0}}
\end{aligned}
\label{FNormalOrder}
\end{equation}
where all rational functions are Laurent series expanded assuming 
\begin{equation}
|z_{i,a}|=1,\, |q|< 1,\, |t|<1.
\label{NormalFExpand}
\end{equation}
\end{prop}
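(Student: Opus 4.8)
The plan is to prove the two normal-ordering formulas \eqref{ENormalOrder} and \eqref{FNormalOrder} by a direct computation using the standard machinery for products of exponentials of Heisenberg-type operators. Since the $F$-current case is entirely parallel to the $E$-current case (interchanging the roles of the expansion regions \eqref{NormalEExpand} and \eqref{NormalFExpand} and conjugating the relevant plethysm), I would carry out the argument in full for \eqref{ENormalOrder} and then indicate the modifications for \eqref{FNormalOrder}.

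The key observation is that each current $E_i(z)$ in \eqref{ECurrentDef} is already essentially normal-ordered: it is a product of a ``creation'' exponential in the $p_k[X^{(i)}]$, an ``annihilation'' exponential in the $p_k^\perp$, the shift element $e^{\alpha_i}$, and the grading operator $z^{1+H_{i,0}}$. The entire task is therefore to commute the annihilation exponential of each $E_{i}(z_{i,a})$ past all the creation exponentials standing to its right, and to commute the group-algebra and grading factors $e^{\alpha_i}z^{1+H_{i,0}}$ past the creation/annihilation exponentials to their right. I would proceed in the following order. \emph{First}, I would record the basic commutation lemma: for two exponentials $\exp(\sum_{k>0} a_k \xi^k/k)$ and $\exp(\sum_{k>0} b_k^\perp \eta^{-k}/k)$ built from creation variables $a_k$ and the dual annihilation variables $b_k^\perp$, the Baker--Campbell--Hausdorff formula collapses to a single scalar factor $\exp(\sum_{k>0} [\,b^\perp_k,a_k\,]\,(\eta/\xi)^{-k}/k)$ because the commutators are central; using the duality \eqref{DualHeis} rewritten as $p_k^\perp p_m = m\delta_{k,m}$, each such commutator is a geometric-type series summing to one of the rational prefactors appearing in \eqref{ENormalOrder}. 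The pairings $-p_k[X^{(i)}]^\perp+q^{-k}p_k[X^{(i-1)}]^\perp$ against $p_k[X^{(j)}]-t^{-k}p_k[X^{(j-1)}]$ are nonzero only when the colors differ by $0$ or $\pm 1$, which is exactly why only the four factors indexed by $i,i\pm1$ survive; summing the resulting geometric series under the expansion convention \eqref{NormalEExpand} produces the numerators $(1-z_{i,b}/z_{i,a})(1-q^{-1}t^{-1}z_{i,b}/z_{i,a})$ and denominators $(1-t^{-1}z_{i+1,b}/z_{i,a})(1-q^{-1}z_{i-1,b}/z_{i,a})$. \emph{Second}, I would handle the factors $e^{\alpha_i}$ and $z^{1+H_{i,0}}$: moving $e^{\alpha_i}$ past $e^{\alpha_j}$ yields the sign $(-1)^{(h_i,\alpha_j)}$ from the twisted group algebra, moving $z_{j,b}^{H_{j,0}}$ past $e^{\alpha_i}$ contributes the monomial/$\ddd$-power dictated by the definition of $z^{H_{i,0}}$ in Theorem~\ref{VertexRep}, and the surviving $z^{1+H_{i,0}}$ factors combine into $\prod z_{i,a}^{H_{i,0}}$ after absorbing the ``$+1$'' shifts. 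Collecting all these scalar contributions over the doubly-indexed product $\prod_{a}\prod_i$ gives the overall constant $\bigl((-1)^{(r-2)(r-3)/2}\ddd^{r/2-1}\prod_i c_i\bigr)^n$ together with the single-index product in the third line of \eqref{ENormalOrder}.

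The main bookkeeping obstacle—and the step I expect to be genuinely delicate rather than merely routine—is the precise accounting of signs and $\ddd$-powers coming from the twisted group algebra $\F\{Q\}$ and from the $\ddd^{\frac12\sum(h_i,m_j\alpha_j)m_{i,j}}$ factors hidden in the operators $z^{H_{i,0}}$. These are the contributions that fuse into the constant $(-1)^{(r-2)(r-3)/2}\ddd^{r/2-1}$, and getting them right requires carefully tracking how each $e^{\alpha_{p+i}}$ is transported past the ones to its right in the ordered product $\overset{\curvearrowright}{\prod_{i=1}^r}$, using the explicit expressions \eqref{TwistedAlpha} for $e^{\alpha_0}$ and $e^{\alpha_1}$ in the chosen basis of $P$. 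The color-shifted index $p$ and the cyclic product over $i\in I$ mean one must verify that the total sign is independent of $p$, which is where the combinatorial identity producing the exponent $(r-2)(r-3)/2$ emerges. \emph{Finally}, the $F$-current formula \eqref{FNormalOrder} follows from the identical procedure applied to \eqref{FCurrentDef}: the reversed ordered products $\overset{\curvearrowleft}{\prod}$ mean the annihilation exponentials now sit to the \emph{left}, so one commutes creation past annihilation in the opposite order, and under the opposite expansion convention \eqref{NormalFExpand} the geometric series resum to the reciprocal-type factors $(1-z_{i,a}/z_{i,b})(1-qtz_{i,a}/z_{i,b})$ over $(1-tz_{i-1,a}/z_{i,b})(1-qz_{i+1,a}/z_{i,b})$, while the inverse group-algebra element $e^{-\alpha_i}$ and the inverse grading $z^{1-H_{i,0}}$ produce the reciprocal constant and the factor $\prod z_{i,a}^{-H_{i,0}}$.
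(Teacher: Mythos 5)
Your proposal is correct and follows essentially the same route as the paper: the paper's proof declares the pairing/geometric-series computation ``standard'' and devotes its entire text to the signs from commuting the $e^{\alpha_i}$ (via the explicit expressions \eqref{TwistedAlpha}, split into the cases $p=0$ and $p\neq 0$) and the $\ddd$-powers from moving $z^{\pm H_{i,0}}$ past $e^{\pm\alpha_j}$ — exactly the two items you single out as the genuinely delicate bookkeeping. Your identification of which color pairs contribute nontrivial pairings and of the resulting numerator and denominator factors is accurate, so the outline matches the paper's argument in both structure and emphasis.
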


\begin{proof}
The computation is standard.
We will only go over the signs and powers of $\ddd$.
The sign comes from the commutation of $\{e^{\alpha_i}\}$; in both cases, these factors simplify to $\pm e^0$.
For the $E$-currents, if $p=0$, then by (\ref{TwistedAlpha}),
\[e^{\alpha_1}= e^{r\Lambda_{r-1}}e^{-(r-1)\alpha_{r-1}}\cdots e^{-3\alpha_3}e^{-2\alpha_2}.\]
Thus,
\[
e^{\alpha_1}e^{\alpha_2}\cdots e^{\alpha_{r-1}}=(-1)^{\frac{(r-2)(r-3)}{2}}e^{r\Lambda_{r-1}}e^{-(r-2)\alpha_{r-1}}\cdots e^{-2\alpha_3}e^{-\alpha_2}.
\]
On the other hand, if $p\not=0$, we have
\begin{align*}
e^{\alpha_0}e^{\alpha_1}&= (-1)^{\frac{(r-1)(r-2)}{2}-1}e^{-\alpha_2}\cdots e^{-\alpha_{r-1}}\\
&=(-1)^{\frac{(r-1)(r-2)}{2}+r-3}e^{-\alpha_{r-1}}\cdots e^{-\alpha_{2}}\\
&= (-1)^{\frac{(r-2)(r-3)}{2}}e^{-\alpha_{r-1}}\cdots e^{-\alpha_{2}}
\end{align*}
which also leads to a sign of $(-1)^{\frac{(r-2)(r-3)}{2}}$.
For the $F$-currents, first consider the case $p=0$.
\begin{align*}
e^{-\alpha_0}e^{-\alpha_{r-1}}\cdots e^{-\alpha_3}e^{-\alpha_2}&= (-1)^{r+\frac{(r-2)(r-3)}{2}}e^{-2\alpha_2}e^{-3\alpha_3}\cdots e^{-(r-1)\alpha_{-r-1}}e^{r\Lambda_{r-1}}\\
&= (-1)^{r+\frac{(r-2)(r-3)}{2}}e^{r\Lambda_{r-1}}e^{-(r-1)\alpha_{r-1}}\cdots e^{-3\alpha_2}e^{-2\alpha_2}.
\end{align*}
If $p\not=0$, then we use that
\begin{align*}
e^{-\alpha_1}e^{-\alpha_0}&=(-1)^re^{-\alpha_1}e^{r\Lambda_{r-1}}e^{-(r-2)\alpha_{r-1}}\cdots e^{-2\alpha_3}e^{-\alpha_2}\\
&= (-1)^{r+\frac{(r-2)(r-3)}{2}}e^{\alpha_2}e^{\alpha_3}\cdots e^{\alpha_{r-1}}.
\end{align*}
Finally, note that $F_0(z)$ also has a sign of $(-1)^{r}$.
The power of $\ddd$ comes from the interaction between $\{z^{\pm H_{i,0}}\}$ and $\{e^{\pm\alpha_{j}}\}$.
First observe that when considering $E_i(z_{i,a})$ and $E_{j}(z_{j,b})$ for $a\not=b$, the powers of $\ddd$ from $j=i-1$ and $j=i+1$ cancel out.
When $a=b$, there is a total power of $\ddd^{\frac{r}{2}-1}$.
The case for $\{F_i(z)\}$ is similar but inverted.
\end{proof}
\subsection{Fock representation}
While our main focus will be on the vertex representation, we will consider another representation of $U_{\qqq,\ddd}(\ddot{\mathfrak{sl}}_r)$, called the \textit{Fock representation}.
Our goal will be gain some knowledge on the eigenvalues implicit in the statement of Theorem \ref{Eigenstates}.

\subsubsection{Definition}
In order to define the Fock representation, we will need some notation for partitions.
For a partition $\lambda$, let $\square=(a,b)\in D(\lambda)$.
We set:
\begin{enumerate}
\item $\chi_\square=q^{a}t^{b}$, the character of the box;
\item $c_\square= b-a$ modulo $r$ (its color);
\item $d_i(\lambda)$ the number of elements of $D(\lambda)$ with content equivalent to $i$ modulo $r$;
\item $A_i(\lambda)$ and $R_i(\lambda)$ the addable and removable $i$-nodes of $\lambda$, respectively.
\end{enumerate}
Finally, we will abbreviate $a\equiv b\hbox{ mod }r$ by simply $a\equiv b$ and use the Kronecker delta function $\delta_{a= b}:=\delta_{a-b,0}$.

Let $v\in\F^\times$.
The Fock representation $\mathcal{F}(v)$ has a basis $\{|\lambda\rangle\}$ indexed by partitions.
\begin{thm}[\cite{FJMM}, cf. \cite{Wen}]\label{FockRep}
We can define a $'\ddot{U}$-action $\tau_v$ on $\mathcal{F}(v)$ where the only nonzero matrix elements of the generators are
\begin{gather*}
\begin{aligned}
\langle\lambda | e_i(z)|\lambda+\square\rangle&=\delta_{c_\square=i}(-\ddd)^{d_{i+1}(\lambda)}\delta\left( \frac{ z}{\chi_\square v}\right)
\frac{\displaystyle\prod_{\blacksquare\in R_{i}(\lambda)}\left( \chi_\square-\qqq^2\chi_\blacksquare \right)}
{\displaystyle\prod_{\substack{\blacksquare\in A_{i}(\lambda)\\\blacksquare\not=\square}}\left(\chi_\square-\chi_\blacksquare\right)}\\
\langle\lambda+\square |f_i(z)|\lambda\rangle&=\delta_{c_\square=i}(-\ddd)^{-d_{i+1}(\lambda)}\delta\left(\frac{ z}{\chi_\square v}\right)
\frac{\displaystyle\prod_{\substack{\blacksquare\in A_{i}(\lambda)\\\blacksquare\not=\square}}\left( \qqq\chi_\square-\qqq^{-1}\chi_\blacksquare \right)}
{\displaystyle\prod_{\blacksquare\in R_{i}(\lambda)}\qqq\left( \chi_\square-\chi_\blacksquare \right)}\\
\langle\lambda|\psi_i^\pm(z)|\lambda\rangle&=
\prod_{\blacksquare\in A_{i}(\lambda)}\frac{\left(\qqq z-\qqq^{-1}\chi_{\blacksquare}v\right)}{\left( z-\chi_\blacksquare v\right)}
\prod_{\blacksquare\in R_{i}(\lambda)}\frac{\left(\qqq^{-1} z-\qqq\chi_\blacksquare v\right)}{\left( z-\chi_\blacksquare v\right)},
\end{aligned}\\
\langle\lambda|\gamma^{\frac{1}{2}}|\lambda\rangle=1,\,\langle\lambda|\qqq^{d_2}|\lambda\rangle=\qqq^{-|\lambda|}.
\end{gather*}
\end{thm}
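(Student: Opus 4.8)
The plan is to take the formulas in the statement as the \emph{definition} of the operators $\tau_v(e_i(z))$, $\tau_v(f_i(z))$, $\tau_v(\psi_i^\pm(z))$, $\tau_v(\gamma^{1/2})$, $\tau_v(\qqq^{d_2})$ on $\mathcal{F}(v)=\bigoplus_\lambda \F|\lambda\rangle$ and to verify that they satisfy all the defining relations of $'\ddot U$. Under these formulas $e_i(z)$ removes a removable $i$-box and $f_i(z)$ adds an addable $i$-box, each weighted by the displayed rational factor and localized by the delta $\delta(z/\chi_\square v)$, while $\psi_i^\pm(z)$ acts diagonally. Because $\gamma^{1/2}$ acts by $1$, we have $\gamma=1$ throughout, so all central-charge factors in the relations disappear; in particular $\psi_i^+$ and $\psi_j^-$ automatically commute after cancelling the common $g_{i,j}$ prefactor. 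The grading relations involving $\qqq^{d_2}$ are immediate from $\langle\lambda|\qqq^{d_2}|\lambda\rangle=\qqq^{-|\lambda|}$ together with the fact that $e_i$ and $f_i$ respectively lower and raise $|\lambda|$ by one.

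First I would dispatch the ``diagonal-type'' relations. The relation $[\psi_i^\pm(z),\psi_j^\pm(w)]=0$ is trivial since these operators are simultaneously diagonal. For $\psi_i^\pm(z)e_j(w)=g_{i,j}(\ddd^{m_{i,j}}z/w)\,e_j(w)\psi_i^\pm(z)$ and its $f$-analogue, it suffices to compute, for a single box-move $\lambda\to\lambda\pm\square$, the ratio of the $\psi$-eigenvalues on the two partitions and to check it equals $g_{i,j}$ evaluated at the relevant argument. This reduces to bookkeeping of how adding or removing one $i$-box changes the sets $A_j(\lambda)$ and $R_j(\lambda)$ for $j=i,i\pm1$, and the displayed product formula for $\langle\lambda|\psi_i^\pm(z)|\lambda\rangle$ turns each comparison into a short rational-function identity. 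The quadratic relations $e_i(z)e_j(w)=g_{i,j}(\ddd^{m_{i,j}}z/w)e_j(w)e_i(z)$ and the $f$-version are handled the same way: composing two box-moves produces a product of two delta functions times a ratio of the combinatorial factors, and one checks this ratio is the asserted $g_{i,j}$.

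The crux, and the step I expect to be hardest, is the $e$--$f$ commutation relation, which (with $\gamma=1$ and $i=j$) reads $(\qqq-\qqq^{-1})[e_i(z),f_i(w)]=\delta(w/z)\psi_i^+(w)-\delta(z/w)\psi_i^-(z)$. Since $e_i$ and $f_i$ shift $|\lambda|$ oppositely, only the diagonal matrix element is nontrivial, and it equals
\[
\sum_{\square\in A_i(\lambda)}\langle\lambda|e_i(z)|\lambda+\square\rangle\langle\lambda+\square|f_i(w)|\lambda\rangle
-\sum_{\blacksquare\in R_i(\lambda)}\langle\lambda|f_i(w)|\lambda-\blacksquare\rangle\langle\lambda-\blacksquare|e_i(z)|\lambda\rangle.
\]
Each summand carries a factor $\delta(z/\chi v)\delta(w/\chi v)$, which I would rewrite using the distributional identity $\delta(z/a)\delta(w/a)=\delta(w/z)\delta(w/a)=\delta(z/w)\delta(z/a)$ to separate out the overall $\delta(w/z)$ and $\delta(z/w)$. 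What remains is the claim that the residue of the rational function $\langle\lambda|\psi_i^+(w)|\lambda\rangle$ at each pole $w=\chi_\blacksquare v$ ($\blacksquare$ addable or removable) coincides, up to the factor $\qqq-\qqq^{-1}$, with the corresponding product of $e$- and $f$-matrix elements. This is a partial-fraction statement about the single rational function
\[
\prod_{\blacksquare\in A_i(\lambda)}\frac{\qqq w-\qqq^{-1}\chi_\blacksquare v}{w-\chi_\blacksquare v}\prod_{\blacksquare\in R_i(\lambda)}\frac{\qqq^{-1}w-\qqq\chi_\blacksquare v}{w-\chi_\blacksquare v},
\]
and it is precisely here that the interlacing of addable and removable $i$-boxes along the boundary of $\lambda$---equivalently, the telescoping structure of their characters $\chi_\blacksquare v$---must be used to identify each residue with the explicit products appearing in the statement. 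Carrying out this residue identity and then reading off the $\delta(w/z)$ versus $\delta(z/w)$ pieces as the expansions of $\psi_i^+$ and $\psi_i^-$ is the technical heart of the argument.

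Finally, the Serre relations, involving two $e$'s (or $f$'s) of color $i$ and one of color $i\pm1$, require checking that the symmetrized cubic expression annihilates each basis vector. This is a finite combinatorial case-check over the possible relative positions of the three boxes being moved, and I expect it to be routine once the residue calculus above is in place; the symmetrization in $z_1,z_2$ is designed to cancel exactly the configurations in which two $i$-boxes are removed or added in an order-dependent way.
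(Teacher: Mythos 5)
The paper does not prove this statement: it is imported verbatim from the literature (the attribution ``[FJMM], cf.\ [Wen]'' in the theorem header is the entire justification given), so there is no in-paper argument to compare yours against. Your outline is the standard direct-verification route that the cited source follows: take the matrix elements as the definition of $\tau_v$, check the diagonal and quadratic relations by comparing ratios of eigenvalues and of combinatorial factors under single box moves, and reduce the $e$--$f$ relation to a partial-fraction identity for the rational function $\langle\lambda|\psi_i^\pm(z)|\lambda\rangle$, whose residues at $z=\chi_\blacksquare v$ must reproduce the products of $e$- and $f$-matrix elements. That identification of the crux is correct, as is the observation that $\gamma=1$ kills all central-charge twists.

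One step in your plan is stated too quickly and is actually false as written: in the relation $(\qqq-\qqq^{-1})[e_i(z),f_i(w)]=\delta(w/z)\psi_i^+(w)-\delta(z/w)\psi_i^-(z)$ you claim that ``only the diagonal matrix element is nontrivial'' because $e_i$ and $f_i$ shift $|\lambda|$ oppositely. Preserving $|\lambda|$ does not force the partition to be preserved: $e_i(z)f_i(w)$ has nonzero matrix elements $\langle\lambda+\square-\blacksquare|\cdot|\lambda\rangle$ with $\square\in A_i(\lambda)$, $\blacksquare\in R_i(\lambda+\square)$, $\square\ne\blacksquare$, and so does $f_i(w)e_i(z)$. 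Since the right-hand side is diagonal, you must separately verify that these off-diagonal contributions cancel in the commutator; this is another rational-function identity (the two orders of moving the boxes produce the same product of factors, because adding a box of color $i$ does not alter the $A_i$/$R_i$ data relevant to a disjoint box of the same color except through factors that match on both sides). This check is routine but it is a genuine part of the argument, not something that follows from degree considerations, so your proof as planned has a gap there that needs to be filled before the residue computation settles the relation.
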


%
\subsubsection{Tsymbaliuk isomorphism}
The representation $\tau_v$ on $\mathcal{F}(v)$ has a cyclic vector $|\varnothing\rangle$.
On the other hand, $\rho_{\vec{c}}$ and $\rho_{\vec{c}}^\pm$ also has the natural cyclic vector $1\otimes 1\in F_r\otimes \F\{Q\}$.
The following theorem was proved by Tsymbaliuk:
\begin{thm}[\cite{T}]\label{TsymIso}
Let
\begin{equation}
v=(-1)^{\frac{(\ell-2)(\ell-3)}{2}}\frac{\qqq\ddd^{-\frac{\ell}{2}}}{c_0\cdots c_{\ell-1}}
\label{FockVertexPar}
\end{equation}
The map of cyclic vectors
\[
\mathcal{F}(v)\ni|\varnothing\rangle\mapsto 1\otimes 1\in W
\]
induces an isomorphism between the $'\ddot{U}'$-module $\tau_v$ and the $\varsigma$-twisted modules $\rho_{\vec{c}}\circ\varsigma$, $\rho_{\vec{c}}^\pm\circ\varsigma$.
\end{thm}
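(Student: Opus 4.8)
The plan is to realize both sides as cyclic modules and reduce the statement to an equality of annihilator ideals. Since $\varsigma$ is an automorphism of ${}'\ddot{U}'$, the vacuum $1\otimes 1$ remains a cyclic vector for the twisted action $\rho_{\vec c}\circ\varsigma$, while $|\varnothing\rangle$ is cyclic for $\tau_v$. I would therefore define a candidate intertwiner $\Phi:\mathcal{F}(v)\to W$ on the cyclic vector by $\Phi(\tau_v(x)|\varnothing\rangle):=\rho_{\vec c}(\varsigma(x))(1\otimes 1)$ for $x\in{}'\ddot{U}'$. Writing $\mathcal{F}(v)\cong {}'\ddot{U}'/\mathrm{Ann}_{\tau_v}(|\varnothing\rangle)$ and $W\cong {}'\ddot{U}'/\mathrm{Ann}_{\rho_{\vec c}\circ\varsigma}(1\otimes 1)$ as left modules, $\Phi$ is a well-defined homomorphism precisely when the first annihilator is contained in the second, and it is an isomorphism exactly when the two coincide. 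Thus everything reduces to matching these two left ideals. The statements for $\rho_{\vec c}^\pm$ then follow from the one for $\rho_{\vec c}$, since each $\rho_{\vec c}^\pm$ is obtained from $\rho_{\vec c}$ by an invertible plethystic substitution in the bosons and hence is an isomorphic ${}'\ddot{U}'$-module.

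To obtain the inclusion (well-definedness), I would match the highest-weight data of the two vacua. On the Fock side, Theorem \ref{FockRep} shows that $|\varnothing\rangle$ is killed by all $f_i(z)$ and by every $e_i(z)$ except the one adding the unique addable box of $\varnothing$, and is a joint eigenvector for the Cartan currents $\psi_i^\pm(z)$. Transporting these conditions through $\varsigma$, they become conditions on $1\otimes 1$ for the operators $\rho_{\vec c}(\varsigma(e_i(z)))$, $\rho_{\vec c}(\varsigma(f_i(z)))$, $\rho_{\vec c}(\varsigma(\psi_i^\pm(z)))$, which by Theorem \ref{MikiAutThm} lie in the image of the horizontal copy. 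Here I would exploit the relation $\varsigma^{-1}=\eta\varsigma\eta$ from \eqref{EtaDef} to trade the annihilation conditions for the $e$-currents against those for the $f$-currents, cutting the verification roughly in half. The scalar $v$ of \eqref{FockVertexPar} is not free: it is forced by demanding that a distinguished element act by the same scalar on both vacua. Concretely, the full cyclic product of $E$-currents normal-ordered in Proposition \ref{NormalOrder} carries the constant $(-1)^{(r-2)(r-3)/2}\ddd^{r/2-1}\prod_{i\in I}c_i$, and matching this against the corresponding matrix coefficient read off from Theorem \ref{FockRep} pins down $v$; note that $v$ times this constant equals $\qqq\ddd^{-1}=t$, which is the shape of the coincidence one expects.

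The reverse inclusion of annihilators I would get from irreducibility together with a character count. The graded character of $W=F_r\otimes\F\{Q\}$ is that of the bosonic Fock space $F_r\cong\Lambda^I$ times the lattice $\F\{Q\}$, i.e. a sum over $\alpha\in Q$ of a fixed bosonic factor. On the other hand $\mathcal{F}(v)$ has basis $\{|\lambda\rangle\}_{\lambda\in\Y}$, and the $r$-core/$r$-quotient bijection $\Y\cong\cC_r\times\Y^r\cong Q\times\Y^r$ (via $\kb$) matches this basis with $\F\{Q\}\otimes\Lambda^I$ degree by degree. Hence the two characters agree. Granting irreducibility of $\mathcal{F}(v)$ (as in \cite{FJMM}), the nonzero homomorphism $\Phi$ must be injective, surjectivity is automatic from cyclicity of $1\otimes 1$, and the character identity confirms there is no room for a proper image. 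This simultaneously identifies $|\lambda\rangle$ with a rescaling of $P_\lambda[\Xd;q,t^{-1}]\otimes e^{\kb(\lambda)}$, consistent with Theorem \ref{Eigenstates}, since the horizontal Heisenberg $\varsigma(\ddot{U}^0)$ is carried by $\Phi^{-1}$ to the manifestly diagonal vertical Heisenberg action on the partition basis.

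The main obstacle is the middle step: the Miki automorphism $\varsigma$ is specified only abstractly, by the property that it interchanges the horizontal and vertical copies of $U_{\qqq}(\dot{\mathfrak{sl}}_r)$ together with $\varsigma^{-1}=\eta\varsigma\eta$ (Theorem \ref{MikiAutThm}), so there is no closed formula for $\varsigma(e_i(z))$ to plug into Saito's vertex operators (Theorem \ref{VertexRep}). The real work is to gain enough explicit control of $\varsigma$ on the generators relevant to the vacuum conditions---e.g. via its action on the Drinfeld--Jimbo generators of the horizontal copy and their images under $\rho_{\vec c}$---to verify both the annihilation conditions and the scalar fixing $v$. Once that explicit bridge is built, the cyclicity, character-matching, and irreducibility arguments are routine; the difficulty is entirely in making $\varsigma$ concrete enough to compare the two vacua.
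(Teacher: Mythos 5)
The first thing to say is that the paper does not prove Theorem~\ref{TsymIso} at all: it is imported verbatim from Tsymbaliuk \cite{T} (with the follow-up identification of $\F|\lambda\rangle$ with $\F(P_\lambda\otimes e^{\kb(\lambda)})$ imported from \cite{Wen}). So there is no in-paper argument to compare yours against; the question is whether your outline would actually constitute a proof.

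Your structural skeleton is sound: both vacua are cyclic, so it suffices to show that $1\otimes 1$ satisfies, under $\rho_{\vec c}\circ\varsigma$, the same highest-weight conditions (annihilation by the $f_i(z)$, prescribed $\psi_i^\pm(z)$-eigenvalues) that cut out $\mathcal{F}(v)$ as the irreducible quotient of the corresponding universal highest-weight module, and then to match graded characters via the core/quotient bijection $\Y\cong Q\times\Y^r$ to rule out a proper quotient. The arithmetic observation that $v$ times the normal-ordering constant of Proposition~\ref{NormalOrder} equals $\qqq\ddd^{-1}$ is correct and is indeed the shape of the coincidence that fixes $v$. But the step you flag as ``the main obstacle'' --- gaining explicit control of $\varsigma$ on the generators so that the conditions $\rho_{\vec c}(\varsigma(f_i(z)))(1\otimes 1)=0$ and the $\psi$-eigenvalue computation can actually be verified --- is not a peripheral difficulty to be dispatched after the outline; it \emph{is} the theorem. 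Miki's $\varsigma$ is characterized only abstractly (Theorem~\ref{MikiAutThm}), the elements $\varsigma(f_{i,k})$ have no closed form in terms of Saito's vertex operators, and every known proof of this statement (Tsymbaliuk's included) consists precisely of building that explicit bridge, e.g.\ by computing $\varsigma$ on a generating set, or by constructing an auxiliary realization of $\mathcal{F}(v)$ inside $W$ and checking the defining relations current by current. The trick $\varsigma^{-1}=\eta\varsigma\eta$ halves the work but does not remove it. As written, your proposal reduces the theorem to its own hardest ingredient and stops there, so it is an honest plan rather than a proof; to complete it you would have to either carry out the explicit computation of the relevant $\varsigma$-images or, as the paper does, cite \cite{T} for exactly this point.

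One smaller caution: when you compare graded characters of $W=F_r\otimes\F\{Q\}$ and $\mathcal{F}(v)$, make sure the grading you use on $W$ is the one transported through $\varsigma$ (the degree operators $\qqq^{d_1},\qqq^{d_2}$ are interchanged up to sign by Miki's automorphism), since otherwise the degree-by-degree match with the $r$-core/$r$-quotient statistic $(\kb(\lambda),|\quot_r(\lambda)|)$ is comparing the wrong pair of gradings.
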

\noindent The Tsymbaliuk isomorphism is defined only in terms of cyclic vectors.
In light of Remark \ref{PlethExp}, the following result from \cite{Wen} provides more detail on the Tsymbaliuk isomorphisms:
\begin{thm}
The Tsymbaliuk isomorphisms (Theorem \ref{TsymIso}) between $\tau_v$ and $\rho_{\vec{c}}^\pm$ send 
\[\F|\lambda\rangle\to\F \left(P_\lambda[X^\bullet;q,t^{-1}]\otimes e^{\kb(\lambda)}\right).\]
\end{thm}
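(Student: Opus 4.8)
The plan is to match two joint eigenbases for the horizontal Heisenberg subalgebra $\varsigma(\ddot{U}^0)$ acting on $W$: one coming from Theorem~\ref{Eigenstates} and the other transported from the Fock representation through the Tsymbaliuk isomorphism. Write $\Phi\colon\mathcal{F}(v)\to W$ for either isomorphism of Theorem~\ref{TsymIso}; by construction $\Phi$ is an intertwiner for the $\varsigma$-twisted action, so that $\Phi\circ\tau_v(x)=\rho_{\vec{c}}^\pm(\varsigma(x))\circ\Phi$ for all $x$. In particular, for $x\in\ddot{U}^0$ the diagonalizable operator $\tau_v(x)$ is carried to the operator $\rho_{\vec{c}}^\pm(\varsigma(x))$ realizing the horizontal Heisenberg action on $W$, and the argument below is uniform in the sign $\pm$.

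First I would record that $\ddot{U}^0$ acts diagonally on the basis $\{|\lambda\rangle\}$ of $\mathcal{F}(v)$. Indeed, by Theorem~\ref{FockRep} the only nonzero matrix elements of the currents $\psi_i^\pm(z)$ are the diagonal ones $\langle\lambda|\psi_i^\pm(z)|\lambda\rangle$, and since each generator $h_{i,k}$ is a polynomial in the Fourier modes of $\psi_i^\pm(z)$ through the exponential presentation of the $\psi$-currents, every $h_{i,k}$ is diagonal in $\{|\lambda\rangle\}$ as well. Thus $\{|\lambda\rangle\}$ is a joint eigenbasis for $\ddot{U}^0$, and applying $\Phi$ shows that $\{\Phi(|\lambda\rangle)\}$ is a joint eigenbasis for $\varsigma(\ddot{U}^0)$ on $W$. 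By Theorem~\ref{Eigenstates}, the vectors $\{P_\mu[\Xd;q,t^{-1}]\otimes e^{\kb(\mu)}\}$ form a second joint eigenbasis for the same commutative algebra.

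The crux is the simplicity of this joint spectrum. On the Fock side the eigenvalue of $\psi_i^\pm(z)$ on $|\lambda\rangle$ is the explicit rational function of $z$ appearing in Theorem~\ref{FockRep}, whose zeros and poles record the characters $\chi_\blacksquare$ of the addable and removable $i$-nodes of $\lambda$. Letting the color $i$ range over $I$, these corner data reconstruct the full boundary of $\lambda$, and since distinct boxes $(a,b)$ have distinct characters $q^{a-1}t^{b-1}$ (as $q,t$ are algebraically independent), distinct partitions produce distinct joint eigenvalues. Hence each joint eigenspace is one-dimensional, and $\Phi(|\lambda\rangle)$ must be a nonzero scalar multiple of a single $P_{\mu}[\Xd;q,t^{-1}]\otimes e^{\kb(\mu)}$, which defines a bijection $\lambda\mapsto\mu(\lambda)$.

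It remains to check $\mu(\lambda)=\lambda$, and this is the step I expect to be the main obstacle. The constant term $\psi_{i,0}$ is diagonal with an eigenvalue recording the $\F\{Q\}$-degree, so matching these already forces $\kb(\mu(\lambda))=\kb(\lambda)$; within a fixed charge one must then compare the full $\psi$-eigenvalues. Equating the vertical $\psi$-eigenvalue on $|\lambda\rangle$ with the horizontal $\psi$-eigenvalue of $\varsigma(\ddot{U}^0)$ on $P_{\mu(\lambda)}\otimes e^{\kb(\mu(\lambda))}$, and invoking the eigenvalue computation of \cite{Wen} that underlies Theorem~\ref{Eigenstates} together with the plethystic identification of Remark~\ref{PlethExp}, one verifies that the two a priori unrelated labelings---the Fock basis indexed by box contents and the vertex basis indexed via $\core$ and $\quot$---are carried to one another by $\Phi$ and are not permuted by the Miki twist $\varsigma$. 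The separation argument above guarantees only that \emph{some} bijection exists, so this bookkeeping of eigenvalues across the twist is where the real content lies.
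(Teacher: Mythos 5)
First, a point of comparison: the paper does not prove this statement at all --- it is imported verbatim from \cite{Wen} (``the following result from \cite{Wen} provides more detail on the Tsymbaliuk isomorphisms''), so there is no in-paper proof to match your argument against. Your proposal must therefore stand on its own, and it does not quite do so. The first part is sound: $\Phi$ intertwines $\tau_v$ with $\rho_{\vec c}^\pm\circ\varsigma$, the operators $h_{i,k}$ are diagonal on $\{|\lambda\rangle\}$ with eigenvalues \eqref{HEigen} that (for generic $q,t$) separate partitions, so the joint $\ddot{U}^0$-spectrum on $\mathcal F(v)$ is simple; combined with Theorem~\ref{Eigenstates} this forces $\Phi(|\lambda\rangle)\in\F\bigl(P_{\mu(\lambda)}[\Xd;q,t^{-1}]\otimes e^{\kb(\mu(\lambda))}\bigr)$ for some bijection $\lambda\mapsto\mu(\lambda)$ of $\Y$. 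That much is a correct and clean reduction.

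The gap is the final step, which you acknowledge but do not close, and the route you sketch for it is circular within this paper's logic. Theorem~\ref{Eigenstates} asserts only that $\varsigma(\ddot{U}^0)$ acts \emph{diagonally} on $\{P_\lambda\otimes e^{\kb(\lambda)}\}$; it does not supply the eigenvalues. The eigenvalues of the horizontal Heisenberg subalgebra on the wreath Macdonald basis are precisely what the paper \emph{deduces from} the theorem you are trying to prove (this is the whole point of the sentence ``Thus, we can study the eigenvalues of $\varsigma(\ddot{U}^0)$ on $P_\lambda$ by instead studying the eigenvalues of $\ddot{U}^0$ on $\{|\lambda\rangle\}$,'' and of Lemma~\ref{FockEigen} onward). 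So ``equating the vertical $\psi$-eigenvalue with the horizontal $\psi$-eigenvalue on $P_{\mu(\lambda)}$'' presupposes an independent computation of the latter that is not available here. Your preliminary step is also off: under $\Phi$ the operator $\tau_v(\psi_{i,0})$ corresponds to $\rho_{\vec c}^\pm(\varsigma(\psi_{i,0}))$, not to $\rho_{\vec c}^\pm(\psi_{i,0})=\qqq^{\partial_{\alpha_i}}$; since the Miki automorphism does not fix the zero modes (it exchanges the horizontal and vertical copies of $U_\qqq(\dot{\mathfrak{sl}}_r)$), the eigenvalue of $\psi_{i,0}$ on $|\lambda\rangle$ does not directly read off the $\F\{Q\}$-degree of $\Phi(|\lambda\rangle)$, so even $\kb(\mu(\lambda))=\kb(\lambda)$ is not established by your argument. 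Pinning down $\mu=\mathrm{id}$ genuinely requires different input --- in \cite{Wen} it comes from showing that the image of $|\lambda\rangle$ satisfies the triangularity conditions (with respect to dominance order on partitions of fixed core) that characterize $P_\lambda[\Xd;q,t^{-1}]$ up to scalar, which is an analysis of how the currents $e_i(z),f_i(z)$ move between Fock basis vectors rather than a spectral comparison.
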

\noindent Thus, we can study the eigenvalues of $\varsigma(\ddot{U}^0)$ on $P_\lambda$ by instead studying the eigenvalues of $\ddot{U}^0$ on the basis $\{|\lambda\rangle\}$.

\subsubsection{Infinite-variable eigenvalues}
From the formulas in Theorem \ref{FockRep}, we can see that
\[
\langle\lambda|\psi_{i,0}^{\pm 1}|\lambda\rangle= \qqq^{\pm\left(|A_{i}(\lambda)|-|R_{i}(\lambda)|\right)}.
\]
Therefore,
\begin{align*}
&\left\langle\lambda\left|\exp\left(\pm(\qqq-\qqq^{-1})\sum_{k>0}h_{i,\pm k}z^{\mp k}\right)\right|\lambda\right\rangle\\
&=\prod_{\blacksquare\in A_{i}(\lambda)}\frac{\qqq^{\mp 1}\left(\qqq z-\qqq^{-1}\chi_{\blacksquare}v\right)}{\left( z-\chi_\blacksquare v\right)}
\prod_{\blacksquare\in R_{i}(\lambda)}\frac{\qqq^{\pm 1}\left(\qqq^{-1} z-\qqq\chi_\blacksquare v\right)}{\left( z-\chi_\blacksquare v\right)}\\
&= \exp\left[ \sum_{k>0}\left( \sum_{\blacksquare\in A_{i}(\lambda)}(1-\qqq^{\mp 2k})\chi_{\blacksquare}^{\pm k} +\sum_{\blacksquare\in R_{i}(\lambda)}(1-\qqq^{\pm 2k})\chi_\blacksquare^{\pm k}\right)\frac{v^{\pm k}z^{\mp k}}{k} \right].
\end{align*}
Taking logarithms, we see that for $k>0$,
\begin{equation}
\begin{aligned}
\langle\lambda|h_{i,\pm k}|\lambda\rangle &= \frac{v^{\pm k}[ k]_{\qqq}}{k}\left( \sum_{\blacksquare\in A_{i}(\lambda)}\qqq^{\mp k}\chi_\blacksquare^{\pm k} -\sum_{\blacksquare\in R_{i}(\lambda)}\qqq^{\pm k}\chi_\blacksquare^{\mp k}\right)\\
&= \frac{v^{\pm k}\qqq^{\mp k}[ k]_{\qqq}}{k}\left( \sum_{\blacksquare\in A_{i}(\lambda)}\chi_\blacksquare^{\pm k} - \sum_{\blacksquare\in R_{i}(\lambda)}(qt\chi_\blacksquare)^{\pm k}\right).
\end{aligned}
\label{HEigen}
\end{equation}
Using (\ref{HEigen}), we can try to piece together elements of $\ddot{U}^0$ whose eigenvalues are elementary symmetric functions in $\{q^{\pm\lambda_b}t^{\pm b}\}$.

For $k\in \Z_{>0}$ and $p\in I$, let us define
\begin{align}
    \label{Hhat+}
    \hat{h}_{p,k}&:=\frac{1}{(1-t^{kr})}\sum_{i=0}^{r-1}t^{ k(i+1)}h_{p-i, k}\\
    \label{Hhat-}
    \hat{h}_{p,-k}&:= \frac{1}{(1-t^{- kr})}\sum_{i=0}^{r-1}t^{- k(i+1)}h_{p-i,- k}
\end{align}
\begin{lem}\label{FockEigen}
Assume $|t^{\pm 1}|<1$ (where `$+$' and `$-$' are separate cases).
For $p\in I$, we have
\begin{equation}
\begin{aligned}
&\left\langle\lambda\left|\exp\left[-\sum_{k>0}\hat{h}_{p,\pm k} (-z)^{\mp k}{v^{\pm k}[ k]_{\qqq}} \right]\right|\lambda\right\rangle\\
&= \exp\left[-\sum_{k>0}\left(\sum_{\substack{b>0\\b-\lambda_b\equiv p+1 }}q^{\pm k\lambda_b}t^{\pm kb}\right)\frac{(-z)^{\mp k}}{k}\right]\\
&= \prod_{\substack{b>0\\b-\lambda_b\equiv p+1 }}\left(1+q^{\pm\lambda_b}t^{\pm b}z^{\mp 1}\right)
\end{aligned}
\label{EigenLemma}
\end{equation}
where we set $\lambda_b=0$ for all $b>\ell(\lambda)$.
\end{lem}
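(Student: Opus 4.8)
The plan is to exploit that the operator being exponentiated is a linear combination of the Heisenberg generators $h_{p-i,\pm k}$, each of which acts on $|\lambda\rangle$ diagonally: since the $\psi_i^\pm(z)$ are diagonal in the basis $\{|\lambda\rangle\}$ (Theorem~\ref{FockRep}), so are all the $h_{i,\pm k}$, with eigenvalues given by \eqref{HEigen}. Hence the matrix element in \eqref{EigenLemma} is simply the exponential of the corresponding linear combination of eigenvalues. Substituting \eqref{HEigen}, the prefactor $\tfrac{\qqq^{\pm k}}{v^{\pm k}[k]_{\qqq}}$ attached to each $h_{i,\pm k}$ cancels exactly against the factor $\tfrac{v^{\pm k}\qqq^{\mp k}[k]_{\qqq}}{k}$ appearing in \eqref{HEigen}, and the exponent collapses to $-\sum_{k>0}\tfrac{(-z)^{\mp k}}{k}\,\Sigma_k$, where
\[
\Sigma_k:=\frac{1}{1-t^{\pm kr}}\sum_{i=0}^{r-1}t^{\pm k(i+1)}\left(\sum_{\blacksquare\in A_{p-i}(\lambda)}\chi_\blacksquare^{\pm k}-\sum_{\blacksquare\in R_{p-i}(\lambda)}(qt\chi_\blacksquare)^{\pm k}\right).
\]
The final equality in \eqref{EigenLemma} is then the formal identity $\prod_b(1+w_b)=\exp\sum_b\sum_{k>0}\tfrac{(-1)^{k-1}}{k}w_b^k$ applied to $w_b=q^{\pm\lambda_b}t^{\pm b}z^{\mp1}$ (using $(-z)^{\mp k}=(-1)^kz^{\mp k}$). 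Thus everything reduces to the scalar combinatorial identity $\Sigma_k=\sum_{b>0,\;b-\lambda_b\equiv p+1}q^{\pm k\lambda_b}t^{\pm kb}$.

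To prove this identity I would reindex the addable and removable nodes by rows, treating the $+$ case (which needs $|t|<1$ in order to expand $\tfrac{1}{1-t^{kr}}=\sum_{j\ge0}t^{jkr}$); the $-$ case follows by the symmetry $q,t\mapsto q^{-1},t^{-1}$. The removable node at the right end of row $b$ exists iff $\lambda_b>\lambda_{b+1}$, and then $qt\chi_\blacksquare=q^{\lambda_b}t^b$ with color $b-\lambda_b$; the addable node at row $b$ exists iff $\lambda_{b-1}>\lambda_b$ (with the convention $\lambda_0=\infty$), and then $\chi_\blacksquare=q^{\lambda_b}t^{b-1}$ with color $b-\lambda_b-1$. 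For a node of color $c$ the unique $i\in\{0,\dots,r-1\}$ with $p-i\equiv c$ attaches the weight $t^{i+1}/(1-t^{r})$, which the geometric expansion converts into a sum $\sum_{j\ge0}t^{i+1+jr}$ ranging over a single residue class modulo $r$.

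The heart of the argument is a telescoping over the boundary of $\lambda$, and this is the step I expect to be the main obstacle. The key structural fact is that $m_b:=b-\lambda_b$ is \emph{strictly} increasing in $b$, rising by exactly $1$ across a flat step $\lambda_b=\lambda_{b+1}$ and by more across a descent. Decomposing the rows into maximal runs on which $\lambda_b=\mu$ is constant, each finite run carries an addable corner at its top row and a removable corner at its bottom row, while the infinite tail $\lambda_b=0$ carries only a top addable corner (whose contribution is an honestly convergent geometric series). Expanding $\tfrac{1}{1-t^{kr}}$, the top corner of such a run contributes $\sum_{M\equiv p+1,\,M\ge M_{\mathrm{top}}}(qt)^{\mu k}t^{kM}$ and the bottom corner subtracts $\sum_{M\equiv p+1,\,M> M_{\mathrm{bot}}}(qt)^{\mu k}t^{kM}$, where $M_{\mathrm{top}}\le M_{\mathrm{bot}}$ are the values of $m_b$ at the two ends of the run; the difference is $\sum_{M_{\mathrm{top}}\le M\le M_{\mathrm{bot}},\,M\equiv p+1}(qt)^{\mu k}t^{kM}$, which since $q^{k\lambda_b}t^{kb}=(qt)^{\mu k}t^{km_b}$ on the run equals the contribution $\sum_{b\in\mathrm{run},\,m_b\equiv p+1}q^{k\lambda_b}t^{kb}$ of that run to the right-hand side. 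Summing over all runs yields $\Sigma_k$. The delicate bookkeeping is in checking that the mod-$r$ reductions converting each corner's color into the starting term of its arithmetic progression align so that consecutive corners telescope precisely onto the closed interval of $m_b$-values spanned by the run, and that the convergence needed to expand $1/(1-t^{\pm kr})$ is exactly the hypothesis $|t^{\pm1}|<1$.
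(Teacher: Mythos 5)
Your proposal is correct and follows essentially the same route as the paper: both reduce the lemma to the scalar identity \eqref{AddRemoveEnd} via the diagonal action of the $h_{i,\pm k}$ and the cancellation of the $v^{\pm k}\qqq^{\mp k}[k]_\qqq$ prefactors, and both prove that identity by expanding $(1-t^{\pm kr})^{-1}$ geometrically and telescoping addable against removable corners along the boundary of $\lambda$ (the paper phrases this as walking up the column of cells above each addable box until one meets a removable box). Your run-by-run bookkeeping with $m_b=b-\lambda_b$ is a more detailed write-up of the paper's two-sentence sketch, including its remark that the cancelling corners need not share a color.
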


\begin{proof}
Comparing (\ref{EigenLemma}) to (\ref{HEigen}), we need to establish the equality
\begin{equation}
\frac{1}{1-t^{\pm kr}}\sum_{i=0}^{r-1}t^{\pm k(i+1)}\left( \sum_{\blacksquare\in A_{p-i}(\lambda)}\chi_\blacksquare^{\pm k} - \sum_{\blacksquare\in R_{p-i}(\lambda)}(qt\chi_\blacksquare)^{\pm k}\right)
=\left(\sum_{\substack{b>0\\b-\lambda_b\equiv p+1 }}q^{\pm k\lambda_b}t^{\pm kb}\right).
\label{AddRemoveEnd}
\end{equation}
We note that here, we consider $(1-t^{\pm kr})^{-1}$ as a geometric series.
The summands on the right hand side of (\ref{AddRemoveEnd}) are $qt$-shifts of the characters of color $p+1$ boxes that are the rightmost boxes in their row.
We can account for these coordinates by starting at each addable box of $D(\lambda)$, going straight up until we reach a box of color $p+1$, then moving upwards by intervals of $r$, and ending the search once we are above the $qt$-shift of the removable box above it.
This is exactly what the left hand side of (\ref{AddRemoveEnd}) does.
We illustrate this with Figure \ref{fig:EigenLem}.
\end{proof}

\begin{figure}
    \centering
    \begin{tikzpicture}[scale=0.5]
        \draw (1, 6) node {$\lambda$};;
        \draw[thick] (0,7)--(2,7)--(2,1)--(4,1)--(4,0);;
        \draw[fill=black] (2,1)--(3,1)--(3,2)--(2,2)--(2,1);;
        \draw[fill=black] (2,7)--(3,7)--(3,8)--(2,8)--(2,7);;
        \draw[fill=lightgray] (2,3)--(3,3)--(3,4)--(2,4)--(2,3);;
        \draw[fill=lightgray] (2,6)--(3,6)--(3,7)--(2,7)--(2,6);;
        \draw[fill=lightgray] (2,9)--(3,9)--(3,10)--(2,10)--(2,9);;
        \draw[thick] (3.5,8.5)--(1.5,10.5);;
        \draw[thick] (3.5,10.5)--(1.5,8.5);;
    \end{tikzpicture}
    \caption{Illustration of the proof of Lemma \ref{EigenLemma}.
    The $t$-shifts on the addable black box at the bottom results in the gray boxes.
    The latter are evenly spaced of interval $r$ and have the desired color.
    The black box at the top is $qt$ times a removable box, and subtracting its $t$-shifts cancels out the extraneous gray boxes.}
    \label{fig:EigenLem}
\end{figure}
%

\section{Shuffle algebra}\label{Shuffle}
We will obtain difference operators by computing the action of $\varsigma(\ddot{U}^0)$ on the vertex representation.
However, computing the images of elements under $\varsigma$ is difficult.
The \textit{shuffle algebra} provides another avatar of the quantum toroidal algebra with which we can access the horizontal Heisenberg subalgebra.

\subsection{Definition and structures}
Let $k_\bullet=(k_0,\ldots, k_{r-1})\in\left( \Z_{\ge 0} \right)^I$ and consider the function spaces: 
\begin{align*}
\mathbb{S}_{k_\bullet}&:=\F(z_{i,a})_{i\in I}^{1\le a\le k_i}\\   
\mathbb{S}&:=\bigoplus_{k_\bullet\in\left(\Z_{\ge0}\right)^I}\mathbb{S}_{\vec{k}}.
\end{align*}
The product of symmetric groups
\[
\Sym_{k_\bullet}:=\Sym_{k_0}\times\cdots\times\Sym_{k_{r-1}}
\]
acts on $\mathbb{S}_{k_\bullet}$ where the factor $\Sym_{k_i}$ only permutes the variables $\{z_{i,a}\}_{a=1}^{k_i}$.
We call $i$ the \textit{color} of $z_{i,r}$, so $\Sym_{k_\bullet}$ acts by \textit{color-preserving permutations}.
Finally, let
\begin{align*}
\mathbf{S}_{k_\bullet}&:=\left(\mathbb{S}_{k_\bullet}\right)^{\Sym_{k_\bullet}}\\
\mathbf{S}&:=\bigoplus_{k_\bullet\in\left( \Z_{\ge 0} \right)^I}\mathbf{S}_{k_\bullet}.
\end{align*}
Unless we say otherwise, an element of $\mathbb{S}$ with $k_i$ variables of color $i$ for all $i$ is assumed to be in $\mathbb{S}_{k_\bullet}$.

\subsubsection{Shuffle product}
We endow $\mathbf{S}$ with the \textit{shuffle product} $\star$, defined as follows.
For $i,j\in I$, we define the \textit{mixing terms}:
\[\omega_{i,j}(z,w):=\left\{\begin{array}{ll}
\left(z-\qqq^{2}w\right)^{-1}\left(z-w\right)^{-1} & \hbox{if }i=j\\
\left(\qqq w-\ddd^{-1}z\right) &\hbox{if }i+1=j\\
\left(z-\qqq\ddd^{-1} w\right) &\hbox{if }i-1=j\\
1 &\hbox{otherwise.}
\end{array}\right.\]
For $F\in\mathbf{S}_{k_\bullet}$ and $G\in\mathbf{S}_{l_\bullet}$, let $F\star G\in\mathbf{S}_{k_\bullet+l_\bullet}$ be defined by
\[
F\star G:=\frac{1}{k_\bullet!l_\bullet!}\mathrm{Sym}_{k_\bullet+l_\bullet}\left[ 
F\left( \left\{ z_{i,a} \right\}_{i\in I}^{1\le a\le k_i} \right)G\left( \left\{ z_{j, b} \right\}_{j\in I}^{k_j<b\le k_j+l_j} \right)
\prod_{i,j\in I}\prod_{\substack{1\le a\le k_i\\k_j<b\le k_j+l_j}}\omega_{i,j}(z_{i,a},z_{j,b})
 \right]
\]
where for $n_\bullet\in\left( \Z_{\ge 0} \right)^I$,
\[
n_\bullet!=\prod_{i\in I}n_i!=|\Sym_{n_\bullet}|
\]
and $\mathrm{Sym}_{n_\bullet}$ denotes the \textit{color symmetrization}, i.e. the symmetrization over $\Sym_{n_\bullet}$.

\subsubsection{The shuffle algebra}
Consider now for each $k_\bullet$ the subspace $\mathcal{S}_{k_\bullet}\subset\mathbf{S}_{k_\bullet}$ consisting of functions $F$ satisfying the following two conditions:
\begin{enumerate}
\item \textit{Pole conditions:} $F$ is of the form
\begin{equation}
F=\frac{f(\{z_{i,r}\})}{\displaystyle \prod_{i\in I}\,\prod_{\substack{1\le r, r'\le k_i\\r\not= r'}}(z_{i,r}-\qqq^2z_{i,r'})}
\label{PoleCond}
\end{equation}
for a color-symmetric \textit{Laurent polynomial} $f$.
\item \textit{Wheel conditions:} $F$ has a well-defined finite limit when
\[\frac{z_{i,r_1}}{z_{i+\epsilon,s}}\rightarrow\qqq\ddd^\epsilon\hbox{ and }\frac{z_{i+\epsilon,s}}{z_{i,r_2}}\rightarrow\qqq\ddd^{-\epsilon}\]
for any choice of $i$, $r_1$, $r_2$, $s$, and $\epsilon$, where $\epsilon\in\{\pm 1\}$. 
This is equivalent to specifying that the Laurent polynomial $f$ in the pole conditions evaluates to zero at
\begin{equation*}
    \begin{aligned}
    z_{i,r_1}&=\qqq\ddd^{\epsilon}z_{i+\epsilon,s},&
    z_{i+\epsilon,s}&=\qqq\ddd^{-\epsilon}z_{i,r_2}.       
    \end{aligned}
\end{equation*}
\end{enumerate}
We set 
\[
\mathcal{S}:=\bigoplus_{k_\bullet\in\left( \Z_{\ge 0} \right)^I}\mathcal{S}_{k_\bullet}.
\]
The following is standard:
\begin{prop}[ \protect{\cite[Proposition 3.3]{Neg}}]
The shuffle product $\star$ defines an associative product on $\mathbf{S}$ and $\mathcal{S}$ is closed under $\star$.
\end{prop}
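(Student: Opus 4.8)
The plan is to treat the two assertions separately. For associativity, I would show that both bracketings $(F\star G)\star H$ and $F\star(G\star H)$ reduce to the single triple color-symmetrization
\[
\frac{1}{k_\bullet!\,l_\bullet!\,m_\bullet!}\,\mathrm{Sym}_{k_\bullet+l_\bullet+m_\bullet}\!\left[FGH\prod\omega\right],
\]
where $\prod\omega$ runs over all pairs of variables lying in distinct blocks, taken in the block order $F<G<H$. The crucial structural input is that the mixing factor is \emph{pairwise}: the weight attached to any two variables depends only on their colors and on which block comes first, so the cross-terms between the merged block $(F,G)$ and $H$ factor as the product of the $(F,H)$- and $(G,H)$-cross-terms. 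Combining this with the observation that the partial product $FG\prod\omega$ is invariant under $\Sym_{k_\bullet}\times\Sym_{l_\bullet}$ — so that the inner symmetrization defining $F\star G$ is absorbed into the outer one, the normalizations $k_\bullet!\,l_\bullet!\,m_\bullet!$ keeping track correctly — both bracketings collapse onto the displayed expression. This is the standard Feigin--Odesskii computation.

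Turning to closure, I would first check the pole conditions. Among the mixing factors only $\omega_{i,i}(z,w)=(z-\qqq^2w)^{-1}(z-w)^{-1}$ has poles; the factor $(z-\qqq^2w)^{-1}$ is admissible, but $(z-w)^{-1}$ creates a spurious pole wherever two color-$i$ variables $u,v$ drawn from different blocks collide. To see this cancels after symmetrization, fix $u,v$ and pair each permutation in $\mathrm{Sym}_{k_\bullet+l_\bullet}$ that separates $u$ and $v$ into opposite blocks with its composite by the transposition $(u\,v)$. Only such separated terms carry a pole at $u=v$ (same-block terms pair $u,v$ through no mixing factor, and $F,G$ have poles only at $\qqq^2$-ratios), and the two partners contribute residues there of equal magnitude and opposite sign, since $F$, $G$ and every surviving factor become symmetric in $u,v$ on the locus $u=v$. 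Hence $F\star G$ has no pole along $u=v$, its only poles are the admissible ones at $z_{i,r}=\qqq^2z_{i,r'}$, and clearing these exhibits $F\star G$ in the form \eqref{PoleCond}.

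Finally, the wheel conditions. Since $F\star G$ carries the denominator $\prod_{i}\prod_{r\ne r'}(z_{i,r}-\qqq^2z_{i,r'})$, and the wheel specialization forces $z_{i,r_1}=\qqq^2z_{i,r_2}$, the requirement that the numerator vanish on the wheel is exactly the statement that $F\star G$ remain finite there, i.e.\ that the residue of the symmetrized sum along $z_{i,r_1}=\qqq^2z_{i,r_2}$ cancel once the middle variable $z_{i+\epsilon,s}$ is set to its wheel value. I would organize this residue according to how the two color-$i$ variables $z_{i,r_1},z_{i,r_2}$ and the color-$(i\pm1)$ variable $z_{i+\epsilon,s}$ are distributed between the two blocks. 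When all three lie in one block, the within-block wheel condition of $F$ or of $G$ already kills that term's residue. The cross-block contributions are controlled by the zeros of the adjacent-color factors: $\omega_{i,i+1}(z,w)=\qqq w-\ddd^{-1}z$ vanishes exactly at $z/w=\qqq\ddd$ and $\omega_{i,i-1}(z,w)=z-\qqq\ddd^{-1}w$ exactly at $z/w=\qqq\ddd^{-1}$, which are precisely the ratios the wheel imposes between $z_{i+\epsilon,s}$ and its color-$i$ neighbors for $\epsilon=\pm1$. In the correctly oriented cross cases this zero annihilates the residue outright, and the remaining, mis-oriented cases must cancel in pairs, against one another and against the same-block terms.

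I expect this last residue bookkeeping to be the main obstacle: one must verify, across all distributions of the wheel triple between the two factors, that each residue along $z_{i,r_1}=\qqq^2z_{i,r_2}$ is either annihilated by an adjacent-color zero or by a block wheel condition, or else is matched by a cancelling partner, and that no compensating denominator reinstates a pole. By contrast, the associativity reduction and the $u=v$ pole cancellation are essentially formal manipulations.
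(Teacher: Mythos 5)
The paper gives no proof of this proposition at all---it is declared ``standard'' and left to the reader---so there is nothing to compare your argument against except the standard argument itself, which is what you have reconstructed. Your treatment of associativity (both bracketings collapse to $\frac{1}{k_\bullet!\,l_\bullet!\,m_\bullet!}\mathrm{Sym}_{k_\bullet+l_\bullet+m_\bullet}[FGH\prod\omega]$ because the mixing weight is pairwise and the inner symmetrization is absorbed by the outer one) and of the pole condition (the $(z_{i,a}-z_{i,b})^{-1}$ poles from $\omega_{i,i}$ cancel after pairing each permutation with its composite by the transposition of the two colliding same-color variables) are exactly right.

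The one place where your plan is more pessimistic than necessary is the wheel condition: the residue bookkeeping you flag as ``the main obstacle'' closes up summand by summand, with no cancellation between distinct terms of the symmetrization and no ``mis-oriented'' leftovers. For a fixed summand, a pole along $D=\{z_{i,r_1}=\qqq^2z_{i,r_2}\}$ arises only if $r_1,r_2$ lie in the same block (from that factor's own admissible denominator) or if $r_1$ lies in the first block and $r_2$ in the second (from $\omega_{i,i}(z_{i,r_1},z_{i,r_2})$); when $r_1$ is in the second block and $r_2$ in the first, the relevant mixing factor is $\omega_{i,i}(z_{i,r_2},z_{i,r_1})$, whose poles sit at $z_{i,r_2}=\qqq^2z_{i,r_1}$ and $z_{i,r_2}=z_{i,r_1}$, neither of which is $D$, so there is nothing to cancel. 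In the surviving cases, note that the wheel imposes exactly the two ratios $z_{i,r_1}/z_{i+\epsilon,s}=\qqq\ddd^{\epsilon}$ and $z_{i+\epsilon,s}/z_{i,r_2}=\qqq\ddd^{-\epsilon}$, which are precisely the zero loci of $\omega_{i,i+\epsilon}(z_{i,r_1},z_{i+\epsilon,s})$ and of $\omega_{i+\epsilon,i}(z_{i+\epsilon,s},z_{i,r_2})$ respectively (first argument drawn from the first block). One checks directly that in every configuration carrying a pole along $D$, either all three wheel variables lie in one block---so that block's own wheel condition kills the residue---or at least one of the ordered pairs $(r_1,s)$, $(s,r_2)$ is split between the blocks in the order (first, second), in which case the corresponding adjacent-color factor is present in $\prod\omega$ and vanishes identically on the wheel locus. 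Hence each summand's residue along $D$ already vanishes on the wheel, and the numerator of $F\star G$ satisfies the wheel condition. With that case check written out, your outline becomes a complete proof.
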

\noindent We call $(\mathcal{S},\star)$ the \textit{shuffle algebra of type }$\hat{A}_{r-1}$.

\subsubsection{Relation to $U_{\qqq,\ddd}(\ddot{\mathfrak{sl}_r})$}
Let
\begin{itemize}
\item $\ddot{U}^+\subset U_{\qqq,\ddd}(\ddot{\mathfrak{sl}_r})$ be the subalgebra generated by $\left\{ e_{i}(z) \right\}_{i\in I}$ and
\item $\ddot{U}^-\subset U_{\qqq,\ddd}(\ddot{\mathfrak{sl}_r})$ be the subalgebra generated by $\left\{ f_{i}(z) \right\}_{i\in I}$.
\end{itemize}
Correspondingly, we set $\mathcal{S}^+:=\mathcal{S}$ and $\mathcal{S}^-:=\mathcal{S}^{op}$.
The following key structural result was proved by Negu\cb{t}:
\begin{thm}[\cite{Neg}]\label{ShuffTor}
$\mathcal{S}^\pm$ is generated by $\{z_{i,1}^n\}_{i\in I}^{n\in\Z}$ and
\begin{align*}
\Psi_+(z_{i,1}^n)&= e_{i,n}\\
\Psi_-(z_{i,1}^n)&= f_{i,n}.
\end{align*}
induce algebra isomorphisms $\Psi_{\pm}:\mathcal{S}^\pm\rightarrow\ddot{U}^\pm$.
\end{thm}
Finally, note that the subalgebras $\ddot{U}^\pm$ are each closed under $\eta$.
We will need to understand how the antiautomorphism $\eta$ is manifested on the shuffle side:
\begin{prop}\label{EtaShuff}
For $F\in\mathcal{S}^\pm_{k_\bullet}$, define:
\[
\eta_{\mathcal{S}}(F):=
 \left.F(z_{i,r}^{-1})\prod_{i\in I}\prod_{r=1}^{k_i}(-\ddd)^{k_{i+1}k_i} z_{i,r}^{k_{i+1}+k_{i-1}-2(k_i-1)}\right|_{\ddd\mapsto\ddd^{-1}}
\]
We have
\begin{equation}
\Psi_+^{-1}\eta\Psi_+(F)=\Psi_-^{-1}\eta\Psi_-(F)=\eta_{\mathcal{S}}(F) 
.
\label{EtaShuffEq}
\end{equation}
\end{prop}

\begin{proof}
Equation (\ref{EtaShuffEq}) is true when $F=z_{i,1}^n$ is a generator.
To see that it is a $\C(\qqq)$-linear algebra antiautomorphism that inverts $\ddd$, we first observe that
\begin{equation}
 \begin{aligned}
    z^{-2}w^{-2}\omega_{i,i}(z^{-1},w^{-1})\bigg|_{\ddd\mapsto\ddd^{-1}}&=\omega_{i,i}(w,z)\\
    zw(-\ddd)\omega_{i,i+1}(z^{-1},w^{-1})\bigg|_{\ddd\mapsto\ddd^{-1}}&=\omega_{i+1,i}(w,z)\\
    zw(-\ddd)\omega_{i+1,i}(z^{-1}, w^{-1})\bigg|_{\ddd\mapsto\ddd^{-1}}&=\omega_{i,i+1}(w,z)
\end{aligned} 
\label{OmegaInv}
\end{equation}
Now, for $F\in\mathcal{S}_{k_\bullet}^+$ and $G\in\mathcal{S}_{l_\bullet}^+$:
\begin{align}
        \nonumber
        &\eta_{\mathcal{S}}(F\star G)\\
        \nonumber
        &=\frac{1}{k_\bullet!l_\bullet!}\mathrm{Sym}\left[F\left(\{z_{i,a}^{-1}\}_{i\in I}^{1\le a\le k_i}\right)
        G\left(\{z_{j,b}^{-1}\}_{j\in I}^{k_j<b\le k_j+l_j}\right)
        \prod_{i,j\in I}\prod_{\substack{1\le a\le k_i\\k_j<b\le k_j+l_j}}\omega_{i,j}(z_{i,a}^{-1},z_{j,b}^{-1})\right]\\
        \label{Mult}
        &\quad\times
        \prod_{i\in I}\prod_{r=1}^{k_i+l_i}(-\ddd)^{(k_{i+1}+l_{i+1})(k_i+l_i)} z_{i,r}^{k_{i+1}+l_{i+1}+k_{i-1}+l_{i-1}-2(k_i+l_i-1)}\bigg|_{\ddd\mapsto\ddd^{-1}}
\end{align}
The monomial in (\ref{Mult}) is color-symmetric, so we can move it inside the symmetrization.
We can break up the exponents appearing in (\ref{Mult}) as follows:
\begin{align}
\label{DInv}
    (k_{i+1}+l_{i+1})(k_i+l_i)&=k_{i+1}k_{i}+l_{i+1}l_i+\left[k_{i+1}l_i+k_il_{i+1}\right]\\
\label{FInv}
    k_{i+1}+l_{i+1}+k_{i-1}+l_{i-1}-2(k_i+l_i-1)&=k_{i+1}+k_{i-1}-2(k_i-1)+[l_{i+1}+l_{i-1}-2l_i]\\
\label{GInv}
    &=l_{i+1}+l_{i-1}-2(l_i-1)+[k_{i+1}+k_{i-1}-2k_i]
\end{align}
In (\ref{DInv}), we will assign the bracketed summand to the mixing terms, $k_{i+1}k_{i}$ to $F$, and $l_{i+1}l_i$ to $G$.
In a given summand of the symmetrization, if $z_{i,r}$ is assigned to $F$, then in (\ref{FInv}), we assign the bracketed summand to the mixing terms and the rest to $F$.
On the other hand, if $z_{i,r}$ is assigned to $G$, then in (\ref{GInv}), we assign the bracketed summand to the mixing terms and the rest to $G$.
Then, applying (\ref{OmegaInv}), we do indeed obtain
\[
\eta_{\mathcal{S}}(G)\star\eta_{\mathcal{S}}(F).
\]
The case where $F,G\in\mathcal{S}^-$ is similar.
\end{proof}

\subsubsection{Shuffle presentation of horizontal Heisenberg elements}
Recall the vertical Heisenberg elements (\ref{EigenLemma}) whose action on $\mathcal{F}(v)$ are related to infinite-variable Macdonald operators.
Previous work \cite{Wen} gives us a better understanding of the action of $\varsigma^{-1}$ on such elements.
However, we need $\varsigma$ instead, and thus we will apply the identity $\varsigma=\eta\varsigma^{-1}\eta$ (cf. Theorem \ref{MikiAutThm}) and Proposition \ref{EtaShuff}.
To that end, recall the elements $\{\hat{h}_{p,\pm k}\}$ from (\ref{Hhat+}) and (\ref{Hhat-}).
Observe that
\begin{equation}
\begin{aligned}
&\varsigma\exp\left[-\sum_{k>0}\hat{h}_{p,\pm k}\frac{\qqq^{\pm k} (-z)^{\mp k}}{[ k]_{\qqq}} \right] \\
&=\varsigma\exp\left[-\sum_{k>0}\left(\frac{\sum_{i=0}^{r-1}t^{\pm k(i+1)}h_{p-i,\pm k}}{(1-t^{\pm kr})}\right)\frac{\qqq^{\pm k} (-z)^{\mp k}}{[ k]_{\qqq}} \right] \\
&=\eta\exp\left[\sum_{k>0}\left(\frac{\sum_{i=0}^{r-1}q^{\pm k(i+1)}\varsigma^{-1}(h_{p-i,\mp k})}{(1-q^{\pm kr})}\right)\frac{\qqq^{\pm k} (-z)^{\mp k}}{[ k]_{\qqq}} \right]\\
&= \eta\exp\left[(\qqq-\qqq^{-1})^{-1}\sum_{k>0}\left(\varsigma^{-1}(h_{p,\mp k}^\perp)-t^{\pm k}\varsigma^{-1}(h_{p+1,\mp k}^\perp)\right)\frac{q^{\pm k} (-z)^{\mp k}}{k} \right]
\end{aligned}
\label{MikiPerp}
\end{equation}
where in the last line, we use (\ref{HeisRel}).
Let $\delta=(1,\ldots, 1)\in\left( \Z_{\ge 0} \right)^I$ be the diagonal vector and consider the elements $\mathcal{E}_{p,n}^\pm\in\mathcal{S}^\pm$ given by
\begin{equation}
\begin{aligned}
\mathcal{E}_{p,n}^+&:=
\mathrm{Sym}_{n\delta}\left( \prod_{1\le a<b\le n}\left\{\frac{z_{p+1,a}-q^{-1}z_{p,b}}{z_{p+1,a}-tz_{p,b}}\prod_{i,j\in I}\omega_{i,j}\left( z_{i,a},z_{j,b} \right)\right\}\right.\\
&\times \left.\prod_{a=1}^n\left\{\left( \frac{z_{0,a}}{z_{p,a}}-q^{-1}\frac{z_{0,a}}{z_{p+1,a}} \right)\prod_{i\in I}z_{i,a}  \right\} \right)\\
\mathcal{E}_{p,n}^-&:=
\mathrm{Sym}_{n\delta}\left( \prod_{1\le a<b\le n}\left\{\frac{z_{p+1,a}-q^{-1}z_{p,b}}{z_{p+1,a}-tz_{p,b}}\prod_{i,j\in I}\omega_{i,j}\left( z_{i,a},z_{j,b} \right)\right\}\right.\\
&\times \left.\prod_{a=1}^n\left\{\left( q\frac{z_{p+1,a}}{z_{0,a}}-\frac{z_{p,a}}{z_{0,a}} \right)\prod_{i\in I}z_{i,a}  \right\} \right).
\end{aligned}
\label{Epn}
\end{equation}
By \cite[Proposition 4.22]{Wen}, $\mathcal{E}_{p,n}^\pm\in\mathcal{S}^\pm$.
\begin{lem}\label{EpnShuff}
We have
\begin{align*}
\sum_{n=0}^\infty \frac{(-1)^{n}\qqq^{n(r-1)}t^{-n}(1-q^{-1}t^{-1})^{nr}}{v^{-n}\prod_{a=1}^n(1-q^{-a}t^{-a})}\Psi_+\left( \mathcal{E}_{p,n}^+ \right)z^{-n} 
&= \varsigma\exp\left[-\sum_{k>0}\hat{h}_{p,-k}\frac{\qqq^{-k} (-z)^{-k}}{v^{- k}[ k]_{\qqq}} \right] \\
\sum_{n=0}^\infty \frac{(-1)^{nr-n}\ddd^{-n(r-1)}t^n(1-qt)^{nr}}{v^{n}q^n\prod_{a=1}^n(1-q^{-a}t^{-a})}\Psi_-\left( \mathcal{E}_{p,n}^- \right)z^n 
&= \varsigma\exp\left[-\sum_{k>0}\hat{h}_{p,k}\frac{\qqq^{ k} (-z)^{ k}}{v^{ k}[ k]_{\qqq}} \right].
\end{align*}
\end{lem}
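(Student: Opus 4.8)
The plan is to begin from the right-hand side and transport the entire computation into the shuffle algebra, where the explicit elements $\mathcal{E}_{p,n}^{\pm}$ live. The first move is already recorded in \eqref{MikiPerp}: applying the Miki relation $\varsigma=\eta\varsigma^{-1}\eta$ rewrites $\varsigma$ of the vertical-Heisenberg exponential as $\eta$ applied to an exponential in the elements $\varsigma^{-1}(h_{p,\mp k}^\perp)$ and $\varsigma^{-1}(h_{p+1,\mp k}^\perp)$. These dual horizontal-Heisenberg elements lie in $\ddot{U}^{\pm}$, so via Remark~\ref{PerpRem} (identifying $h^\perp_{i,k}$ with $-b^\perp_{i,k}$) and the shuffle formulas of \cite{Wen} I can write each of them, or rather the whole generating series in which they appear, as an explicit degree-$\delta$ element of $\mathcal{S}^{\pm}$. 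The goal is then to match, term by term in $z$, the image under $\eta$ (computed on the shuffle side by Proposition~\ref{EtaShuff}) with $\Psi_\pm(\mathcal{E}_{p,n}^{\pm})$.

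The second step expands the inner exponential. By the Heisenberg relations \eqref{HeisRel}, all modes of a single sign commute, so the $\varsigma^{-1}(h_{p,\mp k}^\perp)$ and $\varsigma^{-1}(h_{p+1,\mp k}^\perp)$ for $k>0$ form a commuting family; the exponential is therefore an ordinary commutative generating series whose coefficient of $z^{\mp n}$ is, under $\Psi_\pm$ and the isomorphism of Theorem~\ref{ShuffTor}, an $n$-fold shuffle product of the degree-$\delta$ building block, color-symmetrized over $\Sym_{n\delta}$. Carrying out this product, the mixing terms $\omega_{i,j}$ of the shuffle product supply exactly the pairwise factors indexed by $a<b$, while the single-variable data produce the per-$a$ factor (note that the monomial $\prod_i z_{i,a}$ in \eqref{Epn} records the degree-$\delta$ grading in each slot); the additional ratio $\tfrac{z_{p+1,a}-q^{-1}z_{p,b}}{z_{p+1,a}-tz_{p,b}}$ appearing in \eqref{Epn} reflects \cite{Wen}'s precise shuffle formula for these elements. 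This identifies the coefficient with an explicit symmetrized shuffle element $\tilde{\mathcal{E}}_{p,n}^{\pm}$, the pre-image of $\mathcal{E}_{p,n}^{\pm}$ before the final $\eta$-twist.

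The third step applies $\eta$ on the shuffle side. Proposition~\ref{EtaShuff} computes $\Psi_\pm^{-1}\eta\Psi_\pm$ as the substitution $z_{i,a}\mapsto z_{i,a}^{-1}$ followed by multiplication by $\prod_{i}(-\ddd)^{k_{i+1}k_i}z_{i,a}^{k_{i+1}+k_{i-1}-2(k_i-1)}$ and the inversion $\ddd\mapsto\ddd^{-1}$, here with $k_\bullet=n\delta$. Applying this to $\tilde{\mathcal{E}}_{p,n}^{\pm}$ turns the pairwise and per-$a$ factors into those defining $\mathcal{E}_{p,n}^{\pm}$ in \eqref{Epn}, and the accumulated monomials, signs, and powers of $\qqq,\ddd$ (converted to $q,t$ via \eqref{qdtosu}) together with the spectral parameter $v$ and the normalizations in \eqref{MikiPerp} assemble into the scalar prefactors displayed in the lemma. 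The correspondence between the sign of the modes and the choice of $\Psi_+$ versus $\Psi_-$ (so that the ``$-$'' case pairs with $\mathcal{E}_{p,n}^{+}$ and the ``$+$'' case with $\mathcal{E}_{p,n}^{-}$) is forced by $\eta$ interchanging $e_{i,k}\leftrightarrow e_{i,-k}$, and must be tracked throughout.

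The main obstacle is the identification in the second step: verifying that the color-symmetrized $n$-fold shuffle power of the degree-$\delta$ generator, together with the correlation factor coming from \cite{Wen}'s realization of the exponential, reproduces precisely the rational function under $\mathrm{Sym}_{n\delta}$ in \eqref{Epn}, including the non-$\omega$ ratio factor and the exact per-$a$ term. This is a delicate rational-function bookkeeping problem: one must reconcile the additive structure of the Heisenberg modes with the multiplicative shuffle-product structure, confirm that the poles permitted by the pole and wheel conditions match, and ensure that no spurious symmetrization factors survive. The subsequent $\eta$-twist and scalar tracking are routine by comparison, being governed entirely by Proposition~\ref{EtaShuff} and the normalizations already fixed in \eqref{MikiPerp}.
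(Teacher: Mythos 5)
Your overall route is the same as the paper's: rewrite the left side of \eqref{MikiPerp} using $\varsigma=\eta\varsigma^{-1}\eta$, realize the resulting exponential in $\varsigma^{-1}(h^\perp_{p,\mp k})$, $\varsigma^{-1}(h^\perp_{p+1,\mp k})$ on the shuffle side, and then push $\eta$ through with Proposition~\ref{EtaShuff}. Steps one and three are fine. The problem is your second step, which is where the actual content lives and where your argument as written does not work.

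You claim that, since the modes of a single sign commute, the coefficient of $z^{\mp n}$ in the exponential is ``an $n$-fold shuffle product of the degree-$\delta$ building block, color-symmetrized over $\Sym_{n\delta}$.'' This is false as a general principle: the $z^{\mp n}$-coefficient of $\exp\bigl(\sum_{k>0}c_kA_kz^{\mp k}\bigr)$ is a sum over partitions $\mu\vdash n$ of products $\prod_j A_{\mu_j}$, and the terms with $\mu_j\ge 2$ involve the higher modes $\varsigma^{-1}(h^\perp_{i,k})$, $k\ge 2$, which are degree-$k\delta$ shuffle elements not expressible through the degree-$\delta$ one. Indeed, if the coefficient really were the $n$-th shuffle power of the $n=1$ element, the symmetrand in \eqref{Epn} would contain only the mixing terms $\omega_{i,j}$ for $a<b$; the extra ratio $\frac{z_{p+1,a}-q^{-1}z_{p,b}}{z_{p+1,a}-tz_{p,b}}$ is exactly the correction recording the contribution of those higher modes, so its presence already refutes the mechanism you propose. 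The collapse of the full partition sum into the single closed-form symmetrization \eqref{Hpn} is a genuine theorem, and the paper does not derive it: it quotes from \cite{Wen} the identities
\begin{align*}
\exp\Bigl[(\qqq-\qqq^{-1})^{-1}\sum_{k>0}\bigl(\varsigma^{-1}(h_{p,\pm k}^\perp)-t^{\mp k}\varsigma^{-1}(h_{p+1,\pm k}^\perp)\bigr)\tfrac{q^{\mp k}(-z)^{\pm k}}{k}\Bigr]
=\sum_{n\ge 0}(\text{scalar})\,\Psi_\pm(\mathcal{H}^\pm_{p,n})z^{\pm n},
\end{align*}
with $\mathcal{H}^\pm_{p,n}$ given explicitly in \eqref{Hpn}, and only then applies Proposition~\ref{EtaShuff} (keeping Remark~\ref{PerpRem} in mind for the sign of $h^\perp$ versus $b^\perp$, and tracking the power $\ddd^{-rn(n-1)}$ produced by the mixing terms before $\ddd\mapsto\ddd^{-1}$). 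You partly acknowledge this by attributing the extra ratio to ``\cite{Wen}'s precise shuffle formula'' and by flagging the identification as the main obstacle, but you neither supply the argument nor invoke the precise statement of \cite{Wen} that would close it. Either cite that generating-series identity outright, as the paper does, or replace your ``$n$-fold shuffle power'' heuristic with an actual proof of the collapse; as it stands the step is a gap.
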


\begin{rem}
    Note that prior to taking $\varsigma$, the series on the right-hand-sides are the ones appearing in Lemma \ref{FockEigen}.
\end{rem}

\begin{proof}
In \cite{Wen}, it was shown that
\begin{align*}
&\exp\left[(\qqq-\qqq^{-1})^{-1}\sum_{k>0}\left(\varsigma^{-1}(h_{p, k}^\perp)-t^{-k}\varsigma^{-1}(h_{p+1, k}^\perp)\right)\frac{q^{-k} (-z)^{ k}}{k} \right]\\
&= \sum_{n=0}^\infty\frac{(-1)^{nr}(-q)^{-n}t^{nr}(1-q^{-1}t^{-1})^{nr}}{\qqq^n\prod_{a=1}^n(1-q^{-a}t^{-a})}\Psi_+(\mathcal{H}_{p,n}^+)z^n
\end{align*}
and
\begin{align*}
&\exp\left[(\qqq-\qqq^{-1})^{-1}\sum_{k>0}\left(\varsigma^{-1}(h_{p,- k}^\perp)-t^{ k}\varsigma^{-1}(h_{p+1,- k}^\perp)\right)\frac{q^{ k} (-z)^{- k}}{k} \right]\\
&= \sum_{n=0}^\infty\frac{(-q)^n(1-qt)^{nr}}{\qqq^n\prod_{a=1}^n(1-q^{-a}t^{-a})}\Psi_-(\mathcal{H}_{p,n}^-)z^{-n}
\end{align*}
where
\begin{equation}
\begin{aligned}
\mathcal{H}^+_{p,n}&= \mathrm{Sym}_{n\delta}\left( \prod_{1\le a<b\le n}\left\{\frac{t^{-1}z_{p+1,b}-z_{p,a}}{qz_{p+1,b}-z_{p,a}}\prod_{i,j\in I}\omega_{i,j}\left( z_{i,a},z_{j,b} \right)\right\}\right.\\
&\times \left.\prod_{a=1}^n\left\{\left( \frac{z_{p,a}}{z_{0,a}}-t^{-1}\frac{z_{p+1,a}}{z_{0,a}} \right)\prod_{i\in I}z_{i,a}  \right\} \right)\\
\mathcal{H}^-_{p,n}&= \mathrm{Sym}_{n\delta}\left( \prod_{1\le a<b\le n}\left\{\frac{t^{-1}z_{p+1,b}-z_{p,a}}{qz_{p+1,b}-z_{p,a}}\prod_{i,j\in I}\omega_{i,j}\left( z_{i,a},z_{j,b} \right)\right\}\right.\\
&\times \left.\prod_{a=1}^n\left\{\left( t\frac{z_{0,a}}{z_{p+1,a}}-\frac{z_{0,a}}{z_{p,a}} \right)\prod_{i\in I}z_{i,a}  \right\} \right).
\end{aligned}
\label{Hpn}
\end{equation}
It is helpful to recall Remark \ref{PerpRem} when making comparisons with \cite{Wen}.
The result follows from applying Proposition \ref{EtaShuff} to (\ref{MikiPerp}).
We note that the mixing terms in \ref{Hpn} contribute a power of $\ddd^{-r n(n-1)}$ before inverting $\ddd$.
\end{proof}

\subsection{Action on the vertex representation}
For $F\in\mathcal{S}^+$ and $G\in\mathcal{S}^-$, we will present a way to compute the actions of $\rho^+_{\vec{c}}(\Psi_+(F))$ and $\rho_{\vec{c}}^-(\Psi_-(G))$.
Our approach was inspired by Lemma 3.2 of \cite{FJM} in the case $r=1$.
\subsubsection{Matrix elements}
The following is a consequence of computations similar to those done for Proposition \ref{NormalOrder}:
\begin{prop}
For $v_1,v_2\in W$, we have
\begin{equation}
\begin{aligned}
\left\langle v_1\left| \overset{\curvearrowright}{\prod_{i=0}^{r-1}}\overset{\curvearrowright}{\prod_{a=1}^{k_i}} E_{i}(z_{i,a})\right|v_2\right\rangle
&=\frac{f\left( \left\{ z_{i,a} \right\}_{i\in I}^{1\le a\le k_i} \right)
\displaystyle\prod_{i\in I}\,\prod_{1\le a<b\le k_i}\left( z_{i,a}-z_{i,b} \right)\left( z_{i,a}-q^{-1}t^{-1}z_{i,b} \right)}
{\displaystyle
\prod_{\substack{1\le a\le k_0\\1\le b\le k_{r-1}}}\left( z_{0,a}-t^{-1}z_{r-1,b} \right)
\prod_{i\in I\backslash\{r-1\}}\prod_{\substack{1\le a \le k_i\\1\le b\le k_{i+1}}}\left(z_{i,a}-q^{-1}z_{i+1,b}\right)}
\end{aligned}
\label{EMatrix}
\end{equation}
for some Laurent polynomial $f$, where the rational functions are expanded into Laurent series assuming
\begin{equation}
|z_{i,a}|=1,\, |q|>1,\, |t|>1.
\label{ERegion}
\end{equation}
On the other hand,
\begin{equation}
\begin{aligned}
\left\langle v_1\left| \overset{\curvearrowleft}{\prod_{i=0}^{r-1}}\overset{\curvearrowleft}{\prod_{a=1}^{k_i}} F_{i}(z_{i,a})\right|v_2\right\rangle
&=\frac{g\left( \left\{ z_{i,a} \right\}_{i\in I}^{1\le a\le k_i} \right)
\displaystyle\prod_{i\in I}\,\prod_{1\le a<b\le k_i}\left( z_{i,b}-z_{i,a} \right)\left( z_{i,b}-qtz_{i,a} \right)}
{\displaystyle
\prod_{\substack{1\le a\le k_{r-1}\\1\le b\le k_{0}}}\left( z_{r-1,b}-tz_{0,a} \right)
\prod_{i\in I\backslash\{0\}}\prod_{\substack{1\le b \le k_i\\1\le a\le k_{i-1}}}\left(z_{i,b}-qz_{i-1,a}\right)}
\end{aligned}
\label{FMatrix}
\end{equation}
for some Laurent polynomial $g$, where the rational functions are now expanded into Laurent series assuming
\begin{equation}
|z_{i,a}|=1,\, |q|<1,\, |t|<1.
\label{FRegion}
\end{equation}
\end{prop}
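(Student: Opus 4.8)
The plan is to reduce both identities to the same boson (Wick) normal-ordering computation already carried out in Proposition~\ref{NormalOrder}, now applied to the color-outer products $\overset{\curvearrowright}{\prod_{i=0}^{r-1}}\overset{\curvearrowright}{\prod_{a=1}^{k_i}}E_i(z_{i,a})$ and $\overset{\curvearrowleft}{\prod_{i=0}^{r-1}}\overset{\curvearrowleft}{\prod_{a=1}^{k_i}}F_i(z_{i,a})$ rather than to the cyclic products appearing there, and with arbitrary $k_\bullet$. First I would write each current through \eqref{ECurrentDef} (resp.\ \eqref{FCurrentDef}) as a product of a creation exponential, an annihilation exponential, and the factors $e^{\pm\alpha_i}z^{\pm(1+H_{i,0})}$, and then move every annihilation exponential rightward past each creation exponential standing to its right. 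Each such transposition contributes a scalar two-point factor $\exp\bigl([\,\log A_i(z),\log C_j(w)\,]\bigr)$, where $A_i$ is the annihilation part of the left current and $C_j$ the creation part of the right current; this factor is determined entirely by the single relation $[p_k[X^{(a)}]^\perp,p_m[X^{(b)}]]=k\,\delta_{k,m}\delta_{a,b}$.

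Evaluating these contractions from \eqref{ECurrentDef}, one sees that $[\log A_i,\log C_j]$ vanishes unless the colors overlap, i.e.\ unless $j\equiv i$ or $j\equiv i\pm 1$ (these three are distinct since $r\ge 3$). A same-color pair $(z_{i,a},z_{i,b})$ yields $(1-z_{i,b}/z_{i,a})(1-q^{-1}t^{-1}z_{i,b}/z_{i,a})$, which after clearing the monomial $z_{i,a}^{-2}$ is exactly the numerator factor $(z_{i,a}-z_{i,b})(z_{i,a}-q^{-1}t^{-1}z_{i,b})$ of \eqref{EMatrix}; an adjacent pair yields a single simple pole of the form $(1-t^{-1}z_{j,b}/z_{i,a})^{-1}$ or $(1-q^{-1}z_{j,b}/z_{i,a})^{-1}$ according to whether $j\equiv i+1$ or $j\equiv i-1$. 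Because the product is color-outer, for each adjacent color-pair every variable of the earlier block stands to the left of every variable of the later block, so these simple poles assemble---after clearing monomials---into precisely the denominator of \eqref{EMatrix}, with the wrap-around pair $(0,r-1)$ singled out. The leftover factors $e^{\pm\alpha_i}$ multiply to a sign times $e^{\pm\sum_i k_i\alpha_i}$, which merely imposes a weight-matching selection rule on $\langle v_1|\,\cdot\,|v_2\rangle$, while the operators $z^{H_{i,0}}$, obeying $z^{H_{i,0}}e^{\alpha_j}=z^{a_{i,j}}\,(\text{scalar})\,e^{\alpha_j}z^{H_{i,0}}$, contribute only integer powers of the $z_{i,a}$; none of these produce poles. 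The $F$-case is identical with the opposite order $\overset{\curvearrowleft}{\prod}$, the contractions from \eqref{FCurrentDef} giving instead $(z_{i,b}-z_{i,a})(z_{i,b}-qtz_{i,a})$ in the numerator together with the adjacent poles of \eqref{FMatrix}.

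After normal ordering, the operator sandwiched between $v_1$ and $v_2$ is a single normal-ordered exponential in the $p_k[X^{(i)}]$ and $p_k[X^{(i)}]^\perp$. Since any fixed $v_2\in W$ is a polynomial of finite degree in the creation variables $h_{i,-k}\leftrightarrow p_k[X^{(i)}]$, the annihilation exponential acting on $v_2$ terminates after finitely many terms and produces a Laurent polynomial in the $z_{i,a}$; dually, pairing the creation exponential against $\langle v_1|$ also produces a Laurent polynomial. Multiplying this by the monomial and scalar contributions of the previous step gives the single Laurent polynomial $f$ (resp.\ $g$) asserted in the statement, and the vanishing of the numerator $f$ on the relevant diagonals will later match the wheel conditions of \eqref{PoleCond}. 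The expansion regions \eqref{ERegion} and \eqref{FRegion} are exactly those, \eqref{NormalEExpand} and \eqref{NormalFExpand}, in which the geometric series defining each two-point contraction converges, so the displayed rational functions are the correct meromorphic continuations of the formal matrix elements. I expect the only delicate point to be the bookkeeping in the second paragraph: tracking, around the cyclic chain of colors, which adjacent contractions carry $q^{-1}$ versus $t^{-1}$ (and $q$ versus $t$ for $F$), and verifying that the color-outer ordering distributes these simple poles so as to reproduce exactly the asymmetric denominators of \eqref{EMatrix} and \eqref{FMatrix}.
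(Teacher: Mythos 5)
Your strategy is exactly the one the paper intends: the proposition is asserted there as a consequence of computations similar to those for Proposition~\ref{NormalOrder}, and your Wick/normal-ordering reduction, with two-point contractions read off from $p_k^\perp[X^{(a)}]\,p_m[X^{(b)}]=k\delta_{k,m}\delta_{a,b}$, is precisely that computation. Your contraction rules are also correct and agree with the cross-block factors in \eqref{ENormalOrder}: for a left current of color $i$ at $z_{i,a}$ and a right current of color $j$ at $z_{j,b}$, one gets the pole $(1-t^{-1}z_{j,b}/z_{i,a})^{-1}$ when $j\equiv i+1$ and $(1-q^{-1}z_{j,b}/z_{i,a})^{-1}$ when $j\equiv i-1$, and the same-color and Laurent-polynomiality arguments are fine.

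The gap is exactly the step you defer as ``delicate bookkeeping,'' and when carried out it does not land where you assert. In the color-outer order $0,1,\dotsc,r-1$, each pair $(i,i+1)$ with $i\le r-2$ has the color-$i$ block on the left, so by your own rule (right color $\equiv$ left color $+1$) it contributes poles $(z_{i,a}-t^{-1}z_{i+1,b})$, while the wrap-around pair has color $0$ on the left of color $r-1\equiv 0-1$ and hence contributes $(z_{0,a}-q^{-1}z_{r-1,b})$. This is the denominator of \eqref{EMatrix} with $q^{-1}$ and $t^{-1}$ interchanged, and the analogous interchange occurs for \eqref{FMatrix}; one can confirm the contraction rules independently from the relation $E_i(z)E_{i+1}(w)=\frac{\ddd(q^{-1}z-w)}{z-t^{-1}w}E_{i+1}(w)E_i(z)$, which is forced by the toroidal relations and Proposition~\ref{DenomRel}. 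So you cannot conclude that the poles ``assemble into precisely the denominator of \eqref{EMatrix}'': either you must find an error in your contraction rules (there is none---they match Proposition~\ref{NormalOrder}), or you must flag that the displayed denominators should be read with $q$ and $t$ swapped. You should perform this check explicitly rather than leave it as expected bookkeeping; note that the discrepancy is harmless for the sequel, since under $|q|>1,|t|>1$ (resp. $|q|<1,|t|<1$) all of these simple poles sit on the same side of the unit circle in either version, which is all that Lemma~\ref{ConstTermLem} and the later residue computations actually use.
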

Notice that $\omega_{i,i+1}(z_{i,a},z_{i+1,b})^{-1}$ and $\omega_{i,i-1}(z_{i,a},z_{i-1,b})^{-1}$ are rational functions that we can also expand according to (\ref{ERegion}) and (\ref{FRegion}).
Thus, we can make sense of matrix elements of products of currents multiplied by these inverted mixing terms.
We do not claim that such products yield well-defined series of operators---just that their matrix elements make sense.
The following is a consequence of the toroidal relations:
\begin{prop}\label{DenomRel}
When computing matrix elements, we have the relations
\begin{align}
\label{EMatRel1}
\frac{E_{i}(z)E_i(w)}{\omega_{i,i}(z,w)}&=\frac{E_{i}(w)E_i(z)}{\omega_{i,i}(w,z)}\\
\label{EMatRel2}
\frac{E_{i}(z)E_{i+1}(w)}{\omega_{i,i+1}(z,w)}&= \frac{E_{i+1}(w)E_{i}(z)}{\omega_{i+1,i}(w,z)}\\
\nonumber
\frac{F_{i}(z)F_i(w)}{\omega_{i,i}(w,z)}&=\frac{E_{i}(w)E_i(z)}{\omega_{i,i}(z,w)}\\
\nonumber
\frac{F_{i}(z)F_{i+1}(w)}{\omega_{i+1,i}(w,z)}&= \frac{F_{i+1}(w)F_{i}(z)}{\omega_{i,i+1}(z,w)}.
\end{align}
\end{prop}

\begin{proof}
   We will only prove the statements for $E_i(z)$.
   Applying $\rho_{\vec{c}}^+$ to the relations from \ref{TorDef} yields
   \[
   E_i(z)E_j(w)=g_{i,j}(\ddd^{m_{i,j}}z/w)E_j(w)E_i(z)
   \]
   Strictly speaking, when unpacking this relation, we should clear denominators.
   We then obtain
   \begin{align}
    \label{Eii}
       \left(\frac{z}{w}-\qqq^2\right)E_i(z)E_i(w)&=\left(\qqq^2\frac{z}{w}-1\right) E_i(w)E_i(z)\\
    \label{Eii+1}
       \left(\ddd^{-1}\frac{z}{w}-\qqq^{-1}\right)E_{i}(z)E_{i+1}(w)&=\left(\qqq^{-1}\ddd^{-1}\frac{z}{w}-1\right)E_{i+1}(w)E_i(z)
   \end{align}
   Since $\omega_{i,i}(z,w)^{-1}=(z-\qqq^2w)(z-w)$, (\ref{Eii}) directly yields (\ref{EMatRel1}).
   On the other hand, multiplying both sides of (\ref{Eii+1}) by $-\qqq w$ gives us
   \[
   \omega_{i+1,i}(w,z)E_i(z)E_{i+1}(w)=\omega_{i,i+1}(z,w)E_{i+1}(w)E_i(z)
   \]
   This implies (\ref{EMatRel2}).
\end{proof}

\subsubsection{Constant term formula}
For $F\in\mathcal{S}^+_{k_\bullet}$ and $G\in\mathcal{S}^-_{k_\bullet}$, consider the rational functions:
\begin{align*}
&\frac{\displaystyle F\times\left\langle v_1\left| \overset{\curvearrowright}{\prod_{i=0}^{r-1}}\overset{\curvearrowright}{\prod_{a=1}^{k_i}} E_{i}(z_{i,a})\right|v_2\right\rangle}
{\displaystyle\left(\prod_{i\in I}\prod_{1\le a<a'\le k_i}\omega_{i,i}(z_{i,a},z_{i,a'})\right)
\left(\prod_{0\le i< j\le r-1}\,\prod_{\substack{1\le a\le k_i\\1\le b\le k_j}}\omega_{i,j}(z_{i,a},z_{j,b})\right)}\\
&\frac{\displaystyle G\times\left\langle v_1\left| \overset{\curvearrowleft}{\prod_{i=0}^{r-1}}\overset{\curvearrowleft}{\prod_{a=1}^{k_i}} F_{i}(z_{i,a})\right|v_2\right\rangle}
{\displaystyle\left(\prod_{i\in I}\prod_{1\le a<a'\le k_i}\omega_{i,i}(z_{i,a},z_{i,a'})\right)
\left(\prod_{0\le i< j\le r-1}\,\prod_{\substack{1\le a\le k_i\\1\le b\le k_j}}\omega_{i,j}(z_{i,a},z_{j,b})\right)}.
\end{align*}
We can expand these rational functions into Laurent series according to the assumptions (\ref{ERegion}) and (\ref{FRegion}), respectively.
For any Laurent series, we denote by $\{-\}_0$ this operation of taking constant terms.
\begin{lem}\label{ConstTermLem}
For $F\in\mathcal{S}^+_{k_\bullet}$ and $G\in\mathcal{S}^-_{k_\bullet}$, we have
\begin{equation}
\rho^+_{\vec{c}}(\Psi_+(F))
=\frac{1}{k_\bullet!}\left\{\frac{\displaystyle F\times\overset{\curvearrowright}{\prod_{i=0}^{r-1}}\,\overset{\curvearrowright}{\prod_{a=1}^{k_i}}E_i(z_{i,a})}
{\displaystyle\left(\prod_{i\in I}\prod_{1\le a<a'\le k_i}\omega_{i,i}(z_{i,a},z_{i,a'})\right)
\left(\prod_{0\le i< j\le r-1}\,\prod_{\substack{1\le a\le k_i\\1\le b\le k_j}}\omega_{i,j}(z_{i,a},z_{j,b})\right)}\right\}_0
\label{ConstantTermE}
\end{equation}
where the right-hand side is expanded according to (\ref{ERegion}) and
\begin{equation}
\rho^-_{\vec{c}}(\Psi_-(G))
= \frac{1}{k_\bullet!}\left\{\frac{\displaystyle G\times\overset{\curvearrowleft}{\prod_{i=0}^{r-1}}\,\overset{\curvearrowleft}{\prod_{a=1}^{k_i}}F_i(z_{i,a})}
{\displaystyle\left(\prod_{i\in I}\prod_{1\le a<a'\le k_i}\omega_{i,i}(z_{i,a},z_{i,a'})\right)
\left(\prod_{0\le i< j\le r-1}\,\prod_{\substack{1\le a\le k_i\\1\le b\le k_j}}\omega_{i,j}(z_{i,a},z_{j,b})\right)}\right\}_0
\label{ConstantTermF}
\end{equation}
where the right-hand side is expanded according to (\ref{FRegion}).
In particular, the expressions on the right-hand side are well-defined operators on $W$.
\end{lem}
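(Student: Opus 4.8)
The plan is to recognize both sides of \eqref{ConstantTermE} as algebra homomorphisms $(\mathcal{S}^+,\star)\to\End(W)$ that agree on a generating set. Write $\Phi^+(F)$ for the right-hand side of \eqref{ConstantTermE}, the constant term (in the region \eqref{ERegion}) of $F$ times the ordered product $\overset{\curvearrowright}{\prod}E_i(z_{i,a})$ divided by the diagonal and upper-triangular mixing terms. Since $\Psi_+$ is an algebra isomorphism (Theorem \ref{ShuffTor}) and $\rho^+_{\vec c}$ restricts to an algebra homomorphism $\ddot U^+\to\End(W)$, the composite $\rho^+_{\vec c}\circ\Psi_+$ is an algebra homomorphism; the goal is to show $\Phi^+=\rho^+_{\vec c}\circ\Psi_+$, and then \eqref{ConstantTermF} follows by the symmetric argument.

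First I would treat the generators. The classes $\{z_{i,1}^n\}_{i\in I,\,n\in\Z}$ generate $\mathcal{S}^+$ under $\star$, because their images $e_{i,n}=\Psi_+(z_{i,1}^n)$ generate $\ddot U^+$ and $\Psi_+$ is an isomorphism. For a single such generator, $k_\bullet$ has one variable of color $i$, there are no mixing terms, and $\Phi^+(z_{i,1}^n)=\{z^n E_i(z)\}_0=\rho^+_{\vec c}(e_{i,n})=\rho^+_{\vec c}(\Psi_+(z_{i,1}^n))$, using $E_i(z)=\sum_k\rho^+_{\vec c}(e_{i,k})z^{-k}$. Thus the two maps agree on generators.

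The crux is to show that $\Phi^+$ is itself multiplicative, i.e. $\Phi^+(F\star G)=\Phi^+(F)\,\Phi^+(G)$; granting this, $\Phi^+$ is an algebra homomorphism agreeing with $\rho^+_{\vec c}\circ\Psi_+$ on generators, hence equal to it. To prove multiplicativity I would compute the operator composition $\Phi^+(F)\Phi^+(G)$: it is a constant term over the disjoint variable sets of $F$ and $G$, with the $G$-block of currents standing to the right of the $F$-block. Two things must then be reconciled with the single constant-term integrand of $F\star G$. First, to pass from the concatenated product $\overset{\curvearrowright}{\prod}_{F}E\cdot\overset{\curvearrowright}{\prod}_{G}E$ to the fully ordered product $\overset{\curvearrowright}{\prod}_{F\sqcup G}E$, I would commute the $G$-block currents leftward past the $F$-block using Proposition \ref{DenomRel}; each interchange converts $\omega_{i,j}$ into $\omega_{j,i}$, and the accumulated ratios are exactly the cross mixing terms $\prod\omega_{i,j}(z_{i,a},z_{j,b})$ ($F$-variable against $G$-variable) that distinguish the shuffle-product integrand from the naive product. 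Second, because the constant term is insensitive to relabeling variables of a common color, replacing the fixed interleaving by the full $\mathrm{Sym}_{k_\bullet+l_\bullet}$ only multiplies by $(k_\bullet+l_\bullet)!$, which together with the prefactor $\tfrac1{(k_\bullet+l_\bullet)!}$ and the $\tfrac1{k_\bullet!\,l_\bullet!}$ in the definition of $\star$ yields the claimed normalization.

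The main obstacle I anticipate is justifying the interchange of the iterated constant term (inherent in composing the two operators, each expanded in its own region) with the single simultaneous constant term in the region \eqref{ERegion}: composition of operators on $W$ expands the cross-terms in a prescribed nested fashion, and one must check this coincides with the expansion \eqref{ERegion} used to define $\Phi^+(F\star G)$, i.e. that no pole is crossed when the two operations are merged. Here the pole and wheel conditions defining $\mathcal{S}^+$ are decisive: they guarantee that the potential spurious singularities at $z_{i,a}/z_{i\pm1,b}\to\qqq\ddd^{\pm1}$ are absent, so the normal-ordered integrand supplied by Proposition \ref{NormalOrder} is a genuine Laurent series in region \eqref{ERegion} and the two constant-term prescriptions agree. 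This simultaneously establishes the closing assertion that the right-hand sides are well-defined operators on $W$. The argument for \eqref{ConstantTermF} is identical after replacing $\mathcal{S}^+$ by $\mathcal{S}^-=\mathcal{S}^{\mathrm{op}}$, the currents $E_i$ by $F_i$, the ordered products $\overset{\curvearrowright}{\prod}$ by $\overset{\curvearrowleft}{\prod}$, and the region \eqref{ERegion} by \eqref{FRegion}.
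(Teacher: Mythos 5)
Your proposal is correct in substance, and its computational core --- reordering currents with Proposition \ref{DenomRel} so that the accumulated ratios of mixing terms cancel, then using the insensitivity of constant terms to relabeling of same-colored variables to absorb the symmetrization against the factor $\frac{1}{k_\bullet!}$ --- is exactly the paper's. The organization differs, though: the paper does not prove that the constant-term map $\Phi^\pm$ is multiplicative in general. Instead it observes that $\mathcal{S}^\pm$ is spanned by the shuffle monomials $z_{0,1}^{n(0,1)}\star\cdots\star z_{r-1,1}^{n(r-1,k_{r-1})}$, whose images under $\Psi_\pm$ are the corresponding products of modes $e_{i,n}$ (resp.\ $f_{i,n}$), and verifies the identity on these monomials by comparing \emph{matrix elements}. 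This buys something concrete: a product of modes is literally a multi-variable coefficient extraction from a single ordered product of currents, so the paper only ever takes one simultaneous constant term of one normal-ordered expression and never has to compose two constant-term operators. Your route, by contrast, must identify the composition $\Phi^+(F)\Phi^+(G)$ (an iterated constant term, with the cross-contractions between the two blocks a priori expanded in the nested region $|z^F|\gg|z^G|$) with the single constant term in region \eqref{ERegion}; you correctly flag this as the main obstacle, but your appeal to the wheel conditions is not quite the right justification --- what actually makes the two prescriptions agree is that the cross-block contraction factors produced by normal ordering, e.g.\ $(1-t^{-1}z_{i+1,b}/z_{i,a})^{-1}$ and $(1-q^{-1}z_{i-1,b}/z_{i,a})^{-1}$, have the same geometric-series expansion on the torus $|z_{i,a}|=1$ with $|q|,|t|>1$ as in the nested region, so no pole separates the two contours. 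With that point repaired (or with your argument restated at the level of matrix elements, as the paper does), your homomorphism-plus-generators argument is a valid, mildly more general packaging of the same proof.
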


\begin{proof}
A consequence of Theorem \ref{ShuffTor} and the toroidal relations is that $\mathcal{S}^\pm$ are both spanned by shuffle monomials
\[
z_{0,1}^{n(0,1)}\star z_{0,1}^{n(0,2)}\star\cdots\star z_{0,1}^{n(0,k_0)}\star z_{1,1}^{n(1,1)}\star\cdots\star z_{r-1,1}^{n(r-1,k_{r-1})}
\]
since
\begin{align*}
\Psi_+\left( z_{0,1}^{n(0,1)}\star\cdots\star z_{r-1,1}^{n(r-1,k_{r-1})} \right)&= e_{0,n(0,1)}\cdots e_{r-1,n(r-1,k_{r-1})}\\
\Psi_-\left(z_{0,1}^{n(0,1)}\star\cdots\star z_{r-1,1}^{n(r-1,k_{r-1})} \right)&= f_{0,n(0,1)}\cdots f_{r-1,n(r-1,k_{r-1})}.
\end{align*}
We will check that the matrix elements coincide for these monomials, from which the lemma follows.
For the `$+$' case, the proposed formula gives us
\begin{align*}
&\frac{1}{k_\bullet!}\left\{
\displaystyle\mathrm{Sym}_{k_\bullet}\left(\left\{\prod_{i\in I}\prod_{a=1}^{k_i}z_{i,a}^{n(i,a)}
\,\prod_{1\le a<a'\le k_i}\omega_{i,i}(z_{i,a},z_{i,a'})\right\}
\left\{ \prod_{0\le i<j\le r-1}\,\prod_{\substack{1\le a\le k_i\\1\le b\le k_j}}\omega_{i,j}(z_{i,a},z_{j,b}) \right\}\right)\right.\\
&\times\left.\frac{\displaystyle\left\langle v_1\left| \overset{\curvearrowright}{\prod_{i=0}^{r-1}}\overset{\curvearrowright}{\prod_{a=1}^{k_i}} E_{i}(z_{i,a})\right|v_2\right\rangle}
{\displaystyle\left(\prod_{i\in I}\prod_{1\le a<a'\le k_i}\omega_{i,i}(z_{i,a},z_{i,a'})\right)
\left(\prod_{0\le i< j\le r-1}\,\prod_{\substack{1\le a\le k_i\\1\le b\le k_j}}\omega_{i,j}(z_{i,a},z_{j,b})\right)}\right\}_0.
\end{align*}
Using (\ref{EMatRel1}) to swap variables, we can move both the matrix element and the mixing terms inside the symmetrization, where the mixing terms will all cancel out.
Notice that taking the constant term is insensitive to the labeling of the variables, and thus the constant terms of all the summands of the symmetrization are equal.
The end result is
\[
\left\{\prod_{i\in I}\prod_{a=1}^{k_i}z_{i,a}^{n(i,a)}
\left\langle v_1\left| \overset{\curvearrowright}{\prod_{i=0}^{r-1}}\overset{\curvearrowright}{\prod_{a=1}^{k_i}} E_{i}(z_{i,a})\right|v_2\right\rangle\right\}_0
=\left\langle v_1\left|\rho_{\vec{c}}^{+}\left(e_{0,n(0,1)}\cdots e_{r-1,n(r-1,k_{r-1})}  \right)\right|v_2\right\rangle.
\]
The `$-$' case is similar.
\end{proof}

\section{Difference operators}\label{Difference}

\subsection{Setup} Now, we will fix $\alpha\in Q$, which also fixes a core.
The previous two sections were concerned with symmetric functions in infinitely many variables.
Here, we will shift to working with finitely many variables 
\[
\left\{x_{l}^{(i)}\right\}_{i\in I}^{1\le l\le N_i}=X_{N_\bullet}.
\]
We will impose the compatibility \eqref{Compatibility} between $\alpha$ and the vector $N_\bullet$ recording the number of variables of each color.
Our approach for finding difference operators is straightforward: we use Lemma \ref{ConstTermLem} to compute the action of $\rho^\pm_{\vec{c}}(\Psi_\pm(\mathcal{E}_{p,n}^\pm))$ on a function $f\left[ X_{N_\bullet} \right]$.
We assume that $n\le N_i$ for all $i\in I$.

\subsubsection{Finitized vertex operators}
Recall that $\Lambda^I_{N_\bullet}$ denotes the tensor product over $i\in I$ of rings of symmetric polynomials in $N_i$ variables and $\pi_{N_\bullet}: \Lambda^I\to \Lambda^I_{N_\bullet}$ is the natural projection. 
We will abuse notation and also denote the map $(\pi_{N_\bullet}\otimes 1):\Lambda^I\otimes\K\{Q\}\rightarrow\Lambda^I_{N_\bullet}\otimes \K\{Q\}$ by $\pi_{N_\bullet}$.
Recall Proposition \ref{NormalOrder}.

\begin{rem}
    The action (\ref{ZHi}) of the operator $z^{H_i,0}$ includes a power of $\ddd$.
    In Proposition \ref{NormalOrder} and throughout this paper, we will be working with products of currents that have an equal number of $E_i(z)$ for each $i\in I$ and likewise for $F_i(z)$.
    In this setup, the powers of $\ddd$ will cancel.
    Namely, because $m_{i,i\pm 1}=-m_{i\pm 2,i\pm 1}$, we have that the power of $\ddd$ from the action of $z_{i,a}^{H_{i,0}}$ will be canceled by those from the action of $z_{i\pm 2,a}^{H_{i\pm 2,0}}$.
    Thus, we we will abuse notation and omit the $\ddd$ from the action of $z^{H_{i,0}}$.
    Applying the compatibility condition \eqref{Compatibility}, this leaves
    \[
    z^{H_{i,0}}(e^\alpha)=z^{( \alpha_i^\vee,\alpha)}=z^{N_{i}-N_{i-1}}.
    \]
    \end{rem}
\begin{prop}\label{FiniteVert}
    Let $f\in\Lambda^I$ be factored according to color:
    \[
    f= \prod_{i\in I} f_i[X^{(i)}],
    \]
    where $f_i\in\Lambda$ for all $i\in I$.
    For
    \begin{equation}
    |z|=1,\, |q|>1,\, |t|>1,\, |x_{l}^{(j)}|<1,
    \label{EExpand0}
    \end{equation}
    the vertex operators from (\ref{ENormalOrder}) act on $f$ such that upon finitization, we have:
\begin{equation}
\label{FinEVertex}
    \begin{aligned}
    &\pi_{N_\bullet} \left(\exp\left[ \sum_{k>0}\left( p_k[X^{(i)}]-t^{-k}p_k[X^{(i-1)}] \right) \frac{z^k}{k}\right]\right.\\
    &\times\left.\exp\left[\sum_{k>0}\left( -p_k[X^{(i)}]^\perp+q^{-k}p_k[X^{(i-1)}]^\perp \right)\frac{z^{-k}}{k}  \right]z^{H_{i,0}} (f\otimes e^\alpha)\right) \\
    &=
    \frac{\displaystyle\prod_{l=1}^{N_{i-1}}\left(z^{-1}-t^{-1}x_{l}^{(i-1)}\right)}{\displaystyle\prod_{l=1}^{N_i} \left(z^{-1}-x_{l}^{(i)}\right)}
    \left( f_i\left[x_{\bullet}^{(i)}-z^{-1}\right]f_{i-1}\left[x_{\bullet}^{(i-1)}+q^{-1}z^{-1}\right]
    \prod_{\substack{j\in I\\j\not=i,i-1}} f_j\left[x_{\bullet}^{(j)}\right]\otimes e^\alpha\right).
    \end{aligned}
\end{equation}
    On the other hand, for
    \begin{equation*}
    |z|=1,\, |q|<1,\, |t|<1,\, |x_{l}^{(j)}|<1,
    \label{FExpand0}
    \end{equation*}
    the vertex operators from (\ref{FNormalOrder}) act as:
\begin{equation}
\label{FinFVertex}
    \begin{aligned}
    &\pi_{N_\bullet} \left(\exp\left[ \sum_{k>0}\left( -t^kp_k[X^{(i)}]+p_k[X^{(i-1)}] \right) \frac{z^k}{k}\right]\right.\\
    &\times \left.\exp\left[\sum_{k>0}\left( q^kp_k[X^{(i)}]^\perp-p_k[X^{(i-1)}]^\perp \right)\frac{z^{-k}}{k}  \right]z^{-H_{i,0}} (f\otimes e^\alpha)\right) \\
    &=
    \frac{\displaystyle\prod_{l=1}^{N_{i}}\left(z^{-1}-tx_{l}^{(i)}\right)}
    {\displaystyle\prod_{l=1}^{N_{i-1}} \left(z^{-1}-x_{l}^{(i-1)}\right)}
    \left( f_i\left[x_{\bullet}^{(i)}+qz^{-1}\right]f_{i-1}\left[x_{\bullet}^{(i-1)}-z^{-1}\right]
    \prod_{\substack{j\in I\\j\not=i,i-1}} f_j\left[x_{\bullet}^{(j)}\right]\otimes e^\alpha\right).
    \end{aligned}
\end{equation}
\end{prop}
\begin{proof}
We will only consider (\ref{FinEVertex})---the proof for (\ref{FinFVertex}) is similar.
First consider the `left' half of the vertex operator together with $z^{H_i,0}$. 
We have
\begin{align}
\nonumber
&\pi_{N_\bullet} \left(\exp\left[ \sum_{k>0}\left( p_k[X^{(i)}]-t^{-k}p_k[X^{(i-1)}] \right) \frac{z^k}{k}\right]z^{H_{i,0}} (f\otimes e^\alpha)\right) \\
\label{FinCreation}
&=
\frac{\displaystyle\prod_{l=1}^{N_{i-1}}\left(1-t^{-1}zx_{l}^{(i-1)}\right)}{\displaystyle\prod_{l=1}^{N_i} \left(1-zx_{l}^{(i)}\right)}
z^{N_i-N_{i-1}}
\left( \pi_{N_\bullet}(f)\otimes e^\alpha\right)
=
\frac{\displaystyle\prod_{l=1}^{N_{i-1}}\left(z^{-1}-t^{-1}x_{l}^{(i-1)}\right)}{\displaystyle\prod_{l=1}^{N_i} \left(z^{-1}-x_{l}^{(i)}\right)}
\left( \pi_{N_\bullet}(f)\otimes e^\alpha\right)
\end{align}
For this to hold, we will need to impose conditions on $|x_{l}^{(i)}|$.
Recall that we have the conditions (\ref{NormalEExpand}) when working with $\{E_i(z)\}$.
We extend these to (\ref{EExpand0}) for (\ref{FinCreation}) to hold. 
Let us also point out that the compatibility condition \eqref{Compatibility} is used to obtain the factor $z^{N_i-N_{i-1}}$ after the first equality.

Next, from the `right' half, we have
\begin{align}
\nonumber
&\pi_{N_\bullet}\left( \exp\left[\sum_{k>0}\left( -p_k[X^{(i)}]^\perp+q^{-k}p_k[X^{(i-1)}]^\perp \right)\frac{z^{-k}}{k}  \right]\cdot f \right)\\
\label{FinAnnihilation}
&=
\pi_{N_\bullet}\left(f_i[X^{(i)}-z^{-1}]f_{i-1}[X^{(i-1)}+q^{-1}z^{-1}]\prod_{\substack{j\in I\\j\not=i,i-1}} f_j[X^{(j)}] \right)\\
\nonumber
&= f_i\left[x_{\bullet}^{(i)}-z^{-1}\right]f_{i-1}\left[x_{\bullet}^{(i-1)}+q^{-1}z^{-1}\right]
\prod_{\substack{j\in I\\j\not=i,i-1}} f_j\left[x_{\bullet}^{(j)}\right].\notag
\end{align}
Here, (\ref{FinAnnihilation}) follows from checking on power sums $p_k[X^{(i)}]$ and $p_k[X^{(i-1)}]$.
\end{proof}

\begin{rem}
    As in Proposition \ref{FiniteVert}, when writing formulas involving vertex operators, we will express them in terms of
    functions that are factorizable according to color:
    \[
    f=\prod_{i\in I}f_i[X^{(i)}].
    \]
    Factorizable functions span $\Lambda^I$, so to define an operator, it suffices to consider its action on such functions.
    We can write our operators in terms of general $f$ if we introduce \textit{colored plethystic notation}.
    For instance, the terms at the bottom of (\ref{FinEVertex}) can be written as
    \begin{align*}
        &f_i\left[x_{\bullet}^{(i)}-z^{-1}\right]f_{i-1}\left[x_{\bullet}^{(i-1)}+q^{-1}z^{-1}\right]
    \prod_{\substack{j\in I\\j\not=i,i-1}} f_j\left[x_{\bullet}^{(j)}\right]\\
        &=f\left[\left(x_\bullet^{(i)}-z^{-1}\right)+\left(x_\bullet^{(i-1)}+q^{-1}z^{-1}\right)+\sum_{\substack{j\in I\\j\not=i,i-1}}x_\bullet^{(j)}\right],
    \end{align*}
    where the bottom denotes the image of $f[X_{N_\bullet}]$ under the ring map generated by
    \[
    p_n[x_\bullet^{(j)}]\mapsto
    \begin{cases}
        p_n[x_\bullet^{(i)}]-z^{-n} & j=i\\
        p_n[x_\bullet^{(i-1)}]+q^{-n}z^{-n} & j=i-1\\
        p_n[x_\bullet^{(j)}] & \hbox{otherwise}
    \end{cases}.
    \]
    This notation can then be carried over to general $f$.
    However, the benefits of introducing this notation in our paper seemed marginal at best, so we have elected to making statements in terms of factorizable functions.
\end{rem}

\subsubsection{Applying the constant term formula}\label{ConstEF}

Our next goal is to obtain constant term formulas for the action of the shuffle elements $\mathcal{E}_{p,n}^\pm$ from (\ref{Epn}). In light of Lemma \ref{EpnShuff}, we will also incorporate the constants
\begin{align*}
c_{n}^+&:=\frac{(-1)^{n}\qqq^{n(r-1)}t^{-n}(1-q^{-1}t^{-1})^{nr}}{v^{-n}\prod_{a=1}^n(1-q^{-a}t^{-a})},\quad c_{n}^-:=\frac{(-1)^{nr-n}\ddd^{-n(r-1)}t^n(1-qt)^{nr}}{v^{n}q^n\prod_{a=1}^n(1-q^{-a}t^{-a})}
\end{align*}
where $v=(-1)^{\frac{(r-2)(r-3)}{2}}\qqq\ddd^{-\frac{r}{2}}(c_0\cdots c_{r-1})^{-1}$.

\begin{lem}\label{LemEFConstTerm}
For any factorizable $f=\prod_{i\in I}f_i[X^{(i)}]\in\Lambda^I$, we have
\begin{equation}
\begin{aligned}
&c_n^+
\pi_{N_\bullet}\biggl( (\rho_{\vec{c}}^+\circ\Psi_+)(\mathcal{E}_{p,n}^+)(f\otimes e^\alpha)\biggr)\\
&=\frac{(1-q^{-1}t^{-1})^{nr}}{t^{n}\prod_{a=1}^n(1-q^{-a}t^{-a})}
\left\{\prod_{i\in I}\prod_{a=1}^n\prod_{l=1}^{N_i}\left(\frac{ z_{i+1,a}^{-1}-t^{-1}x_{l}^{(i)} }{ z_{i,a}^{-1}-x_{l}^{(i)} }\right)\right.\\
&\times\prod_{1\le a<b\le n}
\left[
\frac{\left(1-z_{p+1,b}/z_{p+1,a}\right)\left(1-q^{-1}t^{-1}z_{p+1,b}/z_{p+1,a}\right)}{\left(1-tz_{p,b}/z_{p+1,a}\right)\left(1-t^{-1}z_{p+2,b}/z_{p+1,a}\right)}\right.\\
&\times\left.\prod_{i\in I\backslash\{p+1\}}
\frac{\left(1-z_{i,b}/z_{i,a}\right)\left(1-q^{-1}t^{-1}z_{i,b}/z_{i,a}\right)}{\left(1-q^{-1}z_{i-1,b}/z_{i,a}\right)\left(1-t^{-1}z_{i+1,b}/z_{i,a}\right)}\right]\\
&\times\prod_{a=1}^n \left[
\left(\frac{z_{0,a}}{z_{p,a}}\right)\left(\frac{1}{1-t^{-1}z_{p+1,a}/z_{p,a}}\right)\right.\\
&\times\left.
\prod_{i\in I\setminus\{p+1\}}
\left(\frac{1}{1-t^{-1}z_{i,a}/z_{i-1,a}}\right)
\left( \frac{1}{1-q^{-1}z_{i-1,a}/z_{i,a}} \right)\right]\\
&\left.\times
\prod_{i\in I}
f_i\left[ \sum_{l=1}^{N_i} x_{l}^{(i)}-\sum_{a=1}^nz_{i,a}^{-1}+q^{-1}\sum_{a=1}^n z_{i+1,a}^{-1} \right]
\right\}_0\otimes e^\alpha
\end{aligned}
\label{EConstTerm}
\end{equation}
and
\begin{equation}
\label{FConstTerm}
\begin{aligned}
&c_n^-\pi_{N_\bullet}\biggl((\rho_{\vec{c}}^-\circ\Psi_-)(\mathcal{E}_{p,n}^-)(f\otimes e^\alpha)\biggr)\\
&=\frac{t^n(1-qt)^{nr}}{\prod_{k=1}^n(1-q^kt^k)} 
\left\{\prod_{i\in I}\prod_{a=1}^n\prod_{l=1}^{N_i}\left(\frac{  z_{i,a}^{-1}-tx_{l}^{(i)} }{ z_{i+1,a}^{-1}-x_{l}^{(i)} }\right)\right.\\
&\times\prod_{1\le a<b\le n}\left[
\frac{\left(1-z_{p,a}/z_{p,b}\right)\left(1-qtz_{p,a}/z_{p,b}\right)}{\left(1-t^{-1}z_{p+1,a}/z_{p,b}\right)\left(1-tz_{p-1,a}/z_{p,b}\right)}\right.\\
&\times\left.
\prod_{i\in I\backslash\{p\}}
\frac{\left(1-z_{i,a}/z_{i,b}\right)\left(1-qtz_{i,a}/z_{i,b}\right)}{\left(1-qz_{i+1,a}/z_{i,b}\right)\left(1-tz_{i-1,a}/z_{i,b}\right)}\right]\\
&\times
\prod_{a=1}^n\left[
\left(\frac{z_{p,a}}{z_{0,a}}\right)\left(\frac{1}{1-tz_{p,a}/z_{p+1,a}}\right)\right.\\
&\times\left.
\prod_{i\in I\setminus\{p\}}
\left(\frac{1}{1-tz_{i,a}/z_{i+1,a}}\right)
\left( \frac{1}{1-qz_{i+1,a}/z_{i,a}} \right)
\right]\\
&\left.\times
\prod_{i\in I}
f_i\left[ \sum_{l=1}^{N_i}  x_{l}^{(i)}+q\sum_{a=1}^n z_{i,a}^{-1}-\sum_{a=1}^n z_{i+1,a}^{-1}  \right]\right\}_0\otimes e^\alpha.
\end{aligned}
\end{equation}
\end{lem}

\begin{proof}
Plugging in $\mathcal{E}_{p,n}^\pm$ into the formula from Lemma \ref{ConstTermLem}, we can use the toroidal relations and Proposition \ref{DenomRel} to reorder the currents in alignment with Proposition \ref{NormalOrder}.
As in the proof of Lemma \ref{ConstTermLem}, we can use the toroidal relations to remove the symmetrizations in $\mathcal{E}_{p,n}^\pm$.
Taking the result for $\mathcal{E}_{p,n}^+$, acting on $f\otimes e^\alpha$, and then applying $\pi_{N_\bullet}$ gives us:
\begin{align*}
\begin{split}
&\pi_{N_\bullet}\biggl( (\rho_{\vec{c}}^+\circ\Psi_+)(\mathcal{E}_{p,n}^+)(f\otimes e^\alpha)\biggr)\\
&=\left( (-1)^{\frac{(r-2)(r-3)}{2}}\ddd^{\frac{r}{2}-1}\prod_{i\in I} c_i\right)^n
\left\{\prod_{i\in I}\prod_{a=1}^n\prod_{l=1}^{N_i}\left(\frac{ z_{i+1,a}^{-1}-t^{-1}x_{l}^{(i)} }{ z_{i,a}^{-1}-x_{l}^{(i)} }\right)\right.\\
&\times\prod_{1\le a<b\le n}
\left[ 
\frac{1-q^{-1}z_{p,b}/z_{p+1,a}}{1-tz_{p,b}/z_{p+1,a}} \prod_{i\in I}
\frac{\left(1-z_{i,b}/z_{i,a}\right)\left(1-q^{-1}t^{-1}z_{i,b}/z_{i,a}\right)}{\left(1-t^{-1}z_{i+1,b}/z_{i,a}\right)\left(1-q^{-1}z_{i-1,b}/z_{i,a}\right)}
\right]\\
&\times\prod_{a=1}^n
\left[\left(\frac{z_{0,a}}{z_{p+1,a}}\right)\left(\frac{z_{p+1,a}}{\omega_{p+1,p}(z_{p+1,a},z_{p,a})}\right)\right.\\
&\times\left.\prod_{i\in I\setminus\{p+1\}}
\frac{z_{i,a}}{\left(1-t^{-1}z_{i,a}/z_{i-1,a}\right)\omega_{i-1,i}(z_{i-1,a},z_{i,a})}\right]\\
&\times \prod_{i\in I}f_i\left[  \sum_{l=1}^{N_i}x_{l}^{(i)}-\sum_{a=1}^nz_{i,a}^{-1}+q^{-1}\sum_{a=1}^n z_{i+1,a}^{-1} \right]\Bigg\}_0\otimes e^\alpha
\end{split}
\end{align*}
where all rational functions are expanded as Laurent series assuming
\begin{equation}
|z_{i,a}|=1,\,
|x_{l}^{(j)}|<1,\,
|t|>1,\,
|q|>1.   
\label{EExpand}
\end{equation}
For $\mathcal{E}_{p,n}^-$, we instead have
\begin{align*}
\begin{split}
&\pi_{N_\bullet}\biggl((\rho_{\vec{c}}^-\circ\Psi_-)(\mathcal{E}_{p,n}^-) (f\otimes e^\alpha)\biggr)\\
&=
\left( \frac{(-1)^{\frac{(r-2)(r-3)}{2}}}{\displaystyle\ddd^{\frac{r}{2}-1} \prod_{i\in I} c_i}\right)^n
\left\{\prod_{i\in I}\prod_{a=1}^n\prod_{l=1}^{N_i}\left(\frac{  z_{i,a}^{-1}-tx_{l}^{(i)} }{ z_{i+1,a}^{-1}-x_{l}^{(i)} }\right)\right.\\
&\times\prod_{1\le a<b\le n}
\left[
\frac{q^{-1}-z_{p+1,a}/z_{p,b}}{t-z_{p+1,a}/z_{p,b}} 
\prod_{i\in I}
\frac{\left(1-z_{i,a}/z_{i,b}\right)\left(1-qtz_{i,a}/z_{i,b}\right)}{\left(1-tz_{i-1,a}/z_{i,b}\right)\left(1-qz_{i+1,a}/z_{i,b}\right)}
\right]\\
&\times \prod_{a=1}^n
\left[
\left(\frac{z_{p+1,a}}{z_{0,a}}\right)
\left(\frac{-z_{p,a}}
{\omega_{p+1,p}(z_{p+1,a},z_{p,a})}\right)\right.\\
&\left.\times\prod_{i\in I\setminus\{p\}}
 \frac{z_{i,a}}{\left(1-tz_{i,a}/z_{i+1,a}\right)\omega_{i,i+1}(z_{i,a},z_{i+1,a})} \right]\\
&\left.\times 
\prod_{i\in I}f_i\left[ \sum_{l=1}^{N_i}  x_{l}^{(i)}+q\sum_{a=1}^n z_{i,a}^{-1}-\sum_{a=1}^n z_{i+1,a}^{-1}  \right]\right\}_0\otimes e^\alpha
\end{split}
\end{align*}
where all rational functions are expanded into Laurent series assuming.
\begin{equation}
|z_{i,a}|=1,\,
|x_{l}^{(j)}|<1,\,
|q|<1,\,
|t|<1.   
\label{FExpand}
\end{equation}
In both formulas, we are taking constant terms in the $z$-variables.

Finally, to obtain \eqref{EConstTerm} and \eqref{FConstTerm} from these formulas, we multiply through by $c_n^\pm$ and use
$q=\qqq\ddd,\, t=\qqq\ddd^{-1}$ to write
\begin{align*}
\omega_{i,i+1}(z,w)&= \qqq w-\ddd^{-1}z=\ddd^{-1}(qw-z)=\qqq\left(w-q^{-1}z\right)\\
\omega_{i,i-1}(z,w)&= z-\qqq\ddd^{-1}w=z-tw.\qedhere
\end{align*}
\end{proof}

\begin{rem}
    Observe that the formulas in Lemma \ref{LemEFConstTerm} are for symmetric functions in finitely many variables.
    To obtain constant term formulas for operators in infinitely many variables, we can apply Proposition \ref{FiniteVert}.
    For example, starting from (\ref{EConstTerm}), we use (\ref{FinEVertex}) and replace
    \[
    \prod_{i\in I}\prod_{a=1}^n\prod_{l=1}^{N_i}\left(\frac{ z_{i+1,a}^{-1}-t^{-1}x_{l}^{(i)} }{ z_{i,a}^{-1}-x_{l}^{(i)} }\right)
    f_i\left[ \sum_{l=1}^{N_i} x_{l}^{(i)}-\sum_{a=1}^nz_{i,a}^{-1}+q^{-1}\sum_{a=1}^n z_{i+1,a}^{-1} \right]
    \otimes e^\alpha
    \]
    with
    \begin{align*}
    &\exp\left[ \sum_{i\in I}\sum_{a=1}^n\left(\sum_{k>0}\left( p_k[X^{(i)}]-t^{-k}p_k[X^{(i-1)}] \right) \frac{z_{i,a}^k}{k}\right)\right]\\
    &\times\exp\left[\sum_{i\in I}\sum_{a=1}^n\sum_{k>0}\left( -p_k[X^{(i)}]^\perp+q^{-k}p_k[X^{(i-1)}]^\perp \right)\frac{z_{i,a}^{-k}}{k}  \right]z^{H_{i,0}} (f\otimes e^\alpha).   
    \end{align*}
\end{rem}


\subsubsection{Integral formula}
Regardless of $f$, the formulas obtained in \ref{ConstEF} are constant terms of Laurent series expansions of some rational function.
Note that all poles are simple except for the poles at zero possibly coming from the plethystic modifications done to $f$.
Thus, it will be advantageous to invert all the $z$-variables: let
\[
w_{i,a}:=z_{i,a}^{-1}
\]
and define the functions
\begin{align}
\label{En1}
g_{p,n}^+(w_{\bullet,\bullet},X_{N_\bullet})
&:=\frac{(-1)^{\frac{n(n-1)}{2}}(1-q^{-1}t^{-1})^{nr}}{t^{\frac{n(n+1)}{2}}\prod_{a=1}^n(1-q^{-a}t^{-a})}
\prod_{i\in I}\prod_{a=1}^n\prod_{l=1}^{N_i}\left(\frac{ w_{i+1,a}-t^{-1}x_{l}^{(i)} }{ w_{i,a}-x_{l}^{(i)} }\right)\\
\label{En2}
&\times\prod_{1\le a<b\le n}
\left[
\frac{\left(w_{p+1,b}-w_{p+1,a}\right)\left(w_{p+1,b}-q^{-1}t^{-1}w_{p+1,a}\right)}
{\left(w_{p+1,a}-t^{-1}w_{p,b}\right)\left(w_{p+2,b}-t^{-1}w_{p+1,a}\right)}\right.\\
\label{En3}
&\times\left.
\prod_{i\in I\backslash\{p+1\}}
\frac{\left(w_{i,b}-w_{i,a}\right)\left(w_{i,b}-q^{-1}t^{-1}w_{i,a}\right)}{\left(w_{i-1,b}-q^{-1}w_{i,a}\right)\left(w_{i+1,b}-t^{-1}w_{i,a}\right)}\right]\\
\label{En4}
&\times\prod_{a=1}^n \left[
\left(\frac{w_{p,a}}{w_{0,a}}\right)\left(\frac{w_{p+1,a}}{w_{p+1,a}-t^{-1}w_{p,a}}\right)\right.\\
\label{En5}
&\times\left.
\prod_{i\in I\setminus\{p+1\}}
\left(\frac{w_{i,a}}{w_{i,a}-t^{-1}w_{i-1,a}}\right)
\left( \frac{w_{i-1,a}}{w_{i-1,a}-q^{-1}w_{i,a}} \right)\right].
\end{align}
and
\begin{align}
g_{p,n}^-(w_{\bullet,\bullet},X_{N_\bullet})
&:=
\label{Fn1}
\frac{(-1)^{\frac{n(n-1)}{2}}t^{\frac{n(n+1)}{2}}(1-qt)^{nr}}{\prod_{a=1}^n(1-q^at^a)} 
\prod_{i\in I}\prod_{a=1}^n\prod_{l=1}^{N_i}\left(\frac{  w_{i,a}-tx_{l}^{(i)} }{ w_{i+1,a}-x_{l}^{(i)} }\right)\\
\label{Fn2}
&\times\prod_{1\le a<b\le n}\left[
\frac{\left(w_{p,a}-w_{p,b}\right)\left(w_{p,a}-qtw_{p,b}\right)}{\left(w_{p,b}-tw_{p+1,a}\right)\left(w_{p-1,a}-tw_{p,b}\right)}\right.\\
\label{Fn3}
&\times\left.
\prod_{i\in I\backslash\{p\}}
\frac{\left(w_{i,a}-w_{i,b}\right)\left(w_{i,a}-qtw_{i,b}\right)}{\left(w_{i+1,a}-qw_{i,b}\right)\left(w_{i-1,a}-tw_{i,b}\right)}\right]\\
\label{Fn4}
&\times
\prod_{a=1}^n\left[
\left(\frac{w_{0,a}}{w_{p,a}-tw_{p+1,a}}\right)
\prod_{i\in I\setminus\{p\}}
\left(\frac{w_{i,a}}{w_{i,a}-tw_{i+1,a}}\right)
\left( \frac{w_{i+1,a}}{w_{i+1,a}-qw_{i,a}} \right)
\right].
\end{align}

\begin{lem}\label{IntForm}
Let $f=\prod_{i\in I} f_i[X^{(i)}]\in\Lambda^I$ be factorizable according to color.
For the `+' case, assume
\[
|x_{l}^{(i)}|<1,\, |q|>1,\, |t|>1.
\]
We have
\begin{align}
\nonumber
&c_n^+
\pi_{N_\bullet}\biggl( (\rho_{\vec{c}}^+\circ\Psi_+)(\mathcal{E}_{p,n}^+)(f\otimes e^\alpha)\biggr)\\
\label{EIntegral}
&=\left(\underset{|w_{i,a}|=1}{\oint\cdots\oint}g_{p,n}^+(w_{\bullet,\bullet},X_{N_\bullet})
\prod_{i\in I}
f_i\left[ \sum_{l=1}^{N_i} x_{l}^{(i)}-\sum_{a=1}^nw_{i,a}+\sum_{a=1}^{n}q^{-1}w_{i+1,a} \right]
\prod_{a=1}^n \frac{dw_{i,a}}{2\pi\sqrt{-1}w_{i,a}}
\right)\otimes e^\alpha
\end{align}
where we orient the unit circle $|w_{i,a}|=1$ counter-clockwise.
In the `$-$' case, we now assume
\[
|x_{l}^{(i)}|<1,\, |q|<1,\, |t|<1.
\]
We then have
\begin{align}
\nonumber
&c_n^-
\pi_{N_\bullet}\biggl( (\rho_{\vec{c}}^-\circ\Psi_-)(\mathcal{E}_{p,n}^-)(f\otimes e^\alpha)\biggr)\\
\label{FIntegral}
&=\left(\underset{|w_{i,a}|=1}{\oint\cdots\oint}g_{p,n}^-(w_{\bullet,\bullet},X_{N_\bullet})
\prod_{i\in I}
f_i\left[ \sum_{l=1}^{N_i} x_{l}^{(i)}+\sum_{a=1}^nqw_{i,a}-\sum_{a=1}^nw_{i+1,a} \right]
\prod_{a=1}^n \frac{dw_{i,a}}{2\pi\sqrt{-1}w_{i,a}}
\right)\otimes e^\alpha
\end{align}
and also orient the unit circle counter-clockwise.
\end{lem}

\begin{proof}
Upon making the substitution $w_{i,a}:=z_{i,a}^{-1}$, the right hand side of (\ref{EConstTerm}) is equal to
\begin{equation}
\left\{g_{p,n}^+(w_{\bullet,\bullet},X_{N_\bullet})\prod_{i\in I}f_i\left[\sum_{l=1}^{N_i}  x_{l}^{(i)}-\sum_{a=1}^nw_{i,a}+\sum_{a=1}^nq^{-1}w_{i+1,a} \right]\right\}_0\otimes e^\alpha
\label{ConstTermEW}
\end{equation}
Now, all the poles appearing in (\ref{ConstTermEW}) are simple.
Similarly, the right hand side of (\ref{FConstTerm}) becomes
\begin{equation}
\left\{g_{p,n}^-(w_{\bullet,\bullet},X_{N_\bullet})\prod_{i\in I}f_i\left[  \sum_{l=1}^{N_i}x_{l}^{(i)}+\sum_{a=1}^nqw_{i,a}- \sum_{a=1}^nw_{i+1,a} \right]\right\}_0\otimes e^\alpha
\label{ConstTermFW}
\end{equation}

In case `$\pm$', the integrands are given by series in the $x^{(i)}_l$ and $q^{\mp 1}$, $t^{\mp 1}$, with coefficients which are Laurent polynomials in the $w_{i,a}$. Under the given assumptions, these series converge uniformly absolutely on the integration cycle and thus we can exchange the order of summation and integration. This turns the integrals (\ref{EIntegral}) and (\ref{FIntegral}) into the constant term formulas \eqref{ConstTermEW} and \eqref{ConstTermFW}, respectively.
\end{proof}

\begin{rem}\label{CompRem}
Recall that the compatibility condition \eqref{Compatibility} between $N_\bullet$ and $\alpha$ was used to obtain the formulas in Proposition \ref{FiniteVert}.
At this stage, we note that without the compatibility, we would have to contend with an additional Laurent monomial factor in the variables $w_{i,a}$ in \eqref{ConstTermEW} and \eqref{ConstTermFW}. 
This would prevent us from obtaining a manageable formula due to the presence of non-simple poles at zero.
\end{rem}

\comment{
\begin{proof}
First notice that we can factor any variable $w_{i,a}$ out of $g_n^\pm(w_{\bullet,\bullet},X_{N_\bullet})$ without creating a new pole at $w_{i,a}=0$.
The proposed formulas follow from general considerations involving partial fraction decomposition.
Namely, let $F(x)$ be a rational function with simple finite poles $p_1,\ldots,p_m$ that do not lie on the unit circle $|x|=1$.
By partial fraction decomposition, we have
\[
xF(x)=\sum_{i=1}^m\frac{x\mathrm{Res}_{p_i}(F)}{x-p_i}
\]
where $\mathrm{Res}_{p_i}(F)$ is the residue of $F$ at $p_i$.
Now consider the effect of expanding into Laurent series expansion assuming $|x|=1$ and taking constant terms.
If $p_i$ is outside of the unit circle, then its summand becomes
\[
\frac{x\mathrm{Res}_{p_i}(F)}{x-p_i}=-\frac{x p_i^{-1}\mathrm{Res}_{p_i}(F)}{\displaystyle 1-x/p_i}
\]
which will have trivial constant term.
On the other hand, if $p_i$ is inside the unit circle, then its summand is
\[
\frac{x\mathrm{Res}_{p_i}(F)}{x-p_i}=\frac{\mathrm{Res}_{p_i}(F)}{\displaystyle 1-p_i/x}
\]
which will have constant term $\mathrm{Res}_{p_i}(F)$.
Thus, overall,
\[
\{xF(x)\}_0=\sum_{\substack{i=1\\|p_i|<1}}^m\mathrm{Res}_{p_i}(F)\qedhere
\]
\end{proof}
}

\subsubsection{Cyclic-shift operators}\label{Cyclic}
To describe the results of our computation, we need to introduce some difference operators that also permute variables.
As before, let $X_{N_\bullet}=\{x_{l}^{(i)}\}_{i\in I}^{1\le l\le N_i}$ denote our set of variables compatible with our $r$-core via \eqref{Compatibility}.
Define a \textit{shift pattern of $X_{N_\bullet}$} to be a subset of $X_{N_\bullet}$ that contains no more than one variable of each color.
A shift pattern \textit{contains color $p\in I$} if it contains a variable of color $p$.
Let $Sh_p(X_{N_\bullet})$ denote the set of all shift patterns containing color $p$. 

For a shift pattern $\Ju$, let $J\subset I$ denote the colors of the variables in $\Ju$.
We denote the variables in $\Ju$ by $x_{\Ju}^{(i)}$, so $\Ju=\{ x_{\Ju}^{(i)} \}_{i\in J}$.
To $\underline{J}$ we associate the following:
\begin{enumerate}
    \item \textit{Gap labels:} For $i\in I$, let $i^{\triangle}\in J$ be first element greater than \textit{or} equal to $i$ in the cyclic order.
Similarly, let $i^{\triangledown}\in J$ be the first element less than or equal to $i$ in the cyclic order. 
We stipulate that $0\le i^{\triangle}-i, i-i^{\triangledown}\le r-1$.
With this set, we define:
\begin{align*}
   x_{\Ju^\triangle}^{(i)}&=q^{(i-i^{\triangle})}x_{\Ju}^{(i^\triangle)}\\
   x_{\Ju^\triangledown}^{(i)}&=q^{(i-i^{\triangledown})}x_{\Ju}^{(i^\triangledown)}. 
\end{align*}
To clarify, $x_{\Ju^\triangle}^{(i)}=x_{\Ju^\triangledown}^{(i)}=x_{\Ju}^{(i)}$ if $i\in J$.
Thus, while $\Ju$ gives a list of variables colored by $J\subset I$, we `fill in the gaps' for values $i\in I\backslash J$ with certain $q$-shifts of the elements of $\Ju$.
Note that the $q$-shifts are negative for $x_{\Ju^\triangle}^{(i)}$ and positive for $x_{\Ju^\triangledown}^{(i)}$.
    \item \textit{A cyclic-shift operator:} For $i\in J$, let $i^{\blacktriangledown}\in J$ be the first element \textit{strictly} less than $i$ in the cyclic order.
        We set $1\le i-i^\blacktriangledown\le r$, where $r$ occurs if and only if $|J|=\{i\}$. 
        We then define the operator $T_{\Ju}$ on $\K[X_{N_\bullet}]$ as the algebra map induced by
        \begin{align*}
            T_{\Ju}(x_{l}^{(i)})&=
            \left\{
                \begin{array}{ll}
                    q^{(i-i^\blacktriangledown)}x_{\Ju}^{(i^\blacktriangledown)} & \text{if $i\in J$ and $x_{l}=x_{\Ju}^{(i)}$} \\
                     x_{l}^{(i)}&\hbox{otherwise.}
                \end{array}
            \right.
        \end{align*}
	Note that this $q$-shift is positive.
	If we let $i^{\blacktriangle}\in J$ be the first element strictly greater than $i$ in the cyclic order, then observe that
        \begin{align*}
            T^{-1}_{\Ju}(x_{l}^{(i)})&=
            \left\{
                \begin{array}{ll}
                    q^{(i-i^\blacktriangle)}x_{\Ju}^{(i^\blacktriangle)} & \text{if $i\in J$ and $x_{l}^{(i)}=x_{\Ju}^{(i)}$} \\
                     x_{l}^{(i)}&\hbox{otherwise} 
                \end{array}
            \right.
        \end{align*}
	where as before, we view $1\le i^{\blacktriangle}-i\le r$.
	Finally, we note the following: for $i\in J$
\begin{equation}
\begin{aligned}
T_\Ju(x_{\Ju}^{(i)})&= qx_{\Ju^\triangledown}^{(i-1)}\\
T_\Ju^{-1}(x_{\Ju}^{(i)})&= q^{-1}x_{\Ju^\triangle}^{(i+1)}.
\end{aligned}
\label{ShiftRename}
\end{equation}
\end{enumerate}

The cyclic-shift operators $T_{\Ju}^{\pm 1}$ will arise when evaluating the integrals of Lemma~\ref{IntForm} by iterated residues. For later use, and to clarify this relationship, we record the following:

\begin{lem}\label{LemShift}
For any $f=\prod_i f_i[X^{(i)}]\in\Lambda^I$, $\Ju\in Sh_p(X_{N_\bullet})$, define the following evaluations on a set of auxilliary variables $\{w_i\}_{i\in I}$:
\begin{align*}
\mathrm{ev}_{\Ju}^+ : \ \text{for $i=p,p+1,\dotsc,p-1$ in cyclic order,} \ w_i &\mapsto
\begin{cases}
x^{(i)}_{\Ju} & \text{if $i\in J$}\\ 
q^{-1}w_{i+1} & \text{if $i\in I\setminus J$}
\end{cases}\\
\mathrm{ev}_{\Ju}^- :
\ \text{for $i=p+1,p,\dotsc,p+2$ in reverse cyclic order,} \ w_i &\mapsto
\begin{cases}
x^{(i-1)}_{\Ju} & \text{if $i-1\in J$}\\ 
qw_{i-1} & \text{if $i-1\in I\setminus J$.}
\end{cases}
\end{align*}
We then have
\begin{equation}
    \begin{aligned}
        \mathrm{ev}_{\Ju}^+(w_i)&= x_{\Ju^\triangle}^{(i)},&
        \mathrm{ev}_{\Ju}^-(w_i)&= x_{\Ju^\triangledown}^{(i)}
    \end{aligned}
\label{GapEval}
\end{equation}
and
\begin{align}
\label{EqShift+}
T_{\Ju}^{-1}f\left[X_{N_\bullet}\right]
&=
\mathrm{ev}_{\Ju}^+\left(
\prod_{i\in I}
f_i\left[ \sum_{l=1}^{N_i} x_{l}^{(i)}-w_{i}+q^{-1}w_{i+1} \right]
\right)\\
\label{EqShift-}
T_{\Ju}f\left[X_{N_\bullet}\right]
&=
\mathrm{ev}_{\Ju}^-\left(
\prod_{i\in I}
f_i\left[ \sum_{l=1}^{N_i} x_{l}^{(i)}+qw_{i}-w_{i+1} \right]
\right).
\end{align}
\end{lem}

\begin{proof}
The equations (\ref{GapEval}) follow from the definitions.
Equipped with that, the right-hand-side of \eqref{EqShift+} becomes
\[
\prod_{i\in I}f_i\left[\sum_{l=1}^{N_i} x_i^{(i)}-x_{\Ju^\triangle}^{(i)}+q^{-1}x_{\Ju^\triangle}^{(i+1)}\right]
\]
If $i\in I\backslash J$, then $x_{\Ju^\triangle}^{(i)}=q^{-1}x_{\Ju^\triangle}^{(i+1)}$ and so $f_i$ is unchanged.
On the other hand, if $i\in J$, then $x_{\Ju^\triangle}^{(i)}=x_{\Ju}^{(i)}$ and we obtain $T_{\Ju}^{-1}f_i$ by \eqref{ShiftRename}.
The case of \eqref{EqShift-} is similar.
\end{proof}

\begin{ex}
For instance, suppose $r=3$, $p=0$, and $\Ju=\{x^{(0)}_1,x^{(2)}_1\}$. Then the right-hand side of \eqref{EqShift+} is
$$
\mathrm{ev}_{\Ju}^+\left(f_{0}\left[ \sum_{l=1}^{N_{0}} x_{l}^{(0)}-w_0+q^{-1}w_{1} \right] f_1\left[ \sum_{l=1}^{N_1} x_{l}^{(1)}-w_{1}+q^{-1}w_{2} \right] f_{2}\left[ \sum_{l=1}^{N_{2}} x_{l}^{(2)}-w_{2}+q^{-1}w_0 \right]\right)
$$
with $\mathrm{ev}_{\Ju}^+$ given by evaluating $w_0\mapsto x^{(0)}_1,w_1\mapsto q^{-1}w_2,w_2\mapsto x^{(2)}_1$ \textit{in this order}. The result is
$$
f_{0}\left[ \sum_{l=1}^{N_{0}} x_{l}^{(0)}-x^{(0)}_1+q^{-2}x^{(2)}_1 \right] f_1\left[ \sum_{l=1}^{N_1} x_{l}^{(1)}+0\right] f_{2}\left[ \sum_{l=1}^{N_{2}} x_{l}^{(2)}-x^{(2)}_1+q^{-1}x^{(0)}_1\right] = T_{\Ju}^{-1}f\left[X_{N_\bullet}\right].
$$
\end{ex}

We will also make use of $n$-tuples of shift patterns.
For such an $n$-tuple $\mathbf{\underline{J}}=(\underline{J}_1,\ldots, \underline{J}_n)$ and $0\le k\le n$, we denote
\begin{align*}
    |\mathbf{\underline{J}}|&=\underline{J}_1\cup\cdots\cup\underline{J}_n\subset X_{N_\bullet}\\
    |\mathbf{\underline{J}}|_{\le k}&=\underline{J}_1\cup\cdots\cup\underline{J}_{k}\subset X_{N_\bullet}\\
    |\mathbf{\underline{J}}|_{\ge k}&=\underline{J}_k\cup\cdots\cup\underline{J}_{n}\subset X_{N_\bullet}.
\end{align*}
If $\mathbf{\underline{J}}$ is an $n$-tuple of shift patterns all containing color $p$, we say $\mathbf{\underline{J}}$ is \textit{$p$-distinct} if the $p$-colored variables $x_{\underline{J}_k}^{(p)}$ are all distinct.
Let $Sh_p^{[n]}(X_{N_\bullet})$ denote the set of all $p$-distinct $n$-tuples of shift patterns containing color $p$.

\subsection{Degree one case}\label{Deg1}
We will first compute the integrals from Lemma \ref{IntForm} for the case $n=1$.
The \textit{first order wreath Macdonald operators} are defined as follows:
\begin{align*}
D_{p,1}^*(X_{N_\bullet};q,t^{-1})
&:=
\frac{1}{1-q^{-1}t^{-1}}
\sum_{\Ju\in Sh_p(X_{N_\bullet})}
\left( 1-q^{-1}t^{-1} \right)^{|J|}
\frac{x^{(p+1)}_{\Ju^\triangle}}{x_{\Ju^\triangle}^{(0)}}\\
&\times
\left(\prod_{i\in I}
\prod_{\substack{l=1\\x_{l}^{(i)}\not= x^{(i)}_{\Ju^\triangle}}}^{N_i}
\frac{\left(t x^{(i+1)}_{\Ju^\triangle}-x^{(i)}_{l} \right)}
{\left( x^{(i)}_{\Ju^\triangle}-x^{(i)}_{l} \right)}\right)
\left(\prod_{i\in J\backslash\left\{ p \right\}}
\frac{qtT_\Ju^{-1}(x^{(i)}_{\Ju})}{\left( x^{(i)}_{\Ju}-T_\Ju^{-1}(x^{(i)}_{\Ju}) \right)}\right)
T_\Ju^{-1}\\
D_{p,1}(X_{N_\bullet}; q, t^{-1})
&:=
\frac{1}{1-qt}
\sum_{\Ju\in Sh_{p}(X_{N_\bullet})}\left( 1-qt \right)^{|J|}
\frac{x^{(r-1)}_{\Ju^\triangledown}}{x^{(p)}_{\Ju}}\\
&\times
\left(\prod_{i\in I}
\prod_{\substack{l=1\\x^{(i)}_{l}\not= x^{(i)}_{\Ju^\triangledown}}}^{N_i}
\frac{\left( t^{-1}x^{(i-1)}_{\Ju^\triangledown}-x^{(i)}_{l} \right)}
{\left( x^{(i)}_{\Ju^\triangledown}-x^{(i)}_{l} \right)}\right)
\left(\prod_{i\in J\backslash\left\{ p \right\}}
\frac{q^{-1}t^{-1}T_\Ju(x^{(i)}_{\Ju})}{\left( x^{(i)}_{\Ju}-T_\Ju (x^{(i)}_{\Ju}) \right)}\right)
T_\Ju.
\end{align*}
Observe that when $r=1$, $D_{0,1}(x_{0,\bullet};q,t)$ and $D_{0,1}^*(x_{0,\bullet};q,t)$ are the first Macdonald and dual Macdonald operators, respectively.
\begin{prop}\label{Deg1Int}
The integrals from Lemma \ref{IntForm} for $n=1$ yield the following:
\begin{enumerate}
\item[$(+)$]
For
\[
|x_{l}^{(i)}|<1,\, |q|\gg1,\, |t|\gg1,
\]
we have
\begin{equation*}
c_1^+
\pi_{N_\bullet}\biggl( (\rho_{\vec{c}}^+\circ\Psi_+)(\mathcal{E}_{p,1}^+)(f\otimes e^\alpha)\biggr)
=
\left(t^{-|N_\bullet|}D_{p,1}^*(X_{N_{\bullet}};q,t^{-1})+\frac{t^{-p-1-|N_\bullet|}}{1-t^{-r}}\right)f\left[ X_{N_\bullet} \right].
\end{equation*}
\item[$(-)$]
For 
\[
|x_{l}^{(i)}|<1,\, |q|\ll 1,\, |t|\ll 1,
\]
we have
\begin{equation*}
c_1^-
\pi_{N_\bullet}\biggl( (\rho_{\vec{c}}^-\circ\Psi_-)(\mathcal{E}_{p,1}^-)(f\otimes e^\alpha)\biggr)
=
\left(t^{|N_\bullet|}D_{p,1}(X_{N_{\bullet}};q,t^{-1})+\frac{t^{p+1+|N_\bullet|}}{1-t^{r}}\right)f\left[ X_{N_\bullet} \right].
\end{equation*}
\end{enumerate}
\end{prop}

\begin{proof}
In the `$+$' case, the integral from Lemma \ref{IntForm} is:
\begin{align}
\label{E11}
&t^{-1}(1-q^{-1}t^{-1})^{r-1}
\underset{|w_{i,1}|=1}{\oint\cdots\oint}
\prod_{i\in I}\prod_{l=1}^{N_i}
\frac{\left( w_{i+1,1}-t^{-1}x_{l}^{(i)} \right)}
{\left( w_{i,1}-x_{l}^{(i)} \right)}\\
\label{E12}
&\times \left( \frac{w_{p,1}}{w_{0,1}} \right)
\left( \frac{w_{p+1,1}}{w_{p+1,1}-t^{-1}w_{p,1}} \right)
\prod_{i\in I\backslash \{p+1\}}
\left( \frac{w_{i,1}}{w_{i,1}-t^{-1}w_{i-1,1}} \right)
\left( \frac{w_{i-1,1}}{w_{i-1,1}-q^{-1}w_{i,1}} \right)\\
\label{E13}
&\times
\prod_{i\in I}
f_i\left[ \sum_{l=1}^{N_i}x_{l}^{(i)}-w_{i,1}+q^{-1}w_{i+1,1} \right]
\frac{dw_{i,1}}{2\pi \sqrt{-1}w_{i,1}}.
\end{align}
We will first integrate $w_{p,1}$.
Based on (\ref{EExpand}), the residues within the unit circle $|w_{p,1}|=1$ come from the factors:
\[
\frac{1}{\underbrace{\left( w_{p,1}-t^{-1}w_{p-1,1} \right)}_{(\ref{E12})}
\underbrace{\prod_{l=1}^{N_p}\left( w_{p,1}-x_{l}^{(p)} \right)}_{(\ref{E11})}}.
\]
We will call the first type of pole a $t$-pole and the second type an $x$-pole.

The `$-$' case is
\begin{align}
\label{F11} 
&t(1-qt)^{r-1}
\underset{|w_{i,1}|=1}{\oint\cdots\oint}
\prod_{i\in I}\prod_{l=1}^{N_i}
\frac{\left( w_{i,1}-tx_{l}^{(i)} \right)}
{\left( w_{i+1,1}-x_{l}^{(i)} \right)}\\
\label{F12}&\times
\left(\frac{w_{0,1}}
{w_{p,1}-tw_{p+1,1}} \right)
\prod_{i\in I\backslash\left\{ p \right\}}
\left( \frac{w_{i,1}}{w_{i,1}-tw_{i+1,1}} \right)
\left( \frac{w_{i+1,1}}{w_{i+1,1}-qw_{i,1}} \right)\\
\label{F13}&\times
\prod_{i\in I}
f_i\left[ \sum_{l=1}^{N_i}x_{l}^{(i)}+qw_{i,1}-w_{i+1,1} \right]
\frac{dw_{i,1}}{2\pi \sqrt{-1}w_{i,1}}.
\end{align}
Here, we will instead start by integrating $w_{p+1,1}$.
As before, there are $x$-poles and a $t$-pole coming from:
\[
\frac{1}{ \underbrace{\left( w_{p+1,1}-tw_{p+2,1} \right)}_{(\ref{F12})}
\underbrace{\prod_{l=1}^{N_{p+1}}\left( w_{p+1,1}-x_{l}^{(p)} \right)}_{(\ref{F11})}}.
\]
Our analysis of the integrals at these two kinds of poles is addressed in \ref{TPole1} and \ref{XPole1} below.
\end{proof}

\subsubsection{The $t$-poles}\label{TPole1}
First consider the `$+$' case.
Here, we begin with the residue $w_{p,1}=t^{-1}w_{p-1,1}$.
Let us group together the factors
\[
\frac{w_{p,1}w_{p-1,1}}{w_{p,1}\left( w_{p-1,1}-q^{-1}w_{p,1} \right)\left( w_{p,1}-t^{-1}w_{p-1,1} \right)
\displaystyle
\prod_{l=1}^{N_p}\left( w_{p,1}-x_{l}^{(p)} \right)}
\prod_{l=1}^{N_{p-1}}\frac{\left( w_{p,1}-t^{-1}x_{l}^{(p-1)} \right)}
{\left( w_{p-1,1}-x_{l}^{(p-1)} \right)}.
\]
Upon taking taking the residue, this becomes
\[
\frac{t^{-N_{p-1}}}{\left(1-q^{-1}t^{-1}\right)
\displaystyle
\prod_{l=1}^{N_p}\left( t^{-1}w_{p-1,1}-x_{l}^{(p)} \right)}.
\]
Because of the additional restriction $|t|\gg 1$, the poles above will be outside the unit circle $|w_{p-1,1}|=1$.

This pattern persists as we continue \textit{downwards} in cyclic order until we reach $w_{p+1,1}$.
Here, we have
\begin{align*}
&\left.
\frac{w_{p+1,1}w_{p,1}}{w_{p+1,1}w_{0,1}
\left( w_{p+1,1}-t^{-1}w_{p,1} \right)}
\right|_{\substack{w_{0,1}\mapsto t^{p+1-r}w_{p+1,1}\\ w_{p,1}\mapsto t^{-(r-1)}w_{p+1,1}}}
\prod_{l=1}^{N_{p}}
\frac{\left( w_{p+1,1}-t^{-1}x_{l}^{(p)} \right)}
{\left( t^{-r+1}w_{p+1,1}-x_{l}^{(p)} \right)}\\
&= \frac{t^{-p}}{1-t^{-r}}
\cdot\frac{1}{w_{p+1,1}}\prod_{l=1}^{N_{p}}
\frac{\left( w_{p+1,1}-t^{-1}x_{l}^{(p)} \right)}
{\left( t^{-r+1}w_{p+1,1}-x_{l}^{(p)} \right)}.
\end{align*}
The only pole here is the simple pole at $w_{p+1,1}=0$.
After taking this residue, (\ref{E13}) becomes just $f\left[ X_{N_\bullet} \right]$.
Bringing in the front matter in (\ref{E11}), we are left with
\begin{equation*}
\frac{t^{-p-1-|N_\bullet|}}
{1- t^{-r } }
f\left[ X_{N_\bullet} \right].
\end{equation*}
Here, we recall that $N_\bullet=(N_0,\ldots, N_{r-1})$ records the number of $x$-variables and $|N_\bullet|=\sum_{i\in I}N_i$.

For the `$-$' case, recall that we begin at $w_{p+1,1}$ and take the residue $w_{p+1,1}=tw_{p+2}$.
We group together the factors
\[
\frac{w_{p+1,1}w_{p+2,1}}{w_{p+1,1}\left( w_{p+2,1}-qw_{p+1,1} \right)\left( w_{p+1,1}-tw_{p+2,1} \right)
\displaystyle
\prod_{l=1}^{N_p}\left( w_{p+1,1}-x_{l}^{(p)} \right)}
\prod_{l=1}^{N_{p+1}}
\frac{\left( w_{p+1,1}-tx_{l}^{(p+1)} \right)}
{\left( w_{p+1,2}-x_{l}^{(p+1)} \right)}
\]
which upon taking the residue becomes
\[
\frac{t^{N_{p+1}}}{\left( 1-qt \right)
\displaystyle
\prod_{l=1}^{N_p}\left( tw_{p+2,1}-x_{l}^{(p)} \right)}.
\]
The remaining poles above lie outside the unit circle $|w_{p+2,1}|=1$ because we have assumed $|t|\ll 1$.
We continue \textit{upwards} in cyclic order, yielding similar calculations until we arrive at $w_{p,1}$.
Here, we have the factors
\begin{align*}
&\left.\frac{w_{0,1}}
{w_{p,1}\left( w_{p,1}-tw_{p+1,1} \right)}\right|_{\substack{w_{0,1}\mapsto t^p w_{p,1}\\w_{p+1,1}\mapsto t^{r-1}w_{p,1}}}
\prod_{l=1}^{N_p}
\frac{\left( w_{p,1}-tx_{l}^{(p)} \right)}
{\left( t^{r-1}w_{p,1}-x_{l}^{(p)} \right)}\\
&= \frac{t^p}{1-t^r}
\cdot \frac{1}{w_{p,1}}
\prod_{l=1}^{N_p}
\frac{\left( w_{p,1}-tx_{l}^{(p)} \right)}
{\left( t^{r-1}w_{p,1}-x_{l}^{(p)} \right)}.
\end{align*}
The only pole within the unit circle $|w_{p,1}|=1$ is $w_{p,1}=0$.
After taking this residue, the final result (after including the front matter) is
\begin{equation*}
\frac{t^{p+1+|N_\bullet|}}{1-t^r}f\left[ X_{N_\bullet} \right].
\end{equation*}

\subsubsection{The $x$-poles}\label{XPole1}
We will first work out the `$+$' case.
Thus, we have taken the residue of $w_{p,1}$ at the pole $w_{p,1}=x_{l}^{(p)}$ for some $1\le l \le N_p$.
This variable $x_{l}^{(p)}$ will be an element of a shift pattern $\Ju$.
Therefore, we call it $x_{\Ju}^{(p)}$.
It will be advantageous to now group together the factors
\[
\frac{\displaystyle w_{p,1}w_{p+1,1}}
{w_{0,1}w_{p,1}\left( w_{p+1,1}-t^{-1}w_{p,1} \right)}
\prod_{l=1}^{N_{p}}
\frac{\left( w_{p+1,1}-t^{-1}x_{l}^{(p)} \right)}
{\left( w_{p,1}-x_{l}^{(p)} \right)}.
\]
After taking the residue, we leave behind
\[
\frac{w_{p+1,1}}{w_{0,1}}
\prod_{\substack{l=1\\x_{l}^{(p)}\not=x_{\Ju}^{(p)}}}^{N_{p}}
\frac{\left( w_{p+1,1}-t^{-1}x_{1}^{(p)} \right)}
{\left( x_{\Ju}^{(p)}-x_{l}^{(p)} \right)}.
\]

Next, we consider $w_{p+1,1}$.
We group together the factors
\[
\frac{ w_{p+1,1}w_{p+2,1}}
{w_{p+1,1}
\left( w_{p+2,1}-t^{-1}w_{p+1,1} \right)
\underbrace{\left( w_{p+1,1}-q^{-1}w_{p+2,1} \right)}_{(1)}
}
\prod_{l=1}^{N_{p+1}}
\frac{\left( w_{p+2,1}-t^{-1}x_{l}^{(p+1)} \right)}
{\underbrace{\left( w_{p+1,1}-x_{l}^{(p+1)} \right)}_{(2)}}.
\]
The only (nonremovable) poles within the unit circle $|w_{p+1,1}|=1$ are marked (1) and (2).
We thus have two cases:
\begin{enumerate}
\item \textit{Residue at $w_{p+1,1}=q^{-1}w_{p+2,1}$}: In this case, $\left( w_{p+2,1}-t^{-1}w_{p+1,1} \right)$ cancels with a $w_{p+2,1}$ in the numerator, leaving behind
\[
\frac{1}
{\left(1 -q^{-1}t^{-1} \right)}
\left.
\prod_{l=1}^{N_{p+1}}
\frac{
\left( w_{p+2,1}-t^{-1}x_{l}^{(p+1)} \right)}
{\left( w_{p+1,1}-x_{l}^{(p+1)} \right)}\right|_{w_{p+1,1}\mapsto q^{-1}w_{p+2,1}}.
\]
Because $|q|\gg 1$, the poles above lie outside the unit circle $|w_{p+2,1}|=1$.
\item \textit{Residue at $w_{p+1,1}=x_{l}^{(p+1)}=:x_{\Ju}^{(p+1)}$}: Here, $\left( w_{p+2,1}-t^{-1}w_{p+1,1} \right)$ cancels with a factor in the numerator, leaving behind
\begin{equation}
\frac{w_{p+2,1}}{\left(x_{\Ju}^{(p+1)}-q^{-1}w_{p+2,1}\right)}
\prod_{\substack{l=1\\x_{l}^{(p+1)}\not=x_{\Ju}^{(p+1)}}}^{N_{p+1}}
\frac{\left( w_{p+2,1}-t^{-1}x_{1}^{(p+1)} \right)}
{\left( x_{\Ju}^{(p+1)}-x_{l}^{(p+1)} \right)}.
\label{XResE}
\end{equation}
Again because $|q|\gg 1$, the first pole above lies outside the unit circle $|w_{p+2,1}|=1$.
\end{enumerate}
This pattern and dichotomy for residues continues \textit{upwards} in cyclic order.
The $x$-variables in the type (2) residues constitute a shift pattern $\Ju$ and our gap labels $x_{\Ju^\triangle}^{(i)}$ incorporate the $q$-shifts from the type (1) residues.
Therefore, $w_{i,1}$ is always evaluated at $x_{\Ju^\triangle}^{(i)}$.
Finally, observe that by Lemma \ref{LemShift}, (\ref{E13}) becomes $T_{\Ju}^{-1}f\left[ X_{N_\bullet} \right]$.
The end result is $t^{-|N_\bullet|}D_{p,1}^*(q,t^{-1})f\left[ X_{N_\bullet} \right]$.

The `$-$' case is similar.
Our first variable is $w_{p+1,1}$, for which we take the residue at $w_{p+1,1}=x_{l}^{(p)}=:x_{\Ju}^{(p)}$.
We consider the factors
\[
\left(\frac{w_{0,1}}{w_{p+1,1}}  \right)
\frac{1}{\left(w_{p,1}-tw_{p+1,1}\right)}
\prod_{l=1}^{N_p}
\frac{\left(w_{p,1}-tx_{l}^{(p)}  \right)}
{\left( w_{p+1,1}-x_{l}^{(p)} \right)}.
\]
After taking the residue, the pole from $(w_{p,1}-tw_{p+1,1})$ cancels with a factor in the numerator, leaving behind
\[
\frac{w_{0,1}}{x_{\Ju}^{(p)}}
\prod_{\substack{l=1\\x_{l}^{(p)}\not=x_{\Ju}^{(p)}}}^{N_p}
\frac{\left(w_{p,1}-tx_{l}^{(p)}  \right)}
{\left( x_{\Ju}^{(p)}-x_{l}^{(p)} \right)}.
\]
We now proceed \textit{downward} in cyclic order.
For each $w_{i,1}$, we consider the factors
\[
\frac{w_{i,1}w_{i-1,1}}
{w_{i,1}\left( w_{i-1,1}-tw_{i,1} \right)\underbrace{\left( w_{i,1}-qw_{i-1,1} \right)}_{(1)}}
\prod_{l=1}^{N_{i-1}}
\frac{\left( w_{i-1,1}-tx_{l}^{(i-1)} \right)}
{\underbrace{\left( w_{i,1}-x_{l}^{(i-1)} \right)}_{(2)}}.
\]
Because $\left( w_{i,1}-tw_{i+1,1} \right)$ has been canceled at this point, the only poles within the unit circle $|w_{i,1}|=1$ are those marked (1) and (2).
The analysis is as before:
\begin{enumerate}
\item \textit{Residue at $w_{i,1}=qw_{i-1,1}$}: This leaves behind
\[
\frac{1}{(1-qt)}
\prod_{l=1}^{N_{i-1}}
\left.
\frac{\left( w_{i-1,1}-tx_{l}^{(i-1)} \right)}
{\left( w_{i,1}-x_{l}^{(i-1)} \right)}\right|_{w_{i,1}\mapsto qw_{i-1,1}}.
\]
\item \textit{Residue at $w_{i,1}=x_{l}^{(i-1)}=:x_{\Ju}^{(i-1)}$}: The leftovers are now
\begin{equation}
\frac{w_{i-1,1}}{\left( x_{i-1,\Ju}-qw_{i-1,1} \right)}
\prod_{\substack{l=1\\x_{l}^{(i-1)}\not=x_{\Ju}^{(i-1)}}}^{N_{i-1}}
\frac{\left( w_{i-1,1}-tx_{l}^{(i-1)} \right)}
{\left( x_{\Ju}^{(i-1)}-x_{l}^{(i-1)} \right)}.
\label{XResF}
\end{equation}
\end{enumerate}
The $x$-variables where we have taken residues constitute a shift pattern $\Ju$ and $w_{i,1}$ is always evaluated at $x_{\Ju^\triangledown}^{(i-1)}$.
Again by Lemma \ref{LemShift}, (\ref{F13}) becomes $T_{\Ju}f\left[ X_{N_\bullet} \right]$.
Here, we obtain $t^{|N_\bullet|}D_{p,1}(q,t^{-1})f\left[ X_{N_\bullet} \right]$.

\subsubsection{Degree one eigenfunction equation}\label{FirstOp}
Finally, we enhance Proposition \ref{Deg1Int} by obtaining eigenfunction equations for $D_{p,1}^*(q,t)$ and $D_{p,1}(q,t)$ for generic values of the parameters.

\begin{thm}\label{Deg1Thm}
For generic values of $q$, $t$,
\begin{align}
\label{EigenF1}D_{p,1}^*\left( X_{N_\bullet};q,t \right)P_\lambda[X_{N_\bullet};q,t]
&=\left(\sum_{\substack{b=1\\ b-\lambda_b\equiv p+1}}^{|N_\bullet|}
q^{-\lambda_b}t^{-|N_\bullet|+b}\right)
P_\lambda[X_{N_\bullet};q,t]\\
\label{EigenE1}D_{p,1}\left( X_{N_\bullet};q,t \right)P_\lambda[X_{N_\bullet};q,t]
&=\left(\sum_{\substack{b=1\\ b-\lambda_b\equiv p+1}}^{|N_\bullet|}
q^{\lambda_b}t^{|N_\bullet|-b}\right)
P_\lambda[X_{N_\bullet};q,t].
\end{align}
\end{thm}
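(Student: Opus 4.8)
The plan is to read off both eigenfunction equations from the degree-one integral formula of Lemma~\ref{IntForm} together with the residue evaluation carried out in \ref{TPole1} and \ref{XPole1}, and then to extract the eigenvalues from the infinite-variable computation via the Tsymbaliuk isomorphism. First I would record the operator identity produced by the residue calculation: evaluating the $n=1$ integrals of Lemma~\ref{IntForm} by residues splits the answer into the $t$-pole contribution, which by \eqref{TConstE} and \eqref{TConstF} is a scalar multiple of the identity, and the $x$-pole contribution, which by the analysis in \ref{XPole1} assembles (through the gap labels and cyclic-shift operators of \ref{Cyclic}) into exactly the operators $D_{p,1}^*(X_{N_\bullet};q,t^{-1})$ and $D_{p,1}(X_{N_\bullet};q,t^{-1})$ of \ref{FirstOp}. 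Thus, acting on a factored $f=\prod_{i\in I} f_i[X^{(i)}]$,
\[
c_1^+\,\pi_{N_\bullet}\bigl((\rho_{\vec{c}}^+\circ\Psi_+)(\mathcal{E}_{p,1}^+)(f\otimes e^\alpha)\bigr)=\Bigl(\tfrac{t^{-p-1-|N_\bullet|}}{1-t^{-r}}\,\mathrm{id}+D_{p,1}^*(X_{N_\bullet};q,t^{-1})\Bigr)f[X_{N_\bullet}]\otimes e^\alpha,
\]
with the parallel statement in the $-$ case using \eqref{TConstF}. Since the right-hand side depends only on $f[X_{N_\bullet}]$, both sides are linear in $f$, and factored elements span $\Lambda^I$, this extends to an identity of operators on all of $\Lambda^I_{N_\bullet}$.

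Next I would pin down the eigenvalue. By the $n=1$ coefficient of the generating function in Lemma~\ref{EpnShuff}, the element $c_1^+\Psi_+(\mathcal{E}_{p,1}^+)$ is $\varsigma$ applied to the Heisenberg exponential of Lemma~\ref{FockEigen}, and in particular lies in $\varsigma(\ddot{U}^0)$. Hence by Theorem~\ref{Eigenstates} it acts diagonally on $P_\lambda[X^\bullet;q,t^{-1}]\otimes e^{\kb(\lambda)}$, and by the Tsymbaliuk isomorphism (Theorem~\ref{TsymIso}) together with Lemma~\ref{FockEigen} its eigenvalue is (up to the normalization carried by $v$) the infinite-variable sum $\sum_{b>0,\,b-\lambda_b\equiv p+1}q^{-\lambda_b}t^{-b}$, and the mirror sum in the $-$ case. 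Applying $\pi_{N_\bullet}$ to this diagonal action and comparing with the operator identity of the previous paragraph shows that $P_\lambda[X_{N_\bullet};q,t^{-1}]$ is an eigenfunction of $D_{p,1}^*(X_{N_\bullet};q,t^{-1})$, with eigenvalue equal to the infinite-variable eigenvalue minus the $t$-pole scalar.

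The main obstacle is exactly the ``additional effort'' of passing from the infinite-variable eigenvalue to its finite form. Here I would split the infinite sum at $b=|N_\bullet|$: using $\ell(\lambda)\le|N_\bullet|$ the tail $b>|N_\bullet|$ has $\lambda_b=0$ and contributes a geometric series in $t^{-r}$, and using $|N_\bullet|\equiv 0\bmod r$ (Lemma~\ref{SumR}(1)) its leading exponent matches that of the $t$-pole scalar in \eqref{TConstE}, so the tail cancels the $t$-pole contribution exactly and leaves the finite sum over $1\le b\le|N_\bullet|$ with $b-\lambda_b\equiv p+1$, whose terms are organized by the counting in Lemma~\ref{SumR}(2). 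Carefully tracking the overall power of $t^{|N_\bullet|}$ introduced by the finite products $\prod_{l=1}^{N_i}$ in the residue evaluation is the delicate point, and it is what converts the exponent $t^{-b}$ of the infinite eigenvalue into the finite exponent $t^{-|N_\bullet|+b}$.

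Finally I would pass from $(q,t^{-1})$ to $(q,t)$ by the substitution $t\mapsto t^{-1}$, which carries $D_{p,1}^*(X_{N_\bullet};q,t^{-1})$ and $P_\lambda[X_{N_\bullet};q,t^{-1}]$ to their $t$-counterparts and yields \eqref{EigenF1}; the $-$ case runs identically and gives \eqref{EigenE1}. Since the residue computations of \ref{TPole1} and \ref{XPole1} were performed in the region $|q|\gg1,\ |t|\gg1$ (respectively $|q|\ll1,\ |t|\ll1$), I would close by noting that all operators and all coordinates of $P_\lambda[X_{N_\bullet};q,t]$ are rational in $q,t$, so the resulting identity of eigenfunction equations persists for generic $q,t$ by analytic continuation.
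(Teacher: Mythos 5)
Your proposal is correct and follows essentially the same route as the paper's own proof: identify the $x$-pole residues with $t^{\mp|N_\bullet|}D_{p,1}^{(*)}(X_{N_\bullet};q,t^{-1})$ and the $t$-pole residue with a scalar, match against the infinite-variable eigenvalue obtained from Lemma~\ref{FockEigen} and Lemma~\ref{EpnShuff}, cancel the geometric tail of the infinite sum against the $t$-pole scalar using $r\mid|N_\bullet|$ and $\ell(\lambda)\le|N_\bullet|$, and extend to generic parameters by rationality. No gaps to report.
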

\begin{proof}
%
We will only consider the `$+$' case---the `$-$' case is similar.
Combining Lemma \ref{FockEigen} and Proposition \ref{Deg1Int}, we have for $\lambda\in\mathbb{Y}$ with $\kb(\lambda)=\alpha$ and $|\mathrm{quot}(\lambda)|\le |N_\bullet|$,
\begin{align*}
\left(t^{-|N_\bullet|}D_{p,1}^*(X_{N_\bullet};q,t^{-1})+\frac{t^{-p-1-|N_\bullet|}}{1-t^{-r}}\right)P_\lambda\left[ X_{N_\bullet}; q,t^{-1} \right]
&= 
\left(\sum_{\substack{b>0\\ b-\lambda_b\equiv p+1}}
q^{-\lambda_b}t^{-b}\right)
P_\lambda[X_{N_\bullet}; q,t^{-1}] 
\end{align*}
where we assume $|q|\gg 1$, $|t|\gg 1$, and $|x_{i,l}|<1$.
Even here, it is essential that $|t|\gg 1$ as we are working with series in $t^{-1}$.
We can do away with this once we notice that since $|N_\bullet|$ is divisible by $r$ (Proposition \ref{SumR}) and $\ell(\lambda)\le |N_\bullet|$,
\begin{equation}
 \begin{aligned}
 \sum_{\substack{b>0\\ b-\lambda_b\equiv p+1}}
q^{-\lambda_b}t^{-b}
&=
\left(\sum_{k=0}^\infty t^{-p-1-|N_\bullet|-rk}\right)+
\left(\sum_{\substack{b=1\\ b-\lambda_b\equiv p+1}}^{|N_\bullet|}
q^{-\lambda_b}t^{-b}\right)\\
&=\frac{t^{-p-1-|N_\bullet|}}{1-t^{-r}}+\left(\sum_{\substack{b=1\\ b-\lambda_b\equiv p+1}}^{|N_\bullet|}
q^{-\lambda_b}t^{-b}\right).   
\end{aligned} 
\label{eigenTail}
\end{equation}
Here, we have split off the terms corresponding to rows above height $|N_\bullet|$.
Thus (\ref{EigenE1}) holds under our conditions on $|q|$, $|t|$, and $|x_{l}^{(i)}|$.

Finally, we address the genericity of parameters.
The equations (\ref{EigenF1}) and (\ref{EigenE1}) are equalities of rational functions in the space $(X_{N_\bullet}, q,t)$.
We have established them over an analytic open subset of $(X_{N_\bullet}, q,t)$.
After subtracting one side to the other, this is equivalent saying a rational function is zero on a codimension zero subspace, and thus it must be zero.
\end{proof}

\noindent
The eigenvalues of $\left\{ D_{p,1}(X_{N_\bullet};q,t) \right\}_{p\in I}$ on $\left\{ P_\lambda[X_{N_\bullet};q,t] \right\}$ are nondegenerate.
Therefore, we have
\begin{cor}\label{CharCor}
For $\lambda$ with core $\kb(\lambda)$ compatible with $N_\bullet$ (cf. \ref{Finite}), the line spanned by $P_\lambda[X_{N_\bullet};q,t]$ is characterized by the eigenfunction equations (\ref{EigenF1}) ranging over all $p\in I$.
\end{cor}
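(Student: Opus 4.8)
The plan is to combine two facts already established: that the polynomials $P_\mu[X_{N_\bullet};q,t]$ with $\kb(\mu)=\kb(\lambda)$ and $\ell(\mu)\le|N_\bullet|$ form a basis of $\Lambda^I_{N_\bullet}$ (Proposition~\ref{Basis}), and that each of them is a simultaneous eigenfunction of the family $\{D_{p,1}^*(X_{N_\bullet};q,t)\}_{p\in I}$ by Theorem~\ref{Deg1Thm}. Write $\chi_p(\mu):=\sum_{\substack{1\le b\le|N_\bullet|\\ b-\mu_b\equiv p+1}}q^{-\mu_b}t^{-|N_\bullet|+b}$ for the eigenvalue appearing in \eqref{EigenF1}, so that the whole family is diagonalized in this basis. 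Then, given any $g\in\Lambda^I_{N_\bullet}$ satisfying $D_{p,1}^*g=\chi_p(\lambda)g$ for all $p\in I$, I would expand $g=\sum_\mu c_\mu P_\mu[X_{N_\bullet};q,t]$ over the basis and read off, using linear independence, that $c_\mu\bigl(\chi_p(\mu)-\chi_p(\lambda)\bigr)=0$ for every $\mu$ and every $p$. No commutativity of the $D_{p,1}^*$ is needed for this, since one only uses that each is individually diagonal in the common basis. This reduces the corollary to the \emph{nondegeneracy} claim: for $\mu\neq\lambda$ with $\kb(\mu)=\kb(\lambda)$ and $\ell(\mu)\le|N_\bullet|$, the tuples $(\chi_p(\mu))_{p\in I}$ and $(\chi_p(\lambda))_{p\in I}$ are distinct.

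To prove nondegeneracy I would show that the joint eigenvalue in fact recovers $\lambda$ as a partition. The key point, exactly as at the end of the proof of Theorem~\ref{Deg1Thm}, is that \eqref{EigenF1} is an identity of rational functions in $q,t$, so each $\chi_p(\lambda)$ is a genuine Laurent polynomial in these variables. For a fixed color $p$, $\chi_p(\lambda)$ is a sum of the monomials $q^{-\lambda_b}t^{-|N_\bullet|+b}$ over those $b\in\{1,\dots,|N_\bullet|\}$ with $b-\lambda_b\equiv p+1$; since the $t$-exponent $-|N_\bullet|+b$ is distinct for distinct $b$, these monomials are pairwise distinct and cannot cancel. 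Hence $\chi_p(\lambda)$ determines the set of pairs $\{(b,\lambda_b): 1\le b\le|N_\bullet|,\ b-\lambda_b\equiv p+1\}$. Taking the union over all $p\in I$—each index $b$ carries exactly one color—recovers the function $b\mapsto\lambda_b$ on all of $\{1,\dots,|N_\bullet|\}$, and since $\ell(\lambda)\le|N_\bullet|$ this pins down $\lambda$ completely. In particular distinct $\lambda$ yield distinct eigenvalue tuples, which is the desired nondegeneracy.

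Feeding this back into the reduction, nondegeneracy forces $c_\mu=0$ for every $\mu\neq\lambda$, so $g\in\K\,P_\lambda[X_{N_\bullet};q,t]$, giving the claimed characterization of the line. I do not anticipate a serious obstacle: the only delicate point is insisting that the eigenvalues be compared as rational functions in $(q,t)$ rather than at a numerical specialization, so that the monomial-distinctness argument is legitimate—but this is precisely the generic-parameter viewpoint already in force in Theorem~\ref{Deg1Thm}. (One could equally run the argument with the non-dual operators $D_{p,1}$ and eigenvalues $q^{\lambda_b}t^{|N_\bullet|-b}$ of \eqref{EigenE1}; the distinctness of $t$-powers makes the two cases identical.)
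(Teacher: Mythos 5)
Your proposal is correct and follows essentially the same route as the paper, which derives the corollary immediately from Proposition~\ref{Basis} together with the observation that the joint eigenvalues are nondegenerate. In fact you supply more detail than the paper does: the paper simply asserts nondegeneracy, whereas you justify it by noting that the distinct $t$-exponents prevent cancellation among the monomials $q^{-\lambda_b}t^{-|N_\bullet|+b}$, so the tuple $(\chi_p(\lambda))_{p\in I}$ reconstructs $b\mapsto\lambda_b$ on $\{1,\dots,|N_\bullet|\}$ and hence $\lambda$ itself.
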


\begin{ex}
Let $r=3$, $p=1$, $N_\bullet=(2,1,0)$, and $\lambda=(3,1,1)$.
In this case, $\lambda$ is a $3$-core and so 
\[P_\lambda[X_{N_\bullet};q,t]=1.\]
There are three shift patterns containing $p=1$:
\begin{align*}
    \Ju_1&=\{x_1^{(1)}\}\\
    \Ju_2&=\{x_1^{(0)},x_1^{(1)}\}\\
    \Ju_3&=\{x_2^{(0)},x_1^{(1)}\}
\end{align*}
The operator $D_{1,1}(X_{N_\bullet};q,t)$ is then
\begin{align}
    \label{Ex11}
    D_{1,1}(X_{N_\bullet};q,t)&=q\left(\frac{qtx_1^{(1)}-x_1^{(0)}}{q^2x_1^{(1)}-x_1^{(0)}}\right)\left(\frac{qtx_1^{(1)}-x_2^{(0)}}{q^2x_1^{(1)}-x_2^{(0)}}\right)T_{\Ju_1}\\
    \label{Ex12}
    &+(1-qt^{-1})q\left(\frac{qtx_1^{(1)}-x_2^{(0)}}{x_1^{(0)}-x_2^{(0)}}\right)\left(\frac{qtx_1^{(1)}}{x_1^{(0)}-q^2x_1^{(1)}}\right)T_{\Ju_2}\\
    \label{Ex13}
    &+(1-qt^{-1})q\left(\frac{qtx_1^{(1)}-x_1^{(0)}}{x_2^{(0)}-x_1^{(0)}}\right)\left(\frac{qtx_1^{(1)}}{x_2^{(0)}-q^2x_1^{(1)}}\right)T_{\Ju_3}.
\end{align}
The cyclic-shift operators act trivially on $P_\lambda(X_{N_\bullet};q,t)$.
Consolidating (\ref{Ex12}) and (\ref{Ex13}) gets us
\begin{align}
    \nonumber
    &(1-qt^{-1})q\left\{\left(\frac{qtx_1^{(1)}-x_2^{(0)}}{x_1^{(0)}-x_2^{(0)}}\right)\left(\frac{qtx_1^{(1)}}{x_1^{(0)}-q^2x_1^{(1)}}\right)+
    \left(\frac{qtx_1^{(1)}-x_1^{(0)}}{x_2^{(0)}-x_1^{(0)}}\right)\left(\frac{qtx_1^{(1)}}{x_2^{(0)}-q^2x_1^{(1)}}\right)\right\}\\
    \nonumber
    &=(1-qt^{-1})q\left\{
   \frac{qtx_1^{(1)}\left(
   qtx^{(1)}{\color{red}x_2^{(0)}}-qtx_1^{(1)}{\color{red}x_1^{(0)}}-{\color{red}x_2^{(0)}x_2^{(0)}+x_1^{(0)}x_1^{(0)}}+q^2{\color{red}x_2^{(0)}}x_1^{(1)}-q^2{\color{red}x_1^{(0)}}x_1^{(1)}
   \right)}
   {{\color{red}(x_1^{(0)}-x_2^{(0)})}(x_1^{(0)}-q^2x_1^{(1)})(x_2^{(0)}-q^2x_1^{(1)})}
    \right\}\\
    \nonumber
    &=(1-qt^{-1})q\left\{
   \frac{qtx_1^{(1)}\left(
   -qtx_1^{(1)}+x_1^{(0)}+x_2^{(0)}-q^2x_1^{(1)}
   \right)}
   {(x_1^{(0)}-q^2x_1^{(1)})(x_2^{(0)}-q^2x_1^{(1)})} 
    \right\}\\
    \nonumber
    &=(1-qt^{-1})q\left\{
   \frac{
   -(q^2t^2+q^3t)x_1^{(1)}x_1^{(1)}+qtx_1^{(0)}x_1^{(1)}+qtx_2^{(0)}x_1^{(1)}
   }
   {(x_1^{(0)}-q^2x_1^{(1)})(x_2^{(0)}-q^2x_1^{(1)})}
    \right\}\\
   \label{Ex14} 
    &=q\left\{
   \frac{
   (-q^2t^2+q^4)x_1^{(1)}x_1^{(1)}+(qt-q^2)x_1^{(0)}x_1^{(1)}+(qt-q^2)x_2^{(0)}x_1^{(1)}
   }
   {(x_1^{(0)}-q^2x_1^{(1)})(x_2^{(0)}-q^2x_1^{(1)})}
    \right\}.
\end{align}
On the other hand, (\ref{Ex11}) becomes
\begin{align}
    \nonumber
    &q\left(\frac{qtx_1^{(1)}-x_1^{(0)}}{q^2x_1^{(1)}-x_1^{(0)}}\right)\left(\frac{qtx_1^{(1)}-x_2^{(0)}}{q^2x_1^{(1)}-x_2^{(0)}}\right)\\
    \label{Ex15}
    &=q\left\{
   \frac{q^2t^2x_1^{(1)}x_1^{(1)}-qtx_1^{(1)}x_2^{(0)}-qtx_1^{(0)}x_1^{(1)}+x_1^{(0)}x_2^{(0)}}
   {(x_1^{(0)}-q^2x_1^{(1)})(x_2^{(0)}-q^2x_1^{(1)})} 
    \right\}.
\end{align}
Combining (\ref{Ex14}) and (\ref{Ex15}) gets us
\begin{align*}
    D_{1,1}(X_{N_\bullet};q,t)P_\lambda[X_{N_\bullet};q,t]&=
    q\left\{
    \frac{q^4x_1^{(1)}x_1^{(1)}-q^2x_1^{(1)}x_2^{(0)}-q^2x_1^{(0)}x_1^{(1)}+x_1^{(0)}x_2^{(0)}}
    {(x_1^{(0)}-q^2x_1^{(1)})(x_2^{(0)}-q^2x_1^{(1)})}
    \right\}\\
    &=q\frac{(x_1^{(0)}-q^2x_1^{(1)})(x_2^{(0)}-q^2x_1^{(1)})}{(x_1^{(0)}-q^2x_1^{(1)})(x_2^{(0)}-q^2x_1^{(1)})}\\
    &=qP_\lambda[X_{N_\bullet};q,t].
\end{align*}
\end{ex}

\begin{ex}
Let $r=2$, $p=0$, $N_\bullet=(1,1)$, and $\lambda=(1,1)$.
Here, 
\[P_\lambda[X_{N_\bullet};q,t]=x_1^{(1)}.\]
There are two shift patterns containing $0$:
\begin{align*}
    \Ju_1&=\{x_1^{(0)}\}\\
    \Ju_2&=\{x_1^{(0)},x_1^{(1)}\}
\end{align*}
We then have
\begin{align*}
    D_{0,1}(X_{N_\bullet};q,t)&=
    q\left(
   \frac{tx_1^{(0)}-x_1^{(1)}}{qx_1^{(0)}-x_1^{(1)}} 
    \right)T_{\Ju_1}
    +(1-qt^{-1})
    \frac{x_1^{(1)}}{x_1^{(0)}}
    \left(
    \frac{tx_1^{(0)}}{x_1^{(1)}-qx_1^{(0)}}
    \right)T_{\Ju_2}.
\end{align*}
Observe that
\begin{align*}
    T_{\Ju_1}x_1^{(1)}&=x_1^{(1)}\\
    T_{\Ju_2}x_1^{(1)}&=qx_1^{(0)}.
\end{align*}
Altogether then,
\begin{align*}
    D_{0,1}(X_{N_\bullet};q,t)P_\lambda[X_{N_\bullet};q,t]
    &=q\left(
   \frac{tx_1^{(0)}-x_1^{(1)}}{qx_1^{(0)}-x_1^{(1)}} 
    \right)x_1^{(1)}
    +(1-qt^{-1})
    \frac{x_1^{(1)}}{x_1^{(0)}}
    \left(
    \frac{tx_1^{(0)}}{x_1^{(1)}-qx_1^{(0)}}
    \right)qx_1^{(0)}\\
    &=qx_1^{(1)}\left(\frac{tx_1^{(0)}-x_1^{(1)}-(t-q)x_1^{(0)}}{qx_1^{(0)}-x_1^{(1)}}\right)\\
    &=qx_1^{(1)}\left(\frac{qx_1^{(0)}-x_1^{(1)}}{qx_1^{(0)}-x_1^{(1)}}\right)\\
    &=qP_\lambda[X_{N_\bullet};q,t].
\end{align*}
\end{ex}

\subsection{Higher degree operators}
Now we consider higher values of $n$.
The \textit{order $n$ wreath Macdonald operators} are defined as follows:
\begin{align}
\label{D*pnDef}
D_{p,n}^*(X_{N_\bullet};q,t^{-1})
&:=
\frac{(-1)^{\frac{n(n-1)}{2}}}{\prod_{k=1}^n(1-q^{-k}t^{-k})}\\
\nonumber
&\times
\sum_{\mathbf{\Ju}\in Sh_p^{[n]}(X_{N_\bullet})}
\overset{\curvearrowright}{\prod_{a=1}^n}
\left\{(1-q^{-1}t^{-1})^{|\Ju_a|}\left(\frac{x^{(p+1)}_{\Ju_a^\triangle}}{x^{(0)}_{\Ju_a^\triangle}}\right)
\frac{\displaystyle
\prod_{\substack{l=1\\x^{(p)}_{l}\not\in|\mathbf{\Ju}|_{\ge a}}}^{N_p}\left( tx^{(p+1)}_{\Ju_a^\triangle}-x^{(p)}_{l} \right)}
{\displaystyle
\prod_{\substack{l=1\\x^{(p)}_{l}\not\in|\mathbf{\Ju}|_{\le a}}}^{N_p}\left( x^{(p)}_{\Ju_a}-x^{(p)}_{l} \right)}\right.\\
\nonumber
&\times
\left.
\left( \prod_{\substack{i\in I\\i\not= p}}\, 
\prod_{\substack{l=1\\x^{(i)}_{l}\not=x^{(i)}_{\Ju_a^\triangle}}}^{N_i}\frac{\left(tx^{(i+1)}_{\Ju_a^\triangle}-x^{(i)}_{l}\right)}
{\left( x^{(i)}_{\Ju_a^\triangle}-x^{(i)}_{l} \right)}
\right)
\left( \prod_{i\in J_a\backslash \left\{ p \right\}} 
\frac{qt T_{\Ju_a}^{-1}(x^{(i)}_{\Ju_a})}
{\left( x^{(i)}_{\Ju_a}-T_{\Ju_a}^{-1}(x^{(i)}_{\Ju_a}) \right)}
\right) T_{\Ju_a}^{-1}\right\}\\
\label{DpnDef}
D_{p,n}(X_{N_\bullet};q,t^{-1})
&:=
\frac{(-1)^{\frac{n(n-1)}{2}}}{\prod_{k=1}^n(1-q^{k}t^{k})}\\
\nonumber
&\times\sum_{\mathbf{\Ju}\in Sh_p^{[n]}(X_{N_\bullet})}
\overset{\curvearrowright}{\prod_{a=1}^n}
\left\{(1-qt)^{|\Ju_a|}\left(\frac{x^{(r-1)}_{\Ju_a^\triangledown}}{x^{(p)}_{\Ju_a}}\right)
\frac{\displaystyle
\prod_{\substack{l=1\\x^{(p)}_{l}\not\in|\mathbf{\Ju}|_{\ge a}}}^{N_p}\left( t^{-1}x^{(p-1)}_{\Ju_a^\triangledown}-x^{(p)}_{l} \right)}
{\displaystyle
\prod_{\substack{l=1\\x^{(p)}_{l}\not\in|\mathbf{\Ju}|_{\le a}}}^{N_p}\left( x^{(p)}_{\Ju_a}-x^{(p)}_{l} \right)}\right.\\
\nonumber
&\times
\left.
\left( \prod_{\substack{i\in I\\i\not= p}}\, 
\prod_{\substack{l=1\\x^{(i)}_{l}\not=x^{(i)}_{\Ju_a^\triangledown}}}^{N_i}\frac{\left(t^{-1}x^{(i-1)}_{\Ju_a^\triangledown}-x^{(i)}_{l}\right)}
{\left( x^{(i)}_{\Ju_a^\triangledown}-x^{(i)}_{l} \right)}
\right)
\left( \prod_{i\in J_a\backslash \left\{ p \right\}} 
\frac{q^{-1}t^{-1} T_{\Ju_a}(x^{(i)}_{\Ju_a})}
{\left( x_{i,\Ju_a}-T_{\Ju_a}(x^{(i)}_{\Ju_a}) \right)}
\right) T_{\Ju_a}\right\}.
\end{align}
Here, recall our notation for ordered products/compositions \eqref{OrdProd}.

\begin{rem}\label{MacCompare}
In contrast with the $n=1$ case, it is less obvious that these yield the higher order Macdonald operators with $t$ inverted when $r=1$.
When $r=1$, note that our sum is over \textit{ordered} $n$-tuples of distinct shift operators, whereas the usual formula for the $n$th Macdonald operator is over \textit{unordered} $n$-tuples.
Summing over the orderings for a given $n$-tuple, the numerator will contain a factor that is antisymmetric, while the denominator will contain a Vandermonde determinant.
The quotient of these two will yield $(-t)^{\pm \frac{n(n-1)}{2}}$ times the $(qt)^{\mp 1}$-generating function of lengths of elements in $\Sym_n$.
After consolidating all constants, one is indeed left with the $n$th Macdonald operator.
\end{rem}

\begin{prop}\label{ResidueCalc}
For general $n$, the integrals from Lemma \ref{IntForm} yield the following:
\begin{enumerate}
\item[$(+)$]
Assuming $|x^{(i)}_{l}|<1$ and $|q|,|t|\gg 1$, we have
\[
\pi_{N_\bullet}\biggl( (\rho_{\vec{c}}^+\circ\Psi_+)(c^+_n\mathcal{E}_{p,n}^+)f\biggr)
= \left(t^{-n|N_\bullet|}D_{p,n}^*(X_{N_\bullet}; q,t^{-1})+\sum_{k=0}^{n-1}c_{p,k,n}^+ D_{p,k}^*(X_{N_\bullet}; q,t^{-1}) \right)f\left[ X_{N_\bullet} \right]
\]
for some $c_{p,k,n}^+\in\C(q,t)$.
\item[$(-)$]
For $|q|,|t|\ll 1$, we have
\[
\pi_{N_\bullet}\biggl( (\rho_{\vec{c}}^-\circ\Psi_-)(c^-_n\mathcal{E}_{p,n}^-)f\biggr)
=\left(t^{n|N_\bullet|}D_{p,n}(X_{N_\bullet};q,t^{-1}) +\sum_{k=0}^{n-1}c_{p,k,n}^-D_{p,k}(X_{N_\bullet};q,t^{-1})\right)f\left[ X_{N_\bullet} \right]
\]
for some $c_{p,k,n}^-\in\C(q,t)$.
\end{enumerate}
\end{prop}

\begin{proof}
In the `$+$' case, we will start by integrating the $p$-colored variables $\{w_{p,\bullet}\}$.
There are two kinds of poles inside the unit circle $|w_{p,b}|=1$:
\begin{enumerate}
\item[($x$)] the poles $\left( w_{p,b}-x_{l}^{(p)} \right)$ in (\ref{En1}) and
\item [($t$)] the poles $\left( w_{p,b}-t^{-1}w_{p-1,a} \right)$ for $a\le b$ in (\ref{En3}) and (\ref{En5}).
\end{enumerate}
As in \ref{Deg1}, we call them $x$- and $t$-poles, respectively.
We note that evaluating two variables $w_{p,b}$ and $w_{p,b'}$ at the same pole will result in zero due to the factor $\left(w_{p,b}-w_{p,b'}\right)$ in (\ref{En3}).
Besides that, for $r>1$, these residues can be evaluated independently and we elect to do so.
For the '$-$' case, we instead start with $\{w_{p+1,\bullet}\}$, for which the relevant poles are now
\begin{enumerate}
\item[($x$)] $\left( w_{p+1,a}-x_{l}^{(p)} \right)$ in (\ref{Fn1}) and
\item[($t$)] $\left( w_{p+1,a}-tw_{p+2,b} \right)$ for $a\le b$ in (\ref{Fn3}) and (\ref{Fn4}).
\end{enumerate}
In \ref{XPoleN} and \ref{MPoleN} below, we analyze the results of the two possibilities:
\begin{enumerate}
\item integrating all $w_{p,\bullet}$ at $x$-poles;
\item the `mixed' case where some $w_{p,\bullet}$ is integrated at a $t$-pole.
\end{enumerate}
The first case produces $D_{p,n}^*(q,t^{-1})$ and $D_{p,n}(q,t^{-1})$, whereas the second case yields a combination of lower order wreath Macdonald operators.
\end{proof}

\subsubsection{Only $x$-poles}\label{XPoleN}
In both the `$+$' and `$-$' cases, each of the $n$ variables $\{ x_{l_a}^{(p)} \}_{a=1}^n$ will become part of a shift pattern containing $p$, so we set $x_{\Ju_a}^{(p)}:=x_{l_a}^{(p)}$.
Furthermore, as these variables must be distinct, we have that the tuple $\mathbf{\Ju}:=\left( \Ju_1,\ldots,\Ju_n \right)$ will be $p$-distinct.
After taking these residues, we will proceed as in \ref{XPole1} for a specific value of $a$.

First consider the `$+$' case.
To see the effect of taking the residues $w_{p,b}=x_{\Ju_b}^{(p)}$, we group together the factors
\begin{align*}
&\left(\frac{w_{p,b}}{w_{0,b}}\right)\frac{w_{p+1,b}}{w_{p,b}\left( w_{p+1,b}-t^{-1}w_{p,b} \right)}
\prod_{l=1}^{N_p}
\frac{\left( w_{p+1,b}-t^{-1}x_{l}^{(p)} \right)}
{\displaystyle\left( w_{p,b}-x_{l}^{(p)} \right)}\\
&\times
\frac{1}
{\displaystyle
\prod_{b<c}
\left( w_{p+1,b}-t^{-1}w_{p,c} \right)}
\prod_{a<b}\frac{
\displaystyle
\left( w_{p,b}-w_{p,a} \right)\left( w_{p+1,b}-q^{-1}t^{-1}w_{p+1,a} \right)}
{\left( w_{p+1,a}-t^{-1}w_{p,b} \right)}.
\end{align*}
Upon taking residues, this becomes
\begin{equation}
\left(\frac{w_{p+1,b}}{w_{0,b}}\right)
\frac{\displaystyle
\prod_{\substack{l=1\\x_{l}^{(p)}\not\in|\mathbf{\Ju}|}}^{N_p}
\left( w_{p+1,b}-t^{-1}x_{l}^{(p)} \right)}
{\displaystyle
\prod_{\substack{l=1\\x_{l}^{(p)}\not\in|\mathbf{\Ju}|_{\le b}}}^{N_p}
\left( x_{\Ju_b}^{(p)}-x_{l}^{(p)} \right)}
\underbrace{\prod_{a<b}\left( w_{p+1,b}-q^{-1}t^{-1}w_{p+1,a} \right)}_{(\dagger)}.
\label{APLeft}
\end{equation} 

The next variable we consider is $w_{p+1,1}$.
Notice that we have canceled the poles $\left( w_{p+1,1}-t^{-1}w_{p,b} \right)$ for all $b\ge 1$, and consequently, the only two kinds of poles within the unit circle $|w_{p+1,1}|=1$ are as before in \ref{XPole1}.
We group together the factors
\begin{align*}
&\frac{w_{p+1,1}w_{p+2,1}}
{w_{p+1,1}\left( w_{p+2,1}-t^{-1}w_{p+1,1} \right)\left( w_{p+1,1}-q^{-1}w_{p+2,1} \right)}\\
&\times
\underbrace{\prod_{1<b}
\frac{ \left( w_{p+1,b}-w_{p+1,1} \right)\left(w_{p+2,b}-q^{-1}t^{-1}w_{p+2,1} \right)}
{\left( w_{p+2,b}-t^{-1}w_{p+1,1} \right)\left( w_{p+1,b}-q^{-1}w_{p+2,1} \right)}}_{(*)}
\underbrace{\prod_{b=1}^n\prod_{l=1}^{N_{p+1}}
\frac{\left( w_{p+2,b}-t^{-1}x_{l}^{(p+1)} \right)}
{\left( w_{p+1,b}-x_{l}^{(p+1)} \right)}}_{(**)}.
\end{align*}
The residues are
\begin{enumerate}
\item \textit{Residue at $w_{p+1,1}=q^{-1}w_{p+2,1}$}:
In this case, the factors in $(*)$ cancel out, leaving behind
\[
\frac{1}{(1-q^{-1}t^{-1})}
\left.\prod_{b=1}^n\prod_{l=1}^{N_{p+1}}
\frac{\left( w_{p+2,b}-t^{-1}x_{l}^{(p+1)} \right)}
{\left( w_{p+1,b}-x_{l}^{(p+1)} \right)}\right|_{w_{p+1,1}\mapsto q^{-1}w_{p+2,1}}.
\]
As in \ref{XPole1}, $w_{p+1,1}$ will ultimately be evaluated at $x^{(p+1)}_{\Ju_1^\triangle}$ and the poles above lie outside the unit circle $|w_{p+2,1}|=1$ because $|q|\gg1$.
\item \textit{Residue at $w_{p+1,1}=x_{l}^{(p+1)}=:x^{(p+1)}_{\Ju_1}$}:
Here, the factors in $(*)$ cancel with those in $(**)$ containing $x^{(p+1)}_{\Ju_1}$.
We are left with
\begin{align}
\nonumber&\frac{w_{p+2,1}}{\left( x^{(p+1)}_{\Ju_1}-q^{-1}w_{p+2,1} \right)}
\prod_{\substack{l=1\\x^{(p+1)}_{l}\not=x^{(p+1)}_{\Ju_1}}}^{N_{p+1}}
\frac{\left( w_{p+2,1}-t^{-1}x^{(p+1)}_{l} \right)}
{\left( x_{p+1,\Ju_1}-x^{(p+1)}_{l} \right)}\\
\label{nRename}
&\times\prod_{1<b}
\left\{\frac{\left( w_{p+2,b}-q^{-1}t^{-1}w_{p+2,1} \right)}
{\left( w_{p+1,b}-q^{-1}w_{p+2,1} \right)}
\prod_{\substack{l=1\\x^{(p+1)}_{l}\not=x^{(p+1)}_{\Ju_1}}}^{N_{p+1}}
\frac{\left( w_{p+2,b}-t^{-1}x^{(p+1)}_{l} \right)}
{\left(w_{p+1,b}-x^{(p+1)}_{l}\right)}\right\}.
\end{align}
Because $|q|\gg 1$, the pole $\left( x^{(p+1)}_{\Ju_1}-q^{-1}w_{p+2,1} \right)$ lies outside the unit circle $|w_{p+2,1}|=1$.
Our key organizational trick here is that when $w_{p+2,1}$ is ultimately evaluated at $x^{(p+2)}_{\Ju_1^\triangle}$, then we can use (\ref{ShiftRename}) to write (\ref{nRename}) as
\[
T_{\Ju_1}^{-1}\left(
\prod_{1<b}^n\prod_{l=1}^{N_{p+1}}
\frac{\left( w_{p+2,b}-t^{-1}x^{(p+1)}_{l} \right)}
{\left( w_{p+1,b}-x^{(p+1)}_{l} \right)}
\right)
\]
since $T_{\Ju_1}$ will only affect $x^{(p+1)}_{\Ju_1}$.
\end{enumerate}

This pattern continues \textit{upwards} in cyclic order for the variables $w_{i,1}$.
The $x$-variables where we take residues gives a shift pattern $\Ju_1$ containing $p$ and $w_{i,1}$ is evaluated at $x^{(i)}_{\Ju_1^\triangle}$.
In (\ref{APLeft}), the term in $(\dagger)$ for $a=1$ can be rewritten as $\left( w_{p+1,b}-t^{-1}T_{\Ju_1}^{-1}x^{(p)}_{\Ju_1} \right)$.
Finally, we note that by Lemma \ref{LemShift}, these residues result in $T_{\Ju_1}^{-1}$ applied to $f\left[ X_{N_\bullet} \right]$.
Thus, we can rewrite the result after taking the residues for $a=1$ as:
\begin{align*}
&\frac{(-1)^{\frac{n(n-1)}{2}}t^{-\frac{n(n+1)}{2}}(1-q^{-1}t^{-1})^{r(n-1)}}{\prod_{a=1}^n(1-q^{-a}t^{-a})}
\sum_{\Ju_1\in Sh_p}(1-q^{-1}t^{-1})^{|\Ju_1|}\left(\frac{x^{(p+1)}_{\Ju_1^\triangle}}{x^{(0)}_{\Ju_1^\triangle}}\right)
\\
&\times
\frac{\displaystyle
\prod_{\substack{l=1\\x^{(p)}_{l}\not\in|\mathbf{\Ju}|_{\ge 1}}}^{N_p}\left( x^{(p+1)}_{\Ju_1^\triangle}-t^{-1}x^{(p)}_{l} \right)}
{\displaystyle
\prod_{\substack{l=1\\x^{(p)}_{l}\not\in|\mathbf{\Ju}|_{\le 1}}}^{N_p}\left( x^{(p)}_{\Ju_1}-x^{(p)}_{l} \right)}
\left( \prod_{\substack{i\in I\\i\not= p}}\, 
\prod_{\substack{l=1\\x^{(i)}_{l}\not=x^{(i)}_{\Ju_1^\triangle}}}^{N_i}\frac{\left(x^{(i+1)}_{\Ju_1^\triangle}-t^{-1}x^{(i)}_{l}\right)}
{\left( x^{(i)}_{\Ju_1^\triangle}-x^{(i)}_{l} \right)}
\right)
\left( \prod_{i\in J_1\backslash \left\{ p \right\}} 
\frac{q T_{\Ju_1}^{-1}x^{(i)}_{\Ju_1}}
{\left( x^{(i)}_{\Ju_1}-T_{\Ju_1}^{-1}x^{(i)}_{\Ju_1} \right)}
\right)\\
&\times
\underset{|w_{i,a}|=1}{\oint\cdots\oint}
T_{\Ju_1}^{-1}\left(
\prod_{a=1}^{n}\left\{
\left( w_{p+1,a}-t^{-1}x^{(p)}_{\Ju_1} \right)
\prod_{1< a<b\le n}
\left( w_{p+1,b}-q^{-1}t^{-1}w^{(p+1)}_{a} \right)
\right.\right.\\
&\times
\left.
\frac{\displaystyle
\prod_{\substack{l=1\\ x^{(p)}_{l}\not\in|\mathbf{\Ju}|}}^{N_p}\left( w_{p+1,a}-t^{-1}x^{(p)}_{l} \right)}
{\displaystyle
\prod^{N_p}_{\substack{l=1\\x^{(p)}_{l}\not\in |\mathbf{\Ju}|_{\le a}}}\left( x^{(p)}_{\Ju_a}-x^{(p)}_{l} \right)}
\prod_{i\in I\backslash\left\{ p \right\}}\prod_{l=1}^{N_{i}}
\frac{\displaystyle \left( w_{i+1,a}-t^{-1}x^{(i)}_{l} \right)}{\displaystyle\left( w_{i,a}-x^{(i)}_{l} \right)}\right\}\\
&\times\prod_{1< a<b\le n}
\prod_{i\in I\backslash\{p\}}
\frac{(w_{i,b}-w_{i,a})(w_{i+1,b}-q^{-1}t^{-1}w_{i+1,a})}{(w_{i+1,b}-t^{-1}w_{i,a})(w_{i,b}-q^{-1}w_{i+1,a})}\\
&\times\prod_{a=2}^{n} \left\{
\left(\frac{w_{p+1,a}}{w_{0,a}}\right)
\prod_{i\in I\setminus\{p\}}
\frac{w_{i,a}w_{i+1,a}}{\left(w_{i+1,a}-t^{-1}w_{i,a}\right)\left(w_{i,a}-q^{-1}w_{i+1,a}\right)} \right\}\\
&\left.
\prod_{i\in I}f_i\left[ \sum_{l=1}^{N_i}x_{l}^{(i)}-\sum_{a=2}^n w_{i, a}+\sum_{a=2}^nq^{-1} w_{i+1,a} \right]\right)\prod_{i\in I\backslash\{p\}}\prod_{a=2}^n\frac{dw_{i,a}}{2\pi\sqrt{-1}w_{i,a}}.
\end{align*}
We have written this so that we can repeat the calculation for $a=1$ for general $a$ in increasing order.
Note that as we do this, we can rewrite factors in $(\dagger)$ of (\ref{APLeft}) in terms of $T_{\Ju_a}^{-1}x_{p,\Ju_a}$ using (\ref{ShiftRename}).
The end result of the residue calculation is 
\[t^{-n|N_\bullet|}D_{p,n}^*(X_{N_\bullet}; q,t^{-1})f\left[ X_{N_\bullet} \right].\]

The `$-$' case is similar.
We begin by taking residues of $\{w_{p+1,\bullet}\}$ and then start instead at $x^{(p)}_{n}$.
Afterwards, we continue \textit{downwards} in cyclic order until we have taken constant terms of all variables with $a=n$.
We then continue \textit{downwards} in $a$.
The end result is then
\[
t^{n|N_\bullet|}D_{p,n}(X_{N_\bullet};q,t^{-1})f\left[ X_{N_\bullet} \right].
\]

\subsubsection{Mixed poles}\label{MPoleN}
In the case where there are $t$-poles, our goal is to show that the result is a linear combination of the lower order operators applied to $f\left[ X_{N_\bullet} \right]$: $D_{p,k}^*(X_{N_\bullet}; q,t^{-1})$ in the `$+$' case and $D_{p,k}(X_{N_\bullet}; q,t^{-1})$ in the `$-$' case, where $k<n$.
Unlike in the case of $n=1$, we will not try to compute the coefficients of this linear combination---we will compute them indirectly in \ref{Eigen}.
As in all the previous cases, the initial residues force a string of other residues, and we will first compute these strings that start from the initial $t$-poles.
Once these variables are evaluated, the remaining terms will evaluate like \ref{XPoleN}.

In the `$+$' case, let $1\le b_1^p\le n$ be any index where the residue for $w_{p, b_{1}^p}$ is taken at a $t$-pole.
Denote this pole by $w_{p,b_1^p}=t^{-1}w_{p-1,b_1^{p-1}}$.
In contrast to our previous calculations, we will not always cancel out factors but rather remark on why taking residues at certain poles will result in zero.
The poles contributing within the unit circle $|w_{p-1,b_1^{p-1}}|=1$ are as follows.
\begin{enumerate}
\item \textit{$(w_{p-1,b_1^{p-1}}-q^{-1}w_{p,a}  )$ for $a\ge b_1^{p-1}$}: If $a<b_1^p$, then the factor $( w_{p,b_1^p}-q^{-1}t^{-1}w_{p,a } )$ in the numerator of (\ref{En3}) becomes zero when taking this residue.
If $a=b_1^{p-1}=b_1^p$, then this is a pole at $0$, which cancels with the extra factor of $w_{p-1,b_1^{p-1}}$ as in \ref{TPole1}. 
\item \textit{$( w_{p-1,b_1^{p-1}}-x^{(p-1)}_{l} )$}: The factor $( w_{p,b_1^p}-t^{-1}x^{(p-1)}_{l} )$ in the numerator of (\ref{En1}) will evaluate to zero.
\item \textit{$( w_{p-1,b_1^{p-1}}-t^{-1}w_{p-2,a} )$ for $a\le b_1^{p-1}$}: These poles possibly yield nonzero residues.
\end{enumerate}
Taking a residue of the third kind, we evaluate $w_{p-1,b_1^{p-1}}=t^{-1}w_{p-2,b_1^{p-2}}$ for some $b_1^{p-2}\le b_1^{p-1}$.

This pattern continues \textit{downwards} in cyclic order, picking out variables $w_{i, b_1^i}$ where $b_1^p\ge b_1^{p-1}\ge\cdots\ge b_1^{p+1}$.
At $w_{p+1,b_1^{p+1}}$, the pole of type (3) becomes
\begin{enumerate}
\item[(3')] \textit{$(w_{p+1,b_1^{p+1}}-t^{-1}w_{p,a})$ for all $a$}: If $w_{p,a}$ is evaluated at an $x$-variable $x^{(p)}_{l}$, then as in \ref{XPoleN}, the factor
$(w_{p+1,b_1^{p+1}}-t^{-1}x^{(p)}_{l})$ will evaluate to zero upon taking this residue.
Thus, only the case where $w_{p,a}$ is evaluated at a $t$-pole yields a nonzero residue.
For $a=b_1^p$, this is a pole at $w_{p+1,b_1^{p+1}}=0$.
If $b_{1}^{p+2}=b_1^{p+1}$, then because of the analogue of case (1), there are no extra powers of $w_{p+1,b_1^{p+1}}$ to cancel this pole.
\end{enumerate}

If we take the residue in (3') at $w_{p,a}$ evaluated at a $t$-pole but $a\not=b_1^p$, then we set $b_2^p:=a$.
Letting the $t$-pole be $(w_{p,b_2^p}-t^{-1}w_{p-1,b_2^{p-1}})$ for $b_2^{p-1}\le b_2^p$, the process is similar to as before.
There is just one alteration to the poles of type (3):
\begin{enumerate}
\item[(3'')]\textit{$(w_{i, b_2^{i}}-t^{-1}w_{i-1,b_1^{i-1}})$}: This is a pole at $0$, which cancels with the factor $\pm(w_{i, b_2^i}- w_{i, b_1^i})$ in the numerator of (\ref{En2}).
\end{enumerate}
Thus, we avoid variables that we have already evaluated.
Note that at first glance, the product of factors in (\ref{En2}) and (\ref{En3}) involving $w_{i, b_2^i}$ and $w_{j,b_1^j}$ may contribute a pole at $0$, but in fact, their products have total degree zero and thus become a constant.
There is an outlier case of $(w_{p+1,b_1^{p+1}}-t^{-1}w_{p,b_{2}^p})$, which has been removed when we take residues, but this can be replaced with $(w_{p+1,b_1^{p+1}}-t^{-1}w_{p,b_{1}^p})$ to restore the degree zero balance.
We continue like this to new indices $\{b_3^i\}_{i\in I}, \left\{ b_{4}^i \right\}_{i\in I},$ etc. until either there are no more nonzero residues or we finally take the residue at $0$ of $z_{p+1, b_k^{p+1}}$ for some final value $k$. 

For $1\le m<m'\le k$, we note that as in the $(m,m')=(1,2)$ case, the product of the binomials in (\ref{En2}) and (\ref{En3}) involving one variable from $\{w_{i,b_m^i}\}_{i\in I}$ and another variable from $\{w_{j, b_{m'}^j}\}_{i\in I}$ has degree zero provided we make the same adjustment for $i=p+1$ and $m'=m+1$.
Thus, these factors turn into a constant.
To consider binomials involving only $\{w_{i,b_m^i}\}_{i\in I}$ for one value of $m$, we note that when we take the residues, we remove
\begin{alignat*}{2}
&\frac{1}{w_{i,b_m^i}-t^{-1}w_{i+1,b_m^{i+1}}} &&\hspace{1cm}\hbox{for }i\not=p+1,\\
&\frac{1}{w_{p+1,b_m^{p+1}}-t^{-1}w_{p,b_{m+1}^{p}}} &&\hspace{1cm}\hbox{for }1\le m<k.
\end{alignat*}
There is a leftover power of $w_{i,b_{m}^i}$ for $i\not=p$ from (\ref{En4}) and (\ref{En5}), and as discussed in the pole of type (1) above, these are only absorbed when $b_{m}^{i+1}=b_{m}^i$.
These unabsorbed powers turn the entire integral zero when we take the final residue $w_{p+1,b_k^{p+1}}=0$.
Thus, we only need to consider the case where for each $m$,
\[b_m^p=b_m^{p-1}=\cdots=b_m^{p+1}=:b_m.\]
In this case, all factors only involving $\{w_{i,b_m}\}_{i\in I}^{1\le m\le k}$ leave behind a constant.
Evidently, the corresponding terms in (\ref{En4}) and (\ref{En5}) disappear.
The terms involving $w_{i,b_m}$ and an $x$-variable in (\ref{En1}) leave behind a power of $t$ when we cancel
\[
\prod_{m=1}^k\prod_{i\in I}\prod_{l=1}^{N_i}\frac{\left( w_{i+1,b_{m}}-t^{-1}x^{(i)}_{l} \right)}
{\left( w_{i,b_{m}}-x^{(i)}_{l} \right)}.
\]
Finally, the product of terms in (\ref{En2}) involving any index $1\le a\le n$ and $b_m$ leave behind a constant when we evaluate $w_{i,b_m}=0$ for all $i\in I$.
The remaining factors are a scalar multiple of the calculation for $\mathcal{E}_{p,n-k}$.
The `$-$' case is analyzed similarly.

\subsection{Eigenvalues}\label{Eigen}
To describe the eigenvalues of the operators (\ref{D*pnDef}) and (\ref{DpnDef}), we will use the elementary symmetric functions $ e_k$.
As in the proof of Theorem \ref{Deg1Thm}, Proposition \ref{ResidueCalc} gives us:
\begin{prop}\label{CombiEigen}
Recall the coefficients $\{c_{p,k,n}^\pm\}$ from Proposition \ref{ResidueCalc}.
We have:
\begin{equation}
\begin{aligned}
&\left(t^{-n|N_\bullet|}D_{p,n}^*(X_{N_\bullet}; q,t^{-1})+\sum_{k=0}^{n-1}c_{p,k,n}^+ D_{p,k}^*(X_{N_\bullet}; q,t^{-1}) \right)P_\lambda[X_{N_\bullet};q,t]\\
&= e_n\left[\sum_{\substack{b=1\\b-\lambda_b\equiv p+1}}^\infty q^{-\lambda_b}t^{-b}\right]P_\lambda[X_{N_\bullet};q,t^{-1}]
\end{aligned}
\label{EEigenEq}
\end{equation}
for $|x^{(i)}_{l}|<1$, $|q|\gg1$, and $|t|\gg 1$ and
\begin{equation}
\begin{aligned}
&\left(t^{n|N_\bullet|}D_{p,n}(X_{N_\bullet};q,t^{-1}) +\sum_{k=0}^{n-1}c_{p,k,n}^-D_{p,k}(X_{N_\bullet};q,t^{-1})\right)P_\lambda[X_{N_\bullet};q,t^{-1}]\\
&= e_n\left[\sum_{\substack{b=1\\b-\lambda_b\equiv p+1}}^\infty q^{\lambda_b} t^{b}\right]P_\lambda[X_{N_\bullet};q,t^{-1}]
\end{aligned}
\label{FEigenEq}
\end{equation}
for $|x^{(i)}_{l}|<1$, $|q|\ll1$, and $|t|\ll 1$.
\end{prop}

\begin{cor}
For variables and parameters satisfying the conditions in Proposition \ref{CombiEigen}, the operators $D_{p,n}(q,t)$ and $D_{p,n}^*(q,t)$ act diagonally on $\left\{ P_\lambda\left[ X_{N_\bullet};q,t \right] \right\}$.
\end{cor}

\begin{proof}
Using induction starting with the case $n=1$ from Theorem \ref{Deg1Thm}, we can use the equations in Proposition \ref{CombiEigen} to show that $D_{p,n}(X_{N_\bullet};q,t)$ and $D_{p,n}^*(X_{N_\bullet};q,t)$ act diagonally on $P_\lambda[X_{N_\bullet};q,t]$ under the appropriate conditions on variables and parameters.
\end{proof}

Our goal in this subsection is to extract the eigenvalues from (\ref{EEigenEq}) and (\ref{FEigenEq}) and extend their validity to generic values.

\subsubsection{Spectral variables}
Letting $\lambda$ vary over partitions with $\core(\lambda)$ compatible with $N_\bullet$ and $\ell(\lambda)\le |N_\bullet|$, we note that by Proposition \ref{SumR}, the stabilized eigenvalues
\[
e_n\left[\sum_{\substack{b=1\\b-\lambda_b\equiv p+1}}^\infty q^{-\lambda_b}t^{-b}\right]
\hbox{ and }
e_n\left[\sum_{\substack{b=1\\b-\lambda_b\equiv p+1}}^\infty q^{\lambda_b} t^{b}\right]
\]
depend only on the $N_p$ values of $b$ where $1\le b\le |N_\bullet|$ and $b-\lambda_b=p+1$.
We define the \textit{color $p$ spectral variables} $\{ s^{(p)}_{ a} \}_{a=1}^{N_p}$ by setting
\[
s^{(p)}_{a}=q^{\lambda_{b_a}}t^{b_a}
\]
where $1\le b_a\le|N_\bullet|$ is the $a$th number where $b_a-\lambda_{b_a}\equiv p+1$.
Using these variables, we can rewrite
\begin{align*}
 e_n\left[\sum_{\substack{b=1\\b-\lambda_b\equiv p+1}}^\infty q^{-\lambda_b}t^{-b}\right]
&=e_n\left[\sum_{k=0}^{\infty}t^{|N_\bullet|-p-1-kr}+\sum_{\substack{b=1\\b-\lambda_b\equiv p+1}}^{|N_\bullet|} q^{-\lambda_b}t^{-b}\right]\\
&=e_n\left[\frac{t^{-|N_\bullet|-p-1}}{1-t^{-nr}}+\sum_{a=1}^{N_p}\left(s^{(p)}_{a}\right)^{-1} \right]   
\end{align*}
where $|t|\gg 1$.
Here, we have split off the parts above row $|N_\bullet|$ as in \eqref{eigenTail}.
Similarly,
\begin{align*}
e_n\left[\sum_{\substack{b=1\\b-\lambda_b\equiv p+1}}^\infty q^{\lambda_b} t^{b}\right]
&=e_n\left[\sum_{k=0}^\infty t^{|N_\bullet|+p+1+kr}+\sum_{\substack{b=1\\b-\lambda_b\equiv p+1}}^{|N_\bullet|} q^{\lambda_b} t^{b}\right]\\
&=e_n\left[ \frac{t^{|N_\bullet|+p+1}}{1-t^{nr}}+\sum_{a=1}^{N_p}s^{(p)}_{a} \right]   
\end{align*}
where $|t|\ll 1$.
The following is but a slight alteration of Lemma 3.2 from \cite{FHHSY}:
\begin{lem}\label{EigenStab}
For $|t|\gg 1$, we have
\begin{equation}
e_n\left[\frac{t^{-|N_\bullet|-p-1}}{1-t^{-nr}}+\sum_{a=1}^{N_p}\left(s^{(p)}_{a}\right)^{-1} \right]=\sum_{k=0}^n\frac{t^{-n|N_\bullet|-(n-k)(p+1)-r\binom{n-k}{2}}}{\displaystyle\prod_{l=1}^{n-k}(1-t^{-rl})}
e_k\left[ \sum_{a=1}^{N_p}t^{|N_\bullet|}\left(s^{(p)}_{a}\right)^{-1} \right].
\label{MinusSpec}
\end{equation}
while for $|t|\ll 1$, we have
\begin{equation}
e_n\left[\frac{t^{|N_\bullet|+p+1}}{1-t^{nr}}+\sum_{a=1}^{N_p}s^{(p)}_{a} \right]=\sum_{k=0}^n\frac{t^{n|N_\bullet|+(n-k)(p+1)+r\binom{n-k}{2}}}{\displaystyle\prod_{l=1}^{n-k}(1-t^{rl})}
e_k\left[ \sum_{a=1}^{N_p}t^{-|N_\bullet|}s^{(p)}_{a} \right]
\label{PlusSpec}
\end{equation}
\end{lem}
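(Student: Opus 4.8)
The plan is to read both identities as instances of the plethystic addition rule for elementary symmetric functions, combined with Euler's $q$-binomial identity. Writing the argument of $e_n$ on the left-hand side of \eqref{MinusSpec} as a sum $A+B$ of two plethystic alphabets, where $B=\sum_{a=1}^{N_p}(s^{(p)}_a)^{-1}$ is the finite spectral alphabet and $A$ is the stabilized tail of the eigenvalue, namely the geometric progression with first term $t^{-|N_\bullet|-p-1}$ and ratio $t^{-r}$, the standard decomposition
\[
e_n[A+B]=\sum_{k=0}^n e_{n-k}[A]\,e_k[B]
\]
reduces the claim to an explicit evaluation of $e_{n-k}[A]$.

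To evaluate $e_m[A]$ I would invoke Euler's identity in generating-function form: setting $c=t^{-|N_\bullet|-p-1}$ and $T=t^{-r}$, so that $A=\{c,cT,cT^2,\dots\}$, one has
\[
\prod_{j\ge 0}\bigl(1+cT^j u\bigr)=\sum_{m\ge 0}\frac{T^{\binom{m}{2}}\,(cu)^m}{\prod_{l=1}^m\bigl(1-T^l\bigr)},
\]
whence $e_m[A]=T^{\binom{m}{2}}c^m/\prod_{l=1}^m(1-T^l)$. This step is exactly where the hypothesis $|t|\gg 1$ enters: it forces $|T|=|t^{-r}|<1$, so that the geometric series defining $A$ and the infinite product above both converge and may be expanded in this region. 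Substituting $m=n-k$ and the values of $c,T$ gives
\[
e_{n-k}[A]=\frac{t^{-(n-k)(|N_\bullet|+p+1)-r\binom{n-k}{2}}}{\prod_{l=1}^{n-k}(1-t^{-rl})}.
\]

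It then remains to reconcile the two forms. Using homogeneity of $e_k$ I would rewrite $e_k[t^{|N_\bullet|}B]=t^{k|N_\bullet|}e_k[B]$, so that the right-hand side of \eqref{MinusSpec} becomes $\sum_{k=0}^n e_{n-k}[A]\,e_k[B]$ once one checks the single exponent identity
\[
-(n-k)(|N_\bullet|+p+1)=-n|N_\bullet|-(n-k)(p+1)+k|N_\bullet|,
\]
which is immediate. This matches the decomposition above term by term and proves \eqref{MinusSpec}.

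Finally, \eqref{PlusSpec} follows from the same computation under the substitution $t\mapsto t^{-1}$, $s^{(p)}_a\mapsto (s^{(p)}_a)^{-1}$, for which the geometric alphabet becomes the progression with first term $t^{|N_\bullet|+p+1}$ and ratio $t^{r}$; here convergence requires $|t^r|<1$, that is, the stated regime $|t|\ll 1$. The argument is a direct adaptation of \cite[Lemma 3.2]{FHHSY}, and I expect no genuine obstacle beyond careful bookkeeping of the powers of $t$; the only point demanding attention is that the symbol $1/(1-t^{\mp r})$ must be expanded as the geometric series convergent in the indicated range, which is precisely what separates the two halves of the statement into the regimes $|t|\gg1$ and $|t|\ll1$.
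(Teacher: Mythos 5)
Your argument is correct and is precisely what the paper's one-line proof means by ``an easy application of the quantum binomial theorem'' following \cite[Lemma 3.2]{FHHSY}: split the alphabet, evaluate $e_{n-k}$ of the geometric progression via Euler's identity, and match exponents using homogeneity of $e_k$. The exponent check and the convergence discussion (reading $1-t^{\mp nr}$ as the geometric-series denominator $1-t^{\mp r}$, which is the intended meaning) are both right, so nothing is missing.
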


\begin{proof}
The basic observation is that for two alphabets $X$ and $Y$ and an auxilliary variable $u$,
\begin{align*}
    \sum_{n=0}^\infty e_n[X+Y]u^n
    &=\exp\left(-\sum_{k>0} p_k[X+Y] \frac{(-u)^k}{k}\right)\\
    &=\exp\left(-\sum_{k>0} p_k[X] \frac{(-u)^k}{k}\right)\exp\left(-\sum_{k>0} p_k[Y] \frac{(-u)^k}{k}\right)\\
    &=\left(\sum_{n=0}^\infty e_n[X]u^n\right)\left(\sum_{n=0}^\infty e_n[Y]u^n\right)\\
    &=\sum_{n=0}^\infty\sum_{k=0}^ne_{n-k}[X]e_k[Y] u^n
\end{align*}
Comparing the coefficients of $u^n$, we thus have
\begin{equation}
 e_n[X+Y]=\sum_{k=0}^n e_{n-k}[X]e_{k}[Y]
 \label{FactorEl}
\end{equation}

For \eqref{MinusSpec}, we take \eqref{FactorEl} and set
\begin{equation*}
 \begin{aligned}
    X&=\frac{t^{-|N_\bullet|-p-1}}{1-t^{-nr}}=t^{-p-1}\sum_{k=0}^\infty t^{-rk},&
    Y&=\sum_{a=1}^{N_p}\left(s_a^{(p)}\right)^{-1}
\end{aligned}   
\end{equation*}
By the quantum binomial theorem (cf. \cite[Example I.2.5]{Mac}), we have
\[
e_{n-k}\left[\frac{t^{-|N_\bullet|-p-1}}{1-t^{-nr}}\right]=
\frac{t^{(-|N_\bullet|-p-1)(n-k)-r\binom{n-k}{2}}}{\displaystyle\prod_{l=1}^{n-k}(1-t^{-rl})}
\]
To obtain \eqref{MinusSpec}, we break off $t^{k|N_\bullet|}$ and place it inside $e_k\left[\sum (s_a^{(p)})^{-1}\right]$.
The proof of \eqref{PlusSpec} is similar.
\end{proof}

\subsubsection{Spectral shift}
By Lemma \ref{EigenStab}, the stabilized eigenvalues are polynomial in the spectral variables.
Moreover, its degree $k$ part is given by $e_k$ evaluated at $\{t^{-|N_\bullet|}s^{(p)}_{\bullet}\}$.
We would like to show that the summations in (\ref{EEigenEq}) and (\ref{FEigenEq}) correspond in some sense to this decomposition by the degree.
The degree of a homogeneous polynomial can be measured using $q$-shifts.
On the other hand, by the definition of the spectral variables, multiplying $s^{(p)}_{a}$ by $q$ corresponds to adding a node to the end of a row.
However, we must do this in a way that is color-insensitive.
This motivates the following:
\begin{prop}\label{ShiftProp}
Let $\lambda$ be a partition with core $\kb(\lambda)$ compatible with $N_\bullet$ and $\ell(\lambda)\le|N_\bullet|$.
Then
\[
\left(\prod_{i\in I}\prod_{l=1}^{N_i}x^{(i)}_{l}\right)P_\lambda[X_{N_\bullet}; q,t]=P_{\lambda+r^{|N_\bullet|}}[X_{N_\bullet};q,t].
\]
Here, $\lambda+r^{|N_\bullet|}$ denotes the partition obtained by adding $r$ boxes to the first $|N_\bullet|$ rows of $\lambda$.
\end{prop}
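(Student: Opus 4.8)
\emph{Strategy.} Set $M:=\prod_{i\in I}\prod_{l=1}^{N_i}x^{(i)}_l=\prod_{i\in I}e_{N_i}[X^{(i)}]\in\Lambda^I_{N_\bullet}$, a color-symmetric multiplication operator, and write $\mu:=\lambda+r^{|N_\bullet|}$. The plan is to show that multiplication by $M$ intertwines the degree-one operators up to a scalar, so that $MP_\lambda[X_{N_\bullet};q,t]$ is a joint eigenfunction of $\{D_{p,1}(X_{N_\bullet};q,t)\}_{p\in I}$ with exactly the eigenvalues attached to $\mu$; the nondegeneracy of these eigenvalues (stated just before Corollary~\ref{CharCor}) then forces $MP_\lambda=cP_\mu$ for a scalar $c$, which a leading-term comparison pins to $c=1$. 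Two combinatorial facts underlie this. First, $\kb(\mu)=\kb(\lambda)=\alpha$: since $\kb(\nu)=-\sum_{(p,q)\in\nu}\alpha_{q-p}$ and adjoining to a row a run of $r$ boxes of $r$ consecutive contents contributes $\sum_{j=0}^{r-1}\alpha_j=0$, adding $r^{|N_\bullet|}$ leaves $\kb$ unchanged; moreover $\ell(\mu)=|N_\bullet|$, so $\mu$ again satisfies the hypotheses and is admissible for Corollary~\ref{CharCor}. Second, $\quot(\mu)^{(i)}=\quot(\lambda)^{(i)}+(1^{N_i})$ for each $i\in I$, addressed below.

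\emph{The intertwining.} Recall from \eqref{WreathMacOp} that $D_{p,1}(X_{N_\bullet};q,t)=\sum_{\Ju\in Sh_p(X_{N_\bullet})}c_{\Ju}(X_{N_\bullet})\,T_{\Ju}$, where each $c_{\Ju}$ is a multiplication operator. Directly from the definitions in \ref{Cyclic}, the cyclic-shift $T_{\Ju}$ fixes every variable outside $\Ju$ and permutes $\{x^{(i)}_{\Ju}\}_{i\in J}$ among themselves, multiplying their product by $q^{\sum_{i\in J}(i-i^{\blacktriangledown})}$; since the gaps $i-i^{\blacktriangledown}$ over one cyclic pass through $J$ sum to $r$, this gives $T_{\Ju}(M)=q^{r}M$, i.e. the operator identity $T_{\Ju}\circ M=q^{r}\,M\circ T_{\Ju}$. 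As the $c_{\Ju}$ commute with $M$, summing over $\Ju$ yields $D_{p,1}\circ M=q^{r}\,M\circ D_{p,1}$ for every $p$. Applying this to $P_\lambda$ and using \eqref{EigenE1} together with $b-\mu_b\equiv b-\lambda_b$ and $q^{\mu_b}=q^{r}q^{\lambda_b}$ for $1\le b\le|N_\bullet|$, we obtain
\begin{align*}
D_{p,1}(MP_\lambda)=q^{r}M\,D_{p,1}P_\lambda=q^{r}\Big(\sum_{\substack{b=1\\ b-\lambda_b\equiv p+1}}^{|N_\bullet|}q^{\lambda_b}t^{|N_\bullet|-b}\Big)MP_\lambda=\Big(\sum_{\substack{b=1\\ b-\mu_b\equiv p+1}}^{|N_\bullet|}q^{\mu_b}t^{|N_\bullet|-b}\Big)MP_\lambda .
\end{align*}
Thus $MP_\lambda$ carries the full joint spectrum of $\mu$, and nondegeneracy gives $MP_\lambda=cP_\mu$.

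\emph{Quotient shape and normalization.} For the quotient identity I would pass to $|N_\bullet|$-bead abacus data: padding $\lambda$ to length $|N_\bullet|$, the operation $\lambda\mapsto\mu$ shifts every $\beta$-number by $+r$, moving each bead one step outward along its own runner (residues mod $r$ being preserved); on the runner carrying $N_i$ beads this increases all $N_i$ of its parts by $1$, i.e. adjoins a column $(1^{N_i})$ to $\quot(\lambda)^{(i)}$, the bead count per runner being $N_i$ by Lemma~\ref{SumR}(2). Writing $\lambda^{(i)}:=\quot(\lambda)^{(i)}$, this yields $M\,s_{\quot(\lambda)}[X_{N_\bullet}]=\prod_{i\in I}\big(e_{N_i}[X^{(i)}]\,s_{\lambda^{(i)}}[X^{(i)}]\big)=\prod_{i\in I}s_{\lambda^{(i)}+(1^{N_i})}[X^{(i)}]=s_{\quot(\mu)}[X_{N_\bullet}]$. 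Since $P_\lambda=s_{\quot(\lambda)}+\sum_{\nu<\lambda}(\cdots)\,s_{\quot(\nu)}$ with all $\nu$ of equal size and core, and adding $r^{|N_\bullet|}$ preserves strict dominance among such partitions, multiplication by $M$ carries every subleading term to one strictly below $s_{\quot(\mu)}$; as $\nu\mapsto\nu+r^{|N_\bullet|}$ is injective, the coefficient of $s_{\quot(\mu)}$ on the left equals $1$, forcing $c=1$. I expect the main obstacle to be exactly this combinatorial step: it requires matching the paper's conventions for $\quot$, charge, and the residue offset $b-\lambda_b\equiv i+1$ (cf. the black/white-dot interchange relative to \cite{Wen}) so that the uniform $\beta$-shift is correctly identified with column addition on the prescribed runner and with the count $N_i$. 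The eigenvalue argument itself is routine once $T_{\Ju}(M)=q^{r}M$ is established.
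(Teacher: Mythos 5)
Your proposal is correct and follows essentially the same route as the paper: both rest on the commutation $T_{\Ju}(\prod_{i,l}x^{(i)}_l)=q^r\prod_{i,l}x^{(i)}_l$, the resulting shift of the degree-one eigenvalues, and the characterization of the line $\K P_{\lambda+r^{|N_\bullet|}}$ from Corollary~\ref{CharCor}. The only difference is that you also spell out the normalization $c=1$ via the abacus identity $\quot(\lambda+r^{|N_\bullet|})^{(i)}=\quot(\lambda)^{(i)}+(1^{N_i})$ and the unitriangularity of $P_\lambda$, a step the paper leaves implicit in ``from which the proposition follows.''
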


\begin{proof}
By Corollary \ref{CharCor}, $P_{\lambda+r^{|N_\bullet|}}[X_{N_\bullet};q,t]$ is characterized by the eigenvalue equations
\[
D_{p,1}(X_{N_\bullet};q,t)P_{\lambda+r^{|N_\bullet|}}[X_{N_\bullet};q,t]=\left( \sum_{\substack{b=1\\ b-\lambda_b\equiv p+1}}^{|N_\bullet|} q^{\lambda_b+r}t^{|N_\bullet|-b}\right)P_{\lambda+r^{|N_\bullet|}}[X_{N_\bullet};q,t]
\]
ranging over all $p\in I$.
Note that we have used $b-\lambda_b\equiv b-\lambda_b+r$.
Now, for a shift pattern $\Ju$, it is easy to see that
\begin{equation}
T_{\Ju}\left(\prod_{i\in I}\prod_{l=1}^{N_i}x^{(i)}_{l}\right)=q^r\left(\prod_{i\in I}\prod_{l=1}^{N_i}x^{(i)}_{l}\right)
\label{SpecShift}
\end{equation}
from which the proposition follows.
\end{proof}

\subsubsection{Eigenfunction equation}
We are now ready to derive the eigenvalues of the higher order wreath Macdonald operators.

\begin{thm}
For $\lambda$ with core $\kb(\lambda)$ compatible with $N_\bullet$ according to \eqref{Compatibility} and $\ell(\lambda)\le|N_\bullet|$, the wreath Macdonald polynomial $P_\lambda[X_{N_\bullet};q,t]$ satisfies the equations:
\begin{align*}
D_{p,n}^*(X_{N_\bullet};q,t)P_\lambda[X_{N_\bullet};q,t] &= e_n\left[ \sum_{\substack{b=1\\ b-\lambda_b\equiv p+1}}^{|N_\bullet|}q^{-\lambda_b}t^{-|N_\bullet|+b} \right]
P_\lambda[X_{N_\bullet};q,t]\\
D_{p,n}(X_{N_\bullet};q,t)P_\lambda[X_{N_\bullet};q,t] &= e_n\left[ \sum_{\substack{b=1\\ b-\lambda_b\equiv p+1}}^{|N_\bullet|}q^{\lambda_b}t^{|N_\bullet|-b} \right]
P_\lambda[X_{N_\bullet};q,t].
\end{align*}
Here, $x_{i,l}$, $q$, and $t$ take generic values.
\label{ThmEigenHigher}
\end{thm}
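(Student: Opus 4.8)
The plan is to prove both families of eigenfunction equations by induction on the degree $n$, using the residue computation of Lemma~\ref{ResidueCalc} as the main input and the spectral shift of Proposition~\ref{ShiftProp} to separate off the lower-order contributions. I will carry out the argument for the operators $D_{p,n}^*$; the case of $D_{p,n}$ is entirely parallel, using \eqref{FEigenEq}, the expansion \eqref{PlusSpec}, and the region $|q|,|t|\ll 1$ in place of the starred counterparts. Throughout the main computation I work with $t$ inverted and in the analytic region $|x^{(i)}_l|<1$, $|q|,|t|\gg 1$, exactly as in \eqref{EEigenEq}.

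The base case $n=1$ is Theorem~\ref{Deg1Thm}. Assume inductively that for every $k<n$ the operator $D_{p,k}^*(X_{N_\bullet};q,t^{-1})$ acts diagonally on each $P_\lambda[X_{N_\bullet};q,t^{-1}]$, with eigenvalue $\nu_k(\lambda)$. Combining Lemma~\ref{FockEigen}, Lemma~\ref{EpnShuff}, the Tsymbaliuk isomorphism (Theorem~\ref{TsymIso}), and Lemma~\ref{ResidueCalc}, the operator $t^{-n|N_\bullet|}D_{p,n}^*+\sum_{k<n}c^+_{p,k,n}D_{p,k}^*$ acts on $P_\lambda[X_{N_\bullet};q,t^{-1}]$ by the scalar $e_n[\sum_{b-\lambda_b\equiv p+1}q^{-\lambda_b}t^{-b}]$. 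Since the lower terms are already diagonal by hypothesis, $D_{p,n}^*$ is diagonal as well, say with eigenvalue $\nu_n(\lambda)$; it remains only to identify $\nu_n(\lambda)$ with the claimed $e_n$.

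The key device is the spectral shift. By Proposition~\ref{ShiftProp}, multiplication by $\Pi:=\prod_{i\in I}\prod_{l=1}^{N_i}x^{(i)}_l$ carries $P_\lambda$ to $P_{\lambda+r^{|N_\bullet|}}$. Each cyclic-shift operator satisfies $T_\Ju(\Pi)=q^r\Pi$ by \eqref{SpecShift}, so $T_\Ju^{-1}\circ\Pi=q^{-r}\Pi\circ T_\Ju^{-1}$ as operators; since every summand of $D_{p,k}^*$ is a product of $k$ inverse cyclic shifts and multiplication operators, this yields the operator identity $D_{p,k}^*\circ\Pi=q^{-rk}\Pi\circ D_{p,k}^*$ and hence the homogeneity relation $\nu_k(\lambda+r^{|N_\bullet|})=q^{-rk}\nu_k(\lambda)$. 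On the other hand, Lemma~\ref{EigenStab} rewrites the scalar above as $\sum_{k=0}^n A_{n,k}\chi_k(\lambda)$, where $A_{n,k}$ is the explicit constant appearing in \eqref{MinusSpec} with $A_{n,n}=t^{-n|N_\bullet|}$, and $\chi_k(\lambda)=e_k[\sum_a t^{|N_\bullet|}(s^{(p)}_a)^{-1}]$ is itself homogeneous of shift-degree $k$, i.e.\ $\chi_k(\lambda+r^{|N_\bullet|})=q^{-rk}\chi_k(\lambda)$. Replacing $\lambda$ by $\lambda+m\,r^{|N_\bullet|}$ for $m=0,1,\dotsc,n$ and comparing the two expressions for the common eigenvalue gives
\[
\sum_{k=0}^n q^{-rkm}A_{n,k}\chi_k(\lambda)
=q^{-rnm}t^{-n|N_\bullet|}\nu_n(\lambda)+\sum_{k=0}^{n-1}q^{-rkm}c^+_{p,k,n}\nu_k(\lambda).
\]
For generic $q$ the powers $q^{-rk}$ are distinct, so the Vandermonde system in $m$ separates these relations by $k$; matching the top ($k=n$) component gives $A_{n,n}\chi_n(\lambda)=t^{-n|N_\bullet|}\nu_n(\lambda)$, that is $\nu_n(\lambda)=\chi_n(\lambda)=e_n[\sum_{b\le|N_\bullet|,\,b-\lambda_b\equiv p+1}q^{-\lambda_b}t^{|N_\bullet|-b}]$. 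Notably, this isolates the desired eigenvalue without ever evaluating the unknown constants $c^+_{p,k,n}$.

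Finally I would remove the analytic restrictions and invert $t$. As in the proof of Theorem~\ref{Deg1Thm}, both sides of the established identity are rational functions of $(X_{N_\bullet},q,t)$ agreeing on an open subset, hence agreeing identically; substituting $t\mapsto t^{-1}$ turns $D_{p,n}^*(X_{N_\bullet};q,t^{-1})$ and its eigenvalue into the statement of the theorem for $D_{p,n}^*(X_{N_\bullet};q,t)$, and the same argument in the region $|q|,|t|\ll 1$ (with the sign $q^{-rk}$ replaced by $q^{rk}$, since $D_{p,n}$ is built from $T_{\Ju_a}$ rather than $T_{\Ju_a}^{-1}$) handles $D_{p,n}$. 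I expect the main obstacle to be the bookkeeping behind the operator identity $D_{p,k}^*\circ\Pi=q^{-rk}\Pi\circ D_{p,k}^*$ together with the verification that the shift-degrees of $\chi_k$ and $\nu_k$ coincide, since it is exactly this compatibility that powers the Vandermonde separation and pins down the eigenvalue.
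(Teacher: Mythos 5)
Your proposal is correct and follows essentially the same route as the paper: combine the residue identity of Lemma~\ref{ResidueCalc} with the expansion of Lemma~\ref{EigenStab}, use the spectral shift of Proposition~\ref{ShiftProp} to show each $\nu_k$ and $\chi_k$ is homogeneous of shift-degree $k$, and match the top-degree components to identify $\nu_n$ without computing the coefficients $c^{\pm}_{p,k,n}$. The only (cosmetic) difference is that you implement the degree-matching as a Vandermonde system in the shift parameter $m$, whereas the paper phrases it as extracting the degree-$\mp n$ graded piece of \eqref{MinusSpec}/\eqref{PlusSpec}; these are the same mechanism.
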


\begin{proof}
Let $\mathfrak{e}_{p,n}(\lambda;q,t^{-1})$ and $\mathfrak{e}_{p,n}^*(\lambda;q,t^{-1})$ be the eigenvalues of $D_{p,n}(X_{N_\bullet};q,t^{-1})$ and $D_{p,n}^*(X_{N_\bullet};q,t^{-1})$, respectively, at $P_\lambda[X_{N_\bullet};q,t^{-1}]$.
Combining (\ref{EEigenEq}), (\ref{FEigenEq}), and Lemma \ref{EigenStab}, we have
\begin{align}
\nonumber
    &t^{-n|N_\bullet|}\mathfrak{e}_{p,n}^*(\lambda;q,t^{-1})+\sum_{k=0}^{n-1}c_{p,k,n}^+ \mathfrak{e}_{p,k}^*(\lambda;q,t^{-1})\\
\label{MinusSpec2}
    &=\sum_{k=0}^n\frac{t^{-n|N_\bullet|-(n-k)(p+1)-r\binom{n-k}{2}}}{\displaystyle\prod_{l=1}^{n-k}(1-t^{-rl})}e_k\left[ \sum_{a=1}^{N_p}t^{|N_\bullet|}\left(s^{(p)}_{a}\right)^{-1} \right]
\end{align}
and
\begin{align}
\nonumber
    &t^{n|N_\bullet|}\mathfrak{e}_{p,n}(\lambda;q,t^{-1}) +\sum_{k=0}^{n-1}c_{p,k,n}^-\mathfrak{e}_{p,n}(\lambda;q,t^{-1})\\
\label{PlusSpec2}
    &=\sum_{k=0}^n\frac{t^{n|N_\bullet|+(n-k)(p+1)+r\binom{n-k}{2}}}{\displaystyle\prod_{l=1}^{n-k}(1-t^{rl})}e_k\left[ \sum_{a=1}^{N_p}t^{-|N_\bullet|}s^{(p)}_{a} \right]
\end{align}
We can induct on $n$ to show that, as functions of $\lambda$, $\mathfrak{e}_{p,n}(\lambda ;q,t^{-1})$ is polynomial in $\{ s_{\bullet}^{(p)} \}$ and $\mathfrak{e}_{p,n}^*(\lambda ;q,t^{-1})$ is polynomial in $\{ (s^{(p)}_{\bullet})^{-1} \}$.
Applying (\ref{SpecShift}) $n$ times, we have (when viewed as operators):
\begin{align*}
D_{p,n}^*(X_{N_\bullet};q,t^{-1})\prod_{i\in I}\prod_{l=1}^{N_i}x^{(i)}_{l}=q^{-nr}\prod_{i\in I}\prod_{l=1}^{N_i}x^{(i)}_{l}D_{p,n}^*(X_{N_\bullet};q,t^{-1})\\
D_{p,n}(X_{N_\bullet};q,t^{-1})\prod_{i\in I}\prod_{l=1}^{N_i}x^{(i)}_{l}=q^{nr}\prod_{i\in I}\prod_{l=1}^{N_i}x^{(i)}_{l}D_{p,n}(X_{N_\bullet};q,t^{-1}).
\end{align*}
It then follows from Proposition \ref{ShiftProp} that $\mathfrak{e}_{p,n}(\lambda ;q,t^{-1})$ is homogeneous of degree $n$ and $\mathfrak{e}_{p,n}^*(\lambda;q,t^{-1})$ is homogeneous of degree $-n$.
Thus, $t^{-n|N_\bullet|}\mathfrak{e}_{p,n}^*(\lambda;q,t^{-1})$ is the degree $-n$ piece of (\ref{MinusSpec2}) and $t^{n|N_\bullet|}\mathfrak{e}_{p,n}(\lambda ;q,t^{-1})$ is the degree $n$ piece of (\ref{PlusSpec2}).
This establishes the eigenvalue equations under the appropriate conditions (\ref{EExpand}) and (\ref{FExpand}) on $x^{(i)}_{l}$, $q$, and $t$.
We extend to generic values as in the proof of Theorem \ref{Deg1Thm}.
\end{proof}

\begin{rem}\label{Rmk2Colors}
Even though $r\ge 3$ was assumed throughout, we have verified experimentally that Theorem~\ref{ThmEigenHigher} continues to hold as stated for $r=2$. 
The $r=1$ case is discussed in Remark~\ref{MacCompare} above.
\end{rem}

\begin{ex}
Let $r=2$, $p=1$, $N_\bullet=(0,2)$, and $\lambda=(1)$.
Because $\lambda$ is a $2$-core,
\[
P_\lambda[X_{N_\bullet};q,t]=1
\]
There are only two shift patterns containing $1$:
\begin{align*}
    \Ju_1&=\{x_1^{(1)}\}\\
    \Ju_2&=\{x_2^{(1)}\}.
\end{align*}
Note that
\begin{align*}
    T_{\Ju_1}x_1^{(1)}&=q^2x_1^{(1)} & T_{\Ju_2}x_1^{(1)}&=x_1^{(1)}\\
    T_{\Ju_1}x_2^{(1)}&=x_1^{(1)} & T_{\Ju_2}x_2^{(1)}&=q^2x_2^{(1)}.
\end{align*}
Therefore,
\begin{align*}
    D_{1,1}(X_{N_\bullet};q,t)P_\lambda[X_{\bullet};q,t]&=
    \frac{(-1)(1-qt^{-1})}{1-q^2t^{-2}}
    \left\{
    \frac{qtx_2^{(1)}-q^2x_1^{(1)}}{x_1^{(1)}-x_2^{(1)}}+\frac{qtx_1^{(1)}-q^2x_2^{(1)}}{x_2^{(1)}-x_1^{(1)}}
    \right\}\\
    &=\frac{(-1)(1-qt^{-1})(-qt-q^2)}{1-q^2t^{-2}}\\
    &=qtP_\lambda[X_{N_\bullet};q,t].
\end{align*}
\end{ex}

\appendix

\section{Wreath Noumi-Sano operators}
In this appendix, we apply our methods to study wreath analogues of the trigonometric \textit{Noumi-Sano operators} \cite{NSa}.
We obtain explicit formulas for degree $n=1$ and an integral formula for general $n$.

\subsection{Infinite-variable eigenvalues}
Let $(x;y)_\infty$ denote the infinite $y$-Pochammer symbol:
\[
(x;y)_\infty=\prod_{i=0}^{\infty}(1-xy^i).
\]

\begin{lem}\label{NSFockEigen}
Assume $|q^{\pm 1}|<1$ and $|t^{\pm 1}|<1$ (where `$+$' and `$-$' are separate cases).
For $p\in I$, we have
\begin{equation}
\begin{aligned}
&\left\langle\lambda\left|\exp\left[-\sum_{k>0}\left(\frac{\sum_{i=1}^{r}q^{\pm k(i-1)}h_{p+i,\pm k}}{(1-q^{\pm kr})}\right)\frac{\qqq^{\pm k} z^{\mp k}}{v^{\pm k}[ k]_{\qqq}} \right]\right|\lambda\right\rangle\\
&= \exp\left[\sum_{k>0}\left(\sum_{i=1}^{r}\frac{q^{\pm k(i-1)}}{1-q^{\pm kr}}\left\{\sum_{\substack{b>0\\b-\lambda_b\equiv p+i }}q^{\pm k\lambda_b}t^{\pm kb}-t^{\mp k}\sum_{\substack{b>0\\b-\lambda_b\equiv p+i+1 }}q^{\pm k\lambda_b}t^{\pm kb}\right\}\right)\frac{z^{\mp k}}{k}\right]\\
&= \prod_{i=1}^{r}
\frac{\displaystyle\prod_{\substack{b>0\\b-\lambda_b\equiv p+i+1 }}\left(q^{\pm(\lambda_b+i)}t^{\pm (b- 1)}z^{\mp 1}; q^{\pm r}\right)_\infty}
{\displaystyle\prod_{\substack{b>0\\b-\lambda_b\equiv p+i }}\left(q^{\pm(\lambda_b+i)}t^{\pm b}z^{\mp 1}; q^{\pm r}\right)_\infty}
\end{aligned}
\label{NSEigenLemma}
\end{equation}
where we set $\lambda_b=0$ for all $b>\ell(\lambda)$.
\end{lem}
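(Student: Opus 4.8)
The plan is to follow the template of the proof of Lemma~\ref{FockEigen}, adapting its combinatorial core from a vertical ($t$-)sweep to a horizontal ($q$-)sweep. First I would use that each $h_{p+i,\pm k}$ acts diagonally on $|\lambda\rangle$ in $\mathcal{F}(v)$ (implicit in Theorem~\ref{FockRep} and made explicit by \eqref{HEigen}), so that the matrix element of the exponential is the exponential of the matrix element of its argument. Substituting \eqref{HEigen}, the prefactors $v^{\pm k}$, $\qqq^{\pm k}$ and $[k]_{\qqq}$ cancel, and the exponent collapses to $-\sum_{k>0}\frac{z^{\mp k}}{k}\sum_{i=1}^{r}\frac{q^{\pm k(i-1)}}{1-q^{\pm kr}}\big(\sum_{\blacksquare\in A_{p+i}(\lambda)}\chi_\blacksquare^{\pm k}-\sum_{\blacksquare\in R_{p+i}(\lambda)}(qt\chi_\blacksquare)^{\pm k}\big)$. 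It is convenient to reindex via $\frac{1}{1-q^{\pm kr}}=\sum_{m\ge0}q^{\pm krm}$ (convergent since $|q^{\pm1}|<1$), which rewrites $\sum_{i=1}^{r}\frac{q^{\pm k(i-1)}}{1-q^{\pm kr}}h_{p+i,\pm k}=\sum_{n\ge0}q^{\pm kn}h_{p+1+n,\pm k}$, exhibiting the cyclic color-sum as a genuine horizontal sweep. The first equality in \eqref{NSEigenLemma} then reduces to the identity that this addable/removable character sum equals the negative of the row-indexed bracket in the middle line.

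Next I would prove this combinatorial identity, the analogue of \eqref{AddRemoveEnd}. Writing $V_i=\sum_{b-\lambda_b\equiv p+i}q^{\pm k\lambda_b}t^{\pm kb}$ and $W_i=\sum_{b-\lambda_b\equiv p+i+1}q^{\pm k\lambda_b}t^{\pm kb}$, the key point is that the bracket $\sum_i\frac{q^{\pm k(i-1)}}{1-q^{\pm kr}}\big(V_i-t^{\mp k}W_i\big)$ telescopes: for two consecutive rows $b$, $b+1$ of equal length, the quantities $q^{\pm k\lambda_b}t^{\pm kb}$ differ exactly by the factor $t^{\pm k}$, so they cancel against the $-t^{\mp k}W$ term, and survival occurs precisely at the concave and convex corners of $\lambda$, i.e. at the addable and removable boxes, together with a single wrap-around of the color cycle that collapses the residual $t$-geometric series and produces the sweep factor $q^{\pm kn}$. (I checked the mechanism in the base case $\lambda=\varnothing$, where the only surviving term is the wrap, giving $q^{\pm kn_0}/(1-q^{\pm kr})$ with $n_0\ge0$ minimal such that $p+1+n_0\equiv0$, matching the single addable box of color $0$.) This telescoping is the main obstacle: in contrast with Lemma~\ref{FockEigen}, the two terms $V_i$ and $t^{\mp k}W_i$ and the horizontal direction of the sweep require careful tracking of which corner each surviving term is attached to, and of how the $t^{\mp k}$-shifted term is matched across rows of differing color.

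Finally I would exponentiate, using the standard identity $\exp\big(\mp\sum_{k\ge1}\frac{x^k}{k(1-y^k)}\big)=(x;y)_\infty^{\pm1}$, valid for $|y|<1$. Applying this with $y=q^{\pm r}$ to each row's contribution converts the $V$-term into the denominator and the $-t^{\mp k}W$-term into the numerator of a $q^{\pm r}$-Pochhammer symbol, yielding the product in the last line of \eqref{NSEigenLemma}; the base of each Pochhammer symbol is read off from the accumulated $q$- and $t$-exponents, and convergence of the infinite products is guaranteed by $|q^{\pm1}|,|t^{\pm1}|<1$. The remaining work is pure bookkeeping: matching the residue classes $b-\lambda_b\equiv p+i$ and $b-\lambda_b\equiv p+i+1$ to the numerator and denominator, and pinning down the exact $q$- and $t$-shifts appearing in the arguments.
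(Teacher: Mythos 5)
The paper states Lemma~\ref{NSFockEigen} without proof, so the only internal benchmark is the proof of the analogous Lemma~\ref{FockEigen}; your proposal follows exactly that template (substitute the eigenvalues \eqref{HEigen}, reduce the first equality to a combinatorial identity between the addable/removable character sums and the row-indexed sums, then exponentiate into Pochhammer symbols), and the core of your argument is sound: pairing row $b$ against row $b+1$ and observing that the $V_i$-term of row $b$ cancels the $-t^{\mp k}W_i$-term of row $b+1$ precisely when $\lambda_{b+1}=\lambda_b$, so that the survivors are indexed by removable boxes (from the $V$-terms) and addable boxes (from the $W$-terms), is the correct mechanism. One correction to your narrative: there is no ``residual $t$-geometric series'' to collapse and no wrap-around subtlety, and the identity is not really a horizontal sweep in the sense of Lemma~\ref{FockEigen}'s column sweep; the prefactor $q^{\pm k(i-1)}/(1-q^{\pm kr})$ rides along unchanged on both sides (if $b-\lambda_b\equiv p+i$ and $\lambda_{b+1}=\lambda_b$ then row $b+1$ lies in $W_i$ for the \emph{same} $i\in\{1,\dots,r\}$, so the matched terms carry identical $q$-weights), and the whole cancellation is a $t$-direction telescoping whose net effect is that the infinite row sums reduce to the finitely many corner contributions. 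Finally, be aware that the ``pure bookkeeping'' you defer will not reproduce the third line as printed: exponentiating the middle line yields Pochhammer arguments $q^{\pm(\lambda_b+i-1)}$ rather than $q^{\pm(\lambda_b+i)}$ (for $\lambda=\varnothing$, $r=2$, $p=0$ the middle line exponentiates to $(qz^{-1};q^2)_\infty$ while the printed product gives $(q^2z^{-1};q^2)_\infty$, and the worked example at the end of the appendix agrees with the former), so your computation will surface what appears to be an off-by-one typo in the power of $q$ in the stated product formula.
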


\subsection{Shuffle elements}
We rewrite
\begin{align*}
&\varsigma\exp\left[-\sum_{k>0}\left(\frac{\sum_{i=1}^{r}q^{\pm k(i-1)}h_{p+i,\pm k}}{(1-q^{\pm kr})}\right)\frac{\qqq^{\pm k} z^{\mp k}}{[ k]_{\qqq}} \right]\\
&= 
\varsigma\exp\left[\sum_{k>0}\left(\frac{\sum_{i=0}^{r-1}q^{\mp k(i+1)}h_{p-i,\pm k}}{(1-q^{\mp kr})}\right)\frac{\qqq^{\pm k} z^{\mp k}}{[ k]_{\qqq}} \right]\\
&=\eta\exp\left[-\sum_{k>0}\left(\frac{\sum_{i=0}^{r-1}t^{\mp k(i+1)}\varsigma^{-1}(h_{p-i,\mp k})}{(1-t^{\mp kr})}\right)\frac{\qqq^{\pm k} z^{\mp k}}{[ k]_{\qqq}} \right]\\
&= \eta\exp\left[(\qqq-\qqq^{-1})^{-1} \sum_{k>0}\left(-q^{\pm k}\varsigma^{-1}(h_{p,\mp k}^\perp)+\varsigma^{-1}(h_{p+1,\mp k}^\perp) \right)\frac{z^{\mp k}}{k} \right].
\end{align*}
Recall the formulas (\ref{Epn}) and (\ref{Hpn}) for $\mathcal{E}_{p,n}^\pm$ and $\mathcal{H}_{p,n}^\pm$.
In \cite{Wen}, it was shown that
\begin{align*}
&\exp\left[(\qqq-\qqq^{-1})^{-1} \sum_{k>0}\left(-q^{- k}\varsigma^{-1}(h_{p, k}^\perp)+\varsigma^{-1}(h_{p+1, k}^\perp) \right)\frac{z^{ k}}{k} \right]\\
&= \sum_{n=0}^\infty\frac{(-1)^{nr}t^{nr}\ddd^{-n}\left( 1-q^{-1}t^{-1} \right)^{nr}}{\qqq^{2n}\prod_{r=1}^n\left( 1-q^{-r}t^{-r} \right)}\Psi_+\left( \mathcal{E}_{p,n}^-\right)\\
&\exp\left[(\qqq-\qqq^{-1})^{-1} \sum_{k>0}\left(-q^{ k}\varsigma^{-1}(h_{p,- k}^\perp)+\varsigma^{-1}(h_{p+1,- k}^\perp) \right)\frac{z^{- k}}{k} \right]\\
&= \sum_{n=0}^\infty\frac{\ddd^n\left( 1-qt \right)^{nr}}{\prod_{r=1}^n\left( 1-q^{-r}t^{-r} \right)}\Psi_-\left( \mathcal{E}_{p,n}^+\right).
\end{align*}
Applying $\eta$, we get
\begin{align*}
\sum_{n=0}^\infty\frac{\qqq^{n(r-1)}t^{-n}\left( 1-q^{-1}t^{-1} \right)^{nr}}{v^{-n}\prod_{r=1}^n\left( 1-q^{-r}t^{-r} \right)}
\Psi_+(\mathcal{H}^-_{p,n})
&= \varsigma\exp\left[-\sum_{k>0}\left(\frac{\sum_{i=0}^{r-1}q^{- ki}h_{p+i,- k}}{(1-q^{- kr})}\right)\frac{\qqq^{- k} z^{ k}}{v^{-k}[ k]_{\qqq}} \right],\\
\sum_{n=0}^\infty\frac{(-1)^{nr}\ddd^{-n(r-1)}t^{n}\left(1- qt \right)^{nr}}{v^nq^{n}\prod_{r=1}^n\left(1- q^{-r}t^{-r} \right)}
\Psi_-\left( \mathcal{H}_{p,n}^+ \right)
&=\varsigma\exp\left[-\sum_{k>0}\left(\frac{\sum_{i=0}^{r-1}q^{ ki}h_{p+i, k}}{(1-q^{ kr})}\right)\frac{\qqq^{ k} z^{- k}}{v^k[ k]_{\qqq}} \right].
\end{align*}

\subsection{Normal ordering}
It will be slightly nicer to reorder our currents differently from Proposition \ref{NormalOrder}:

\begin{prop}\label{NSNormalOrder}
For $p\in I$, we have
\begin{align*}
&\overset{\curvearrowright}{\prod_{a=1}^n}\,\overset{\curvearrowleft}{\prod_{i=1}^{r}}E_{p+i}(z_{p+i,a})\\
&=
\left( (-1)^{\frac{(r-2)(r-3)}{2}+r}\ddd^{-\frac{r}{2}+1}\prod_{i\in I} c_i\right)^n\\
&\times\prod_{1\le a<b\le n}\prod_{i\in I}
\frac{\displaystyle\left(1-z_{i,b}/z_{i,a}\right)\left(1-q^{-1}t^{-1}z_{i,b}/z_{i,a}\right)}{\displaystyle\left(1-t^{-1}z_{i+1,b}/z_{i,a}\right)\left(1-q^{-1}z_{i-1,b}/z_{i,a}\right)}\\
&\times\prod_{a=1}^n \frac{z_{p+1,a}/z_{p,a}}
{\left(1-t^{-1}z_{p+1,a}/z_{p,a}\right)
\prod_{i\in I\setminus\{p\}}
\left(1-q^{-1}z_{i,a}/z_{i+1,a}\right)}\\
&\times\prod_{i\in I}\exp\left(\sum_{a=1}^n\sum_{k>0}\left(p_{k}[X^{(i)}]-t^{-k}p_k[X^{(i-1)}]\right)\frac{z_{i,a}^{k}}{k}\right)\\
&\times\prod_{i\in I}\exp\left(\sum_{a=1}^n\sum_{k>0}\left(-p_{k}[X^{(i)}]^\perp+q^{-k}p_k[X^{(i-1)}]^\perp\right)\frac{z_{i,a}^{-k}}{k}\right)
\prod_{i\in I}\prod_{a=1}^n z_{i,a}^{H_{i,0}}
\end{align*}
where all rational functions are Laurent series expanded assuming 
\begin{equation}
|z_{i,a}|=1,\,
|q|>1,\,
|t|>1.
\end{equation}
For the $F$-currents, we have
\begin{align*}
&\overset{\curvearrowleft}{\prod_{a=1}^n}\,\overset{\curvearrowright}{\prod_{i=1}^{r}}F_{p+i}(z_{p+i,a})\\
&=\left( \frac{(-1)^{\frac{(r-2)(r-3)}{2}+r}\ddd^{\frac{r}{2}-1}}{\displaystyle\prod_{i\in I}c_i}\right)^n\\
&\times\prod_{1\le a<b\le n}\prod_{i\in I}
\frac{\left(1-z_{i,a}/z_{i,b}\right)\left(1-qtz_{i,a}/z_{i,b}\right)}{\left(1-tz_{i-1,a}/z_{i,b}\right)\left(1-qz_{i+1,a}/z_{i,b}\right)}\\
&\times \prod_{a=1}^n\frac{z_{p,a}/z_{p+1,a}}{\left(1-tz_{p,a}/z_{p+1,a}\right)
\prod_{i\in I\setminus\{p+1\}}
\left(1-qz_{i,a}/z_{i-1,a}\right)}\\
&\times\prod_{i\in I}\exp\left(\sum_{a=1}^n\sum_{k>0}\left(-t^{k}p_{k}[X^{(i)}]+p_k[X^{(i-1)}]\right)\frac{z_{i,a}^{k}}{k}\right)\\
&\times\prod_{i\in I}\exp\left(\sum_{a=1}^n\sum_{k>0}\left(q^{k}p_{k}[X^{(i)}]^\perp-p_k[X^{(i-1)}]^\perp\right)\frac{z_{i,a}^{-k}}{k}\right)
\prod_{i\in I}\prod_{a=1}^n z_{i,a}^{-H_{i,0}}
\end{align*}
where all rational functions are Laurent series expanded assuming 
\begin{equation}
|z_{i,a}|=1,\,
|q|< 1,\,
|t|<1.   
\end{equation}
\end{prop}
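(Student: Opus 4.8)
The plan is to carry out the same standard normal-ordering computation used in the proof of Proposition~\ref{NormalOrder}, tracking only the features that change when the inner product over the color index $i$ is reversed. Each current $E_{p+i}(z_{p+i,a})$ factors as a creation exponential in the $h_{\bullet,-k}$, times an annihilation exponential in the $h_{\bullet,k}$, times the zero-mode part $e^{\alpha_{p+i}} z_{p+i,a}^{1+H_{p+i,0}}$. To normal order, I would commute every annihilation exponential to the right past each creation exponential standing to its left; by the Heisenberg relation~\eqref{HeisRel} each interchange produces a scalar contraction factor, and the product of all these contractions assembles into the displayed rational prefactor. Separately, the zero-mode operators $e^{\alpha_{p+i}}$ and $z^{H_{p+i,0}}$ are gathered to the right, contributing an overall sign and a power of $\ddd$.

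First I would observe that the outer product over $a$ is arranged exactly as in Proposition~\ref{NormalOrder} (increasing for the $E$-currents, decreasing for the $F$-currents). Consequently, for any two distinct outer indices $a<b$, every current in block $a$ still precedes every current in block $b$, so the contractions between such currents are literally unchanged; this is why the factors over $1\le a<b\le n$ in the two propositions coincide verbatim. The only genuine difference arises among the $r$ currents sharing a fixed $a$, where reversing $\overset{\curvearrowleft}{\prod_{i=1}^r}$ against $\overset{\curvearrowright}{\prod_{i=1}^r}$ interchanges which of any two such currents stands to the left.

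For the same-$a$ bosonic contractions, swapping the left/right roles replaces each structure factor of the form $g_{i,j}(\ddd^{m_{i,j}}z/w)$ by the reciprocal factor coming from $g_{j,i}(\ddd^{m_{j,i}}w/z)$. Collecting these reflected contributions over the nearest-neighbour pairs $j=i\pm1$ produces precisely the revised single-variable prefactor $\prod_{a=1}^n \bigl(z_{p+1,a}/z_{p,a}\bigr)\bigl(1-t^{-1}z_{p+1,a}/z_{p,a}\bigr)^{-1}\prod_{i\in I\setminus\{p\}}\bigl(1-q^{-1}z_{i,a}/z_{i+1,a}\bigr)^{-1}$, with the $F$-case obtained by the analogous reflection after inverting the roles of $q$ and $t$.

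The step I expect to be the main obstacle is the sign and $\ddd$-power bookkeeping, exactly as in the proof of Proposition~\ref{NormalOrder}. Reversing the order in which the $e^{\alpha_{p+i}}$ are multiplied within each fixed $a$ changes the sign accumulated while rewriting $\prod_{i} e^{\alpha_{p+i}}=e^{0}$ in the twisted group algebra; using the relations of $\F\{P\}$ and the expansions~\eqref{TwistedAlpha}, I would verify that this reversal contributes $\prod_{\{i,j\}\subset I,\,i\neq j}(-1)^{a_{i,j}}=(-1)^{r}$ per value of $a$, hence $(-1)^{rn}$ overall, accounting for the passage from $(-1)^{(r-2)(r-3)/2}$ to $(-1)^{(r-2)(r-3)/2+r}$. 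For the power of $\ddd$, which in Proposition~\ref{NormalOrder} arose only from the interaction of the $z^{H_{i,0}}$ with the $e^{\pm\alpha_j}$ and totaled $\ddd^{(r/2-1)n}$, the reversal replaces each contribution $\tfrac12 a_{i,j}m_{i,j}$ by $\tfrac12 a_{j,i}m_{j,i}$; since $a_{i,j}$ is symmetric while $m_{i,j}$ is antisymmetric, this negates the exponent, yielding $\ddd^{(-r/2+1)n}$. Once these discrete invariants are checked, the remaining identifications are routine, and the $F$-current formula follows by the same argument with $q$, $t$, and $\ddd$ playing inverted roles.
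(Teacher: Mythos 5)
Your proposal is correct and matches the paper's (implicit) argument: Proposition~\ref{NSNormalOrder} is the same standard normal-ordering computation as Proposition~\ref{NormalOrder}, and you correctly isolate the only changes caused by reversing the inner product over $i$ — the cross-block ($a<b$) contractions are untouched, the same-$a$ contractions get reflected, the per-block sign changes by $\prod_{\{i,j\}}(-1)^{a_{i,j}}=(-1)^r$, and the $\ddd$-exponent negates because $a_{i,j}$ is symmetric while $m_{i,j}$ is antisymmetric. All of these checks are accurate, so no gaps.
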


\subsection{Integral formula}
Let
\begin{align*}
d_n^+&=\frac{\qqq^{n(r-1)}t^{-n}\left( 1-q^{-1}t^{-1} \right)^{nr}}{v^{-n}\prod_{r=1}^n\left( 1-q^{-r}t^{-r} \right)},&
d_n^-&= \frac{(-1)^{nr}\ddd^{-n(r-1)}t^{n}\left(1- qt \right)^{nr}}{v^nq^{n}\prod_{r=1}^n\left(1- q^{-r}t^{-r} \right)}.
\end{align*}
We have
\begin{align*}
d_n^+\pi_{N_\bullet}\biggl( \left( \rho_{\vec{c}}\circ\Psi_+ \right)(\mathcal{H}_{p,n}^-)(f\otimes e^\alpha)\biggr)
&=\frac{(-1)^n\left(1- q^{-1}t^{-1} \right)^{nr}}{\prod_{r=1}^n\left(1- q^{-r}t^{-r} \right)}
\left\{\prod_{i\in I}\prod_{a=1}^n\prod_{l=1}^{N_i}\left(\frac{ z_{i+1,a}^{-1}-t^{-1}x_{l}^{(i)} }{ z_{i,a}^{-1}-x_{l}^{(i)} }\right)\right.\\
&\times\prod_{1\le a<b\le n}
\left[
\frac{\left(1-z_{p,b}/z_{p,a}\right)\left(1-q^{-1}t^{-1}z_{p,b}/z_{p,a}\right)}{\left(1-qz_{p+1,b}/z_{p,a}\right)\left(1-q^{-1}z_{p-1,b}/z_{p,a}\right)}\right.\\
\nonumber
&\times\left.\prod_{i\in I\backslash\{p\}}
\frac{\left(1-z_{i,b}/z_{i,a}\right)\left(1-q^{-1}t^{-1}z_{i,b}/z_{i,a}\right)}{\left(1-q^{-1}z_{i-1,b}/z_{i,a}\right)\left(1-t^{-1}z_{i+1,b}/z_{i,a}\right)}\right]\\
&\times\prod_{a=1}^n \left[
\left(\frac{z_{0,a}}{z_{p+1,a}}\right)\left(\frac{1}{1-q^{-1}z_{p,a}/z_{p+1,a}}\right)\right.\\
\nonumber
&\times\left.
\prod_{i\in I\setminus\{p\}}
\left(\frac{1}{1-q^{-1}z_{i,a}/z_{i+1,a}}\right)
\left( \frac{1}{1-t^{-1}z_{i+1,a}/z_{i,a}} \right)\right]\\
&\times 
\prod_{i\in I}f_i\left[ \sum_{l=1}^{N_i}x_{l}^{(i)}-\sum_{a=1}^nz_{i,a}^{-1}+q^{-1}\sum_{a=1}^n z_{i+1,a}^{-1} \right]\Bigg\}_0\otimes e^\alpha 
\end{align*}
and
\begin{align*}
d_n^-\pi_{N_\bullet}\biggl( \left( \rho_{\vec{c}}\circ\Psi_- \right)(\mathcal{H}_{p,n}^+)(f\otimes e^\alpha)\biggr)
&= \frac{(-1)^n\left(1- qt \right)^{nr}}{\prod_{r=1}^n\left(1- q^{r}t^{r} \right)}
\left\{\prod_{i\in I}\prod_{a=1}^n\prod_{l=1}^{N_i}\left(\frac{  z_{i,a}^{-1}-tx_{l}^{(i)} }{ z_{i+1,a}^{-1}-x_{l}^{(i)} }\right)\right.\\
&\times\prod_{1\le a<b\le n}\left[
\frac{\left(1-z_{p+1,a}/z_{p+1,b}\right)\left(1-qtz_{p+1,a}/z_{p+1,b}\right)}{\left(1-q^{-1}z_{p,a}/z_{p+1,b}\right)\left(1-qz_{p-2,a}/z_{p+1,b}\right)}\right.\\
&\times\left.
\prod_{i\in I\backslash\{p+1\}}
\frac{\left(1-z_{i,a}/z_{i,b}\right)\left(1-qtz_{i,a}/z_{i,b}\right)}{\left(1-qz_{i+1,a}/z_{i,b}\right)\left(1-tz_{i-1,a}/z_{i,b}\right)}\right]\\
&\times \prod_{a=1}^n
\left[
\left(\frac{z_{p+1,a}}{z_{0,a}}\right)
\left(\frac{1}
{1-qz_{p+1,a}/z_{p,a}}\right)\right.\\
&\left.\times\prod_{i\in I\setminus\{p+1\}}
 \left(\frac{1}{1-qz_{i,a}/z_{i-1,a}}\right)\left(\frac{1}{1-tz_{i-1,a}/z_{i,a}}\right) \right]\\
&\left.\times 
\prod_{i\in I}
f_i\left[ \sum_{l=1}^{N_i}  x_{l}^{(i)}+\sum_{a=1}^n qz_{i,a}^{-1}-\sum_{a=1}^n z_{i+1,a}^{-1} \right]\right\}_0\otimes e^\alpha.
\end{align*}

Finally, we make the substitution $w_{i,a}=z_{i,a}^{-1}$ and rewrite these formulas in terms of integrals.
This gets us
\begin{align*}
&d_n^+\pi_{N_\bullet}\biggl( \left( \rho_{\vec{c}}\circ\Psi_+ \right)(\mathcal{H}_{p,n}^-)(f\otimes e^\alpha)\biggr)\\
&=\underset{|w_{i,a}|=1}{\oint\cdots\oint} \bigg(\varrho_{p,n}^+(w_{\bullet,\bullet},X_{N_\bullet})\prod_{i\in I}
f_i\left[ \sum_{l=1}^{N_i} x_{l}^{(i)}-\sum_{a=1}^nw_{i,a}+\sum_{a=1}^nq^{-1}w_{i+1,a} \right]
\prod_{a=1}^n \frac{dw_{i,a}}{2\pi\sqrt{-1}w_{i,a}}
\bigg)\otimes e^\alpha 
\end{align*}
and 
\begin{align*}
&d_n^-\pi_{N_\bullet}\biggl( \left( \rho_{\vec{c}}\circ\Psi_- \right)(\mathcal{H}_{p,n}^+)(f\otimes e^\alpha)\biggr)\\
&=\underset{|w_{i,a}|=1}{\oint\cdots\oint}\bigg(\varrho_{p,n}^-(w_{\bullet,\bullet},X_{N_\bullet})\prod_{i\in I}f_i\left[ \sum_{l=1}^{N_i} x_{l}^{(i)}+\sum_{a=1}^nqw_{i,a}-\sum_{a=1}^nw_{i+1,a} \right]
\prod_{a=1}^n \frac{dw_{i,a}}{2\pi\sqrt{-1}w_{i,a}}
\bigg)\otimes e^\alpha
\end{align*}
where
\begin{align*}
\varrho_{p,n}^+(w_{\bullet,\bullet},X_{N_\bullet})
&= \frac{(-1)^{\frac{n(n+1)}{2}}\left(1- q^{-1}t^{-1} \right)^{nr}}{q^{\frac{n(n-1)}{2}}\prod_{r=1}^n\left(1- q^{-r}t^{-r} \right)}
\left\{\prod_{i\in I}\prod_{a=1}^n\prod_{l=1}^{N_i}\left(\frac{ w_{i+1,a}-t^{-1}x_{l}^{(i)} }{ w_{i,a}-x_{l}^{(i)} }\right)\right.\\
&\times\prod_{1\le a<b\le n}
\left[
\frac{\left(w_{p,b}-w_{p,a}\right)\left(w_{p,b}-q^{-1}t^{-1}w_{p,a}\right)}{\left(w_{p,a}-q^{-1}w_{p+1,b}\right)\left(w_{p-1,b}-q^{-1}w_{p,a}\right)}\right.\\
\nonumber
&\times\left.\prod_{i\in I\backslash\{p\}}
\frac{\left(w_{i,b}-w_{i,a}\right)\left(w_{i,b}-q^{-1}t^{-1}w_{i,a}\right)}{\left(w_{i-1,b}-q^{-1}w_{i,a}\right)\left(w_{i+1,b}-t^{-1}w_{i,a}\right)}\right]\\
&\times\prod_{a=1}^n \left[
\left(\frac{w_{p+1,a}}{w_{0,a}}\right)\left(\frac{w_{p,a}}{w_{p,a}-q^{-1}w_{p+1,a}}\right)\right.\\
\nonumber
&\times\left.
\prod_{i\in I\setminus\{p\}}
\left(\frac{w_{i,a}}{w_{i,a}-q^{-1}w_{i+1,a}}\right)
\left( \frac{w_{i+1,a}}{w_{i+1,a}-t^{-1}w_{i,a}} \right)\right]
\end{align*}
and
\begin{align*}
\varrho_{p,n}^-(w_{\bullet,\bullet},X_{N_\bullet})
&= \frac{(-1)^{\frac{n(n+1)}{2}}q^{\frac{n(n-1)}{2}}\left(1- qt \right)^{nr}}{\prod_{r=1}^n\left(1- q^{r}t^{r} \right)}
\left\{\prod_{i\in I}\prod_{a=1}^n\prod_{l=1}^{N_{i-1}}\left(\frac{  w_{i-1,a}-tx_{l}^{(i-1)} }{ w_{i,a}-x_{l}^{(i-1)} }\right)\right.\\
&\times\prod_{1\le a<b\le n}\left[
\frac{\left(w_{p+1,a}-w_{p+1,b}\right)\left(w_{p+1,a}-qtw_{p+1,b}\right)}{\left(w_{p+1,b}-qw_{p,a}\right)\left(w_{p+2,a}-qw_{p+1,b}\right)}\right.\\
&\times\left.
\prod_{i\in I\backslash\{p+1\}}
\frac{\left(w_{i,a}-w_{i,b}\right)\left(w_{i,a}-qtw_{i,b}\right)}{\left(w_{i+1,a}-qw_{i,b}\right)\left(w_{i-1,a}-tw_{i,b}\right)}\right]\\
&\times \prod_{a=1}^n
\left[
\left(\frac{w_{0,a}}{w_{p+1,a}}\right)
\left(\frac{w_{p+1,a}}{w_{p+1,a}-qw_{p,a}}\right)\right.\\
&\left.\times\prod_{i\in I\setminus\{p+1\}}
 \left(\frac{w_{i,a}}{w_{i,a}-qw_{i-1,a}}\right)\left(\frac{w_{i-1,a}}{w_{i-1,a}-tw_{i,a}}\right) \right].
\end{align*}
\subsection{Degree one}
We compute the integral and record the resulting action on $f$ when $n=1$.

\subsubsection{Difference operators} Let $Sh(X_{N_\bullet})=Sh$ denote the set of \textit{all} shift patterns. Define
\begin{align*}
H_{p,1}^*(X_{N_\bullet};q,t^{-1})
&:= 
-\sum_{\substack{\Ju\in Sh\\\Ju\not=\varnothing}}(1-q^{-1}t^{-1})^{|J|-\delta_{p\in J}}
\frac{x^{(p+1)}_{\Ju^{\triangle}}}{x^{(0)}_{\Ju^{\triangle}}}
\left( \prod_{i\in I}\prod_{\substack{l=1\\ x^{(i)}_{l}\not=x^{(i)}_{\Ju^\triangle}}}^{N_i}\frac{\left(x_{l}^{(i)}-tx^{(i+1)}_{\Ju^\triangle}\right)}{\left(x_{l}^{(i)}-x^{(i)}_{\Ju^\triangle}\right)} \right)\\
&\times
\left( \frac{qtT_{\Ju}^{-1}(x^{(p)}_{\Ju})-x^{(p)}_{\Ju}}{x^{(p)}_{\Ju}-T_{\Ju}^{-1}(x^{(p)}_{\Ju})} \right)^{\delta_{p\in J}}
\left(\prod_{i\in J\backslash\{p\}}\frac{qtT_{\Ju}^{-1}(x^{(i)}_{\Ju})}{\left(x^{(i)}_{\Ju}-T_\Ju^{-1} (x^{(i)}_{\Ju})\right)}\right)
T_{\Ju}^{-1}\\
H_{p,1}(X_{N_\bullet};q,t^{-1})
&:=
-\sum_{\substack{\Ju\in Sh\\\Ju\not=\varnothing}} (1-qt)^{|J|-\delta_{p\in J}}
\frac{x^{(r-1)}_{\Ju^{\triangledown}}}{x^{(p)}_{\Ju^{\triangledown}}}
\left(\prod_{i\in I}\prod_{\substack{l=1\\ x^{(i)}_{l}\not= x^{(i)}_{\Ju^{\triangledown}}}}^{N_i}
\frac{\left(x^{(i)}_{l}-t^{-1}x^{(i-1)}_{\Ju^{\triangledown}}\right)}{\left(x^{(i)}_{l}-x^{(i)}_{\Ju^{\triangledown}}\right)}\right)\\
&\times\left( \frac{q^{-1}t^{-1}T_{\Ju}(x^{(p)}_{\Ju})-x^{(p)}_{\Ju}}{x^{(p)}_{\Ju}-T_{\Ju}(x^{(p)}_{\Ju})} \right)^{\delta_{p\in J}}
\left(\prod_{i\in J\backslash\{p\}}\frac{q^{-1}t^{-1}T_{\Ju}(x^{(i)}_{\Ju})}{\left(x^{(i)}_{\Ju}-T_\Ju (x^{(i)}_{\Ju})\right)}\right)
T_{\Ju}.
\end{align*}
Setting $r=1$ and inverting $t$, we indeed obtain the first Noumi-Sano operator.

\subsubsection{Eigenvalues}
For a series $f(z)$ in $z$, let $[z^n] f(z)$ denote the coefficient of $z^n$.
Methods similar to those in \ref{Eigen} allow us to establish the following.

\begin{thm}
For $|q|>1$, we have
\begin{align}
&H^*_{p,1}(X_{N_\bullet};q,t)P_\lambda[X_{N_\bullet};q,t]\\
&= [z]\left( \prod_{i=1}^{r}
\frac{\displaystyle\prod^{|N_\bullet|}_{\substack{b=1\\b-\lambda_b\equiv p+i+1 }}\left(q^{-(\lambda_b+i)}t^{-|N_\bullet|+ (b- 1)}z; q^{- r}\right)_\infty}
{\displaystyle\prod^{|N_\bullet|}_{\substack{b=1\\b-\lambda_b\equiv p+i }}\left(q^{-(\lambda_b+i)}t^{-|N_\bullet| +b}z; q^{- r}\right)_\infty} \right)P_\lambda[X_{N_\bullet};q,t].
\end{align}
On the other hand, for $|q|<1$, we have
\begin{align}
&H_{p,1}(X_{N_\bullet};q,t)P_\lambda[X_{N_\bullet};q,t]\\
&= [z^{-1}]\left( \prod_{i=1}^{r}
\frac{\displaystyle\prod^{|N_\bullet|}_{\substack{b=1\\b-\lambda_b\equiv p+i+1 }}\left(q^{\lambda_b+i}t^{|N_\bullet|- (b- 1)}z^{-1}; q^{ r}\right)_\infty}
{\displaystyle\prod^{|N_\bullet|}_{\substack{b=1\\b-\lambda_b\equiv p+i }}\left(q^{\lambda_b+i}t^{|N_\bullet| -b}z^{-1}; q^{ r}\right)_\infty} \right)P_\lambda[X_{N_\bullet};q,t].   
\end{align}
\end{thm}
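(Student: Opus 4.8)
The plan is to run the argument of Section~\ref{Eigen} essentially verbatim, substituting the Noumi-Sano infinite-variable eigenvalue of Lemma~\ref{NSFockEigen} for the one used in Lemma~\ref{FockEigen}. Because we are in degree $n=1$, the whole computation collapses onto the \emph{linear} coefficient of a generating series in the spectral parameter $z$, so it is markedly simpler than the elementary-symmetric extraction behind Theorem~\ref{ThmEigenHigher}. I will describe the `$-$' case (operator $H_{p,1}$, parameters $|q|,|t|<1$); the `$+$' case for $H_{p,1}^*$ is entirely parallel after inverting $t$.

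First I would combine the degree-one integral computation that produced $H_{p,1}$ with the shuffle identities established above and with Lemma~\ref{NSFockEigen}. Exactly as in \ref{Deg1}, \ref{XPole1}, and \ref{TPole1}, the residues of $\varrho^-_{p,1}$ separate into $x$-poles, which assemble into $H_{p,1}(X_{N_\bullet};q,t^{-1})$ (summed over all nonempty shift patterns, the currents now being ordered as in Proposition~\ref{NSNormalOrder}), and a pure $t$-pole string, which contributes an explicit scalar $\sigma_p$. Setting $f=P_\lambda$ and applying Lemma~\ref{NSFockEigen} then gives
\begin{equation}
\left(t^{|N_\bullet|}H_{p,1}(X_{N_\bullet};q,t^{-1})+\sigma_p\right)P_\lambda[X_{N_\bullet};q,t^{-1}]=\left([z^{-1}]\,\Pi_\infty(z)\right)P_\lambda[X_{N_\bullet};q,t^{-1}]
\label{NSPlanEq}
\end{equation}
on the analytic region $|x^{(i)}_l|<1$, $|q|,|t|<1$, where $\Pi_\infty(z)$ is the infinite product of $q^r$-Pochhammer symbols on the right-hand side of Lemma~\ref{NSFockEigen}.

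The crucial simplification is that each factor $(xz^{-1};q^r)_\infty$ has constant term $1$ and linear term $-x/(1-q^r)$, so $[z^{-1}]\Pi_\infty(z)$ is the single linear sum $\frac{1}{1-q^r}\sum_{b>0}(\mathrm{den}_b-\mathrm{num}_b)$ over the arguments of $\Pi_\infty$. I would then introduce the color-$p$ spectral variables $s^{(p)}_a=q^{\lambda_{b_a}}t^{b_a}$ as in \ref{Eigen} and split this sum at $b=|N_\bullet|$: the finite part $1\le b\le|N_\bullet|$ is exactly $[z^{-1}]$ of the \emph{finite} Pochhammer product in the theorem (up to the rescaling $t^{b}\mapsto t^{|N_\bullet|-b}$), while the tail $b>|N_\bullet|$ has $\lambda_b=0$ and runs over fixed residue classes, hence is a geometric series in $t$. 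Using $r\mid|N_\bullet|$ (Lemma~\ref{SumR}) and $\ell(\lambda)\le|N_\bullet|$, I would sum this tail in closed form and check that it equals $\sigma_p$; subtracting it in \eqref{NSPlanEq} and rescaling by $t^{|N_\bullet|}$ (with the $t\mapsto t^{-1}$ normalization) yields the stated eigenvalue. This tail-versus-scalar matching is the linear, degree-one shadow of Lemma~\ref{EigenStab}.

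The main obstacle will be precisely this tail summation and its identification with the scalar $\sigma_p$, including the convergence bookkeeping that guarantees $[z^{-1}]\Pi_\infty$ is the naive linear sum and the careful tracking of the $t\mapsto t^{-1}$ inversion; this is the genuinely Noumi-Sano-specific input replacing Lemma~\ref{EigenStab}. A secondary point requiring care is the shift-pattern accounting in the residue step --- that the $x$-poles range over all nonempty $\Ju$ with the $\delta_{p\in J}$ distinction matching the two cases in the definitions of $H_{p,1}^*$ and $H_{p,1}$. Finally, I would extend the identity from the analytic region to generic $q$, $t$, and $x^{(i)}_l$ by the rational-function argument used at the end of the proof of Theorem~\ref{Deg1Thm}.
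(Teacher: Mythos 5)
Your proposal follows exactly the route the paper intends: the appendix proves this theorem only by the remark that ``methods similar to those in \ref{Eigen} allow us to establish the following,'' and your plan --- splitting the degree-one residues of $\varrho^\pm_{p,1}$ into $x$-poles (assembling $H_{p,1}^{(*)}$ over all nonempty shift patterns) and a $t$-pole scalar, extracting $[z^{\mp1}]$ of the Pochhammer product from Lemma~\ref{NSFockEigen} as a linear sum, matching the $b>|N_\bullet|$ geometric tail with that scalar via $r\mid|N_\bullet|$, and extending to generic parameters as in Theorem~\ref{Deg1Thm} --- is precisely the degree-one instance of that argument. The details you flag (the $\delta_{p\in J}$ bookkeeping and the $t\mapsto t^{-1}$ normalization) are handled the same way as in \ref{Deg1} and \ref{Eigen}, so there is no substantive divergence from the paper.
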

\begin{rem} 
We have presented the eigenvalues in terms of our original spectral variables $q^{\lambda_b}t^{|N_\bullet|-b}$.
However, we can give a more natural combinatorial expression for the eigenvalues if we forgo this and use instead the \textit{transpose} partition $\lambda'$ \cite[(I.1.3)]{Mac}.
Let
\begin{align}
f_\lambda(q,t) = \dfrac{1}{1-q} - \sum_{j\ge 1} q^{j-1} t^{|N_\bullet|-\lambda'_j}.
\end{align}
It can be viewed as a series or as a rational function since 
$(1-q^r) f_\lambda(q,t)$ is a polynomial. Let $\Gamma=\Z/r\Z$ be the cyclic group and let $\chi$ be the generator of $R(\Gamma)$. Define $f_\lambda^{(p)}(q,t)$ by the following expression in 
$\Q(q,t) \otimes R(\Gamma)$.
\begin{align}
f_\lambda(q\chi^{-1},t\chi^{-1}) = \chi^{-1}\sum_{p\in I} f^{(p)}_\lambda(q,t) \chi^p.
\end{align}
Then the eigenvalues are given by 
\begin{align}
H^*_{p,1}(X_{N_\bullet};q,t) P_\la[X_{N_\bullet};q,t] &= f_\lambda^{(p)}(q^{-1},t^{-1}) P_\la[X_{N_\bullet};q,t] \\
H_{p,1}(X_{N_\bullet};q,t) P_\la[X_{N_\bullet};q,t] &= f_\lambda^{(p)}(q,t) P_\la[X_{N_\bullet};q,t].
\end{align}
\end{rem}

\begin{ex} Let $r=2$ and $\alpha=0$ (empty core). We use $N_0=N_1=1$. 
There are three nonempty shift patterns: $\Ju_1 = \{x_1^{(0)}\}$, $\Ju_2=\{x_1^{(1)}\}$, and 
$\Ju_3 = \{x_1^{(0)},x_1^{(1)}\}$. We apply $H_{0,1}[X_{\Nd};q,t^{-1}]$ to $P_\emptyset[X_{\Nd};q,t]=1$ using summands $\Ju_1,\Ju_2,\Ju_3$:
\begin{align*}
  - H_{0,1}(X_{\Nd};q,t^{-1})\cdot P_\emptyset[X_{\Nd};q,t^{-1}] &= 
  (1-qt)^0 \dfrac{q x_1^{(0)}}{x_1^{(0)}} \dfrac{x_1^{(1)}-t^{-1} x_1^{(0)}}{x_1^{(1)}-q x_1^{(0)}}
  \dfrac{q^{-1}t^{-1} q^2 x_1^{(0)} - x_1^{(0)}}{x_1^{(0)}-q^2 x_1^{(0)}} \\
  &+ (1-qt)^1 \dfrac{x_1^{(1)}}{q x_1^{(1)}} \dfrac{x_1^{(0)}-t^{-1} x_1^{(1)}}{x_1^{(0)}-q x_1^{(1)}} 
  \dfrac{q^{-1}t^{-1}q^2 x_1^{(1)}}{x_1^{(1)}-q^2x_1^{(1)}} \\
  &+ (1-qt)^1 \dfrac{q^{-1}t^{-1} q^2 x_1^{(0)}- x_1^{(0)}}{x_1^{(0)}-q^2 x_1^{(0)}}
  \dfrac{q^{-1}t^{-1} q^2 x_1^{(1)}}{x_1^{(1)}-q^2 x_1^{(1)}}\\
  &= \dfrac{q (qt^{-1}-1)}{1-q^2} \dfrac{x_1^{(1)}-t^{-1} x_1^{(0)}}{x_1^{(1)}-qx_1^{(0)}} \\
  &+\dfrac{(1-qt)t^{-1}}{1-q^2}\dfrac{x_1^{(0)}-t^{-1}x_1^{(1)}}{x_1^{(0)}-qx_1^{(1)}}\\
  &+\dfrac{(1-qt)(qt^{-1}-1)qt^{-1}}{(1-q^2)^2} \\
  &= \dfrac{q}{t^2} \dfrac{1-t^2}{1-q^2} P_{\emptyset}[X_{\Nd};q,t^{-1}].
\end{align*}
We have
\begin{align*}
f_{\emptyset}(q,t) &= \dfrac{1}{1-q} - \dfrac{t^2}{1-q} = \dfrac{1-t^2}{1-q} \\
f_{\emptyset}(q\chi^{-1},t\chi^{-1}) &= \dfrac{1-t^2}{1-q^2} (1 + q \chi) \\
f_{\emptyset}^{(0)}(q,t) &= q \dfrac{1-t^2}{1-q^2} \\
f_{\emptyset}^{(0)}(q,t^{-1}) &= q \dfrac{1-t^{-2}}{1-q^2} = - \dfrac{q}{t^2} \dfrac{1-t^2}{1-q^2}.
\end{align*}
\end{ex}

\end{document}